\newtheorem{theorem}[equation]{Theorem}    
\newtheorem{proposition}[equation]{Proposition}
\newtheorem{conjecture}[equation]{Conjecture}
\newtheorem{stein}[equation]{E.\thinspace M.~Stein Conjecture}
\newtheorem{zygmund}[equation]{Zygmund Conjecture} 
\newtheorem{lipKakeya}[equation]{Weak $ L^2$ estimate for the Lipschitz Kakeya Maximal Function} 
\newtheorem{carleson}[equation]{Carleson's Theorem}
\newtheorem{fl}[equation]{Fourier Localization Lemma}
\newtheorem{lemma}[equation]{Lemma}
\theoremstyle{definition}
\newtheorem{definition}[equation]{Definition}
\newtheorem{remark}[equation]{Remark}
\newtheorem*{Acknowledgment}{Acknowledgment}
\numberwithin{section}{chapter}
\numberwithin{equation}{chapter}
\numberwithin{figure}{chapter}
\newcommand{\abs}[1]{\lvert#1\rvert}
\def\abs#1{\lvert#1\rvert}
\def\Abs#1{\bigl\lvert#1\bigr\rvert}
\def\ABs#1{\Bigl\lvert#1\Bigr\rvert}
\def\name #1 #2{\operatorname {\mathsf #1}(#2)}
\newcommand{\scl}{{\operatorname  {scl}}}
\def\norm#1.#2.{\lVert#1\rVert_{#2}}
\def\Norm#1.#2.{\bigl\lVert#1\bigr\rVert_{#2}}
\def\NOrm#1.#2.{\Bigl\lVert#1\Bigr\rVert_{#2}}
\def\NORm#1.#2.{\biggl\lVert#1\biggr\rVert_{#2}}
\def\NORM#1.#2.{\Biggl\lVert#1\Biggr\rVert_{#2}}
\def\ip#1,#2,{\langle #1,#2\rangle}
\def\Ip#1,#2,{\bigl\langle#1,#2\bigr\rangle}
\def\IP#1,#2,{\Bigl\langle#1,#2\Bigr\rangle}
\def\abs#1{\lvert#1\rvert}
\def\Abs#1{\bigl\lvert#1\bigr\rvert}
\def\ABs#1{\Bigl\lvert#1\Bigr\rvert}
\def\eqdef{\stackrel{\mathrm{def}}{=}}
\def\ind#1{ {\mathbf 1}_{#1}}
\def\size#1{\operatorname{ size}(#1)}
\def\sh#1{\operatorname{ sh}(#1)}
\def\dense#1{\operatorname{dense}(#1)}
\def\prm#1{{\operatorname{\rm ann}}(#1)}
\def\scl#1{\operatorname{\rm scl}(#1)}      
\def\Prm{{\operatorname{\mathsf {ann}}}}
\def\Scl{{\operatorname{\mathsf {scl}}}}
\def\fss#1{{\phi_{s#1}} }    
 \def\inr{\int_{\mathbb R}}        
 \def\inrr{\int_{\mathbb R^2}}  
\def\tree#1{$#1$--tree} 
\def\ipf{ \langle f,\varphi_{s} \rangle}
\def\vLip{\norm v.\text{\rm Lip}.}
\def\Xint#1{\mathchoice
   {\XXint\displaystyle\textstyle{#1}}%
   {\XXint\textstyle\scriptstyle{#1}}%
   {\XXint\scriptstyle\scriptscriptstyle{#1}}%
   {\XXint\scriptscriptstyle\scriptscriptstyle{#1}}%
   \!\int}
\def\XXint#1#2#3{{\setbox0=\hbox{$#1{#2#3}{\int}$}
     \vcenter{\hbox{$#2#3$}}\kern-.5\wd0}}
\def\dashint{\Xint-}
\def\thinrect#1#2#3#4{
	\pgfrect[stroke]{\pgfpoint{#3cm}{#4cm}}{\pgfpoint{#2cm}{.3cm}}
	 }
\def\ThinRect#1{\begin{pgfrotateby}{\pgfdegree{#1}}
	\pgfrect[stroke]{\pgfpoint{0cm}{0cm}}{\pgfpoint{0.6cm}{4cm}}
	\end{pgfrotateby}}
\begin{document}
\frontmatter
\title[Stein Conjecture]{On a Conjecture of E.~M.\thinspace Stein
on the Hilbert Transform on Vector Fields}

 \author[M. Lacey \& Xiaochun Li]{Michael Lacey and Xiaochun Li}

\address{Michael Lacey \\ School of Mathematics \\ 
Georgia Institute of Technology \\ Atlanta GA 30332 }

\address{        Xiaochun Li\\
         	Department of Mathematics\\
        University of Illinois\\
        Urbana IL 61801}

\email{xcli@math.uiuc.edu}
\thanks{The authors are supported in part by NSF grants.  M.L. was supported in 
part by the Guggenheim Foundation.}

\date{}

\begin{abstract} 
Let $ v$ be a smooth vector field on the plane, that is a map from the plane 
to the unit circle. We study sufficient conditions for the boundedness of the 
Hilbert transform 
\begin{equation*}
  \operatorname H_{v, \epsilon }f(x) := \text{p.v.}\int_{-\epsilon }^ \epsilon
  f(x-yv(x))\;\frac{dy}y\, 
\end{equation*}
where $ \epsilon $ is a suitably chosen parameter, determined by the smoothness 
properties of the vector field.  It is a conjecture, due to E.\thinspace M.\thinspace 
Stein, that if $ v$ is Lipschitz, there is a positive $ \epsilon $ for which the 
transform above is bounded on $ L ^{2}$.  Our principal result gives a sufficient 
condition in terms of the boundedness of a maximal function associated to $ v$, 
namely  that this new maximal function be bounded on some 
$ L ^{p}$, for some $ 1<p<2$. We show that the maximal function is bounded from 
$ L ^{2}$ to weak $ L ^{2}$ for all Lipschitz vector fields.  The relationship 
between our results and other known sufficient conditions is explored. 

\end{abstract}

\subjclass{Primary 42A50, 42B25 }
\keywords{}

\maketitle

\setcounter{page}{4}
\tableofcontents

%

\chapter*{Preface}

This memoir is devoted to a question in planar Harmonic Analysis,  a  
subject  which is a circle of problems all related to the Besicovitch set.  This 
anomalous set has zero Lebesgue measure, yet contains 
a line segment of unit length in each direction of the plane.  It is a 
known, since the 1970's, that such sets must necessarily have full 
Hausdorff dimension.  The existence of these sets, and the full Hausdorff 
dimension, are intimately related to other, independently interesting issues \cite{stein}. 
An important tool to study these questions is the so-called Kakeya Maximal Function, 
in which one computes the maximal average of a function over rectangles of 
a fixed eccentricity and arbitrary orientation. 

Most famously, Charles Fefferman showed \cite{MR45:5661} that the Besicovitch set is the obstacle to the 
boundedness of the disc multiplier in the plane.  But as well, this set is 
intimately related to finer questions of Bochner-Riesz summability of 
Fourier series in higher dimensions and space-time regularity of solutions of
 the wave equation.  
 
This memoir concerns one of the finer questions which center around the Besicovitch set in
the plane. (There are not so many of these questions, but our purpose here is not 
to catalog them!) It concerns a certain degenerate Radon transform.  Given a 
vector field $ v$ on $ \mathbb R ^2 $, one considers a Hilbert transform 
computed in the one dimensional line segment determined by $ v $, namely the 
Hilbert transform of a function on the plane computed on the line segment 
$ \{ x+t v (x)\mid \lvert  t\rvert\le 1 \}$.  

The Besicovitch set itself says that choice of $ v$ cannot be just measurable, for 
you can choose the vector field to always point into the set.  Finer constructions show 
that one cannot take it to be H\"older continuous of any index strictly less than one. 
Is the sharp condition of H\"older continuity of index one enough?  This is the question 
of E.~M.\thinspace Stein, motivated by an earlier question of A.\thinspace Zygmund, 
who asked the same for the question of differentiation of integrals.  

The answer is not known under any condition of just smoothness of the vector field.  
Indeed, as is known, and we explain, a positive answer would necessarily imply 
Carleson's famous theorem on the convergence of Fourier series, \cite{car}.  This memoir 
is concerned with reversing this implication: Given the striking recent successes 
related to Carleson's Theorem, what can one say about Stein's Conjecture?  
In this direction, we introduce a new object into the study, a 
\emph{Lipschitz Kakeya Maximal Function}, which is a variant of the more familiar 
Kakeya Maximal Function, which links the vector field $ v$ to the `Besicovitch sets' 
associated to the vector field.  One averages a function over rectangles of 
arbitrary orientation and---in contrast  to the classical setting---arbitrary 
eccentricity.  But, the rectangle must suitably localize the directions in which 
the vector field points.   This Maximal Function admits a favorable estimate on 
$ L ^{2}$, and this is one of the main results of the Memoir.   

On Stein's Conjecture, we prove a conditional result:  If the Lipschitz Kakeya Maximal 
Function associated with $ v$ maps is an estimate a  little better than our $ L ^2 $
 estimate, then the associated Hilbert transform is indeed bounded.  Thus, the 
 main question left open concerns the behavior of these novel Maximal Functions.  
 
While the main result is conditional, it does contain many of the prior results on the 
subject, and greatly narrows the possible avenues of a resolution of this conjecture.  

The principal results and conjectures are stated in the Chapter 1; following that 
we collect some of the background material for this subject, and prove some of the 
folklore results known about the subject.  The remainder of the Memoir is taken 
up with the proofs of the Theorems stated in the Chapter 1.  

\begin{Acknowledgment}
The efforts of a  strikingly generous referee has resulted in corrections of arguments, and 
improvements in presentation throughout this manuscript. We are indebted to that person.      
\end{Acknowledgment}
 
\aufm{Michael T.~Lacey and Xiaochun Li}


\mainmatter
%


\chapter{Overview of Principal Results}

We are interested in singular integral operators on functions of two variables,
 which act  by performing a one dimensional transform along a  particular 
 line in the plane. 
 The choice of lines is to be variable.
 Thus, for a measurable map,  $v$ from $\mathbb R^2$ to the unit
 circle in the plane, that is a vector field, and a Schwartz function $f$ on $\mathbb R^2$, define 
 \begin{equation*} 
  \operatorname H_{v, \epsilon }f(x) := \text{p.v.}\int_{-\epsilon }^ \epsilon 
  f(x-yv(x))\;\frac{dy}y\,.
 \end{equation*}
 This is   a  truncated Hilbert transform performed on the line segment
 $\{x+tv(x)\;:\; \abs t<1\}$.  We stress the limit of the truncation in the definition above 
 as it is important to  different scale invariant formulations of our questions of 
 interest. This is an example of a Radon transform, one that is degenerate in the sense 
 that we seek results independent of geometric assumptions on the vector field.  We are 
 primarily interested in assumptions of smoothness on the vector field.  
 
 Also relevant is the corresponding maximal function 
\begin{equation} \label{e.MVE}
\operatorname M _{v, \epsilon }f := 
\sup _{0<t\le \epsilon }  (2t) ^{-1} \int _{-t } ^{t} \lvert f (x-s v (x)) \rvert \; ds 
\end{equation} 
The principal conjectures here concern Lipschitz vector fields. 

\begin{zygmund} Suppose that $ v$ is Lipschitz.  Then, for all $ f\in L ^2 (\mathbb R ^2 )$ 
we have the pointwise convergence 
\begin{equation}\label{e.zygmund}
\lim _{t\to 0} (2t) ^{-1} \int _{-t } ^{t} f (x-s v (x))\; ds 
= f (x) \qquad \textup{a.\thinspace e.}
\end{equation}
More particularly, there is an absolute constant $ K >0$ so that if 
$ \epsilon ^{-1} = K \norm v . \textup{\rm Lip}. $, we have the weak type estimate 
\begin{equation}\label{e.weak-zygmund}
\sup _{\lambda >0} \lambda  \lvert  \{\operatorname M _{v,\epsilon  } f >\lambda \}\rvert ^{1/2} 
\lesssim \norm f.2. \,.
\end{equation}
\end{zygmund}

The origins of this question go back to the discovery of the Besicovitch
set in the 1920Õs, and in particular, constructions of this set 
show that the Conjecture is false under the
 assumption that $v$ is H\"older continuous for any index strictly less than $1$.  
 These constructions, known since the 1920's, were 
 the inspiration for A.~Zygmund to ask if integrals of, say, $L^2(\mathbb R^2)$ functions could be differentiated in a Lipschitz choice of 
 directions.  That is, for Lipschitz $v$, and $f\in L^2$, is it the case that 
 \begin{equation*}
 \lim_{\epsilon\to0}(2\epsilon)^{-1}\int_{-\epsilon}^\epsilon f(x-yv(x))\; dy=f(x)\qquad \text{a.e.}(x)
 \end{equation*} 
These and other matters  are reviewed in the next chapter.

Much later, E. M. Stein \cite{stein-icm} raised the singular integral variant of this 
conjecture. 

\begin{stein}
 There is an absolute constant $ K >0$ so that if 
$ \epsilon ^{-1} = K \norm v . \textup{\rm Lip}. $, we have the weak type estimate 
\begin{equation}\label{e.weak-stein}
\sup _{\lambda >0} \lambda  \lvert  \{\lvert \operatorname H _{v,\epsilon  } f \rvert
>\lambda \}\rvert ^{1/2} 
\lesssim \norm f.2. \,.
\end{equation}
\end{stein}

These are very difficult conjectures. Indeed, it is known that if
the Stein Conjecture holds  for, say, $C^2$  vector fields, then Carleson's 
Theorem on the pointwise convergence of  Fourier series
\cite{car}
would follow. This folklore result is recalled in the next Chapter. 

We will study these questions using modifications of the phase plane 
analysis associated with Carleson's Theorem \cites{laceythiele,
MR1491450,MR1619285,MR1689336,MR1425870,laceyli1} and a new tool, which we 
term a \emph{Lipschitz Kakeya Maximal Function.} 

Associated with the Besicovitch set is the Kakeya Maximal Function, a maximal function over all rectangles of a given eccentricity. A 
key estimate is that the $ L^2 \longrightarrow L^{2, \infty }$  norm of this operator grows 
logarithmically in the eccentricity, \cites{MR0481883,MR0487260}.

 Associated with a Lipschitz vector field, we define a class of maxi- 
mal functions taken over rectangles of \emph{arbitrary} eccentricity, but these 
rectangles are approximate level sets of the vector field.
Perhaps surprisingly, these maximal functions admit an $ L ^{2}$
 bound that is \emph{independent} of eccentricity. 
Let us explain. 
 \begin{figure}
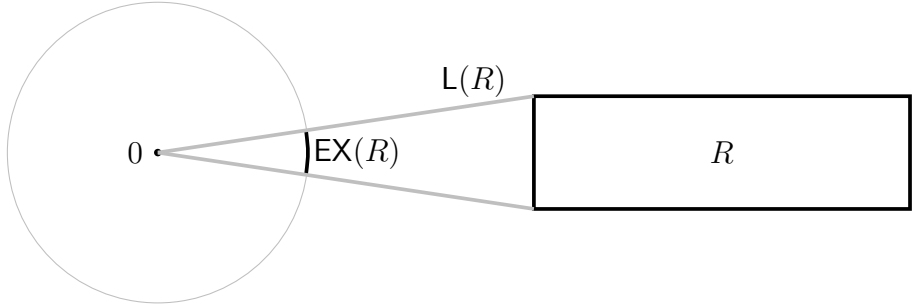

	 \begin{pgfpicture}{0cm}{0cm}{8cm}{4.5cm}
	 \begin{pgftranslate}{\pgfxy(-.15,2)}
	{\color{lightgray}  
		\pgfcircle[stroke]{\pgfxy(0,0)}{2cm}
	}
	\pgfcircle[fill]{\pgfxy(0,0)}{0.05cm}
	\pgfputat{\pgfxy(-.3,0)}{\pgfbox[center,center]{$0$}}
	\pgfputat{\pgfxy(2.65,0)}{\pgfbox[center,center]{$\mathsf{ EX}(R) $}}
	\pgfputat{\pgfxy(7.5,0)}{\pgfbox[center,center]{$R $}}
	\pgfsetlinewidth{1.4pt}
	\pgfrect[stroke]{\pgfxy(5,-.75)}{\pgfxy(5,1.5)}
	\pgfmoveto{\pgfxy(1.97,.3)}
	\pgfcurveto{\pgfxy(2.005,0)}{\pgfxy(2.005,0)}{\pgfxy(1.97,-.3)}\pgfstroke
			\pgfputat{\pgfxy(4.2,.95)}{\pgfbox[center,center]{$ \mathsf L (R)$}}
{\color{lightgray} 
			\pgfmoveto{\pgfxy(0,0)} 
			\pgflineto{\pgfxy(5,.75)} 	\pgfstroke
			\pgfmoveto{\pgfxy(0,0)} 
			\pgflineto{\pgfxy(5,-.75)}	\pgfstroke
	}
	\end{pgftranslate}
	\end{pgfpicture} 

	\caption{ An example eccentricity interval $ \mathsf {EX}(R)$.  The circle on 
	the left has radius one.}
	\label{f.EXR}
	\end{figure}

 A \emph{rectangle} is determined as follows.  Fix a choice 
of unit vectors in the plane $ (\operatorname e,\operatorname e ^{\perp})$, with $ \operatorname 
e ^{\perp}$ being the vector $ \operatorname e$ rotated by $ \pi /2$.  Using these vectors 
as coordinate axes, a rectangle is a product of two intervals $ R= I\times J$.  We will 
insist that $ \abs{ I}\ge \abs{ J}$, and use the notations 
\begin{equation}\label{e.LW}
\name L R=\abs{ I},\qquad \name W R=\abs{ J}
\end{equation}
for the length and width respectively of $ R$. 
 
The \emph{interval of uncertainty of  $ R$} is the subarc  $ \name EX R $ of the unit circle in the plane, 
centered at $ \operatorname e$, and of length $ \name W R/\name L R$.  See Figure~\ref{f.EXR}.

We now fix a Lipschitz map $ v$ of the plane into the unit circle. 
We only consider rectangles $ R$ with 
\begin{equation} \label{e.shortlength}
\name L R\le{} (100 \norm v. \text{Lip} .) ^{-1}\,.
\end{equation}  
For such a rectangle $ R$, set $ \name V R=R\cap v ^{-1}(\name EX R )$. 
It is essential to impose a restriction of this type on the length of the rectangles, for 
with out it, one can modify constructions of the Besicovitch set to provide examples 
which would contradict the main results and conjectures of this work. 

For $ 0<\delta <1$, we consider the maximal functions 
\begin{equation}\label{e.Mdef}
\operatorname M _{v,\delta }f(x) \eqdef \sup _{\substack{ \abs{ \name V R }\ge \delta \abs{ R} }}
\frac {\mathbf 1 _{R}(x)} {\abs{ R} } \int _{R} \abs{f(y)}\; dy .
\end{equation}
That is we only form the supremum over rectangles for which the vector field
 lies in the interval of uncertainty for a fixed positive proportion $ \delta $ of 
 the rectangle, see Figure~\ref{f.density}.

\begin{lipKakeya}\label{t.lipKakeya}
The maximal function $\operatorname M_{\delta,v}$ is bounded from $L^2({\mathbb R}^2)$
to $L^{2, \infty}({\mathbb R}^2)$ with norm at most $ \lesssim \delta ^{-1/2}$. That is, 
for any $\lambda >0$,  and $ f\in L^2 (\mathbb R^2)$, this inequality holds: 
\begin{equation}\label{max1}
 \lambda ^{2}
 \abs{\{x\in {\mathbb R}^2: \operatorname M_{\delta,v}f(x)>\lambda \}} 
 \lesssim \delta^{-1}\norm f.2.^2\,.
\end{equation}
The norm estimate in particular is independent of the Lipschitz vector field $ v$. 
\end{lipKakeya}

\begin{figure}
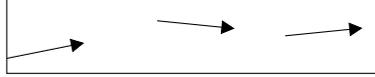

 \begin{pgfpicture}{0cm}{0cm}{5cm}{1.1cm}
%
\pgfrect[stroke]{\pgfxy(0,0)}{\pgfxy(5,1)}
{ \pgfsetendarrow{\pgfarrowtriangle{4pt}} 
  \pgfline{\pgfxy(0,.2)}{\pgfxy(1,.4)} 
   \pgfline{\pgfxy(2,.7)}{\pgfxy(3,.6)} 
    \pgfline{\pgfxy(3.7, .5)}{\pgfxy(4.7,.6)} 
  \pgfclearendarrow}
  \end{pgfpicture}
\caption{A rectangle, with the vector field pointing in the long direction 
of the rectangle at three points. }
  \label{f.density}  
\end{figure}

A principal Conjecture of this work is: 

\begin{conjecture} \label{j.laceyli1}
For some $ 1<p<2$, and some finite $N $ and all $ 0<\delta <1$ and all 
Lipschitz vector fields $ v$, 
the maximal function $\operatorname M_{\delta,v}$ is bounded from $L^p({\mathbb R}^2)$
to $L^{p, \infty}({\mathbb R}^2)$ with norm at most $ \lesssim \delta ^{-N}$.
\end{conjecture}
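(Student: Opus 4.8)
Since Conjecture~\ref{j.laceyli1} is the central open problem singled out in this memoir, I record an approach rather than a proof. First I would reduce, by the now-standard C\'ordoba duality, to a covering estimate. By homogeneity and a limiting argument it is enough to treat a finite family $\mathcal R=\{R_1,\dots,R_M\}$ of rectangles with $\name L {R_i}\le(100\norm v.\mathrm{Lip}.)^{-1}$ and $\abs{\name V {R_i}}\ge\delta\abs{R_i}$. If one can establish the overlap bound
\[
\int_{\mathbb R^2}\Bigl(\sum_{i=1}^{M}\mathbf 1_{R_i}\Bigr)^{q}\,dx\ \lesssim\ \delta^{-N_q}\sum_{i=1}^{M}\abs{R_i}
\]
for \emph{some} exponent $q>2$ and some finite $N_q$, then --- selecting an essentially disjoint subfamily capturing the level set of $\operatorname M_{\delta,v}\mathbf 1_E$ and applying H\"older with exponent $q$ --- one obtains $\operatorname M_{\delta,v}\colon L^{q'}\to L^{q',\infty}$ on characteristic functions with norm $\lesssim\delta^{-N_q/q}$, where $q'=q/(q-1)\in(1,2)$. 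Interpolating that restricted weak-type bound against the weak $(2,2)$ bound~\eqref{max1} then yields $\operatorname M_{\delta,v}\colon L^{p}\to L^{p,\infty}$ for every $q'<p<2$, which is the Conjecture. So everything reduces to the displayed covering estimate for a single $q>2$.

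Second, I would discard the easy rectangles. For rectangles with $\name L R\le 2\,\name W R$ the average over $R$ is dominated by a fixed multiple of a Hardy--Littlewood average over squares, so the corresponding portion of $\operatorname M_{\delta,v}$ is pointwise at most a constant times the Hardy--Littlewood maximal function, hence of weak type $(1,1)$ uniformly in $\delta$ and $v$ (the density restriction only shrinks the supremum). All the difficulty therefore lies with the \emph{thin} rectangles, which I would organize by dyadic eccentricity, $\name L R/\name W R\approx 2^{k}$ with $k\ge 1$.

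Third --- and this is the heart of the matter --- comes the geometric input that distinguishes the Lipschitz setting from the classical Kakeya problem: $v$ is \emph{single-valued} and Lipschitz. If $R,R'\in\mathcal R$ and some point of $R\cap R'$ carries a value of $v$ lying in both intervals of uncertainty, then the long axes of $R$ and $R'$ subtend an angle at most $\name W R/\name L R+\name W {R'}/\name L {R'}$, forcing near-parallelism and small relative overlap when the rectangles are thin; moreover the Lipschitz bound limits how many widely separated directions $v$ can realize over a single rectangle. I would try to convert this into a quantitative decay: for fixed $R\in\mathcal R$ and each $k$, the rectangles $R'\in\mathcal R$ with $\name L {R'}\approx 2^{k}\name L R$ whose intersection with $R$ carries a positive proportion of $R$ should cover in total at most $C\,2^{-c\abs k}\delta^{-c'}\abs R$ of $R$. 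Summing the geometric series over $k$ would then bootstrap the $L^2$ covering estimate behind~\eqref{max1} up to the $L^{q}$ estimate above for some $q>2$, with only polynomial loss in $\delta$.

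The obstacle I expect to be decisive is the interplay of \emph{widely differing eccentricities} with the density hypothesis: the condition $\abs{\name V R}\ge\delta\abs R$ controls $v$ over all of $R$, whereas an honest bound on $\abs{R\cap R'}$ needs $v$ localized on the far smaller set $R\cap R'$, and bridging this with only a polynomial cost in $\delta^{-1}$ is exactly the difficulty --- it is not even clear a fixed polynomial can suffice. A secondary difficulty is that for rectangles of eccentricity just above the trivial range the interval of uncertainty is long, so the near-parallel gain is weak and must be balanced against the $\delta$-loss. Should the clean decay prove out of reach, a fallback is to prove the restricted weak-type bound at some $p_0<2$ only for subfamilies of \emph{bounded} eccentricity ratio --- where the C\'ordoba argument on a single scale, reinforced by the Lipschitz constraint, ought to go through --- and to absorb the unbounded range with a coarser, lossier decomposition; this could still yield Conjecture~\ref{j.laceyli1} with a large exponent $N$.
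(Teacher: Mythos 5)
This is Conjecture~\ref{j.laceyli1}, which the paper explicitly does \emph{not} prove: the authors state that they cannot verify it, and can only establish bounds that grow slowly with the eccentricity rather than by a pure power of $\delta^{-1}$. So there is no proof of the paper's to compare against; the question is how your plan fits the paper's partial progress and its diagnosis of the obstruction. On that score your framework matches theirs closely. The reduction via C\'ordoba duality to an overlap estimate $\|\sum_i\mathbf 1_{R_i}\|_q^q\lesssim\delta^{-N}\sum_i|R_i|$ for some $q>2$, followed by H\"older and restricted-weak-type interpolation with the $L^2$ bound \eqref{max1}, is exactly the mechanism behind Proposition~\ref{p.3/2}, which the paper does prove at $q=3$ for the width-restricted variant --- but only with an extra factor of $\log(\mathsf w^{-1}\vLip)$. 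Your remark that rectangles of comparable length can overlap at most $O(\delta^{-1})$ times is Lemma~\ref{l.fixscale}; the whole difficulty, as you note, is summing this over dyadic eccentricity.

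The genuine gap is your third step: the asserted geometric decay that, for fixed $R$, the rectangles $R'$ with $\name L{R'}\approx 2^{k}\name L R$ meeting $R$ with positive density cover at most $C\,2^{-c|k|}\delta^{-c'}|R|$ of $R$. This is precisely what the paper's ``pocketknife'' example is constructed to cast doubt on: a handle together with hinges carrying unboundedly many blades at progressively smaller scales, all of density $\delta$, makes the covering-lemma quantity $\sum_{\mathcal B\in\mathbb B}(\sharp\mathcal B)^2\mathsf w^2\theta(\mathcal B)^{-1}$ grow with the number of scales in a way the authors can only bound by a slowly varying function of $\mathsf w^{-1}$, not by a power of $\delta^{-1}$ alone. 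You correctly single this out as ``the obstacle I expect to be decisive,'' and the paper agrees --- but no new mechanism is supplied to defeat it, so what you have is a program rather than a proof, consistent with the statement's status as an open conjecture in the paper.
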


We cannot verify this Conjecture, only establishing that the norm of the 
operator can be controlled by a slowly growing function of eccentricity.  

In fact, this conjecture is stronger than what is needed below. Let us 
modify the definition of the Lipschitz Kakeya Maximal Function, by 
restricting the rectangles that enter into the definition to have 
an approximately fixed width.  
For $ 0<\delta <1$, and choice of $ 0<\mathsf w<\tfrac 1 {100} \norm v. \textup{Lip}.$,
parameterizing the width of the rectangles we consider, 
define 
\begin{equation}\label{e.MdefW}
\operatorname M _{v,\delta, \mathsf w }f(x) \eqdef 
\sup _{\substack{ \abs { \name V R }\ge \delta \abs{ R}  \\ 
\mathsf w\le \name W R\le 2 \mathsf w}}
\frac {\mathbf 1 _{R}(x)} {\abs{ R} } \int _{R} \abs{f(y)}\; dy .
\end{equation}
We can restrict attention to this case as the primary interest below 
is the Hilbert transform on vector fields applied to functions with 
frequency support in a fixed annulus.  By Fourier uncertainty, the 
width of the fixed annulus is the inverse of the parameter $ \mathsf w$ above. 

\begin{conjecture}\label{j.laceyli2}
For some $ 1<p<2$, and some finite $N $ and all $ 0<\delta <1$, 
all Lipschitz vector fields $ v$ and $ 0<\mathsf w< \tfrac 1 {100} \norm v. \textup{Lip}.$ 
the maximal function $\operatorname M_{\delta,v, \mathsf w}$
is bounded from $L^p({\mathbb R}^2)$
to $L^{p, \infty}({\mathbb R}^2)$ with norm at most $ \lesssim \delta ^{-N}$.
\end{conjecture}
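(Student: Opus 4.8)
The plan is to reduce Conjecture~\ref{j.laceyli2} to a single geometric covering estimate for the ``heavy'' rectangles that enter the supremum defining $\operatorname M _{v,\delta,\mathsf w}$, and then to prove that estimate by using the Lipschitz hypothesis to force overlapping heavy rectangles to be nearly parallel, uniformly in the eccentricity $\name L R/\name W R$. First I would linearize: it suffices to bound $\operatorname M _{v,\delta,\mathsf w}$ on indicators $f=\mathbf 1 _E$, and for each point $x$ of the level set one selects a rectangle $R_x\ni x$ with $\mathsf w\le \name W {R_x}\le 2\mathsf w$, $\abs{\name V {R_x}}\ge\delta\abs{R_x}$, and $\abs{R_x\cap E}\ge\lambda\abs{R_x}$. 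A Vitali-type selection extracts a countable subfamily $\mathcal R=\{R_j\}$ whose union is comparable to the level set, and a weak-type $(p,p)$ inequality with constant $\delta^{-N}$ then reduces, in the standard way, to the covering bound
\[ \Bigl\lVert \sum _{j} \mathbf 1 _{R_j} \Bigr\rVert _{p'} \lesssim \delta ^{-N}\ABs{\bigcup _{j} R_j}^{1/p'}. \]
Here one may rescale so that $\mathsf w=1$; then $\name L {R_j}\le (100\norm v.\textup{Lip}.)^{-1}$, and since $\operatorname{diam} R_j\le\name L {R_j}$, the direction of $v$ drifts by at most $\tfrac1{100}$ across any single $R_j$.

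The content of the Conjecture is entirely in this covering bound, and the Lipschitz structure is used only here. The geometric input I would exploit is: if $R_j,R_k\in\mathcal R$ overlap at a point $x^{\ast}$, then $v(x^{\ast})$ lies within $\tfrac1{100}$ of the long axis of $R_j$ --- because $R_j$ is heavy, so $v$ meets $\name EX {R_j}$ somewhere on $R_j$ and drifts by at most $\tfrac1{100}$ thereafter --- and likewise within $\tfrac1{100}$ of the long axis of $R_k$; hence those two axes make an angle $\lesssim\tfrac1{100}$. Thus all heavy rectangles through a common point are morally parallel, and the finer, $\delta$-quantified adaptation of each $R$ to $v$ at the scale $\name W R/\name L R$ further limits how much a long $R_j$ and a short $R_k$ can overlap. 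With this in hand I would split $\mathcal R$ into dyadic eccentricity classes $\mathcal R_m$, bound the within-class overlap by the classical C\'ordoba argument (which costs only a power of $\delta$, not of the eccentricity), and control the cross-class interactions by a bush/hairbrush iteration adapted to this setting: the longest rectangles through a high-multiplicity point form a bush whose members lie in a common $\tfrac1{100}$-cap, so their union is large; one peels these off and descends through the eccentricity scales.

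I expect the cross-scale iteration to be the real obstacle, and this is precisely where the statement remains conjectural. The near-parallelism is only up to the \emph{fixed} angle $\tfrac1{100}$, not a shrinking one, so it does not by itself defeat the eccentricity --- even the classical Kakeya maximal function restricted to a $\tfrac1{100}$-cap of directions but \emph{all} eccentricities is badly behaved below $p=2$. The extra leverage must come from the heaviness condition $\abs{\name V R}\ge\delta\abs{R}$, which ties $R$ to $v$ at the critical scale $\name L R\approx\sqrt{\mathsf w/\norm v.\textup{Lip}.}$; quantifying exactly how this restricts the overlap of heavy rectangles of differing eccentricities, with a gain that strictly beats the per-scale combinatorial cost and a total constant polynomial in $\delta^{-1}$ and \emph{independent of the number of eccentricity scales}, is the crux. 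For $p=2$ the soft endpoint improvement in C\'ordoba's argument already supplies the logarithmic-only loss recorded in Theorem~\ref{t.lipKakeya}; Conjectures~\ref{j.laceyli1} and~\ref{j.laceyli2} ask to push this below $2$, where no such soft improvement exists and the Lipschitz geometry must be used quantitatively --- and running the same covering argument without the fixed-width restriction would yield the stronger Conjecture~\ref{j.laceyli1}.
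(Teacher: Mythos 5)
You correctly identify both the reduction to a covering estimate and the geometric input from the Lipschitz hypothesis, but what you have written is not a proof, and you acknowledge as much: the cross-scale iteration is left unargued. That gap is precisely why this statement is labeled a \emph{Conjecture}; the paper does not prove it either, and the authors state explicitly that every argument they can find loses a logarithm in the eccentricity (see Proposition~\ref{p.3/2} and the discussion surrounding it). A small correction to your framing: Theorem~\ref{t.lipKakeya} has \emph{no} eccentricity loss at $p=2$ --- the bound is $\lesssim\delta^{-1/2}$ uniformly in $\mathsf w$ --- so there is no ``logarithmic-only loss'' at the endpoint; the logarithm first appears when one tries to push below $p=2$, as in Proposition~\ref{p.3/2}.

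Your observation that the Lipschitz hypothesis forces overlapping heavy rectangles to be nearly parallel is the right starting point, and the paper quantifies it in Lemmas~\ref{l.samedirection}, \ref{l.geo} and \ref{l.stromberg} to close the $L^2$ case with no eccentricity loss at all. Below $L^2$ the covering norm is superlinear in the multiplicity, the per-scale C\'ordoba bound no longer sums with a constant independent of the number of eccentricity scales, and this is exactly where your bush/hairbrush step is a placeholder rather than an argument. The pocketknife configurations of Figure~\ref{f.pocketknife} --- one long handle carrying several hinges, each with many blades at a common angle --- realize precisely the cross-scale combinatorics your iteration would have to control, and the paper presents them to explain why nothing soft works: controlling $\sum (\sharp B)^2 \mathsf w^2 \theta(\mathcal B)^{-1}$ rather than $\sum (\sharp B) \mathsf w^2 \theta(\mathcal B)^{-1}$ is the unresolved step. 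You have rediscovered the obstacle; you have not removed it.
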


These conjectures are stated as to be universal over Lipschitz vector fields. 
On the other hand, we will state conditional results below in which we 
assume that a given vector field satisfies the Conjecture above, and then 
derive consequences for the Hilbert transform on vector fields.  
We also show that e.\thinspace g.\thinspace real-analytic vector fields 
\cite{bourgain} satisfy these conjectures.

\bigskip  

We turn to the Hilbert transform on vector fields. As it turns out, 
it is useful to restrict functions in frequency variables to an annulus. 
Such operators are given by 
\begin{equation*}
\operatorname S_t f (x)=\int _{1/t \le \lvert  \xi \rvert \le 2/ t  } 
\widehat f (\xi ) \operatorname e ^{i \xi \cdot x}\; d \xi \,.  
\end{equation*}
The relevance in part is explained in part by this result of the authors 
\cite{laceyli1}, valid for measurable vector fields. 

\begin{theorem}\label{t.laceyli1}
For any \emph{measurable} vector field $ v$ we have the $ L ^2 $ into $ L ^{2,\infty }$ 
\begin{equation*}
\sup _{\lambda >0} \lambda  \lvert  
\{\lvert  \operatorname H _{v,\infty } \circ \operatorname S_t f\rvert>\lambda  \}\rvert ^{1/2} 
\lesssim \norm f.2. \,. 
\end{equation*}
The inequality holds uniformly in $ t>0$. 
\end{theorem}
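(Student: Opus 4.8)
The plan is a time--frequency analysis adapted to the vector field, in the spirit of the proof of Carleson's theorem, with the planar geometry of the Besicovitch set entering only through a counting argument. \textbf{Reductions.} The dilation $f\mapsto f(t\,\cdot)$ reduces matters to $t=1$, so that $\operatorname S_1$ is a fixed smooth Littlewood--Paley projection onto the annulus $\{1\le \abs\xi\le 2\}$ and the asserted uniformity in $t$ is automatic. For the weak $L^2$ bound it suffices, by a standard linearization, to produce for every measurable $F\subset\mathbb R^2$ of finite measure a subset $F'\subset F$ with $\abs{F'}\ge\tfrac12\abs F$ and
\[ \abs{\langle \operatorname H_{v,\infty}\circ\operatorname S_1 f,\ \mathbf 1_{F'}\rangle}\lesssim \norm f.2.\,\abs F^{1/2}. \]
Then split the Hilbert kernel $1/y$ into dyadic pieces supported on $\abs y\sim 2^k$; composed with $\operatorname S_1$, the piece of scale $k$ has $L^2$ norm $\lesssim \min(2^{k},2^{-k})$ (by the size of the associated symbol on the annulus), so the extreme scales sum harmlessly and one is left with a model operator, a smoothly truncated, frequency--localized directional Hilbert transform.

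\textbf{Tile decomposition.} Expand this model operator as $\sum_s \langle f,\varphi_s\rangle\,\psi_s$, where a tile $s$ records a dyadic rectangle $R_s$, a dyadic arc $\omega_s$ of directions, and $\varphi_s,\psi_s$ are $L^2$--normalized wave packets adapted to $s$, with $R_s$ oriented so its long side matches the directions in $\omega_s$ (eccentricity of $R_s$ matched to the width of $\omega_s$, the usual Kakeya scaling). The decisive constraint --- produced by the factor $\mathbf 1_{v^{-1}(\omega_s)}$ that is implicit in $\operatorname H_{v,\infty}$ --- is that a tile contributes only if $v$ points into $\omega_s$ over a fixed proportion of the shadow of $R_s$; this is precisely the Besicovitch/Kakeya configuration, and it is the point at which the relevant maximal function of the Introduction is being used.

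\textbf{Trees and the single--tree estimate.} Organize the surviving tiles into trees: collections in which the arcs $\omega_s$ are linearly ordered under inclusion and the rectangles $R_s$ share a common top. Along each line of the plane in the tree's direction the tree operator is, up to errors controlled by a fixed maximal average, a truncated Carleson operator; hence \emph{Carleson's theorem} yields the single--tree estimate, bounding the contribution of a tree $T$ to the bilinear form by $\operatorname{size}(T)\cdot\operatorname{dense}(T)^{1/2}\cdot\abs F^{1/2}\cdot\bigl(\sum_{s\in T}\abs{\langle f,\varphi_s\rangle}^2\bigr)^{1/2}$, with the energies of the inputs summable in $\ell^2$ across a family of trees that is suitably disjoint. (This is consistent with the folklore implication, recalled in the next Chapter, that Stein's conjecture contains Carleson's theorem.)

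\textbf{Counting --- the main obstacle --- and conclusion.} What remains, and what I expect to be the hard core of the argument, is to bound the number of trees of a given size $\sigma$ and density $\delta$ and to sum the resulting estimate over the dyadic values of $\sigma$ and $\delta$. This is where the planar geometry must be controlled: the rectangles underlying trees of a fixed density have bounded overlap, and --- crucially --- because $\operatorname S_t$ has confined the frequencies to a \emph{single} annulus, these rectangles all have comparable eccentricity, so a Córdoba--type $L^2$ superposition estimate (equivalently, the fixed--eccentricity Kakeya bound) applies and controls the count \emph{without} the logarithmic loss that the Besicovitch set would otherwise impose. It is also here that mere measurability of $v$ is enough: the vector field is never required to be coherent from one scale to another. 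Assembling the single--tree estimate with this count and summing over $\sigma$ and $\delta$ gives the displayed bilinear bound with a constant independent of $v$ and of $t$; this is carried out in \cite{laceyli1}.
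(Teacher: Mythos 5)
The paper does not prove this Theorem; it is quoted from the authors' earlier paper \cite{laceyli1}, and the only indication of method given here is the remark (in the section on sharpness) that ``a variant of the approach to Carleson's theorem by Lacey and Thiele will prove this norm inequality.''  Your high-level architecture --- restricted weak type reduction, wave packets adapted to a single annulus, tiles carrying both a rectangle $R_s$ and an arc $\omega_s$ of directions, trees, and a decomposition governed by density and size --- does reflect that method, and it is also the skeleton of the conditional $\mathcal C_{\Prm}$ analysis carried out later in this paper.

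However, the two mechanisms you single out as the crux are not the ones used, and the second would fail outright.  First, the single-tree estimate is not obtained by quoting Carleson's theorem along lines in the tree's direction; in this framework (compare Lemma~\ref{l.tree} and \eqref{e.tree-ll}) it is a direct Calder\'on--Zygmund-type argument giving a bound of the shape $\operatorname{dense}(\mathbf T)\cdot\operatorname{size}(\mathbf T)\cdot\lvert R_{\mathbf T}\rvert$, not the expression with $\operatorname{dense}(\mathbf T)^{1/2}$ times an energy that you wrote.  Second, and more seriously, the tree count is \emph{not} a fixed-eccentricity C\'ordoba--Kakeya bound.  Restricting the frequency to a single annulus fixes the \emph{width} of the spatial rectangles (to $\sim 1/\Prm$), but their \emph{lengths} range over all dyadic scales $1/\scl{s}$ with $\scl{s}\le\kappa\Prm$, so the eccentricities run from nearly square up to order $1/\Prm$ and are nowhere near comparable.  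Moreover, for a merely \emph{measurable} vector field no directional maximal or Kakeya estimate is available at all --- supplying such an estimate in the Lipschitz case is precisely the point of the Lipschitz Kakeya Maximal Function introduced in this memoir.  The actual count is purely $L^2$: trees of size $\sigma$ satisfy $\sum_{\mathbf T}\lvert R_{\mathbf T}\rvert\lesssim\sigma^{-2}\lVert f\rVert_2^2$ by a $\operatorname T\operatorname T^{\ast}$/Bessel (orthogonality) argument, the planar Size Lemma of \cite{laceyli1}, relatives of which appear in the proof of Lemma~\ref{l.dense+size}; and trees of density $\delta$ satisfy $\sum_{\mathbf T}\lvert R_{\mathbf T}\rvert\lesssim\delta^{-1}$ by disjointness of the sets $R_{\mathbf T}\cap v^{-1}({\boldsymbol\omega}_{\mathbf T})\cap G$.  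Summing these against the single-tree estimate over dyadic $\sigma,\delta$ gives exactly $\lvert F\rvert^{1/2}$; no geometric Kakeya input is needed, nor is any available at this level of generality.
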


It is critical that the Fourier restriction $ \operatorname S_t$ enters in, for otherwise 
the Besicovitch set would provide a counterexample, as we indicate in the 
first section of Chapter 2. This is one point 
at which the difference between the maximal function and the Hilbert 
transform is striking. The maximal function variant of the estimate 
above holds, and is relatively easy to prove, yet the Theorem above 
contains Carleson's Theorem on the pointwise convergence of Fourier series as a Corollary.

The weak $ L ^{2}$ estimate is sharp for measurable vector fields, and so we raise 
the conjecture 

\begin{conjecture}\label{j.LipH}
 There is a universal constant K for which we 
have the inequalities 
\begin{equation}\label{e.LipH}
\sup _{0<t< \norm v. \textup{Lip}.} \norm \operatorname H _{v, \epsilon } 
\circ \operatorname S_t .2\to 2. < \infty \,, 
\end{equation}
where $ \epsilon =\norm v. \textup{Lip}./K$. 
\end{conjecture}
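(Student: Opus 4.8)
The plan is to deduce \eqref{e.LipH} from the time--frequency analysis that underlies Carleson's theorem, with the Lipschitz Kakeya Maximal Function playing the role that a Besicovitch/maximal-function estimate plays in the classical argument; concretely I would establish \eqref{e.LipH} \emph{conditionally} on Conjecture~\ref{j.laceyli2}. After a dilation we may restrict to a single frequency annulus of unit scale, so that $ \operatorname S_t$ localizes to $ \abs \xi \simeq 1$; Fourier uncertainty then ties the problem to physical rectangles of width $ \mathsf w\simeq 1$, which are exactly the rectangles entering $ \operatorname M _{v,\delta ,\mathsf w}$, and after this normalization $ \norm v. \textup{Lip}.\gtrsim 1$ so that the truncation of the Hilbert transform at the Lipschitz scale is legitimate. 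In these coordinates $ \operatorname H _{v,\epsilon }\circ \operatorname S_t$ is, morally, a Carleson operator in which the linearizing frequency at a point $ x$ is forced to be $ v (x)$ rather than being free.

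First I would introduce a collection of tiles $ s=(\omega _s,R_s)$, with $ \omega _s$ a frequency arc of length $ \simeq 1$ and $ R_s$ a dual rectangle in physical space whose long axis is perpendicular to $ \omega _s$, and decompose the operator into rank-one model pieces of the form $ \langle f,\varphi _s\rangle \psi _s$. The crucial structural point, exactly as in Carleson's theorem, is that a tile $ s$ is active at a point $ x$ only when $ v (x)$ lies in a normalized dilate of $ \omega _s$; since $ \omega _s$ is the interval of uncertainty of $ R_s$, the set on which $ s$ is active is precisely a set of the kind $ R_s\cap v ^{-1}(\mathsf{EX}(R_s))$ that appears in the definition of $ \operatorname M _{v,\delta ,\mathsf w}$. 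I would then run the standard machinery: organize the active tiles into trees; bound the contribution of a single tree by a Carleson-type maximal-function and square-function estimate, gaining a power of the tree's \emph{size}; and count trees of a given size and \emph{density} by an $ L ^2 $ almost-orthogonality argument on the tree tops.

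The counting of trees is where the Lipschitz Kakeya Maximal Function enters, and where the real difficulty lies. Summing the physical supports of the tiles inside a tree reduces to estimating $ \bigl\lvert \bigcup _{s}\bigl(R_s\cap v ^{-1}(\mathsf{EX}(R_s))\bigr)\bigr\rvert$, and controlling the aggregate of all trees of density at least $ \delta $ amounts, by the usual duality between a maximal inequality and such a covering bound, to the operator norm of $ \operatorname M _{v,\delta ,\mathsf w}$ on the relevant space. The sum over dyadic sizes converges by a standard gain coming from the square-function bound; but the sum over the dyadic range of densities $ \delta $ does \emph{not} converge if one only inputs the sharp weak $ L ^2 $ estimate \eqref{max1}---that bound sits at the non-summable endpoint. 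What the scheme requires is an $ L ^{p}\to L ^{p,\infty }$ bound for $ \operatorname M _{v,\delta ,\mathsf w}$ with some $ p<2$ and only polynomial loss $ \delta ^{-N}$, i.e.\ precisely Conjecture~\ref{j.laceyli2}, which turns the sum over densities into a convergent geometric series.

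Assembling the pieces then yields \eqref{e.LipH}. Thus the \emph{main obstacle} is not the phase-plane bookkeeping, which is by now routine (if intricate), but the geometric input: improving the sharp weak $ L ^2 $ estimate for the Lipschitz Kakeya Maximal Function to an $ L ^{p}$ estimate for some $ p<2$, even at the cost of a large negative power of $ \delta $. That single improvement would upgrade the conditional argument above to an unconditional proof of \eqref{e.LipH}, and with it Stein's Conjecture for the vector fields to which it applies.
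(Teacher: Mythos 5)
Your proposal matches the paper's approach: since \eqref{e.LipH} is posed as an open conjecture, the paper establishes it only conditionally, as inequality \eqref{e.cond1} of Theorem~\ref{t.conditional}, under the hypothesis that Conjecture~\ref{j.laceyli2} holds, and the phase-plane decomposition into annular tiles, the organization into trees measured by size and density, and the tree-count estimate via the Lipschitz Kakeya Maximal Function (Lemma~\ref{l.add1} and the middle estimate of \eqref{e.large-1}) are exactly the ingredients you describe. Your closing observation that the sole missing ingredient is an $L^{p}$ bound, for some $p<2$, for $\operatorname M_{v,\delta,\mathsf w}$ with only polynomial loss in $\delta$ is precisely how the paper frames the remaining obstacle.
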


Modern proofs of the pointwise convergence of Fourier series use 
the so-called \emph{restricted weak type approach}, invented by Muscalu, Tao and Thiele in 
\cite{mtt}. This 
method uses refinements of the weak $ L ^2 $ estimates, together with appropriate
maximal function estimates, to  derive  $ L^p$ inequalities, 
for $1<p<2$. In the case of Theorem~\ref{t.laceyli1}---for which this approach 
can not possibly work---the appropriate maximal function is the maximal
function over all possible line segments.  This is the unbounded 
Kakeya Maximal function for rectangles with zero eccentricity. One
might suspect that in the Lipschitz case, there is a bounded maximal 
function. This is another motivation for our Lipschitz Kakeya Maximal 
Function, and our main Conjecture~\ref{j.laceyli1}. 
We illustrate how these issues play out in our current setting, with 
this conditional result, one of the main results of this memoir.

\begin{theorem}\label{t.conditional} Assume that Conjecture~\ref{j.laceyli2} holds
for a choice of Lipschitz vector field $ v$.  
Then we have the inequalities 
\begin{equation}\label{e.cond1}
\norm  \operatorname H _{v, \epsilon } \circ \operatorname S_t .2. 
\lesssim 1\,, \qquad 0<t< \norm v. \textup{Lip}. \,. 
\end{equation}
Here, $ \epsilon $ is as in \eqref{e.LipH}. 
Moreover, if the vector field as $ 1+\eta  $ derivatives, we have the estimate
\begin{equation}\label{e.cond2}
\norm  \operatorname H _{v, \epsilon }   .2. 
\lesssim  (1+ \log  \norm v. C ^{1+\eta  }.) ^2 \,. 
\end{equation}
In this case, $ \epsilon = K/\norm v.C ^{1+\eta  }. $ and $ \eta >0$. 
\end{theorem}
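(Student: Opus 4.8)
The plan is to prove the single-annulus estimate \eqref{e.cond1} by a phase-plane analysis modeled on the proof of Carleson's theorem, with the Lipschitz Kakeya Maximal Function --- in the conjectured $L^{p}$ form of Conjecture~\ref{j.laceyli2}, not merely the $L^{2}\to L^{2,\infty}$ form of Theorem~\ref{t.lipKakeya} --- playing the role of the Carleson maximal operator, and then to deduce \eqref{e.cond2} by summing over annuli. First I would use a dilation $x\mapsto\lambda x$ to normalize both the scale $t$ and the Lipschitz norm, reducing \eqref{e.cond1} to the boundedness on $L^{2}$ of a single Littlewood--Paley piece $\operatorname{H}_{v,\epsilon}\circ\operatorname{S}_1$ with $v$ Lipschitz with an absolute constant (the truncation surviving harmlessly, since for one annulus it is not a constraint, as Theorem~\ref{t.laceyli1} already illustrates). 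Dualizing, it suffices to control $\langle\operatorname{H}_{v,\epsilon}\operatorname{S}_1 f,g\rangle$, which I would expand as a sum over tiles $s=(R_s,\omega_s)$: $R_s$ a rectangle in the sense of \eqref{e.LW}, of arbitrary eccentricity subject to the admissibility bound \eqref{e.shortlength}, and $\omega_s$ an arc of $\mathbb{S}^{1}$ of length comparable to the eccentricity of $R_s$, the contribution of $s$ being cut off in space to $\{x\in R_s:\ v(x)\in\omega_s\}$. Here $v(x)^{\perp}$ plays the role of Carleson's linearizing frequency and the wave packets $\varphi_s$ are $L^{2}$-normalized bumps adapted to $R_s$ and modulated to frequencies $\sim\omega_s$.

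Next I would group tiles into trees $T$ with a common top $(R_T,\xi_T)$ --- the $R_s$ nested in $R_T$, the $\omega_s$ straddling $\xi_T$ --- and prove the single-tree bound
\[
\Bigl|\sum_{s\in T}\langle f,\varphi_s\rangle\,\langle\varphi_s,g\rangle\Bigr|\lesssim\sigma(T)\,d(T)\,\abs{R_T},
\]
where $\sigma(T)$ is the $L^{2}$-controlled energy of $f$ on $T$ and $d(T)=\abs{R_T\cap v^{-1}(\omega_{R_T})}/\abs{R_T}$ is the density of the vector field into the top arc; overlapping trees are handled separately by a $TT^{*}$ orthogonality argument exploiting the near-disjointness of the frequency supports of the $\varphi_s$. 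One then bounds the operator by sorting tiles by dyadic energy $2^{-n}$ and dyadic density $2^{-m}$, extracting maximal trees at each level, and estimating $\sum_{n}\sum_{m}2^{-n}2^{-m}\sum_{T}\abs{R_T}$. The tops of the trees of density $\gtrsim 2^{-m}=:\delta$ are precisely rectangles $R_T$ with $\abs{R_T\cap v^{-1}(\omega_{R_T})}\ge\delta\abs{R_T}$ and a common dyadic width, so the indicator of their union is dominated pointwise by $\operatorname{M}_{v,\delta,\mathsf{w}}$ of a suitable test function, and Conjecture~\ref{j.laceyli2} converts this into $\sum_{T}\abs{R_T}\lesssim\delta^{-N'}2^{2n}\norm f.2.^{2}$ --- a bound that gains over the trivial count. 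This gain, and this alone, is why an $L^{p}$ hypothesis with $p<2$ is needed rather than the $L^{2}$ endpoint: it is exactly what makes the double geometric series in $n$ and $m$ converge, yielding \eqref{e.cond1} uniformly in the annulus.

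For \eqref{e.cond2} I would write $\operatorname{H}_{v,\epsilon}f=\sum_{t}\operatorname{H}_{v,\epsilon}\operatorname{S}_t f$ over dyadic $t$. The scales $t\gtrsim\epsilon$ contribute $O(1)$ directly, because there $\operatorname{S}_t f$ varies on a scale coarser than the truncation and the odd symmetry of $dy/y$ supplies an extra factor $\epsilon/t$. For the remaining $O(\log\norm v.C^{1+\eta}.)$ scales I would invoke a Fourier localization lemma: since $v\in C^{1+\eta}$, each $\operatorname{H}_{v,\epsilon}\operatorname{S}_t f$ is, up to an error controlled by $\norm v.C^{1+\eta}.$, again frequency-localized to $\abs\xi\sim 1/t$, so almost-orthogonality across scales reduces the $L^{2}$ norm to the single-scale bounds above. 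One power of the logarithm arises from enumerating these scales, and a second from the frequency leakage that the Fourier localization lemma controls only up to a further logarithmic loss; together they produce the $(1+\log\norm v.C^{1+\eta}.)^{2}$ in \eqref{e.cond2}.

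The step I expect to be the main obstacle is the interface between the combinatorial tile-selection algorithm and the geometric content of Conjecture~\ref{j.laceyli2}: one must verify that the selected tree-tops genuinely constitute a family of rectangles with the prescribed density and a common dyadic width, so that $\operatorname{M}_{v,\delta,\mathsf{w}}$ really governs their total measure, and one must run the single-tree estimate and the orthogonality argument with a vector field that is only Lipschitz rather than smooth.
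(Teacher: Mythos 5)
You have identified the right broad strategy --- a phase-plane decomposition of Carleson type in which the Lipschitz Kakeya maximal function counts tree-tops of a given density, together with almost orthogonality between annuli via a Fourier localization lemma --- and this is indeed the route the paper takes. However, the sketch omits two ideas without which the mechanism does not close.

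The first is a truncation. In your sort, the tree-tops $R_{\mathbf T}$ must literally satisfy the length constraint \eqref{e.shortlength}, $\name L R\le (100\norm v.\textup{Lip}.)^{-1}$, or Conjecture~\ref{j.laceyli2} says nothing about their union. But the rectangles dual to the annular frequency pieces have lengths $\scl s ^{-1}$ ranging over all admissible scales; for $\scl s\gg\norm v.\textup{Lip}.$ the rectangle $R_s$ is far too long. The paper handles this by splitting $\varphi_s=\alpha_s+\beta_s$ with $\alpha_s$ supported on the shrunken rectangle $\gamma_sR_s$, $\gamma_s^2=\scl s/(100^2\norm v.\textup{Lip}.)$ as in \eqref{e.gamma}, and decomposing $\Phi=\operatorname A_-+\operatorname A_++\operatorname B$; only $\operatorname A_-$ requires the Kakeya input, and \eqref{e.add2} is precisely the verification that the (further dilated) truncated rectangles obey \eqref{e.shortlength}. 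Without this, the ``main obstacle'' you flag is not a technicality but a fatal gap.

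The second concerns the route to $L^2$, which is more delicate than making a double geometric series converge. With only the trivial counts --- Bessel in energy, a covering bound in density --- the energy-density sort already closes, but it yields only the weak $L^2$ estimate already known for measurable vector fields, Theorem~\ref{t.laceyli1}. What the Kakeya hypothesis supplies is the middle line of \eqref{e.large-1}, a count of the form $\size{\mathbf S}^{-p}\dense{\mathbf S}^{-M}\abs F$ with $p<2$, which yields a restricted weak type estimate at some $p_0<2$ (Lemma~\ref{l.A}); the strong $L^2$ bound then follows by interpolating against the $L^p$, $p>2$, estimate \eqref{e.Phip} from the measurable-vector-field theory. Note also that the bound you wrote, $\sum_{T}\abs{R_T}\lesssim\delta^{-N'}2^{2n}\norm f.2.^2$, is \emph{weaker} than Bessel on account of the $\delta^{-N'}$ factor; the genuine gain is the exponent $p<2$ on size, paid for by a controllable power of density.

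Two smaller matters. The natural partial order on tiles fails here (Remark~\ref{r.noPartialOrder}): chains of comparable tiles can wander far from the original rectangle, forcing a non-transitive order relation and the more elaborate density \eqref{e.DENSE}; this bears directly on your step of extracting maximal trees. Finally, for \eqref{e.cond2} your attribution of one logarithm to ``frequency leakage'' is not quite the mechanism: the Fourier localization lemma gives polynomial decay when $\Prm\gg\norm v.C^{1+\eta}.$, and one logarithm comes from the range $\Scl\le\Prm\lesssim\norm v.C^{1+\eta}.$ where no such decay is available and the fixed-scale Bessel bound \eqref{e.scalefixed} is used, while the other counts the scales $\Scl\lesssim\norm v.C^{1+\eta}.$ below the Lipschitz cut-off. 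Both powers are scale counts.
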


While this is a conditional result, we shall see that it sheds new light on 
 prior results, such as one of Bourgain \cite{bourgain} on real analytic vector fields. 
See Proposition~\ref{p.bourgain}, and the discussion of that Proposition. 

The authors are not aware of any conceptual obstacles to the following extension of the 
Theorem above to be true, namely that one can establish $ L ^{p}$ estimates, 
for all $ p>2$. As our argument currently stands, we could only prove this 
result for $ p$ sufficiently close to $ 2$, because of our currently crude understanding 
of the underlying orthogonality arguments. 

\begin{conjecture}\label{j.conditional}
Assume that Conjecture~\ref{j.laceyli2} holds
for a choice of  vector field $ v$ with  $ 1+\eta  >1$ derivatives,
then we have the inequalities below
\begin{equation}\label{e.Jcond2}
\norm  \operatorname H _{v, \epsilon }   .p. 
\lesssim  (1+ \log  \norm v. C ^{1+\eta  }.) ^2 \,, 
\qquad 2<p<\infty \,. 
\end{equation}
In this case, $ \epsilon = K/\norm v.C ^{1+\eta  }. $.
\end{conjecture}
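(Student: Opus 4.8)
The plan is to run the time--frequency analysis that already proves Theorem~\ref{t.conditional}, but to read off $L^p$ rather than $L^2$ information at each step. After the Fourier localization, $\operatorname H_{v,\epsilon}\circ \operatorname S_t$ is expanded into a model sum over tiles adapted to the vector field, and this sum is reorganized into a sum over \emph{trees}: maximal collections of tiles sharing a common stack/orientation structure, each carried by a rectangle $R$ which localizes the directions of $v$ to a degree measured by a density parameter $\delta$. For a fixed annular scale $\mathsf w=2^{-j}$ one sorts the trees by $\delta=2^{-\ell}$. Two ingredients drive the estimate: a single--tree bound that gains a fixed positive power $\delta^{\theta}$, and the fact that the trees of density $\sim\delta$ pack into a maximal operator of the type $\operatorname M_{v,\delta,\mathsf w}$, whose $L^p$ norm, by Conjecture~\ref{j.laceyli2} interpolated against the trivial $L^\infty$ bound (which makes $\operatorname M_{v,\delta,\mathsf w}$ bounded on every $L^q$, $q>2$, with norm $\lesssim\delta^{-N}$), is at most $\lesssim\delta^{-N}$. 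For $p$ close enough to $2$, and $\theta$ large relative to $N$, the geometric series $\sum_\ell 2^{-\ell\theta}2^{\ell N}$ converges; summing over the $\lesssim\log\norm v.C^{1+\eta}.$ relevant widths $\mathsf w$ and accounting once more for the passage from the localized pieces back to the full truncation $\operatorname H_{v,\epsilon}$ yields the factor $(1+\log\norm v.C^{1+\eta}.)^2$. This is exactly the proof of Theorem~\ref{t.conditional}, now at exponent $p>2$ but only in a neighborhood of $2$.

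To reach all $2<p<\infty$, rather than iterating the $L^2$-based orthogonality I would aim for an endpoint at the top: that the tree sum maps $L^\infty(\mathbb{R}^2)$ into $\operatorname{BMO}(\mathbb{R}^2)$ with norm $\lesssim(1+\log\norm v.C^{1+\eta}.)^2$, and then interpolate this against the $L^2$ bound of Theorem~\ref{t.conditional} to obtain \eqref{e.Jcond2} for every $2<p<\infty$ (the constant being a geometric mean of two $\log^2$ factors, hence again $\log^2$). The $\operatorname{BMO}$ estimate follows the standard recipe: fix a cube $Q$, split the tiles into those whose shadow meets $Q$ at a scale comparable to or larger than $\ell(Q)$, handled by the $L^2$ bound after subtracting the appropriate constant, and the remaining finer tiles, whose contribution to the mean oscillation over $Q$ must be summed using a Carleson packing property for the trees $T$ with $\sh T\subset Q$, namely $\sum_{T:\,\sh T\subset Q}\abs{\sh T}\lesssim\abs{Q}$ with the density weighting built in. That packing bound is precisely a dual, geometric reformulation of the Lipschitz Kakeya maximal estimate, and would be extracted from Conjecture~\ref{j.laceyli2} in the same way the maximal bound is used in the range of $p$ near $2$.

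The main obstacle is the one already flagged in the text: the counting/orthogonality of trees is at present controlled only in a form strong enough for a Bessel-type ($L^2$) inequality, which is what confines the first argument to $p$ near $2$ and what must be upgraded for the $\operatorname{BMO}$ endpoint. Concretely one needs, for trees of a given density $\delta$ and width $\mathsf w$, not merely an $\ell^2$ almost-orthogonality statement but a genuine Carleson packing condition on the rectangles $\name V R$, uniform in $\delta$ up to the allowed $\delta^{-N}$ loss and summable over the annular scales with only the stated double-logarithmic cost; this appears to require a more efficient organization of these rectangles than the current arguments provide, and is why the statement is posed as a conjecture. A secondary, more technical point is that the single-tree estimate must be shown to lose at most a fixed power of $\delta$ on $L^p$ for all $p>2$ simultaneously, which should follow from the $L^2$ tree estimate by a vector-valued interpolation argument but needs to be carried out with constants uniform in $p$.
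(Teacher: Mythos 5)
This statement is a conjecture that the paper explicitly declines to prove: the text just before it says the argument only reaches $p$ in a small neighbourhood of $2$, ``because of our currently crude understanding of the underlying orthogonality arguments,'' and Remark~\ref{r.conditional} indicates only that a proof ``would depend upon refinements of Lemma~\ref{l.dense+size}, as well as using the restricted weak type approach of~\cite{mtt}.'' There is therefore no proof in the paper to match your argument against, and you are right to conclude that the gap you identify ``is why the statement is posed as a conjecture.'' Your near-$L^2$ outline does reproduce the mechanism underlying Theorem~\ref{t.conditional}, and your diagnosis of the bottleneck---that the tree-count and inter-annular orthogonality are at present controlled only in an $L^2$ Bessel form---matches the paper's own statement of the difficulty.

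Where you and the paper diverge is in the proposed road forward. You suggest an $L^\infty\to\operatorname{BMO}$ endpoint built on a Carleson packing of tree shadows, then interpolation against the $L^2$ bound of Theorem~\ref{t.conditional}. The paper's remark points instead to refining the counting estimates in Lemma~\ref{l.dense+size} so that the restricted weak type machinery of Muscalu--Tao--Thiele can run over a wider range of exponents. These are genuinely different strategies. Yours has the appeal of a clean endpoint, but it demands a Carleson measure estimate on the tree shadows at each density level, uniform across the $\sim\log\norm v.C^{1+\eta}.$ annular scales with only the stated $\log^2$ cost, and nothing in the existing estimates supplies such a condition. A specific additional difficulty your route would have to face is that the order relation on tiles used here is deliberately \emph{not} a transitive partial order (Remark~\ref{r.noPartialOrder}); this undercuts the usual nesting arguments by which tree shadows inside a fixed cube are organized in BMO-type proofs, and in this setting it is a structural obstruction rather than a technicality. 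Your secondary point, that the single-tree estimate should lose only a fixed power of $\delta$ on $L^p$ for all $p>2$ simultaneously, is plausible but also unestablished: the tree bound in the paper, Lemma~\ref{l.tree}, is an $L^1$-type pairing estimate, and promoting it uniformly to $L^p$ is part of what the conjecture leaves open.
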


For a brief remark on what is required to prove this conjecture, see
Remark~\ref{r.conditional}.

The results of  Christ, Nagel, Stein and Wainger \cite{MR2000j:42023} apply to certain vector
fields $v$.  This work is a beautiful culmination of the `geometric' approach to questions 
concerning the boundedness of Radon transforms. 
Earlier, a positive result for 
analytic vector fields followed from Nagel, Stein and Wainger \cite{MR81a:42027}.  
E.M.~Stein \cite{stein-icm}  specifically raised the question of the boundedness of $\operatorname H_v$ 
for smooth vector fields $v$. And the results of  D.~Phong and Stein \cites{MR88i:42028b
,MR88i:42028a} also give results about $\operatorname H_v$.  J.~Bourgain \cite{bourgain} considered 
real--analytic vector field.  N.~H.~Katz \cite{MR1979942} has  made an interesting contribution to
maximal function question.  Also see the partial  results of Carbery, Seeger, Wainger and Wright \cite{carbery}.

%


\chapter[Connections]{Connections to Besicovitch Set and Carleson's Theorem}

\section*{Besicovitch Set} 

The Besicovitch set is a compact set that contains a line segment of 
unit length in each direction in the plane. Anomalous constructions of 
such sets show that they can have very small measure. Indeed, given 
$ \epsilon >0$ one can select rectangles $ R_1 ,\dotsc,  R_n $,
with disjoint eccentricities, 
$ \lvert  \operatorname {EX}(R)\rvert \simeq n ^{-1} $, 
 and of unit length, so that 
$ \lvert  B\rvert\le \epsilon  $ for $ B:= \bigcup _{n=1} ^{n} R_j$. 
On the other hand, letting $e_j\in \operatorname {EX} (R_j)$,  
one has that the rectangles 
$R_j + e_j$  
 are essentially disjoint. See Figure~\ref{f.besi}. Call the `reach' of 
the Besicovitch set 
\begin{equation*}
\operatorname {Reach} := 
\bigcup _{j=1} ^{n} R_j + e_j\,.
\end{equation*}
This set has measure about one.  
On the Reach, one can define a vector field with points to a line segment 
contained in the Besicovitch set. Clearly, one has 
\begin{equation*}
\lvert  \operatorname H _{v} \mathbf 1_{B} (x)\rvert \simeq 1\,, 
\qquad x\in \operatorname {Reach}.
\end{equation*}
Further, constructions of this set permit one to take the vector field 
to be Lipschitz continuous of any index strictly less than one. And conversely, 
if one considers a Besicovitch set associated to a  vector 
field of sufficiently small Lipschitz norm, of index one, the corresponding Besicovitch 
set must have large measure. Thus, Lipschitz estimates are critical. 

\begin{figure}
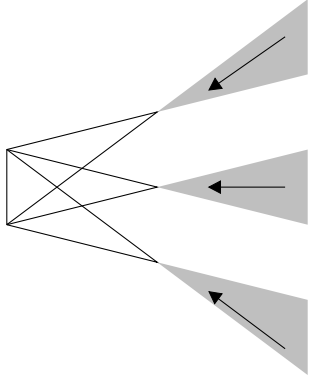

 \begin{pgfpicture}{0cm}{0cm}{5cm}{5.5cm}
 \begin{pgftranslate}{\pgfxy(0,.8)}

\pgfmoveto{\pgfxy(0,1)}
\pgflineto{\pgfxy(2,2.5)}
\pgflineto{\pgfxy(0,2)}
\pgflineto{\pgfxy(0,1)}
\pgfstroke
\pgfmoveto{\pgfxy(0,1)}
\pgflineto{\pgfxy(2,1.5)}
\pgflineto{\pgfxy(0,2)}
\pgfstroke
\pgfmoveto{\pgfxy(0,1)}
\pgflineto{\pgfxy(2,.5)}
\pgflineto{\pgfxy(0,2)}
\pgfstroke
{\color{lightgray}
\pgfmoveto{\pgfxy(2.,1.5)}
\pgflineto{\pgfxy(4,1)}
\pgflineto{\pgfxy(4,2)}
\pgflineto{\pgfxy(2.,1.5)}
\pgffill
\pgfmoveto{\pgfxy(2.,2.5)}
\pgflineto{\pgfxy(4,3)}
\pgflineto{\pgfxy(4,4)}
\pgflineto{\pgfxy(2.,2.5)}
\pgffill
\pgfmoveto{\pgfxy(2.,.5)}
\pgflineto{\pgfxy(4,0)}
\pgflineto{\pgfxy(4,-1)}
\pgflineto{\pgfxy(2.,.5)}
\pgffill}

 \pgfsetendarrow{\pgfarrowtriangle{4pt}} 
  \pgfline{\pgfxy(3.7,1.5)}{\pgfxy(2.7,1.5)} 
   \pgfline{\pgfxy(3.7,3.5)}{\pgfxy(2.7,2.8)} 
    \pgfline{\pgfxy(3.7, -.65)}{\pgfxy(2.7,0.1)} 
  \pgfclearendarrow
\end{pgftranslate}
  \end{pgfpicture}
\caption{A Besicovitch Set on the left, and it's Reach on the right.} 
\label{f.besi}
\end{figure}

\section*{The Kakeya Maximal Function} 

The Kakeya maximal function is typically defined as 
\begin{equation}\label{e.MK}
\operatorname M _{K,\epsilon } f (x) 
:=
\sup _{\lvert  \operatorname {EX} (R)\rvert \ge \epsilon }
\frac {\mathbf 1_{R} (x)} {\lvert  R\rvert } 
\int_R \lvert  f (y)\rvert \; dy \,, \qquad \epsilon >0\,.  
\end{equation}
One is forced to take $\epsilon>0$ due to the existence of the Besicovitch set.
It is a critical fact that the norm of this operator admits a norm bound 
on $L^2$ that is logarithmic in $\epsilon$. See C\'ordoba and Fefferman 
\cite{MR0476977}, and 
Str\"omberg \cites{MR0487260,MR0481883}. Subsequently, there have been several refinements 
of this observation, we cite only Nets H. Katz \cite{nets}, Alfonseca, Soria 
and Vargas \cite{MR1960122}, and Alfonseca \cite{MR1942421}. These papers contain additional 
references. 
For the $L^2$ norm, the following is the sharp result.

\begin{theorem}\label{t.strom} We have the estimate below valid for all $ 0<\epsilon <1$. 
\begin{equation*}
\norm \operatorname M _{K, \epsilon }.2\to 2. \lesssim 1+\log 1/ \epsilon \,. 
\end{equation*}
\end{theorem}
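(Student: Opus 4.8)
The plan is to prove the $L^2$ bound $\norm \operatorname M _{K, \epsilon }.2\to 2. \lesssim 1+\log 1/\epsilon$ by the classical duality/covering argument of C\'ordoba--Fefferman and Str\"omberg, reducing matters to a geometric estimate on overlapping rectangles of a fixed eccentricity. First I would linearize: for a nonnegative $f\in L^2$ choose, for each $x$, a rectangle $R(x)$ with $\abs{\operatorname{EX}(R(x))}\ge\epsilon$ realizing (up to a factor $2$) the supremum in \eqref{e.MK}, and normalize its eccentricity. It suffices to bound the bilinear form $\int (\operatorname M_{K,\epsilon}f)\, g$ for $g\ge 0$ with $\norm g.2.=1$; by a standard reduction (discretizing the eccentricities into $O(\log 1/\epsilon)$ families of comparable orientation, or simply passing to a finite collection of rectangles and using the linearization) the problem collapses to estimating $\sum_j \abs{R_j}^{-1}\left(\int_{R_j} f\right)\left(\int_{R_j} g\right)$ for a family $\{R_j\}$ of rectangles of a single fixed eccentricity $e$, no two of which are contained one in another.

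The heart of the matter is the geometric lemma: if $R,R'$ are two rectangles with comparable eccentricity $e$, neither containing the other, making an angle $\theta$, then $\abs{R\cap R'}\lesssim \frac{e}{e+\theta}\abs{R}^{1/2}\abs{R'}^{1/2}$, or in the cruder form adequate here, $\abs{R\cap R'}\lesssim \frac{\min(\abs R,\abs{R'})}{1+\theta/e}$. Summing the geometric series over dyadic angular separations $\theta\sim 2^k e$, $0\le 2^k\le 1$, produces exactly the logarithmic factor $\log 1/e\sim\log 1/\epsilon$ and no more. Concretely, I would bound the bilinear form by Cauchy--Schwarz against $\big\|\sum_j \mathbf 1_{R_j}\big\|_2$-type quantities, or run the argument directly: $\sum_{j,k}\abs{R_j}^{-1/2}\abs{R_k}^{-1/2}\abs{R_j\cap R_k}\, a_j a_k$ with $a_j$ the normalized masses of $f$ on $R_j$, and the geometric lemma makes the matrix $\big(\abs{R_j}^{-1/2}\abs{R_k}^{-1/2}\abs{R_j\cap R_k}\big)_{j,k}$ have both row and column sums $\lesssim 1+\log 1/\epsilon$, whence its operator norm on $\ell^2$ is $\lesssim 1+\log1/\epsilon$ by Schur's test.

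The main obstacle—and the only step that is not bookkeeping—is establishing the sharp two-rectangle intersection estimate and organizing the sum so that the angular variable is genuinely decoupled from the translation variable: one must cover the sphere of directions by $O(\log 1/\epsilon)$ dyadic angular scales and, within each scale, bound the overlap of translates of rectangles of a \emph{single} orientation, where the bound is the elementary one for axis-parallel (or nearly axis-parallel) boxes. The logarithm is produced precisely by the number of angular scales, and the sharpness matches the Besicovitch construction recalled above. Once the Schur bounds $\sup_j\sum_k(\cdots)\lesssim 1+\log1/\epsilon$ are in hand, the $L^2\to L^2$ estimate follows immediately, and the same computation in fact gives the $L^2\to L^{2,\infty}$ form as well. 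I would cite \cite{MR0476977} and \cite{MR0487260,MR0481883} for the original arguments and present the Schur-test packaging as the clean route.
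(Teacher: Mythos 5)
The paper does not actually prove Theorem~\ref{t.strom}; it states the result and refers the reader to C\'ordoba--Fefferman \cite{MR0476977} and Str\"omberg \cites{MR0487260,MR0481883}, so there is no internal argument for you to be measured against. Your sketch correctly identifies the classical ingredients---linearization, a dyadic decomposition of the circle of directions into $O(\log 1/\epsilon)$ angular bands, a two-rectangle overlap estimate that decays like the ratio of the eccentricity to the angular separation, and a bilinear/Schur packaging of the resulting sum---and the logarithm indeed comes from counting the angular scales.

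There is, however, a real gap at the step you dismiss as ``a standard reduction.'' The Schur bound on the matrix $\bigl(\abs{R_j}^{-1/2}\abs{R_k}^{-1/2}\abs{R_j\cap R_k}\bigr)_{j,k}$ fails for a generic collection in which no rectangle contains another: take, say, $N$ congruent rectangles of the same orientation and eccentricity $\epsilon$ all passing through a common point but with long axes translated by distances comparable to their width. None is contained in another, the pairwise angle is zero, yet the row sums grow like $N$. The assumption ``neither contains the other'' is not what the classical argument uses; what it uses is a \emph{greedy selection} (the covering lemma of C\'ordoba--Fefferman, refined by Str\"omberg) that discards any rectangle already mostly covered by previously chosen ones, so that the surviving family is genuinely sparse. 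Only after that selection do the row and column sums of your matrix come out bounded by $1+\log 1/\epsilon$. You should make the selection explicit---it is the entire content of the proof; the angular bookkeeping and the two-rectangle geometry are the easy parts. Relatedly, the covering argument most naturally produces the weak-$L^2$ estimate, and obtaining the stated strong $L^2\to L^2$ bound with the single power of $\log 1/\epsilon$ (rather than the cruder power one gets from interpolating the weak bound with $L^\infty$) is exactly what Str\"omberg's Annals paper was written to achieve; your write-up should acknowledge that this passage is not immediate.
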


The standard example of taking $ f$ to be the indicator of a small disk show 
that the estimate above is sharp, and that the norm grows as an inverse power 
of $ \epsilon $ for $ 1<p<2$.

\section*{Carleson's Theorem} 

We explain the connection between the Hilbert transform on vector 
fields and Carleson's Theorem on the pointwise convergence of Fourier 
series. Since smooth functions have a convergent Fourier expansion, 
the main point of Carleson's Theorem is to provide for the control of 
an appropriate maximal function. We recall that maximal function in 
this Theorem.

\begin{carleson}
For all measurable functions $ N\;:\; \mathbb R \longrightarrow \mathbb R $, the operator 
below maps $ L ^2 $ into itself. 
\begin{equation*}
\mathcal C_N f (x) 
:= \textup{p.v.}
\int \operatorname e ^{i N (x) y} f (x-y) \frac {dy}y\,. 
\end{equation*}
The implied operator norm is independent of the choice of measurable $ N (x)$. 
\end{carleson}

For fixed function $ f$, an appropriate choice of $ N$ will give us 
\begin{equation*}
\sup _{N} \ABs{ \textup{p.v.} \int \operatorname e ^{i N  y} f (x-y) \frac {dy}y} 
\lesssim 
\lvert  \mathcal C_N f (x) \rvert\,.  
\end{equation*}
Thus, in the Theorem above we have simply linearized the supremum.  Also, 
we have stated the Theorem with the un-truncated integral.  The content of the 
Theorem is unchanged if we make a truncation of the integral, which we will do below.

\begin{figure}
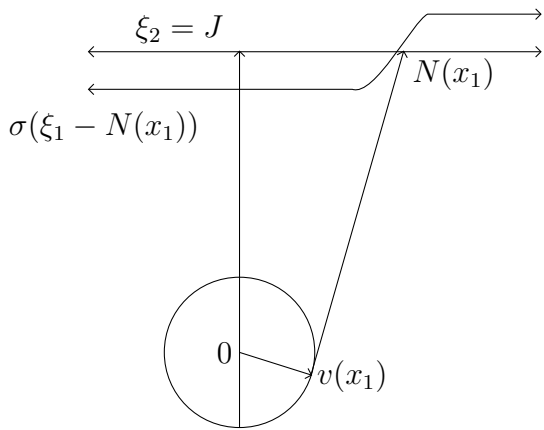

  \begin{pgfpicture}{0cm}{0cm}{8cm}{6cm}
   \pgfcircle[stroke]{\pgfxy(2,1)}{1cm}
    \pgfsetendarrow{\pgfarrowpointed}
   
   \pgfmoveto{\pgfxy(2,1)}
  \pgflineto{\pgfxy(2.95,.7)}
  \pgfstroke
  \pgfmoveto{\pgfxy(2,0)}
  \pgflineto{\pgfxy(2,5)}
  \pgfstroke
  
  \pgfmoveto{\pgfxy(2.95,.7)}
  \pgflineto{\pgfxy(4.18,5)}
  \pgfstroke
  
  \pgfsetstartarrow{\pgfarrowpointed}
  \pgfmoveto{\pgfxy(0,5)}
  \pgflineto{\pgfxy(6,5)}
  \pgfstroke
  \pgfputat{\pgfxy(1.7,1)}{\pgfbox[left,center]{$0$}}
  \pgfputat{\pgfxy(3.5,.9)}{\pgfbox[center,top]{$v(x_1)$}}
  
  \pgfputat{\pgfxy(4.3,4.7)}{\pgfbox[top,center]{$N(x_1)$}}

  \pgfputat{\pgfxy(1.2,5.3)}{\pgfbox[center,center]{$\xi_2=J$}}
    \pgfputat{\pgfxy(.2,4)}{\pgfbox[center,center]{$\sigma (\xi _1-N (x_1))$}}
\pgfmoveto{\pgfxy(0,4.5)}
\pgflineto{\pgfxy(3.5,4.5)}
\pgfcurveto{\pgfxy(3.8,4.4)}{\pgfxy(4.3,5.4)}{\pgfxy(4.5,5.5)}
\pgflineto{\pgfxy(6,5.5)}
\pgfstroke
\end{pgfpicture}   
\caption{Deducing Carleson's Theorem from Stein's Conjecture.}
\label{f.carleson}
\end{figure}

Let us now show how to deduce this Theorem from an appropriate 
bound on certain bound on Hilbert transforms on vector fields.
(This observation is apparently due to R.\thinspace Coifman from the 1970's.)

\begin{proposition}\label{p.Stein=>Carleson}
 Assume that we have, say, the bound
 \begin{equation*}
\norm \operatorname H _{v,1} .2\to 2. \lesssim 1\,, 
\end{equation*}
assuming that $ \norm v.C ^2 . \le 1$.  
It follows that the Carleson maximal operator is bounded on $L^2 (\mathbb R )$. 
\end{proposition}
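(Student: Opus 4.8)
The plan is to realize the Carleson operator as a special case of the Hilbert transform on a vector field by lifting the one-dimensional problem to two dimensions. Given $f \in L^2(\mathbb R)$ and a measurable $N : \mathbb R \to \mathbb R$, I would work on $\mathbb R^2$ with coordinates $(x_1,x_2)$ and consider the modulated function $F(x_1,x_2) = f(x_1)\, e^{i x_2 \cdot 0}$—more precisely, I would exploit the fact that modulation in the first variable corresponds to a shear in frequency space. The key device, visible in Figure~\ref{f.carleson}, is to choose the line of integration in the plane so that its slope encodes $N(x_1)$: take $v(x_1,x_2)$ to depend only on $x_1$, pointing in a direction whose reciprocal slope is $N(x_1)$ (after a normalization so that $\norm v . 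C^2 . \le 1$, which forces us to first truncate $N$ to a bounded range and rescale, handling the general case by a limiting/exhaustion argument since the bound is independent of the truncation). Then $\operatorname H_{v,1} F(x_1,x_2)$ computes, along the line $\{(x_1 - y, x_2 - yN(x_1))\}$, a one-dimensional Hilbert transform whose kernel, after taking a partial Fourier transform in $x_2$, carries the factor $e^{i y N(x_1) \eta}$ for the dual variable $\eta$.

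Concretely, the steps I would carry out are: (1) reduce to $N$ bounded, say $\abs{N} \le R_0$, with constants uniform in $R_0$; (2) choose a smooth vector field $v$ with $v(x)$ having reciprocal-slope proportional to $N(x_1)$ and with $\norm v . C^2 . \le 1$, which may require working at a small scale or conjugating by a dilation; (3) apply the hypothesized bound $\norm \operatorname H_{v,1} . 2 \to 2. \lesssim 1$ to a test function of the form $F(x_1,x_2) = f(x_1)\, g(x_2)$ where $\widehat g$ is a smooth bump, or better, integrate against a one-parameter family $g_\xi(x_2) = e^{i \xi x_2}\chi(x_2)$; (4) take a partial Fourier transform in the second variable and localize $\eta$ near a fixed value, say $\eta = 1$, so that $\operatorname H_{v,1}$ restricted to that frequency slice becomes, in the $x_1$ variable, exactly the truncated modulated Hilbert transform $y \mapsto \text{p.v.}\int_{-1}^1 e^{i N(x_1) y} f(x_1 - y)\, \frac{dy}{y}$ up to a harmless smooth error from the truncation and the cutoff $\chi$; (5) conclude $\norm \mathcal C_N f . 2 . \lesssim \norm f . 2.$ by Fubini/Plancherel in $x_2$, noting that the truncation of the integral does not change the content of Carleson's theorem (as remarked in the text just above the Proposition), and that the supremum over $N$ is already linearized.

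The main obstacle I expect is the bookkeeping in step (2)–(4): arranging the vector field so that it is genuinely $C^2$ with norm $\le 1$ while still "seeing" an arbitrary measurable $N$. Since $N$ is only measurable, one cannot literally set $v$ proportional to $(1,N(x_1))$; instead the standard fix is to note that the hypothesis and conclusion are scale-invariant in an appropriate sense, so one first proves a truncated/discretized estimate for smooth $N$ with uniform constants and then passes to the limit, or one observes that $\operatorname H_{v,1}$ with the Fourier cutoff built in via $g$ only tests $v$ against a single frequency scale, where even a rough $v$ behaves like a smooth one after mollification at that scale. Getting the dependence on the truncation parameter to drop out—so that the final constant is the universal one—is the delicate point, but it is forced by the scale-invariant formulation emphasized in the introduction. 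Everything else is a change of variables plus Plancherel in one variable.
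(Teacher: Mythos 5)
Your proposal is correct and matches the paper's proof: both lift to $\mathbb R^2$ via a vector field depending only on $x_1$ with reciprocal slope $N(x_1)/n$ for a large normalizing constant $n$, restrict to a high-frequency slice $\xi_2 = J$ in the second variable to recover the modulated symbol $\sigma(\xi_1 - N(x_1))$ of the (truncated) Carleson operator on $L^2(\mathbb R)$, and pass from the $C^2$ case to measurable $N$ by a standard limiting argument. The one simplification over your outline is that no truncation of $N$ or mollification of $v$ is needed: dividing the slope by $n \gg \norm N . C^2 .$ directly produces $\norm v . C^2 . \le 1$, and the bound is then uniform in $n$.
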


\begin{proof}
The Proposition and the proof are only given in their most 
obvious formulation. 
Set $ \sigma (\xi )=\int _{-1} ^{1} \operatorname e ^{i \xi y}\frac {dy}y$. 
For a $ C ^{2}$ function $ N\;:\; \mathbb R \longrightarrow \mathbb R $ 
we deduce that the operator with symbol $ \sigma (\xi - N (x))$ 
maps $ L ^{2} (\mathbb R )$ into itself with norm that is independent 
of the $ C ^2 $ norm of the function $ N (x)$.  A standard limiting argument 
then permits one to conclude the same bound for all measurable choices of 
$ N (x)$, as is required for the deduction of Carleson's inequality. 

This argument is indicated in Figure~\ref{f.carleson}. Take 
the vector field to be $ v (x_1, x_2)=(1,-N (x_1)/n)$ where 
$ n$ is chosen 
much larger than the $ C ^2 $ norm of the function  $ N (x_1)$. 
Then, $ \operatorname H _{v,1}$ is bounded on $ L ^2 (\mathbb R ^2 )$ 
with norm bounded by an absolute constant. The symbol of $ H_ {v,1}$ is 
\begin{equation*}
\sigma (\xi _1, \xi _2)=\sigma (\xi _1- \xi _2 N (x_1)/n)\,.
\end{equation*}
The trace of this symbol along 
the line $\xi _2 = J$  defines a symbol of a bounded operator on $L^2 (\mathbb R )$. 
Taking $ J$ very large, we obtain a very good approximation to symbol 
$ \sigma (\xi _1- \xi _2 N (x_1)/n)$, deducing that it maps $L^2 (\mathbb R )$  
into itself with a bounded 
constant. Our proof is complete. 
\end{proof}

\section*{The Weak $ L ^2 $ Estimate in Theorem~\ref{t.laceyli1} is Sharp} 
An example shows that under the assumption that the vector field is measurable, the sharp
conclusion is that $\operatorname H_v \circ \operatorname S _1$ maps $L^2$ into
$L^{2,\infty}$. And a variant of the approach to Carleson's theorem by Lacey and Thiele
\cite{laceythiele} will prove this norm inequality. This method will also show, under
only the measurability assumption, that $\operatorname H_v{\operatorname S}_1$ maps $L^p$
into itself for $p>2$, as is shown by the current authors \cite{laceyli1}. The results and
techniques of that paper are critical to this one.


\chapter[Lipschitz Kakeya]{The Lipschitz Kakeya Maximal Function}

\section*{The Weak $ L ^2 $ Estimate} 

We prove Theorem~\ref{t.lipKakeya}, the weak $ L ^{2}$ estimate 
for the maximal function defined in \eqref{e.Mdef}, by suitably adapting 
classical covering lemma arguments.

\subsection*{The Covering Lemma Conditions} 

We adopt the covering lemma approach of 
C{\'o}rdoba and R.~Fefferman \cite{MR0476977}.  To this end, we regard the choice of vector field 
$ v$ and $ 0<\delta <1$ as fixed.  Let $ \mathcal R$ be any finite collection of rectangles 
obeying the conditions \eqref{e.shortlength} and $ \abs{ \name V R }\ge \delta \abs{ R}$. 
We show that $ \mathcal R$ has a decomposition into disjoint collections $ \mathcal R'$ 
and $ \mathcal R''$ for which these estimates hold.   
\begin{align}\label{e.2<1}
\NOrm \sum _{R\in \mathcal R'} \mathbf 1 _{R}.2.^2 &\lesssim \delta ^{-1 } 
\NOrm \sum _{R\in \mathcal R'} \mathbf 1 _{R}.1.\,,
\\  \label{e.bigcup}
\ABs{\bigcup _{R\in \mathcal R''}R} &\lesssim 
\NOrm \sum _{R\in \mathcal R'} \mathbf 1 _{ R}.1.
\end{align}
The first of these conditions is the stronger one, as it bounds the $ L^2$ norm squared 
by the $ L^1$ norm; the verification of it will occupy most of the proof. 

\smallskip 

Let us see how to deduce Theorem~\ref{t.lipKakeya}.  Take $ \lambda >0$ and $ f\in L^2$  which is non negative and 
 of norm one. Set $ \mathcal R$ to be all the rectangles  $ R$ 
 of prescribed maximum length as given in \eqref{e.shortlength}, 
 density with respect to the vector field, namely $ \abs{ \name V R}\ge \delta \abs{ R}$,  and  
\begin{equation*}
\int _{R} f(y)\; dy\ge{} \lambda \abs{ R} \,.
\end{equation*}
We should verify the weak type inequality 
\begin{equation} \label{e.wweak}
\lambda \ABs{\bigcup _{R\in \mathcal R}R} ^{1/2} \lesssim \delta ^{-1/2}\,.
\end{equation}

Apply the decomposition to $ \mathcal R$.  Observe that 
\begin{align*}
\lambda \NOrm \sum _{R\in \mathcal R'} \mathbf 1 _{R}.1. 
&\le \IP f, \sum _{R\in \mathcal R'} \mathbf 1 _{R},
\\
&\le \NOrm \sum _{R\in \mathcal R'} \mathbf 1 _{R}.2.
\\
& \lesssim \delta ^{-1/2} \NOrm \sum _{R\in \mathcal R'} \mathbf 1 _{R}.1. ^{1/2}\,.
\end{align*}
Here of course we have used \eqref{e.2<1}.  This implies that 
\begin{equation*}
\lambda \NOrm \sum _{R\in \mathcal R'} \mathbf 1 _{R}.1. ^{1/2} \lesssim  \delta ^{-1/2}.
\end{equation*}
Therefore clearly \eqref{e.wweak} holds for the collection $ \mathcal R'$. 

Concerning the collection $ \mathcal R''$, apply \eqref{e.bigcup} to see that  
\begin{align*}
\lambda \ABs{\bigcup _{R\in \mathcal R''}R} ^{1/2}  
\lesssim \lambda \NOrm \sum _{R\in \mathcal R'} \mathbf 1 _{R}.1. ^{1/2}
\lesssim \delta ^{-1/2}\,.
\end{align*}
This completes our proof of \eqref{e.wweak}.

\smallskip  

The remainder of the proof is devoted to  the proof of \eqref{e.2<1} and \eqref{e.bigcup}.

\subsection*{The Covering Lemma Estimates} 

\subsubsection*{Construction of $ \mathcal R'$ and $ \mathcal R''$.}
In the course of the proof, we will need several recursive procedures.  The 
first of these occurs in the selection of $ \mathcal R'$ and $ \mathcal R''$. 

We will have need of one large constant $ \kappa $, of the order of say $ 100$, but whose 
exact value does not concern us. Using this notation hides 
distracting terms.

Let $\operatorname M_{\kappa}$ be a maximal function given as 
\begin{equation*}
\operatorname M _{\kappa } f(x)=\sup _{s>0} \max \Bigl\{ 
 s ^{-2}\int _{x+sQ} \abs{ f(y)} \; dy\, ,\ \sup _{\omega \in \Omega } s ^{-1 }
 \int _{-s} ^{s} \abs{ f(x+ \sigma \omega  )} \; d \sigma \Bigr\}\,.
\end{equation*}
Here, $ Q$ is the unit square in plane, and $ \Omega $ is a set of uniformly distributed 
points on the unit circle of cardinality equal to $ \kappa $. 
It follows from the usual weak type bounds 
that this operator maps $ L^1(\mathbb R^2)$ into weak $ L^1(\mathbb R^2)$. 

\smallskip 

To initialize the recursive procedure, set 
\begin{align*}
\mathcal R'&\leftarrow \emptyset\,,
\\
\mathsf{STOCK} & \leftarrow \mathcal R\,.
\end{align*}

The main step is this while loop.  While $ \mathsf{STOCK}$ is not empty, 
select $ R \in \mathsf{STOCK}$ subject to the criteria that first it have a 
maximal  length $ \mathsf L (R)$, and second that it have minimal value 
of $ \abs{ \name EX R}$. 
Update 
\begin{equation*}
\mathcal R'\leftarrow \mathcal R'\cup\{R\}. 
\end{equation*}
Remove $ R$ from $ \mathsf{STOCK}$. As well, remove any rectangle $ R'\in \mathsf{STOCK}$
which is also contained in 
\begin{equation*}
\Bigl\{ \operatorname M_{\kappa} \sum _{R\in \mathcal R'} \mathbf 1 _{\kappa R}\ge{} \kappa ^{-1}
\Bigr\}.
\end{equation*}

As the collection $ \mathcal R$ is finite, the while loop will terminate, and at this point 
we set $ \mathcal R''{} \eqdef  \mathcal R- \mathcal R'$. 
In the course of the argument below, we will refer the order in which rectangles 
were added to $ \mathcal R'$. 

\smallskip

With this construction, it is obvious that \eqref{e.wweak} holds, with a bound 
that is a function of $ \kappa $.  Yet, $ \kappa $ is an absolute constant, so 
this dependence does not concern us. And so the 
rest of the proof is devoted to the verification of \eqref{e.2<1}. 

\smallskip

An important aspect of the qualitative nature of the interval of eccentricity is 
encoded into this algorithm.  We will choose $ \kappa $ so large that this is true: 
Consider two rectangles $ R$ and $ R'$ with $ R\cap R'\neq\emptyset$, 
$ \name L R\ge \name L {R'}$, $ \name W R\ge \name W {R'} $,  
$ \abs{\name {EX} {R}}\le{} \abs{\name {EX} {R'} }$ and $\name EX R\subset 10\name EX {R'} $
then we have    
\begin{equation}\label{e.ex==>}
R'\subset \kappa R\,.
\end{equation}
See Figure~\ref{f.kappa}.

 \begin{figure}
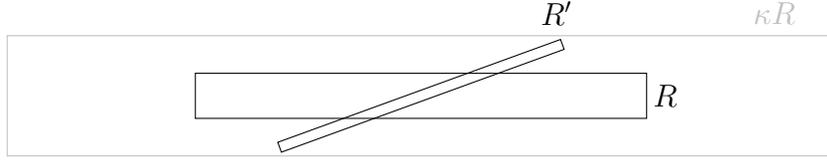
  
\begin{center}
  \begin{pgfpicture}{0cm}{0cm}{8cm}{4cm}
  \begin{pgftranslate}{\pgfxy(4,2.5)}
	\pgfrect[stroke]{\pgfxy(-3,-.3)}{\pgfxy(6,.6)}
	\pgfputat{\pgfxy(3.25,0)}{\pgfbox[center,center]{$ R$}}
	\begin{pgfrotateby}{\pgfdegree{20}}
	\pgfrect[stroke]{\pgfxy(-2,-.07)}{\pgfxy(4,.14)}
	\end{pgfrotateby}
	\pgfputat{\pgfxy(1.8,1.1)}{\pgfbox[center,center]{$ R'$}}
 {\color{lightgray} 
 	\pgfrect[stroke]{\pgfxy(-5.5,-.8)}{\pgfxy(11,1.6)}
	\pgfputat{\pgfxy(4.7,1.1)}{\pgfbox[center,center]{$\kappa R$}}
}
	\end{pgftranslate}
	\end{pgfpicture}
	\end{center} 
\caption{The rectangle $ R'$ would have been removed from $ \mathsf {STOCK}$ 
upon the selection of $ R$ as a member of $ \mathcal R'$. }
\label{f.kappa}
	\end{figure}

\subsection*{Uniform Estimates}

We estimate the left hand side of \eqref{e.2<1}.  In so doing we expand the square, 
and seek certain uniform estimates.  Expanding the square on the left hand side 
of \eqref{e.2<1}, we can estimate 
\begin{equation*}
\textup{l.h.s.\thinspace of } \eqref{e.2<1}
\le \sum _{R\in \mathcal R'} \abs{ R}+2 \sum _{(\rho ,R)\in \mathcal P} \abs{ \rho \cap R}
\end{equation*}
where $ \mathcal P$ consists of all pairs $ (\rho ,R)\in \mathcal R' \times \mathcal R'$ 
such that $ \rho \cap R\neq \emptyset$, and 
$ \rho $ was selected to be a member of $ \mathcal R'$ before 
$ R$ was.  It is then automatic that $ \mathsf L (R )\le \mathsf L (\rho )$.
And since the density of all tiles is positive, it follows that 
$ \operatorname {dist} (\mathsf {EX} (\rho ), \mathsf {EX} (R))
\le 2 \norm v.\textup{Lip}. \mathsf L (\rho )<\tfrac 1 {50}$.

We will split up the collection $ \mathcal P$ into 
sub-collections $ \{\mathcal S _{R} \;:\; R\in \mathcal R'\}$ 
and $ \{\mathcal T _{\rho }\;:\; \rho \in \mathcal R'\}$.

\medskip 

For a rectangle $ R\in \mathcal R'$, we take 
$ \mathcal S _{R}$ to consist of all rectangles $ \rho $ such that 
(a) $ (\rho ,R)\in \mathcal P$;  and (b) $\name EX \rho \subset 
10\name EX R $. 
We assert that 
\begin{equation}\label{e.uni1}
\sum_{\rho \in \mathcal S_R} \abs{ R\cap \rho } \le 
\abs{ R},\qquad R\in \mathcal R. 
\end{equation}

This estimate is in fact easily available to us. 
Since the rectangles $ \rho \in \mathcal S_R$ were selected to be 
in $ \mathcal R'$ before $ R$ was, we cannot have the inclusion 
\begin{equation} \label{e.Xuni}
R\subset \Bigl\{ \operatorname M_{\kappa} \sum _{\rho \in \mathcal S_R} \mathbf 1 _{\kappa \rho } 
> \kappa ^{-1} \Bigr\}\,.
\end{equation}
Now the  rectangle $ \rho $ are  also longer.  Thus, if \eqref{e.uni1} 
 does not hold,
 we would compute the maximal function of 
 \begin{equation*}
\sum _{\rho \in \mathcal S_R} \mathbf 1 _{\kappa \rho }
\end{equation*}
in a direction which is close, within an error of $ 2 \pi /\kappa $, of being orthogonal 
to the long direction of $ R$.  In this way, we will contradict \eqref{e.Xuni}.

\medskip 

The second uniform estimate that we need is as follows.  
For fixed $ \rho $, set $ \mathcal T _{\rho }$ to be the set of all rectangles 
$ R$ such that 
(a) $ (\rho ,R)\in \mathcal P$ and (b) 
$\name EX \rho \not\subset  10\name EX R $. 
We assert that 
\begin{equation}\label{e.uni2}
\sum_{R\in \mathcal T _{\rho }} 
\abs{ R\cap \rho } \lesssim \delta ^{-1} \abs{ \rho },\qquad \rho \in \mathcal R'. 
\end{equation}
This proof of this inequality is more involved, and taken up in the next subsection. 

\begin{remark}\label{r.rho} In the proof of \eqref{e.uni2}, it is not 
necessary that $ \rho \in \mathcal R'$.  Writing $ \rho =I _{\rho } \times J _{\rho }$, 
in the coordinate basis $ \operatorname e$ and $ \operatorname e _{\perp}$, 
we could take any rectangle of the form $ I \times J _{\rho }$. 
\end{remark}

\medskip 

These two estimates conclude the proof of \eqref{e.2<1}. 
For any two distinct rectangles $\rho, R\in \mathcal P$,
we will have either $ \rho \in \mathcal S_R$ or $ R\in \mathcal T _{\rho }$.
Thus \eqref{e.2<1} follows by summing \eqref{e.uni1} on $ R$ and \eqref{e.uni2}
on $ \rho $. 

\subsection*{The Proof of \eqref{e.uni2}} 

We do not need this Lemma for the proof of \eqref{e.uni2}, but this is the most 
convenient place to prove it.

\begin{lemma}\label{l.fixscale}
Let $ \mathcal S$ be any finite collection of rectangles with 
$ \name L R\le2\name L {R'}$, and with 
$ \abs{\name V R }\ge{} \delta \abs R$ for all $ R,R'\in \mathcal S$.  Then it is the 
case that 
\begin{equation}\label{folap}
\NOrm \sum_{R\in \mathcal S}
 \mathbf 1 _{R} .\infty . 
\le2\delta ^{-1}\,.
\end{equation}
\end{lemma}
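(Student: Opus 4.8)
The plan is to prove the pointwise bound $\sum_{R\in\mathcal S}\mathbf 1_R(x)\le 2\delta^{-1}$ for every $x\in\mathbb R^2$, which is exactly \eqref{folap}. Fix $x$ and set $\mathcal S_x=\{R\in\mathcal S:x\in R\}$. Let $L=\sup_{R\in\mathcal S}\name L R$; the hypothesis forces $\tfrac12 L\le\name L R\le L$ for all $R\in\mathcal S$, so each $R\in\mathcal S_x$ lies in the ball $B$ of radius $\sqrt2\,L$ about $x$. Since $\name L R\le(100\norm v.\textup{Lip}.)^{-1}$ by \eqref{e.shortlength}, $v$ varies on $B$ by less than an absolute constant $\eta_0<\tfrac1{10}$, so that $v(y)$ lies within angular distance $\eta_0$ of $v_0:=v(x)$ for every $y\in B$.

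First I would extract the orientation data hidden in the density hypothesis. For $R\in\mathcal S_x$ one has $\abs{\name V R}\ge\delta\abs R>0$, so there is $z\in R\subset B$ with $v(z)\in\name {EX} R$. As $\name {EX} R$ is the arc of length $\name W R/\name L R$ centered at the long axis $\operatorname e_R$ of $R$, and $v(z)$ is within $\eta_0$ of $v_0$, the angular distance from $\operatorname e_R$ to $v_0$ is at most $\tfrac12\,\name W R/\name L R+\eta_0$. Thus every rectangle of $\mathcal S_x$ has its long direction pinned to within a controlled angle of the single direction $v_0$, the control being by the rectangle's own eccentricity plus the fixed error $\eta_0$. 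Ordering $\mathcal S_x=\{R_1,\dots,R_N\}$ by increasing width, and using this clustering together with the comparability of the lengths and the fact that $R_i$ and $R_N$ both contain $x$, a direct geometric estimate in the spirit of \eqref{e.ex==>} shows $R_i\subset\kappa R_N$ for each $i$ (in the exceptional regime where $\eta_0$ dominates every eccentricity, all of $\mathcal S_x$ instead lies in a single rectangle of length $\simeq L$ and width $\simeq\eta_0 L$ about the line through $x$ in direction $v_0$, which plays the same role).

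It remains to count. With each $\name V {R_i}$ inside $\kappa R_N$, the density bound gives $N\delta\min_i\abs{R_i}\le\sum_{i=1}^N\abs{\name V {R_i}}=\int_{\kappa R_N}\bigl(\sum_{i=1}^N\mathbf 1_{\name V {R_i}}\bigr)$, so matters reduce to showing that the sets $\name V {R_i}$ pile up boundedly inside $\kappa R_N$ and that the areas $\abs{R_i}$ are comparable to $\abs{R_N}$; granting both, $N\lesssim\delta^{-1}$, which one then sharpens to $2\delta^{-1}$ by careful bookkeeping of the geometric constants. I expect the real obstacle to lie precisely here: the bare inequality $\abs{\name V R}\ge\delta\abs R$ does not by itself exclude rectangles through $x$ of wildly different widths, so the argument must use the density condition more quantitatively --- exploiting how $\name V R$ sits inside $R$ relative to the slab $v^{-1}(\name {EX} R)$ --- to force the rectangles at a fixed scale into a genuinely bounded-overlap family and thereby land on the clean constant $2\delta^{-1}$.
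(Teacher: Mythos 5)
Your plan is a genuinely different route from the paper's, and, as you yourself suspect at the end, it contains a gap that the stated hypotheses do not appear to let you close.  The reduction
\begin{equation*}
N\delta\min_i\abs{R_i}\;\le\;\int_{\kappa R_N}\sum_{i=1}^N\mathbf 1_{\name V {R_i}}\,dx
\end{equation*}
requires both that the areas $\abs{R_i}$ be comparable to $\abs{R_N}$ and that the sets $\name V {R_i}$ overlap boundedly, and neither is available.  The hypothesis controls only the \emph{lengths} to within a factor of $2$; the widths can spread over many scales, so $\abs{R_i}/\abs{R_N}$ can be arbitrarily small.  And there is no planar bounded-overlap principle to appeal to for the $\name V {R_i}$: if the rectangles through $x$ are all nearly aligned --- which is entirely compatible with $\norm v.\textup{Lip}.$ being small --- then the arcs $\name EX {R_i}$ overlap and the sets $\name V {R_i}$ can all coincide in a neighborhood of $x$.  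The intermediate nesting step $R_i\subset\kappa R_N$ is also not justified by your estimate, which only pins $\operatorname e_{R_i}$ to within $\tfrac12\name W {R_i}/\name L {R_i}+\eta_0$ of $v_0$; the absolute constant $\eta_0$ may dwarf the eccentricity $\name W {R_N}/\name L {R_N}$, so passing either to $\kappa R_N$ or to your fallback slab of width $\simeq\eta_0 L$ loses an uncontrolled area factor $\eta_0 L/\name W {R_N}$.

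The paper's argument works by a one-dimensional, radial mechanism that is absent from your proposal.  Instead of looking for bounded overlap in the plane, one foliates $\mathbb R^2$ by circles $C_r(x)$ about $x$ and argues that each circle meets at most one of the sets $\name V R$.  If $y\in\name V R\cap C_r(x)$ and $y'\in\name V {R'}\cap C_r(x)$, then $\abs{y-x}=\abs{y'-x}=r$ together with the geometry of the two rectangles through $x$ gives $\abs{y-y'}\le 4\name L R\,\abs{v(y)-v(y')}$, and chaining with the Lipschitz bound $\abs{v(y)-v(y')}\le\norm v.\textup{Lip}.\abs{y-y'}$ and the length restriction \eqref{e.shortlength} produces $1\le 4\norm v.\textup{Lip}.\name L R\le\tfrac1{25}$, a contradiction.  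With that in hand the density hypothesis is used radially, not in the plane: the shadows $\{r:\name V R\cap C_r(x)\neq\emptyset\}$ are pairwise disjoint subsets of $[0,\sqrt2\,L]$, and $\abs{\name V R}\ge\delta\abs R$ forces each shadow to have length $\gtrsim\delta\name L R\gtrsim\delta L$, which yields the count directly with no comparison of widths at all.  This ``one rectangle per circle'' observation --- the Lipschitz condition letting $v$ sweep across a full uncertainty arc at most once per radius --- is exactly the ingredient your plan lacks and is where the hypothesis $\abs{\name V R}\ge\delta\abs R$ actually does its work.
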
 

\begin{proof}
Fix a point $ x$ at which we give an upper bound on the sum above. 
Let $C(x)$ be any circle centered at $x$. We shall 
show that there exists at most one $R$ in $\mathcal S$ 
such that $\name V R\cap C(x)\neq \emptyset$. 
By the assumption   $ \abs{ \name V R }\ge{} \delta \abs{ R}$ 
this proves the Lemma.

\smallskip 
We prove this  last claim by contradiction of the 
Lipschitz assumption on the vector field 
$ v$. Assume that there exist 
at least two rectangles $ R,R'\in \mathcal S$ for which the 
sets $ \name V R $ and $ \name V {R'}$ intersect $ C(x)$. 
Thus there exist $y$ and $y'$ in $C(x)$ such that 
$v(y)\in \name EX R$ and $v(y')\in \name EX {R'} $. 
Since $v$ is Lipschitz, we have 
\begin{align*}
\abs{v(y)-v(y')}&\leq \norm v. \text{Lip}.\abs{y-y'}
\\
&\leq 4 \norm v. \text{Lip}. \name L R 
            \abs{v(y)-v(y')}\,,
\end{align*}
but this is a contradiction to our assumption \eqref{e.shortlength}.  
See Figure~\ref{f.radial}. 
\end{proof}

\begin{figure}
	\begin{pgfpicture}{0cm}{0cm}{9cm}{4cm}
	\begin{pgftranslate}{\pgfxy(2,.3)}
	\begin{pgfrotateby}{\pgfdegree{-80}}
 \ThinRect{-10} 	\ThinRect{0} 
 \pgfputat{\pgfxy(-.5,3)}{\pgfbox[center,center]{$\ldots$}}
 \ThinRect{30}	\ThinRect{40} \ThinRect{50}
 \pgfsetendarrow{\pgfarrowtriangle{4pt}}
 
 	\begin{pgfrotateby}{\pgfdegree{-10}}
	 \pgfmoveto{\pgfxy(.3,3.5)}\pgflineto{\pgfxy(.3,4.5)}\pgfstroke
	\end{pgfrotateby}
	\begin{pgfrotateby}{\pgfdegree{40}}
	\pgfmoveto{\pgfxy(.3,3.5)}\pgflineto{\pgfxy(.3,4.5)}\pgfstroke
	\end{pgfrotateby}	
	\end{pgfrotateby}
	\end{pgftranslate}
	\end{pgfpicture}

\caption{Proof of Lemma~\ref{l.fixscale}}
\label{f.radial}
\end{figure}

We fix $ \rho $, and begin by making a decomposition of the collection $ \mathcal T _{\rho }$. 
Suppose that the coordinate axes for $ \rho $ are given by $ \operatorname e _{\rho }$, 
associated with the long side of $ R$, and 
$ \operatorname e ^{\perp} _{\rho }$, with the short side.  Write the rectangle as a product of intervals $ I _{\rho }\times J$, 
where $ \abs{ I _{\rho }}=\name L \rho $.  
Denote one of the endpoints of $ J$ as $ \alpha $.  
See Figure~\ref{f.rho}.

%
%
 \begin{figure}
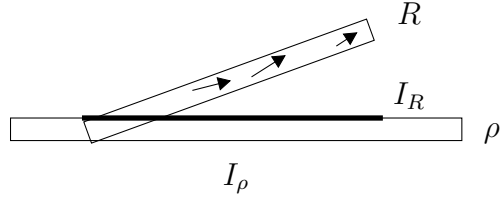
  
\begin{center} 
 \begin{pgfpicture}{0cm}{0cm}{8cm}{4cm}
\begin{pgftranslate}{\pgfxy(2,1)}
{\thinrect 0600}
 \pgfputat{\pgfxy(6.4,0.1)}{\pgfbox[center,center]{$\rho $}}
  \pgfputat{\pgfxy(3,-0.5)}{\pgfbox[center,center]{$I_\rho  $}}
 				\pgfsetendarrow{\pgfarrowtriangle{4pt}}
  		\begin{pgfrotateby}{\pgfdegree{20}}
	\thinrect 041{-.4}
			\pgfmoveto{\pgfxy(4.5,-.3)}\pgflineto{\pgfxy(4.8,-.23)}\pgfstroke
			\pgfmoveto{\pgfxy(3.3,-.3)}\pgflineto{\pgfxy(3.8,-.19)}\pgfstroke
			\pgfmoveto{\pgfxy(2.5,-.2)}\pgflineto{\pgfxy(3,-.25)}\pgfstroke
	\end{pgfrotateby}
	\pgfputat{\pgfxy(5.3,1.7)}{\pgfbox[center,center]{$R$}}
	\pgfclearendarrow
  \pgfsetlinewidth{2pt}
 \pgfmoveto{\pgfxy(.95,.3)}\pgflineto{\pgfxy(4.95,.3)}\pgfstroke
 \pgfputat{\pgfxy(5.3,.6)}{\pgfbox[center,center]{$I_R$}}

 \end{pgftranslate}  
 \end{pgfpicture}
 \end{center} 
 \caption{Notation for the proof of \eqref{e.uni2}.}
\label{f.rho}
 \end{figure}

For rectangles $ R\in \mathcal T _{\rho }$, let $ I_R$ denote the orthogonal projection 
$ R$ onto the line segment $ 2I _{\rho }\times \{\alpha \}$.  Subsequently, we will consider 
different subsets of this line segment.  The first of these is as follows. 
For $ R\in \mathcal T _{\rho }$, let $ \mathsf V_R$ be the projection 
of the set $ \name V R $ onto $2I _{\rho }\times \{\alpha \}$.  
The angle $ \theta $ between $ \rho $ and $ R$ is at most 
$ \lvert  \theta \rvert \le 2\norm v. \textup{Lip}. \name L {\rho } \le \tfrac 1 {50}  $. 
It follows that 
\begin{equation} \label{e.B}
\tfrac 12 \name L R \le\abs{ I_R}\le 2 \name L R ,
\quad\text{and}\quad
\delta \name L R\lesssim  \abs{ \mathsf V_R}. 
\end{equation}

A recursive mechanism is used to decompose $ \mathcal T _{\rho }$.  Initialize 
\begin{align*}
\mathsf{STOCK} & \leftarrow \mathcal T_\rho \,,
\\
\mathcal U & \leftarrow \emptyset \,.
\end{align*}
While $ \mathsf{STOCK}\neq\emptyset$ select $ R\in \mathsf{STOCK}$ of maximal 
length. Update 
\begin{equation}\label{e.UU}
\begin{split}
\mathcal U&\leftarrow \mathcal U\cup\{R\},
\\
\mathcal U(R) &\leftarrow \{R'\in \mathsf{STOCK}\;:\; \mathsf V_R\cap \mathsf V_{R'}\neq\emptyset\}. 
\\
\mathsf{STOCK}&\leftarrow\mathsf{STOCK}-\mathcal U(R). 
\end{split}
\end{equation}
When this while loop stops, it is the case that $ \mathcal T_ \rho =\bigcup _{R\in \mathcal U} 
\mathcal U(R)$.

\smallskip

With this construction, the sets $ \{ \mathsf V_R \;:\; R\in \mathcal U\} $ are 
disjoint.  By \eqref{e.B}, we have 
\begin{equation}\label{e.U}
\sum _{R\in \mathcal U} \name L R \lesssim \delta ^{-1} \name L \rho \,.
\end{equation}
The main point, is then to verify the uniform estimate 
\begin{equation}\label{e.uni3}
\sum _{R'\in \mathcal U(R)} \abs{ R'\cap \rho } 
\lesssim  \name L R\cdot \name W \rho  \,, 
\qquad R\in \mathcal U. 
\end{equation}
Note that both estimates immediately imply \eqref{e.uni2}.

\subsubsection*{Proof of \eqref{e.uni3}.}

There are three important, and more technical, facts to observe about the collections 
$ \mathcal U(R)$.

\begin{figure}
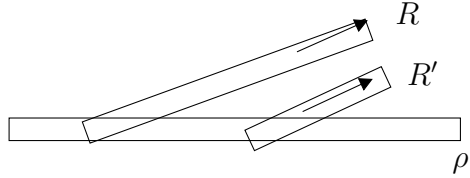
   
\begin{center} 
 \begin{pgfpicture}{0cm}{0cm}{7.5cm}{4cm}
\begin{pgftranslate}{\pgfxy(1,1)}
 \thinrect 0600
 \pgfputat{\pgfxy(6,-.3)}{\pgfbox[center,center]{$\rho $}}
 \pgfsetendarrow{\pgfarrowtriangle{4pt}}
 	\begin{pgfrotateby}{\pgfdegree{20}}
	\thinrect 041{-.4}
\pgfmoveto{\pgfxy(4,-.2)}\pgflineto{\pgfxy(5,-.13)}\pgfstroke
			\end{pgfrotateby}
	\pgfputat{\pgfxy(5.3,1.7)}{\pgfbox[center,center]{$R$}}
		 	\begin{pgfrotateby}{\pgfdegree{25}}
	\thinrect 02{2.9}{-1.5}
\pgfmoveto{\pgfxy(3.7,-1.3)}\pgflineto{\pgfxy(4.7,-1.3)}\pgfstroke
		\end{pgfrotateby}
		\pgfputat{\pgfxy(5.5,.9)}{\pgfbox[center,center]{$R'$}}
 \end{pgftranslate}  
 \end{pgfpicture}
 \end{center} \caption{Proof of Lemma~\ref{l.samedirection}:  The rectangles $R,R'\in \mathcal U(\rho ) $, 
 and so the angles $ R$ and $ R'$ form with $ \rho $ are nearly the same. }
 \label{f.samedirection}
 \end{figure}

For any rectangle $ R'\in \mathcal U(R)$, denote its coordinate 
axes as $ \operatorname e _{R'}$ and $ \operatorname e _{R'} ^{\perp}$, 
associated to the long and short sides of $ R'$ respectively.

\begin{lemma}\label{l.samedirection}
For any rectangle $ R'\in \mathcal U (R)$ we have  
\begin{equation*}
\abs{ \operatorname e _{R'}-\operatorname e _{R}} \le \tfrac 12 
\abs{ \operatorname e _{\rho }-\operatorname e _{R}}
\end{equation*}
\end{lemma}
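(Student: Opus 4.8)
The plan is to exploit the Lipschitz continuity of $v$ together with the defining properties of $\mathcal U(R)$—namely that $R'$ has length comparable to $R$ (being selected after $R$ in the while loop, so $\name L {R'}\le \name L R \le 2\name L{R'}$) and that $\mathsf V_{R'}\cap \mathsf V_R\neq\emptyset$, so in particular $\name V R$ and $\name V{R'}$ project onto overlapping subintervals of $2I_\rho\times\{\alpha\}$. First I would fix a point $y\in \name V R$ and a point $y'\in \name V{R'}$ whose projections onto $2I_\rho\times\{\alpha\}$ lie in $\mathsf V_R\cap\mathsf V_{R'}$; since both rectangles have length at most $\name L\rho$ times a small constant away from $\rho$ in angle, and both meet $\rho$, the points $y,y'$ are within distance $\lesssim \name L\rho$ of each other. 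Then $v(y)\in\name EX R$ and $v(y')\in\name EX{R'}$, and $\operatorname e_R$, $\operatorname e_{R'}$ are the centers of these arcs.

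The key estimates are then: first, $\abs{\operatorname e_R - \operatorname e_{R'}}$ is controlled by $\abs{v(y)-v(y')}$ plus the lengths of the two arcs $\name EX R$ and $\name EX{R'}$; second, $\abs{v(y)-v(y')}\le \norm v.\textup{Lip}.\abs{y-y'}\lesssim \norm v.\textup{Lip}.\name L\rho$ by the Lipschitz bound and the proximity of $y,y'$. On the other hand, since $R\in\mathcal T_\rho$ we have $\name EX\rho\not\subset 10\name EX R$, which forces $\abs{\operatorname e_\rho - \operatorname e_R}\gtrsim \abs{\name EX R}$ (the center of $\name EX\rho$ is far from $\name EX R$ on the scale of the latter's length); combined with $\abs{\operatorname e_\rho - \operatorname e_R}\le 2\norm v.\textup{Lip}.\name L\rho$ from the positive-density/Lipschitz observation already recorded for pairs in $\mathcal P$, this pins down $\abs{\operatorname e_\rho-\operatorname e_R}$ to be of order $\norm v.\textup{Lip}.\name L\rho$ up to constants. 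Feeding $\abs{v(y)-v(y')}\lesssim\norm v.\textup{Lip}.\name L\rho$ and $\abs{\name EX R},\abs{\name EX{R'}}\lesssim\norm v.\textup{Lip}.\name L\rho$ into the first estimate gives $\abs{\operatorname e_R-\operatorname e_{R'}}\lesssim\norm v.\textup{Lip}.\name L\rho\lesssim\abs{\operatorname e_\rho-\operatorname e_R}$; the factor $\tfrac12$ in the conclusion is then obtained by absorbing the implied constant into the requirement \eqref{e.shortlength}, i.e.\ by taking the constant $100$ there large enough.

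The main obstacle I anticipate is bookkeeping the constants so that the final inequality comes out with the clean factor $\tfrac12$ rather than merely $\lesssim$: one has to track that each Lipschitz-difference term and each arc-length term is bounded by a \emph{fixed} small multiple of $\norm v.\textup{Lip}.\name L\rho$, and that the lower bound $\abs{\operatorname e_\rho-\operatorname e_R}\gtrsim\abs{\name EX R}$ extracted from $\name EX\rho\not\subset 10\name EX R$ has a definite constant; then \eqref{e.shortlength} with its explicit $100$ (rather than an unspecified large constant) is exactly what makes the sum of the small multiples fall below $\tfrac12$ of the lower bound. The geometric content—"nearby points, Lipschitz field, so nearby directions"—is routine; getting the quantitative arc-length comparison $\abs{\operatorname e_\rho-\operatorname e_R}\gtrsim\abs{\name EX R}$ right from the failure of the inclusion $\name EX\rho\subset 10\name EX R$ is the one place requiring care.
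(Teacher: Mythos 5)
There is a genuine gap in your argument, located precisely at the step where you write ``$\abs{\operatorname e_R-\operatorname e_{R'}}\lesssim\norm v.\textup{Lip}.\name L\rho\lesssim\abs{\operatorname e_\rho-\operatorname e_R}$.'' The second inequality is not justified and is false in general. The two facts you marshal—namely $\abs{\operatorname e_\rho-\operatorname e_R}\gtrsim\abs{\name EX R}$ (from the failed inclusion) and $\abs{\operatorname e_\rho-\operatorname e_R}\le 2\norm v.\textup{Lip}.\name L\rho$—give an upper bound on $\abs{\operatorname e_\rho-\operatorname e_R}$ in terms of $\norm v.\textup{Lip}.\name L\rho$ and a lower bound in terms of $\abs{\name EX R}$; these do not ``pin down'' $\abs{\operatorname e_\rho-\operatorname e_R}$ to order $\norm v.\textup{Lip}.\name L\rho$, because $\abs{\name EX R}=\name W R/\name L R$ can be arbitrarily small relative to $\norm v.\textup{Lip}.\name L\rho$ for a thin rectangle $R$. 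Consequently your bound $\abs{v(y)-v(y')}\lesssim\norm v.\textup{Lip}.\name L\rho$ cannot be absorbed into $\tfrac12\abs{\operatorname e_\rho-\operatorname e_R}$.

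The missing ingredient is the definition of $\mathcal U(R)$ via overlapping \emph{projections}: since $\mathsf V_R\cap\mathsf V_{R'}\neq\emptyset$, one can choose $x\in\name V R$ and $x'\in\name V{R'}$ that orthogonally project to the \emph{same} point of $2I_\rho\times\{\alpha\}$. Their separation is then governed by the perpendicular distance to that line, not by $\name L\rho$; this distance is of order $\name L R\cdot\abs{\operatorname e_\rho-\operatorname e_R}$, because the rectangles are thin, meet $\rho$, and tilt by that angle. Hence the Lipschitz bound gives $\abs{v(x')-v(x)}\le\norm v.\textup{Lip}.\cdot\name L R\cdot\abs{\operatorname e_\rho-\operatorname e_R}$, which carries the crucial extra factor of $\abs{\operatorname e_\rho-\operatorname e_R}$. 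After that, \eqref{e.shortlength} shrinks the prefactor $\norm v.\textup{Lip}.\name L R$ to $\tfrac1{100}$, and the arc-length terms are absorbed using the two non-inclusion facts exactly as you anticipated, but with the arcs compared to the \emph{angles} rather than to $\norm v.\textup{Lip}.\name L\rho$: one needs $\abs{\name EX R}\le\tfrac15\abs{\operatorname e_\rho-\operatorname e_R}$ (else $\rho\in\mathcal S_R$) and $\abs{\name EX{R'}}\le\tfrac15\abs{\operatorname e_{R'}-\operatorname e_R}$, the latter being absorbed into the left side. Your version, which only extracts $\abs{y-y'}\lesssim\name L\rho$ from proximity and tries to close with an absolute comparison to $\norm v.\textup{Lip}.\name L\rho$, loses this angular gain and cannot produce the required scale-invariant conclusion.
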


\begin{proof}
There are by construction, points $ x\in \name V R$ and $ x'\in  \name V {R'} $ which get projected 
to the same point on the line segment $ I_\rho \times \{\alpha \}$. 
See Figure~\ref{f.samedirection}. Observe that 
\begin{align*}
\abs{ \operatorname e _{R'}-\operatorname e _{R}}&\le
\abs{ \name EX {R'} }+\abs{\name  EX R}+\abs{ v(x')-v(x)}
\\
&\le 
\abs{ \name EX {R'} }+\abs{\name  EX R}+\norm v.\text{Lip}. \cdot \name L R \cdot \abs{ \operatorname e _{\rho }-\operatorname e _{R}}
\\
&\le{} \abs{ \name EX {R'} }+\abs{\name  EX R}+\tfrac1{100} \abs{ \operatorname e _{\rho }-\operatorname e _{R}}
\end{align*}
Now, $ \abs{ \name EX R}\le{} \tfrac1{5} \abs{ \operatorname e _{\rho }-\operatorname e _{R}}$, 
else we would have $ \rho \in \mathcal S _{R}$. 
Likewise, $\abs{ \name EX {R'} }\le{} \tfrac1{5} \abs{ \operatorname e _{R' }-\operatorname e _{R}} $.
And this proves the desired inequality. 
 
\end{proof}

\begin{lemma}\label{l.geo}
Suppose that there is an interval $I\subset I _{\rho }$ such that
\begin{equation}\label{1}
\sum_{\substack{R'\in \mathcal U(R )\\ \name L {R'} \geq 8\abs I}} \abs{R'\cap I\times J} 
\geq     \abs{I\times J}\,.
\end{equation}
Then 
there is no $R''\in \mathcal U(R )$ such that $ \name L {R''} < \abs{I}$ and 
$  R''\cap 4I\times J\neq\emptyset  $. 
\end{lemma}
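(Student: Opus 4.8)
The statement to prove is Lemma~\ref{l.geo}: if an interval $I \subset I_\rho$ already receives mass at least $\abs{I\times J}$ from the long rectangles in $\mathcal U(R)$ (those with $\name L {R'} \ge 8\abs I$), then no short rectangle $R'' \in \mathcal U(R)$ with $\name L{R''} < \abs I$ can intersect the fattened slab $4I \times J$. The natural strategy is a proof by contradiction: suppose such an $R''$ exists, and show that its presence in $\mathcal U(R)$ forces $\mathsf V_{R''}$ to be disjoint from all the $\mathsf V_{R'}$ for the long $R'$ appearing in \eqref{1}, which would contradict the hypothesis that those long $R'$ cover $I\times J$ with multiplicity one in the $\mathsf V$-sense (since $\mathcal U(R)$ is built precisely so that the $\mathsf V$-projections of its members pairwise intersect the $\mathsf V$-projection of $R$, but more to the point, by the selection of $\mathcal U(R)$ all of its members were in $\mathsf{STOCK}$ together, so their $\mathsf V$-sets are \emph{not} forced disjoint — rather, the point is a packing/counting statement).

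**The key geometric steps.** First I would record the angle information: by Lemma~\ref{l.samedirection}, every $R' \in \mathcal U(R)$ makes an angle with $\rho$ that is comparable to the angle $R$ makes with $\rho$, hence all rectangles in $\mathcal U(R)$ are nearly parallel to one another, with pairwise angular discrepancy at most $O(\norm v.\textup{Lip}. \cdot \name L R)$. Second, since $R''$ has $\name L{R''} < \abs I$ and meets $4I\times J$, its projection $I_{R''}$ onto the axis of $\rho$ sits inside an $O(\abs I)$-neighborhood of $I$ — concretely $I_{R''} \subset 5I \times\{\alpha\}$ or so, using \eqref{e.B} which gives $\abs{I_{R''}} \le 2\name L{R''} < 2\abs I$. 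Third, for each long $R'$ in \eqref{1} with $\name L{R'} \ge 8\abs I$, the strip $R' \cap I\times J$ is nonempty, so $R'$ also passes over a neighborhood of $I$; because $R'$ is long (length $\ge 8\abs I$) and nearly parallel to $R''$, and because $\name V{R'}$ has length $\gtrsim \delta \name L {R'} \ge 8\delta\abs I$ while it is being forced through a window of width $\abs I$ transverse to $R''$, a direct geometric comparison shows that $\mathsf V_{R''}$, which has length $\gtrsim \delta\name L{R''}$ but sits over the short interval near $I$, must intersect $\mathsf V_{R'}$ for \emph{some} such long $R'$ — this is where the hypothesis \eqref{1} (the covering of $I\times J$ by the long ones) is used, via a pigeonholing over the sub-intervals of $I$.

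**Where the real difficulty lies.** The main obstacle is the third step: carefully quantifying how the long rectangles' $\mathsf V$-sets tile the window over $I$ and deducing that a short rectangle sitting over $4I\times J$ cannot avoid all of them, \emph{given} that it was not removed when the long ones were selected. The cleanest route is: in the recursive construction \eqref{e.UU} of $\mathcal U$, $R''$ would have been placed in $\mathcal U(R')$ — and hence removed from $\mathsf{STOCK}$ — at the stage when the first long $R'$ with $\mathsf V_{R'} \cap \mathsf V_{R''}\neq\emptyset$ was selected, since the $R'$ are selected in decreasing order of length and all the long ones in \eqref{1} precede $R''$. So the real content is: \eqref{1} forces $\mathsf V_{R''}$ to meet $\mathsf V_{R'}$ for at least one long $R'$. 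I would prove this by noting that $\sum_{R' \text{ long}} \abs{\mathsf V_{R'}}$ restricted to lie over $2I$ is, by \eqref{1} and \eqref{e.B}, at least $\gtrsim \delta\abs{I}$ worth of mass spread over an interval of length $O(\abs I)$, while $\mathsf V_{R''}$ is an interval of length $\gtrsim \delta\name L{R''}$; the geometry and the near-parallelism then force an intersection unless $\name L{R''}$ is so small that $\mathsf V_{R''}$ can thread between gaps — but the hypothesis says the long ones cover $I\times J$ \emph{without gaps} over $I$, so no threading is possible once $4I\times J$ is met. Since this counting argument is the crux, I would isolate it as the one place to be fully careful, treating the angle bounds and projection estimates as routine consequences of Lemmas~\ref{l.samedirection} and the bounds \eqref{e.B}.
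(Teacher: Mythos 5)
Your proposed mechanism for the contradiction misidentifies which recursive construction does the work, and this is a genuine gap, not a detail.  You claim that ``$R''$ would have been placed in $\mathcal U(R')$ --- and hence removed from $\mathsf{STOCK}$ --- at the stage when the first long $R'$ with $\mathsf V_{R'} \cap \mathsf V_{R''}\neq\emptyset$ was selected.''  But look again at the construction \eqref{e.UU}: one selects $R\in\mathsf{STOCK}$ of maximal length, and $\mathcal U(R)$ consists of \emph{all} rectangles $R'$ (long or short) whose $\mathsf V$-projection meets $\mathsf V_R$; these are removed together in a single step.  The long $R'$ in hypothesis~\eqref{1} are never ``selected'' in that algorithm --- only $R$ is --- so there is no stage at which $R''$ would be expelled because its $\mathsf V$-set meets theirs.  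Inside $\mathcal U(R)$, nothing prevents $\mathsf V_{R''}$ from intersecting $\mathsf V_{R'}$ for one of the long rectangles; that is not a forbidden configuration, so showing it happens would not produce a contradiction.

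The constraint that actually does the work comes from the earlier construction of $\mathcal R'$: a rectangle is admitted only if it is \emph{not} contained in $\{\operatorname M_{\kappa}\sum_{R\in\mathcal R'}\mathbf 1_{\kappa R}>\kappa^{-1}\}$ at the moment of selection, and longer rectangles are selected first.  The paper's argument exploits this by choosing line segments $\Lambda$ of length $\kappa\theta\abs{I}$ in the direction of the coordinate axis of $\operatorname M_\kappa$ closest to $\operatorname e_\rho^\perp$, centered in $4I\times J$.  A computation using Lemma~\ref{l.samedirection} (which you do invoke) shows $\abs{\kappa R'\cap\Lambda}/\abs{\Lambda}\ge \mathsf W(R')/(\theta\abs I)$, so hypothesis~\eqref{1} forces $\operatorname M_\kappa\bigl(\sum\mathbf 1_{\kappa R'}\bigr)\gtrsim 1$ on every such $\Lambda$, and these $\Lambda$'s sweep out a neighborhood of $4I\times J$.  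Since the long $R'$ were admitted to $\mathcal R'$ before $R''$, this would have removed $R''$ from $\mathsf{STOCK}$ --- unless $R''$ is wide enough ($\mathsf W(R'')\ge\frac\kappa 4\theta\abs I$) to fail to be contained in the super-level set, but then $\abs{\mathsf{EX}(R'')}\ge\frac\kappa 4\theta$, forcing $\rho\in\mathcal S_{R''}$, again a contradiction.  Your proposal contains none of this: no choice of test direction for $\operatorname M_\kappa$, no analysis of the escape case of a wide $R''$, and crucially no correct identification of the removal mechanism.  The ``packing / counting'' step you flag as the crux is indeed the crux, but it must be run through the $\operatorname M_\kappa$-criterion in the $\mathcal R'$-construction, not through $\mathsf V$-set intersections in the $\mathcal U$-construction.
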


\begin{proof}
There is a natural angle $ \theta $ between the rectangles $ \rho $ and $ R$, 
which we can assume is positive, and is 
given by $ \abs{\operatorname e _{\rho }-\operatorname e _{R}}$.  
Notice that we have $ \theta\ge 10\abs{\name EX R} $, else we would have $ \rho \in \mathcal S _R$, 
which contradicts our construction.

Moreover, there is an important consequence of  Lemma~\ref{l.samedirection}: For any 
$ R'\in \mathcal U(R)$, there is a natural angle $ \theta '$ between $ R'$ and $ \rho $. 
These two angles are close.  For our purposes below, these two 
angles can be regarded as the same.

For any $ R'\in \mathcal U(R)$, we will have 
\begin{align*}
\frac{\abs{ \kappa R'\cap \rho }} {\abs{I \times J}} 
&\simeq \kappa \frac{ \name W {R'} \cdot \name W \rho }{\theta \abs{ I } \name W \rho  }
\\
&= \kappa \frac{\name W {R'} }{\theta \cdot  \abs {I } }.
\end{align*}

Recall $\operatorname M_{\kappa}$ is larger than the maximal function over $ \kappa $ uniformly 
distributed directions. Choose a direction $\operatorname e'$ from this set of  $ \kappa $ directions
that is closest to $\operatorname e ^\perp _{\rho }$. Take a line segment $\Lambda  $ in
direction $\operatorname e'$  of length  $\kappa \theta \abs{ I} $, 
and the center of $ \Lambda $ is in $4I\times J$.  See Figure~\ref{f.5kappa}. 
Then we have 
\begin{align*}
\frac{\abs{\kappa R'\cap  \Lambda }}{\abs{\Lambda }}& \geq 
\frac{ \name W {R'}   } {  \theta\cdot  \abs I  }
\end{align*}
Thus by our assumption \eqref{1},  
$$
\frac1 {\abs{ \Lambda }} \sum _{R'\in \mathcal U(R)} \abs{ \kappa R'\cap \Lambda } \gtrsim  1 \,.
$$
That is, any of the lines $ \Lambda $ are contained in the set 
\begin{equation*}
\Bigl\{  \operatorname M _{\kappa }\sum _{R\in \mathcal R'} \mathbf 1 _{\kappa R'}>\kappa ^{-1} \Bigr\}.  
\end{equation*}

\smallskip

 \begin{figure}   
\begin{center} 
\begin{pgfpicture}{0cm}{0cm}{5cm}{4cm}
\begin{pgftranslate}{\pgfxy(1.5,0)}
%
{\color{gray} \pgfrect[stroke]{\pgfxy(-3.3,0.5)}{\pgfxy(6.6,.2)} }
\pgfputat{\pgfxy(-2.7,1)}{\pgfbox[center,center]{$4I\times J$}}
\pgfrect[stroke]{\pgfxy(-1.3,0.5)}{\pgfxy(2.3,.2)}
\pgfputat{\pgfxy(.8,.2)}{\pgfbox[center,center]{$I\times J$}}
\pgfsetlinewidth{0.85pt}
\pgfline{\pgfxy(2.1,-0.7)}{\pgfxy(2.1,2.0)}
\pgfputat{\pgfxy(2.45,0)}{\pgfbox[center,center]{$\Lambda $}}
\begin{pgftranslate}{\pgfxy(-1,.5)}
\begin{pgfrotateby}{\pgfdegree{15}}
\pgfrect[stroke]{\pgfxy(-3,-.1)}{\pgfxy(8,.2)}
\end{pgfrotateby}
\end{pgftranslate}
\pgfputat{\pgfxy(3.5,1.4)}{\pgfbox[center,center]{$\kappa R'$}}
\end{pgftranslate}
\end{pgfpicture}
\end{center}
\caption{}
\label{f.5kappa}
\end{figure}

Clearly our construction does not permit 
any rectangle $ R''\in \mathcal U(R) $ contained in this set. 
To conclude the proof of our Lemma, we seek a contradiction.  Suppose that there 
is an $ R''\in \mathcal U(R)$ with $ \name L {R''}<\abs{ I}$ and $ R''$ intersects 
$ 2I \times J$.  The range of line segments $ \Lambda  $ we can permit is however quite 
broad.  The only possibility permitted to us is that the rectangle $ R''$ is quite 
wide.  We must have 
\begin{equation*}
\name W {R''}\ge{}\tfrac 14 \abs{  \Lambda  } =\tfrac \kappa 4 \cdot \theta \cdot  \abs I .  
\end{equation*}
This however forces us to have $ \abs{\name EX {R''} }\ge \tfrac \kappa 4  {\theta} $. 
And this implies that $ \rho \in \mathcal S _{R''}$, as in \eqref{e.uni1}.  This is 
the desired contradiction.

\end{proof}

Our third and final fact about the collection $ \mathcal U(R)$ is a consequence 
of Lemma~\ref{l.samedirection} and a geometric observation of 
J.-O.~Stromberg \cite[Lemma 2, p. 400]{MR0481883}.

\begin{lemma}\label{l.stromberg}  For any interval $ I\subset I _{R}$ we have the inequality 
\begin{equation}\label{e.removeafew}
\sum _{\substack{R'\in \mathcal U(R)\\ \name L {R'}\le \abs I \le 
\sqrt\kappa \name L {R'} }}
\abs{ R'\cap I\times J } \le 5\abs I \cdot \name W \rho  \,.
\end{equation}
\end{lemma}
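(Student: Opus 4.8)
The plan is to follow the geometric argument of Str\"omberg \cite[Lemma 2, p. 400]{MR0481883}, adapted to the present setting of rectangles of varying eccentricity whose long directions are all close to that of $\rho$. First I would fix the interval $I\subset I_R$ and restrict attention to the sub-collection
$\mathcal U_I(R)\eqdef\{R'\in\mathcal U(R): \name L{R'}\le \abs I\le\sqrt\kappa\,\name L{R'}\}$,
so that all rectangles under consideration have length within a factor $\sqrt\kappa$ of $\abs I$ and, by Lemma~\ref{l.samedirection}, long directions $\operatorname e_{R'}$ all within $\tfrac12\abs{\operatorname e_\rho-\operatorname e_R}$ of $\operatorname e_R$; hence they all make essentially the same small angle $\theta\simeq\abs{\operatorname e_\rho-\operatorname e_R}$ with $\rho$. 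The point is that $R'\cap I\times J$ is a parallelogram whose projection onto the long axis $\operatorname e_\rho$ of $\rho$ has length comparable to $\name L{R'}$ while its extent in the $\operatorname e_\rho^\perp$ direction is governed by $\name W\rho$ (since $R'$ crosses the thin strip $I\times J$ at angle $\theta$, and the strip has width $\name W\rho$). Thus $\abs{R'\cap I\times J}\lesssim \name L{R'}\cdot\name W\rho\lesssim \abs I\cdot\name W\rho$ for each single $R'$, and the content of the lemma is that the overlap of these pieces inside $I\times J$ is bounded by an absolute constant.

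The key step is the bounded-overlap estimate: I claim that at most $O(1)$ of the sets $R'\cap I\times J$, $R'\in\mathcal U_I(R)$, can contain a common point. Here I would use the defining property of $\mathcal U(R)$ — namely that the projections $\mathsf V_{R'}$ onto $2I_\rho\times\{\alpha\}$ of the density sets $\name V{R'}$ are pairwise disjoint (this is how $\mathcal U(R)$ was selected in \eqref{e.UU}) — together with the quantitative bound $\abs{\mathsf V_{R'}}\gtrsim\delta\,\name L{R'}\simeq\delta\abs I$ from \eqref{e.B}. A point $x\in I\times J$ lying in many $R'$'s would force the corresponding projected intervals $\mathsf V_{R'}$ — all of comparable length $\simeq\delta\abs I$ and all located within the fixed window $\simeq 4I$ on the line $2I_\rho\times\{\alpha\}$ — to be disjoint, which bounds their number by $\lesssim\delta^{-1}$; but that is not yet good enough. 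The improvement is the geometric observation of Str\"omberg: since all the $R'$ have comparable length and comparable direction, two rectangles $R',R''$ through a common point $x$ whose projected intervals $I_{R'},I_{R''}$ (not $\mathsf V$) on the long axis overlap must in fact have $R'\cap I\times J$ and $R''\cap I\times J$ almost coincident, so the genuinely distinct pieces are indexed by an essentially disjoint family of length-$\abs I$ subintervals of $4I$, of which there are $O(1)$ — and the total measure $\sum\abs{R'\cap I\times J}$ telescopes to at most $5\abs I\cdot\name W\rho$, the stated bound. Summing the single-piece bound against this $O(1)$ overlap, using that the base of $I\times J$ has length $\abs I$ and width $\name W\rho$, yields \eqref{e.removeafew}.

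The main obstacle I anticipate is making the "almost coincident" claim precise: one has to control, uniformly, the position of the slab $R'\cap(\text{strip of width }\name W\rho)$ in terms of $I_{R'}$ and the common angle, and show that the small variation in the directions $\operatorname e_{R'}$ allowed by Lemma~\ref{l.samedirection} (an error of order $\abs{\operatorname e_\rho-\operatorname e_R}$, which is itself only $\lesssim 1/50$ and \emph{not} small compared to $\theta$) does not destroy the bounded-overlap structure. This is exactly the role of the restriction $\name L{R'}\le\abs I\le\sqrt\kappa\,\name L{R'}$: it limits how much the endpoints of $R'\cap I\times J$ can wander, at the cost of the $\sqrt\kappa$ in the hypothesis and the explicit constant $5$ in the conclusion. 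I would carry this out by a direct computation of the two extreme positions of the parallelogram $R'\cap I\times J$ as $R'$ ranges over $\mathcal U_I(R)$ with a fixed value of $\operatorname{proj}_{\operatorname e_\rho}(R')$, checking that they differ by at most a fixed fraction of $\abs I$ along the long axis, and then invoking the disjointness of the $\mathsf V_{R'}$'s from \eqref{e.UU} to count the remaining degrees of freedom.
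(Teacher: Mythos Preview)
Your proposal has a genuine gap, rooted in a misreading of the construction and in missing the mechanism the paper actually exploits.

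First, the disjointness you invoke is not there. In \eqref{e.UU} the sets $\{\mathsf V_R: R\in\mathcal U\}$ are pairwise disjoint, but for $R'\in\mathcal U(R)$ --- the collection you are summing over --- all one knows is that each $\mathsf V_{R'}$ meets the fixed set $\mathsf V_R$. Nothing prevents many $\mathsf V_{R'}$ from overlapping one another, so your counting step based on ``disjoint projected intervals of length $\simeq\delta|I|$'' has no foundation. You recognise this only gives $\lesssim\delta^{-1}$ anyway, but the premise itself is false.

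Second, and more importantly, the argument you outline never touches the one structural fact that makes the lemma true: every $R'\in\mathcal U(R)$ belongs to $\mathcal T_\rho\subset\mathcal R'$, and hence was admitted into $\mathcal R'$ only because it was \emph{not} contained in $\{M_\kappa\sum_{R''\in\mathcal R'}\mathbf 1_{\kappa R''}>\kappa^{-1}\}$ at the moment of its selection. The paper's proof is a contradiction argument built on precisely this: assume \eqref{e.removeafew} fails, remove the last-selected $R_0\in\mathcal U_I(R)$, and use Str\"omberg's geometric observation (that for a square $S$ of side $\sqrt\kappa\,|I|\cdot|\operatorname e_R-\operatorname e_\rho|$ centred in $4I\times J$ one has $|\kappa R'\cap I\times J|/|I\times J|\simeq |S\cap\kappa R'|/|S|$) to transfer the large sum inside $I\times J$ to a large value of the square maximal function on $R_0$. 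That forces $R_0\subset\{M_\kappa\sum\mathbf 1_{\kappa R'}>\kappa^{-1}\}$, contradicting its selection. Your ``almost coincident pieces'' picture, besides being vague about how the angular spread of order $\theta$ (not $o(\theta)$) is absorbed, simply does not access this selection rule, and without it there is no reason the sum should be bounded independently of $\delta$ or of the number of rectangles in $\mathcal U(R)$.
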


 \begin{figure}
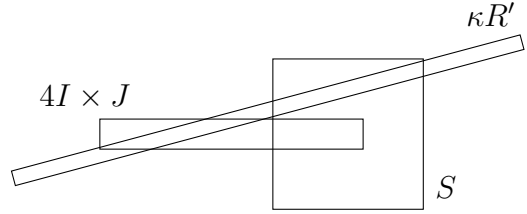
   
\begin{center} 
\begin{pgfpicture}{0cm}{0cm}{5cm}{4cm}
\begin{pgftranslate}{\pgfxy(1.5,0)}
\pgfputat{\pgfxy(-1.5,1.2)}{\pgfbox[center,center]{$4I\times J$}}
\pgfrect[stroke]{\pgfxy(-1.3,0.5)}{\pgfxy(3.5,.4)}
\pgfrect[stroke]{\pgfxy(1.0,-0.3)}{\pgfxy(2,2)}
\pgfputat{\pgfxy(3.3,0)}{\pgfbox[center,center]{$S$}}
\begin{pgftranslate}{\pgfxy(-1,.5)}
\begin{pgfrotateby}{\pgfdegree{15}}
\pgfrect[stroke]{\pgfxy(-1.5,-.1)}{\pgfxy(7,.2)}
\end{pgfrotateby}
\end{pgftranslate}
\pgfputat{\pgfxy(3.9,2.3)}{\pgfbox[center,center]{$\kappa R'$}}
\end{pgftranslate}
\end{pgfpicture}
\end{center}
\caption{The proof of Lemma~\ref{l.stromberg}} 
\label{f.strom}
\end{figure}

\begin{proof}  
For each point $ x\in 4 I\times J$, consider the square $ S$ centered at $ x$ of 
side length equal to $  \sqrt \kappa \cdot  \abs I 
\cdot \abs {\operatorname e _R - \operatorname e _\rho }$.
See Figure~\ref{f.strom}. 
It is Stromberg's observation 
that for $ R'\in \mathcal U(R)$ we have 
\begin{equation*}
\frac{ \abs{\kappa R'\cap I\times J } }{\abs{ I\times J}} \simeq  \frac {\abs{S\cap \kappa R' }}{\abs {S} }
\end{equation*}
with the implied constant  independent of $ \kappa $. 
Indeed, by Lemma~\ref{l.samedirection}, we have that 
\begin{align*}
\frac{ \abs{\kappa R'\cap I\times J } }{\abs{ I\times J}}
& \simeq  \frac{ \kappa \name W {R'} }
	{\abs{ \operatorname e _{R}-\operatorname e _{\rho } }\cdot \abs I} 
\\
& \simeq \frac{ \kappa \name W {R'} \cdot \abs I 
\cdot \abs {\operatorname e _R - \operatorname e _\rho }}
{(\abs{ \operatorname e _{R}-\operatorname e _{\rho } }\cdot \abs I)^2}  
\\
&\simeq \frac {\abs{S\cap \kappa R' }}{\abs {S} }\,,
\end{align*}
as claimed.

Now, assume that \eqref{e.removeafew} does not hold and seek a contradiction.  
Let $ \mathcal U'\subset \mathcal U(R)$ denote the collection of rectangles $ R'$ 
over which the sum is made in \eqref{e.removeafew}.  The rectangles in $ \mathcal U'$ 
were added in some order to the collection $ \mathcal R'$, and in particular 
there is a rectangle $ R_0\in \mathcal U'$ that was the last to be added to 
$ \mathcal U'$.  Let $ \mathcal U''$ be the 
collection $ \mathcal U'-\{R_0\}$. We certainly have 
\begin{equation*}
\sum _{R'\in \mathcal U''} \abs{ R''\cap I\times J}\ge{}4\abs{ I\times J}. 
\end{equation*}

Since we cannot have $ \rho \in \mathcal S _{R_0}$, Stromberg's observation implies that 
\begin{equation*}
R_0\subset\Bigl\{ \operatorname M _{\kappa } \sum _{R'\in \mathcal U''} \mathbf 1 _{\kappa R'}> 
\kappa ^{-
1}\Bigr\}\,.
\end{equation*}
Here, we rely upon the fact that the maximal function  $ \operatorname M _{\kappa }$ is larger than the usual maximal 
function over squares. 
But this is a contradiction to our construction, and so the proof is complete.  
\end{proof}

The principal line of reasoning to prove \eqref{e.uni3} can now begin with it's 
initial recursive procedure.  Initialize 
\begin{equation*}
C (R') \leftarrow R'\cap \rho \,. 
\end{equation*}
We are to bound the sum  $ \sum _{R'\in \mathcal U (R)} \lvert  C (R')\rvert $.  Initialize 
a collection of subintervals of $ I_R$ to be 
\begin{equation*}
\mathcal I\leftarrow \emptyset 
\end{equation*}

\textsf{WHILE} there is an interval $ I\subset I_R $ satisfying 
\begin{gather} \label{e..f}
\sum _{R'\in \mathcal V (I)} \abs{  C (R')\cap I \times J } 
\ge 40\abs I \cdot \name W \rho  \,, 
\\ \label{e..g}
\mathcal V (I) = \{ R' \in \mathcal U (R)
 \mid  \lvert   C (R')\cap I \times J\rvert\neq \emptyset \,,\   
\name L {R'} \ge 8\abs I \} \,, 
\end{gather}
we take $ I$ to be an interval of maximal length with this property, and update 
\begin{gather*}
\mathcal I\leftarrow \mathcal I\cup \{I\}\,;
\\
C (R',I)= C (R')\cap I \times J \,, \qquad R'\in \mathcal V (I); 
\\
C (R') \leftarrow C (R') - I \times J, \qquad R'\in \mathcal V (I) \,. 
\end{gather*}
[We remark that this last updating is not needed in the most important special case 
when all rectangles have the same width.  But the case we are considering, rectangles 
can have variable widths, so that $ \lvert  C (R)\rvert $ can be much larger than 
any $ \lvert  I\rvert \cdot \lvert  J\rvert  $ that would arise from this algorithm.] 

Once the \textsf{WHILE} loop stops, we have 
\begin{equation*}
R'\cap \rho = C (R')\cup  \bigcup \{ C (R',I)\mid I\in \mathcal I\,,\ R'\in \mathcal V (I)\}\,. 
\end{equation*}
Here the union is over pairwise disjoint sets.  

\smallskip 
We first consider the collection of sets $ \{C (R') \mid R' \in \mathcal U (R)\}$ 
that remain after the \textsf{WHILE} loop has finished.  
Since we must not have $ R'\subset 1/4 \kappa \cdot \rho $, it follows 
that the minimum value of $ \name L {R'}$ is $ \tfrac 1 {32}  \name W \rho $.  Thus, if 
in \eqref{e..f}, 
we consider an interval $ I$ of  length $ \tfrac 1 {256}  
\name W \rho$, the condition $ \name L {R'} \ge 8\abs I  $ 
in the definition of $ \mathcal V (I)$ in \eqref{e..g} is vacuous.  Thus, we necessarily have 
\begin{equation*}
\sum _{R'\in \mathcal V (I)} \abs{  C (R')\cap I \times J } 
\le 40\abs I \cdot \name W \rho \,. 
\end{equation*}
For if this inequality failed, the \textsf{WHILE} loop would not have stopped.  
We can partition $ I _R$ by intervals of length close to $ \tfrac 1 {256}  \name W \rho$, 
showing that we have 
\begin{equation*}
\sum _{R'\in \mathcal U (R)} \lvert  C (R')\rvert \lesssim \lvert  I_R \rvert \cdot \name W
\rho \,.   
\end{equation*}

\smallskip

Turning to the central components of the argument,  namely the bound for the 
terms associated with the intervals in $ \mathcal I$, consider $ I\in \mathcal I$.  
The inequality \eqref{e..f} and Lemma~\ref{l.stromberg}  implies that each $ I \in \mathcal I$ must 
have length $ \abs{ I}\le \kappa ^{-1/2} \abs{ I_\rho }$. 
But we choose intervals in $ I$ to be of maximal length. 
Thus, 
\begin{equation} \label{e.V}
\begin{split}
\sum _{R'\in \mathcal V(I)} \abs{ C(R',I) } 
\le 100\cdot  \abs I \cdot \name W \rho  \,.
\end{split}
\end{equation}
Indeed, suppose this  last inequality fails.  Let $ I\subset \widetilde I\subset I _{\rho }$ 
be an interval twice as long as $ I$.  By Lemma~\ref{l.stromberg}, we conclude that 
\begin{equation*}
\sum _{\substack{R'\in \mathcal V(I)\\ \name L {R'}\le 8 \lvert  \widetilde I\rvert  }} 
\abs{ R'\cap \widetilde I \times J} 
\le 10  \abs I \cdot \name W \rho  \,.
\end{equation*}
Notice that we are restricting the sum on the left by the length of $ \lvert  \widetilde
I\rvert $. 
Therefore, we have the inequalities 
\begin{align*}
\sum _{\substack{R'\in \mathcal V(I)\\ \name L {R'}>8 \lvert  \widetilde I\rvert  }} \abs{ C (R',I)} 
\ge 90\cdot  \abs I \cdot \name W \rho > 40 \cdot \abs {\widetilde I} \cdot \name W \rho  \,.
\end{align*}
That is, $ \widetilde I$ would have been selected, contradicting our construction. 

\medskip 
 Lemmas~\ref{l.geo}  and~\ref{l.stromberg} place significant restrictions on  
the collection of intervals $ \mathcal I$.  If we have $ I\neq I'\in \mathcal I$
with $ \tfrac 32 I\cap \tfrac 32 I'\neq\emptyset$, then we must have e.g.~$\sqrt \kappa \abs{ I'}<\abs{ I} $, 
as follows from Lemma~\ref{l.stromberg}. 
Moreover, $ \mathcal V (I')$ must contain a rectangle $R'$ with $\name L {R'}< \abs{I}$. 
But this contradicts Lemma~\ref{l.geo}.

Therefore, we must have 
\begin{equation*}
\sum _{I\in \mathcal I} \abs{ I} \lesssim \abs{ I_R} \lesssim \mathsf L (R). 
\end{equation*}
With \eqref{e.V}, this completes the proof of \eqref{e.uni3}.

\section*{An Obstacle to an $ L^p$ estimate, for $ 1<p<2$} 

We address one of the main conjectures of this memoir, namely Conjecture~\ref{j.laceyli1}. 
Let us first observe 

\begin{proposition}\label{p.3/2}  We have the estimate below valid for 
all $ 0<\mathsf w<\norm v. \textup{Lip}.$. 
\begin{equation*}
\sup _{\lambda >0}\lambda \lvert  \{ \operatorname M _{v, \mathsf w} f > \lambda \}\rvert ^{2/3}
\lesssim 
\delta ^{-1/3} (1 + \log w ^{-1}  \norm v. \textup{Lip}. ) ^{1/3} \norm f. 3/2. 
\end{equation*}

\end{proposition}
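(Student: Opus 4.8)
The plan is to deduce this weak-type $(3/2,3/2)$ estimate from the weak-type $(2,2)$ bound of Theorem~\ref{t.lipKakeya} by the same C\'ordoba--Fefferman covering scheme used to prove that theorem, the only new feature being that the width constraint $\mathsf w\le\name W R\le 2\mathsf w$ together with \eqref{e.shortlength} confines the eccentricity $\name W R/\name L R$ of the admissible rectangles to a range of only $K\lesssim 1+\log(\mathsf w^{-1}\norm v.\textup{Lip}.)$ dyadic values, which is where the logarithm comes from. Fixing $\lambda>0$ and a nonnegative $f$ with $\norm f.3/2.=1$, let $\mathcal R$ be a finite family of rectangles obeying \eqref{e.shortlength}, $\mathsf w\le\name W R\le 2\mathsf w$, $\abs{\name V R}\ge\delta\abs R$, and $\int_R f\ge\lambda\abs R$, chosen with $\abs{\{\operatorname M_{v,\delta,\mathsf w}f>\lambda\}}\lesssim\abs{\bigcup_{R\in\mathcal R}R}$. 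Running the selection algorithm from the proof of Theorem~\ref{t.lipKakeya} verbatim splits $\mathcal R=\mathcal R'\sqcup\mathcal R''$ with $\abs{\bigcup_{R\in\mathcal R''}R}\lesssim\NOrm\sum_{R\in\mathcal R'}\ind R.1.$ coming from \eqref{e.bigcup}, and the inherited estimate $\NOrm\sum_{R\in\mathcal R'}\ind R.2.^2\lesssim\delta^{-1}\NOrm\sum_{R\in\mathcal R'}\ind R.1.$ from \eqref{e.2<1}. The task is then to upgrade the latter to the cubic bound
\[
\NOrm\sum_{R\in\mathcal R'}\ind R.3.^3\lesssim\delta^{-1}\bigl(1+\log(\mathsf w^{-1}\norm v.\textup{Lip}.)\bigr)\,\NOrm\sum_{R\in\mathcal R'}\ind R.1.\,.
\]
Granting this, pairing $f$ against $\sum_{R\in\mathcal R'}\ind R$ gives $\lambda\NOrm\sum_{R\in\mathcal R'}\ind R.1.\le\NOrm\sum_{R\in\mathcal R'}\ind R.3.$, hence $\lambda\abs{\bigcup_{R\in\mathcal R'}R}^{2/3}\lesssim\delta^{-1/3}(1+\log(\mathsf w^{-1}\norm v.\textup{Lip}.))^{1/3}$, and the $\mathcal R''$ contribution is absorbed exactly as in the proof of Theorem~\ref{t.lipKakeya}.

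For the cubic bound I would write $\sum_{R\in\mathcal R'}\ind R=\sum_k g_k$ according to the dyadic length scale of $R$, noting there are only $K\lesssim 1+\log(\mathsf w^{-1}\norm v.\textup{Lip}.)$ indices $k$. Lemma~\ref{l.fixscale} already gives $\norm {g_k}.\infty.\lesssim\delta^{-1}$ for each $k$; moreover, the proof of that lemma in fact shows that the level sets $\name V R$ of rectangles of a single scale are essentially pairwise disjoint, and combining this with the removal step of the algorithm (every $R'$ lying inside $\{\operatorname M_\kappa\sum_{\mathcal R'}\ind{\kappa R}\ge\kappa^{-1}\}$ is discarded the moment $R$ enters $\mathcal R'$) should sharpen the per-scale overlap of the \emph{selected} family to $\norm {g_k}.\infty.\lesssim 1$. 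That would yield $\NOrm\sum_{R\in\mathcal R'}\ind R.\infty.\le\sum_k\norm {g_k}.\infty.\lesssim K$, and then $\NOrm\sum_{R\in\mathcal R'}\ind R.3.^3\le\NOrm\sum_{R\in\mathcal R'}\ind R.\infty.\cdot\NOrm\sum_{R\in\mathcal R'}\ind R.2.^2\lesssim K\delta^{-1}\NOrm\sum_{R\in\mathcal R'}\ind R.1.$, which is the claim.

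The step I expect to be the real obstacle is exactly this per-scale bounded-overlap statement for $\mathcal R'$. Lemma~\ref{l.fixscale} by itself only gives overlap $\lesssim\delta^{-1}$ at a fixed scale, and using that as it stands costs a further $\delta^{-1/3}$ and leaves $\delta^{-2/3}$ rather than $\delta^{-1/3}$ in the final bound. To recover the sharper power one must exploit that two surviving rectangles of comparable length that intersect are forced to be genuinely transverse: if the angle between them were comparable to, or smaller than, their common eccentricity $\name W R/\name L R$, then one would lie inside the $\operatorname M_\kappa$-dilate of the other and would have been removed upon selection. A Besicovitch-type count over their (small) common region, using the Lipschitz bound on $v$ there, should then limit how many such rectangles can pass through a single point in terms of the eccentricity range --- i.e.\ in terms of $K$ --- rather than $\delta$. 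Carrying out that count and threading it through the sum over the $K$ scales, where the logarithmic loss is genuinely unavoidable (which is the very point of this section), is the substantive part of the argument; everything else is the covering-lemma bookkeeping already in place for Theorem~\ref{t.lipKakeya}.
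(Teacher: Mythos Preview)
Your framework --- the C\'ordoba--Fefferman selection, the reduction to a cubic covering bound $\NOrm\sum_{\mathcal R'}\ind R.3.^3\lesssim\delta^{-1}K\NOrm\sum_{\mathcal R'}\ind R.1.$ with $K=1+\log(\mathsf w^{-1}\norm v.\textup{Lip}.)$, and the deduction of the weak-$(3/2,3/2)$ inequality from it --- is exactly the paper's. The paper's own proof is a one-liner: it asserts that the bounded number of length scales ``leads very easily'' to \eqref{e.V2<1} and \eqref{e.Vbigcup}, and points back to the weak-$L^2$ argument, without spelling out a mechanism.

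The genuine gap in your proposal is the claim that selection sharpens the per-scale overlap of $\mathcal R'$ to $\norm{g_k}.\infty.\lesssim 1$. It does not. Fix a single scale with dimensions $L\times\mathsf w$ and take rectangles $R_1,\ldots,R_N$ all through a common point, at angles $\theta_j=jC\,\mathsf w/L$ with $C$ a large absolute constant. Then no $R_j$ is contained in $\kappa R_{j'}$ for $j\neq j'$, and more generally the far half of each $R_j$ escapes the set $\{\operatorname M_\kappa\sum_{\text{selected}}\ind{\kappa R}\ge\kappa^{-1}\}$, so a fixed fraction of the $R_j$ survive selection. The density hypothesis together with Lemma~\ref{l.fixscale} allows $N\sim\delta^{-1}$ (the Lipschitz constraint only forces $N\mathsf w/L\lesssim\norm v.\textup{Lip}.L$, which is compatible with $N\sim\delta^{-1}$ at any scale with $L^2\gtrsim\delta^{-1}\mathsf w/\norm v.\textup{Lip}.$). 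So $\norm{g_k}.\infty.\sim\delta^{-1}$ after selection. Your transversality observation is correct --- two surviving same-scale rectangles that meet form an angle $\gtrsim\mathsf w/L$ --- but combining it with the Lipschitz bound only yields a count $\lesssim\min(\delta^{-1},\,\norm v.\textup{Lip}.L^2/\mathsf w)$, which at the longer scales is $\delta^{-1}$, not $O(1)$ and not $O(K)$. With only $\norm{g_k}.\infty.\lesssim\delta^{-1}$, the route $\NOrm\sum.3.^3\le\NOrm\sum.\infty.\NOrm\sum.2.^2$ gives $K\delta^{-2}$ in the cubic bound and hence $\delta^{-2/3}K^{1/3}$, not $\delta^{-1/3}K^{1/3}$, in the conclusion. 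This is exactly the obstruction the paper illustrates immediately afterwards with the pocketknife: many blades at a single hinge, all of one scale, all surviving selection, all through a common point.
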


\begin{proof}
Let $ \norm v. \textup{Lip}.=1$. 
This just relies upon the fact that  with $ 0<\mathsf w < \tfrac12 $ fixed, there are 
only about $ \log 1/ \mathsf w $ possible values of $ \name L R $.  This leads very easily 
to the following two estimates.  Following the earlier argument, consider  an 
arbitrary collection of rectangles $ \mathcal R$  with 
each $ R\in \mathcal R$ satisfying 
 \eqref{e.shortlength} and $ \abs{ \name V R }\ge \delta \abs{ R}$. 
We can then  decompose $ \mathcal R$ into disjoint collections $ \mathcal R'$ 
and $ \mathcal R''$ for which these estimates hold.   
\begin{align}\label{e.V2<1}
\NOrm \sum _{R\in \mathcal R'} \mathbf 1 _{R}.3.^3 &\lesssim \delta ^{-1 } 
(\log 1/\mathsf w)
\NOrm \sum _{R\in \mathcal R'} \mathbf 1 _{R}.1.\,,
\\  \label{e.Vbigcup}
\ABs{\bigcup _{R\in \mathcal R''}R} &\lesssim 
\NOrm \sum _{R\in \mathcal R'} \mathbf 1 _{ R}.1.
\end{align}
Compare to \eqref{e.2<1} and \eqref{e.bigcup}.  Following the same line of 
reasoning that was used to prove \eqref{e.wweak}, we prove our Proposition. 

\end{proof}

We can devise proofs of smaller bounds on the norm of the maximal function 
than that given by this proposition.  But no argument that we can find avoids the 
some logarithmic term in the width of the rectangle.  Let us illustrate the 
difficulty in the estimate with an object pointed out to us by Ciprian Demeter.  
We term it a \emph{pocketknife}, and it is pictured in Figure~\ref{f.pocketknife}. 

\begin{figure}
\begin{center}
\begin{tikzpicture}
\draw (-1,0) rectangle  (6,0.1) node[below] {handle};
\foreach \ang/\length in {10/3, 15/2, 25/1, 40/.5 }
\draw[rotate=\ang] (0,0) rectangle (\length, 0.1) ;
\draw[snake=brace] (0,.4) -- ++(2.7,.5) node [midway,above,sloped] {blades};
\draw (2.5,.3) node {$ \dots$};
\begin{pgftranslate}{\pgfxy(3,0)}
\foreach \ang/\length in {10/3, 15/2, 25/1, 40/.5 }
\draw[rotate=\ang] (0,0) rectangle (\length, 0.1);
\end{pgftranslate}
\draw (5,-.6)  node[left,below] (n1) {hinges}; 
\draw (0,-.1)  node (j1) {}; 
\draw (3,-.1) node (j2) {}; 
\path[->] (n1) edge [out=-180, in=-60] (j1);
\path[->] (n1) edge [out=160, in=-90] (j2);
\end{tikzpicture} 
\end{center}

\caption{A \emph{pocketknife}.}
\label{f.pocketknife}
\end{figure}
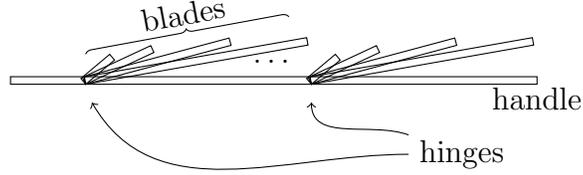

A pocketknife comes with a \emph{handle}, namely a rectangle $ R _{\textup{handle}}$ 
that is longer than any other rectangle in the pocketknife.  We call a collection 
of rectangles $ \mathcal B$ a set of \emph{blades} if these two conditions are met. 
In the first place, 
\begin{equation} \label{e.hinge}
R _{\textup{handle}}
\cap \bigcap _{R\in \mathcal B} R\neq \emptyset \,. 
\end{equation}
In the second place, we have 
\begin{equation*}
\operatorname {angle} (R,R _{\textup{handle}} ) 
\simeq 
\operatorname {angle} (R',R _{\textup{handle}} ) \,, \qquad R,R'\in \mathcal B\,. 
\end{equation*}
Let $ \theta (\mathcal B) $ denote the angle between $ R _{\textup{handle}}$ and the rectangles in the 
blade $ \mathcal B$.  We refer to as a \emph{hinge} a  rectangle of dimensions 
$ \mathsf w/ \theta (\mathcal B)$ by $ \mathsf w$, in the same coordinate system of 
$ R _{\textup{handle}}$ that contains the intersection in \eqref{e.hinge}. 

Now, let $ \mathbb B $ be a collection of blades for the handle $ R _{\textup{handle}}$. 
Our proof of the weak  $ L ^{2}$ estimate for the Lipschitz Kakeya Maximal function 
shows that we can assume 
\begin{equation*}
\sum _{\mathcal B\in \mathbb B } \sharp B \cdot \mathsf w ^2 \cdot 
\theta (\mathcal B) ^{-1} \lesssim \lvert  R _{\textup{handle}}\rvert\,.  
\end{equation*}
This is essentially the estimate \eqref{e.uni1}. 

But, to follow the covering lemma approach to the $ L ^{3/2}$ estimate for the maximal 
function, we need to control 
\begin{equation*}
\sum _{\mathcal B\in \mathbb B } (\sharp B) ^2  \cdot \mathsf w ^2 \cdot 
\theta (\mathcal B) ^{-1} \,. 
\end{equation*}
We can only find control of expressions of this type in terms of 
some slowly varying function of $ \mathsf  w ^{-1} $.

\section*{Bourgain's Geometric Condition} 

Jean Bourgain \cite{bourgain} gives a geometric condition on the Lipschitz vector field 
that is sufficient for the  $ L ^{2}$ boundedness of the maximal function associated with 
$ v$.  We describe the condition, and show how it immediately proves that the 
corresponding Lipschitz Kakeya maximal function admits a weak type bound on $ L ^{1} $. 
In particular our Conjecture~\ref{j.laceyli1} holds for these vector fields.

To motivate Bourgain's condition, let us recall the earlier condition considered 
by Nagel, Stein and Wainger in \cite{MR81a:42027}.  This condition imposes 
a restriction on the maximum and minimum curvatures of the integral curves of the 
vector field through the assumption that 
\begin{equation*}
0< \frac 
{\sup _{x\in \Omega } \operatorname {det}[\nabla v (x) v (x), v (x)]}
{\inf _{x\in \Omega } \operatorname {det}[\nabla v (x) v (x), v (x)]} 
<\infty \,. 
\end{equation*}
Here, $ \Omega $ is a domain in $ \mathbb R ^2 $, and 
one can achieve an upper bound on the norm of the maximal function associated to 
$ v$, appropriately restricted to $ \Omega $, in terms of this ratio. 

Bourgain's condition permits the vector field to have integral curves which are 
flat.  Suppose that $ v$ is defined on all of $ \mathbb R ^2 $.  Define 
\begin{equation}\label{e.zw}
\omega (x; t) := 
\lvert  \operatorname {det}[v (x+t v (x)), v (x)]\rvert\,, 
\qquad 
\lvert  t\rvert\le \tfrac 12 \norm v .\operatorname {Lip}. \,.  
\end{equation}
Assume a uniform estimate of the following type: For absolute constants 
$ 0<c,C <\infty $ and $ 0<\epsilon _0< \tfrac 12 \norm v. \operatorname {Lip}.$, 
\begin{equation}\label{e.bour-un} 
\lvert  \{ \lvert  t\rvert\le \epsilon 
\mid 
\omega (x; t)< \tau \sup _{\lvert  s\rvert\le \epsilon  } \omega (x,s )
\}\rvert 
\le C \tau ^{c} \epsilon \,, 
\end{equation}
this condition holding for all $
 x\in \mathbb R ^2 $, $0<\tau  <1$ and $  \ 0<\epsilon < \epsilon _0$. 

The interest in this condition stems from the fact \cite{bourgain} that 
real-analytic vector fields satisfy it.  Also see Remark~\ref{r.bourgain}. 
Bourgain proved: 

\begin{theorem}\label{t.bourgain} Assume that \eqref{e.bour-un} holds.  Then, the 
maximal operator 
$
\operatorname M _{v, \epsilon _0}
$ defined in \eqref{e.MVE} maps $ L ^2 $ into itself.  
\end{theorem}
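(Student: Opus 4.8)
The plan is to regard $\operatorname M_{v,\epsilon_0}$ as a maximal operator of averages along the straight segments $I_t(x)=\{x-sv(x):\ \abs s\le t\}$, $0<t\le\epsilon_0$, and to use \eqref{e.bour-un} to supply the one ingredient missing from the merely measurable case: a quantitative control on how often nearby segments can be almost parallel, which is the Besicovitch-dangerous configuration. First I would normalize $\norm v.\textup{Lip}.=1$ and note that $\operatorname M_{v,\epsilon_0}f(x)$ depends only on $f$ on the ball $B(x,\epsilon_0)$; on any such ball $v$ differs from a fixed unit vector by $O(\epsilon_0)$, so after a partition of unity one may assume $v$ is a small perturbation of a fixed direction. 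Then I would decompose dyadically in the truncation parameter: with $A_jf(x):=2^{j}\int_{2^{-j-1}\le\abs s\le 2^{-j}}\abs{f(x-sv(x))}\,ds$ one has $\operatorname M_{v,\epsilon_0}f\lesssim\sup_{j\ge j_0}A_jf$, $j_0\simeq\log(1/\epsilon_0)$. The single-scale bound $\norm A_j.2\to2.\lesssim 1$, uniform in $j$, is immediate because $x\mapsto x-sv(x)$ is bi-Lipschitz with Jacobian $\simeq 1$ for $\abs s\le\epsilon_0$, so $A_j$ is uniformly bounded on $L^1$ and on $L^\infty$, and one interpolates.

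The whole point is to upgrade this to a bound on the supremum, and \eqref{e.bour-un} must enter here. I would linearize the supremum by a measurable scale $x\mapsto j(x)$, put $Bf(x):=A_{j(x)}f(x)$, and reduce to proving $\norm B.2\to2.\lesssim 1$ uniformly in $j(\cdot)$; equivalently $\norm BB^{\ast}.2\to2.\lesssim 1$, for which Schur's test suffices. A direct computation gives $BB^{\ast}g(x)=\int K(x,z)g(z)\,dz$, where $K(x,z)$ is obtained, after the change of variables $(s,s')\mapsto z$ defined implicitly by $x-sv(x)=z-s'v(z)$, from integrating out the time parameters $\abs s\simeq 2^{-j(x)}$, $\abs{s'}\simeq 2^{-j(z)}$; its size is $\lesssim 2^{j(x)}2^{j(z)}/D(x,z)$ on the thin set where such $(s,s')$ exist, with $D(x,z)$ the Jacobian of that change of variables. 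The crucial structural feature is that $D(x,z)$ degenerates exactly when $I(x)$ and $I(z)$ are parallel — if $v$ were constant it would vanish identically, which is precisely the Besicovitch catastrophe — and the Bourgain quantity $\omega(x;t)=\abs{\det[v(x+tv(x)),v(x)]}$ measures this degeneracy along $I_t(x)$: $\omega(x;s)$ is, up to lower-order error, the angle through which the segment based at the downstream point $x-sv(x)$ has turned. Hence in the Schur sum $\sup_x\int\abs{K(x,z)}\,dz$ one decomposes the $z$-integration dyadically by the size of $D$ (writing $D\simeq\tau\,D_{\max}$) and by the scale $j(z)$; by coarea the piece with $D\simeq\tau D_{\max}$ corresponds to the set $\{\,\abs s\le\epsilon:\ \omega(x;s)\simeq\tau\sup_{\abs\sigma\le\epsilon}\omega(x,\sigma)\,\}$, which \eqref{e.bour-un} bounds by $C\tau^{c}\epsilon$, and this power gain — combined with the geometric restriction on when the two segments can genuinely intersect — is what makes the double dyadic sum converge to $O(1)$. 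The symmetric estimate $\sup_z\int\abs{K(x,z)}\,dx\lesssim 1$ is obtained the same way, and Schur's test then gives $\norm B.2\to2.\lesssim 1$ with a constant depending only on $c$ and $C$; undoing the reductions proves the theorem.

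I expect the geometric part of the Schur estimate to be the main obstacle, for two reasons. First, \eqref{e.bour-un} is a statement along the one-parameter flow $t\mapsto x+tv(x)$, whereas the kernel estimate needs control of how the \emph{direction} $v(z)$ varies as $z$ ranges over a thin two-dimensional sliver about an integral-curve-like region; converting the former into the latter is a coarea argument that must be set up so as not to lose the exponent $\tau^{c}$, and then carrying out the double dyadic summation (in the Jacobian scale and in $j(z)$) so that both series actually converge is delicate, especially when $c$ is small. Second, the averages in \eqref{e.MVE} are over the \emph{straight} segments $I_t(x)$, not over the integral curves of $v$, so one must also track the $O(t^{2})$ discrepancy between the two and check that it does not corrupt the near-parallel bookkeeping at the relevant scales. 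A secondary technical matter is the simultaneous appearance of the two variable scales $j(x)$ and $j(z)$; this is organized by running the Schur sum off the larger of the two, in the manner of the Nagel--Stein--Wainger analysis of maximal functions along varying families of curves \cite{MR81a:42027}. An alternative to the $BB^{\ast}$ route would be an induction on scales, bounding $\norm{\operatorname M_{v,\delta}}.2\to2.$ as $\delta$ doubles with a gain from \eqref{e.bour-un} at each step and summing \cite{bourgain}; the geometric input, and hence the main difficulty, is the same.
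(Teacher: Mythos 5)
The paper does not prove this theorem; it states it as Bourgain's result \cite{bourgain} and supplies no proof. The paper's own treatment of vector fields satisfying \eqref{e.bour-un} goes a disjoint route: Proposition~\ref{p.bourgain} shows that \eqref{e.bour-un} forces a weak $L^1$ bound for the Lipschitz Kakeya maximal function $\operatorname M_{v,\delta,\mathsf w}$, and the conditional Theorem~\ref{t.conditional} then delivers $L^2$ boundedness of the Hilbert transform $\operatorname H_v$. In that route \eqref{e.bour-un} is used only through the scale-separation dichotomy \eqref{e.fortune} --- a combinatorial statement about overlapping rectangles, not a kernel bound --- and the orthogonality is supplied by the phase-plane analysis of Chapter~4 rather than by a $TT^*$ computation on the maximal operator itself.

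Your sketch is a reasonable reconstruction of the $TT^*$ architecture one would try, and you correctly identify \eqref{e.bour-un} as the ingredient that quantifies the failure of the Besicovitch configuration (the Jacobian $D(x,z)$ degenerating on near-parallel segments). But the two steps that carry all the weight are stated as intentions, not executed, and both are genuinely hard. First, \eqref{e.bour-un} is a one-dimensional sublevel-set estimate along the segment $t\mapsto x+tv(x)$, normalized against $\sup_{\abs s\le\epsilon}\omega(x,s)$ for a fixed $\epsilon$; your coarea step must promote this to a planar measure estimate for $\{z : D(x,z)\simeq\tau D_{\max}\}$, and the choice of $\epsilon$ is ambiguous precisely when $j(x)$ and $j(z)$ differ widely, which is where a scale-separation mechanism (an induction on scales, or the role \eqref{e.fortune} plays in the paper) becomes unavoidable and is not supplied. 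Second, the kernel bound $\abs{K(x,z)}\lesssim 2^{j(x)}2^{j(z)}/D(x,z)$ on a thin set is heuristic: $K$ is a density pushed forward through an implicit intersection equation, and one needs both its height and its support in $z$ (uniformly in the measurable choice $j(\cdot)$) before the double dyadic sum in $\tau$ and $j(z)$ can be shown to converge, particularly when $c$ in \eqref{e.bour-un} is small. I also think your second listed concern misfires --- $\omega(x;t)$ is already defined along the straight segment, not an integral curve --- but a genuine version of it lurks nearby: $\omega(x;\cdot)$ controls how $v$ turns \emph{on} the segment through $x$, while the Schur sum needs control over $v(z)$ for $z$ ranging over a planar neighborhood of that segment, and bridging that gap is exactly where the Lipschitz hypothesis must be reinvoked and the exponent $\tau^c$ can be lost.
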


This paper claims that the same methods would prove the 
bounds 
\begin{equation*}
\norm \operatorname M _{v, \epsilon _0} . p. 
\lesssim \norm f.p. \,, \qquad 1<p<\infty \,. 
\end{equation*}
And suggests that similar methods would apply to the localized Hilbert 
transform with respect to these vector fields.

Here, we prove 

\begin{proposition}\label{p.bourgain} 
Assume that \eqref{e.bour-un} holds.  Then, the Lipschitz Kakeya Maximal Functions 
\begin{equation*}
\operatorname M _{v, \delta , \mathsf w}
\,, 
\qquad 0<\delta <1\,, \ 0<\mathsf w<\epsilon _0
\end{equation*}
 defined in \eqref{e.MdefW} satisfy the weak $ L ^{1}$ estimate 
\begin{equation*}
\sup _{\lambda >0} 
\lambda \lvert   \{ \operatorname M _{v, \delta , \mathsf w} f > \lambda \}\rvert 
\lesssim 
 \delta ^{-1} ( 1+\log 1/\delta) \norm f.1.  \,. 
\end{equation*}
The implied constants depend upon the constants in \eqref{e.bour-un}.
\end{proposition}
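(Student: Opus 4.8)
The plan is to follow the same covering-lemma template that established the weak $L^2$ bound in Theorem~\ref{t.lipKakeya}, but to exploit Bourgain's hypothesis \eqref{e.bour-un} to upgrade the key uniform estimate \eqref{e.uni1}--\eqref{e.uni2} so that it yields an $L^1$-to-weak-$L^1$ conclusion rather than merely $L^2$-to-weak-$L^2$. Concretely, let $\mathcal R$ be a finite collection of rectangles, each satisfying \eqref{e.shortlength}, having width in $[\mathsf w, 2\mathsf w]$, and with $\abs{\name V R}\ge\delta\abs R$. I would run exactly the same \textsf{WHILE}-loop selection as before to produce $\mathcal R'$ and $\mathcal R''$, so that \eqref{e.bigcup} holds verbatim; the entire content of the Proposition then reduces to proving the strengthened overlap estimate
\begin{equation*}
\NOrm \sum_{R\in\mathcal R'} \mathbf 1_R .\infty. \lesssim \delta^{-1}(1+\log 1/\delta)\,,
\end{equation*}
in place of the $L^2$ bound \eqref{e.2<1}. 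Given this $L^\infty$ bound on the counting function of $\mathcal R'$, the standard linearization argument (dualize against $f\in L^1$, use $\int_R f\ge\lambda\abs R$, and Chebyshev) immediately delivers the claimed weak $(1,1)$ inequality with constant $\delta^{-1}(1+\log1/\delta)$.

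To prove the $L^\infty$ bound, fix a point $x$ and group the rectangles $R\in\mathcal R'$ containing $x$ by the (essentially unique, up to the $\kappa$-algorithm) direction $\operatorname e_R$ of their long axis. By Lemma~\ref{l.fixscale}, at each fixed length scale the overlap is already $O(\delta^{-1})$; the issue is summing over the $O(\log 1/\mathsf w)$ distinct scales, which is exactly what produced the spurious logarithm in Proposition~\ref{p.3/2} and the pocketknife obstruction. Here is where \eqref{e.bour-un} enters: for each rectangle $R$ with $x\in R$, the set $\name V R = R\cap v^{-1}(\name EX R)$ has measure $\ge\delta\abs R$, which forces $v$ to point into the arc $\name EX R$ on a $\delta$-fraction of a line segment of length $\name L R$ through (a neighborhood of) $x$ in the long direction of $R$. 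Writing $\omega(x;t)=\abs{\det[v(x+tv(x)),v(x)]}$ as in \eqref{e.zw}, the condition that $v(x+tv(x))$ lie in a narrow arc around $\operatorname e_R$ translates into $\omega(x;t)$ being small relative to its sup over the relevant range of $t$. Bourgain's bound \eqref{e.bour-un} says this can happen on at most a $C\tau^c$-fraction of the segment, where $\tau$ is the relative width of the arc, i.e.\ $\tau\simeq\abs{\name{EX}R}/\abs{\name{EX}{R_{\mathrm{max}}}}$ comparing $R$ to the longest rectangle through $x$ in that direction. Demanding $\abs{\name V R}\ge\delta\abs R$ then forces $\delta\lesssim C\tau^c$, i.e.\ $\tau\gtrsim(\delta/C)^{1/c}$: the eccentricities $\abs{\name{EX}R}$ of rectangles through $x$ in a fixed direction cannot cluster near $0$, they are bounded below by a fixed power of $\delta$.

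With that lower bound on the eccentricities in hand, the number of geometrically distinct scales of rectangles through $x$ in a fixed direction is $O(\log 1/\delta)$ rather than $O(\log1/\mathsf w)$, because $\name L R=\name W R/\abs{\name{EX}R}\le 2\mathsf w\,(\delta/C)^{-1/c}$ while also $\name L R\ge\name W R\ge\mathsf w$, so the length ratios across one direction span a factor of only $(\delta/C)^{-1/c}$. Summing the per-scale bound $O(\delta^{-1})$ from Lemma~\ref{l.fixscale} over $O(\log1/\delta)$ scales, and then over the $\lesssim 1$ relevant directions (controlled exactly as in the proof of \eqref{e.uni1} via the maximal operator $\operatorname M_\kappa$ over $\kappa$ uniformly distributed directions), gives $\NOrm\sum_{R\in\mathcal R'}\mathbf 1_R.\infty.\lesssim\delta^{-1}(1+\log1/\delta)$, as required. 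I expect the main obstacle to be the bookkeeping in the middle step: translating the measure-theoretic statement $\abs{\name V R}\ge\delta\abs R$ into a clean statement about the one-dimensional function $t\mapsto\omega(x;t)$ along a single integral curve, since $\name V R$ is a two-dimensional set and one must use the Lipschitz bound on $v$ together with the restriction \eqref{e.shortlength} on $\name L R$ to Fubini-slice it into segments on which the scalar condition \eqref{e.bour-un} can be applied with the correct $\epsilon$ and $\tau$. The extra factor $1+\log1/\delta$ in the statement is exactly the residue of the surviving (now $\delta$-controlled) scale sum, and the $\delta^{-1}$ is the per-scale overlap from Lemma~\ref{l.fixscale}.
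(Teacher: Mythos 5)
There is a genuine gap in the middle step, precisely where you anticipated the bookkeeping would be tricky. Your high-level plan is the right one (covering-lemma template, per-scale overlap from Lemma~\ref{l.fixscale}, and Bourgain's hypothesis to limit the range of scales that survive), but the way you invoke \eqref{e.bour-un} does not produce the constraint you need. You compare a rectangle $R$ through $x$ to the longest rectangle $R_{\max}$ through $x$ in that direction, setting $\tau\simeq\abs{\name{EX}R}/\abs{\name{EX}{R_{\max}}}$. Since $R_{\max}$ is the longest, its eccentricity interval is the shortest, so this $\tau\ge 1$ and Bourgain's bound is vacuous; rearranging, the conclusion $\name L R\lesssim\delta^{-1/c}\name L{R_{\max}}$ is always true. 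Moreover, the claim "the eccentricities $\abs{\name{EX}R}$ of rectangles through $x$ cannot cluster near $0$" is simply false under \eqref{e.bour-un}: if the integral curve through $x$ is flat (allowed by Bourgain's hypothesis, in contrast to the Nagel--Stein--Wainger condition), then $\omega(x;t)\equiv 0$, the supremum in \eqref{e.bour-un} vanishes, and rectangles aligned with that curve may be as long as \eqref{e.shortlength} permits, with eccentricity tending to zero. The hypothesis is a \emph{relative} statement about the distribution of $\omega(x;\cdot)$; to use it you must produce, somewhere on the segment, a value of $\omega$ that is genuinely large compared with $\abs{\name{EX}R}$, and the segment through $R$ alone does not supply one.

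The paper extracts the large value of $\omega$ from a \emph{pair} of intersecting rectangles with \emph{disjoint} eccentricity intervals. Concretely, the key claim \eqref{e.fortune} is pairwise: if $R\cap R'\neq\emptyset$, $C\name{EX}R\cap C\name{EX}{R'}=\emptyset$, and $\name L{R'}\le 2^{-s}\name L R$, then the density of $\name V R$ along a line $\ell$ in the long direction of $R$ forces $\omega(x;t)\lesssim\abs{\name{EX}R}$ on a $\delta$-fraction, while passing near $\name V{R'}$ forces $\sup_{\lvert s\rvert\le\name L R}\omega(x;s)\gtrsim\abs{\name{EX}{R'}}$. With $\tau\simeq\abs{\name{EX}R}/\abs{\name{EX}{R'}}\simeq\name L{R'}/\name L R$ (now genuinely $<1$), Bourgain's estimate yields $\delta\lesssim\tau^c$, which is impossible once $s\gtrsim\log 1/\delta$. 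The architecture is also different from what you propose: the paper first splits $\mathcal R$ into $\simeq\log 1/\delta$ subcollections with scales separated by $s\simeq\log 1/\delta$ (this pre-decomposition is where the $\log1/\delta$ factor enters), and within each subcollection it runs a \emph{different} covering algorithm (select maximal length, discard every $R'\subset 4CR$), after which any two selected rectangles meeting at a point either have disjoint dilated eccentricity intervals (hence, by \eqref{e.fortune}, comparable length) or have overlapping intervals (hence were discarded). The resulting pointwise overlap is then $\lesssim\delta^{-1}$ per subcollection by Lemma~\ref{l.fixscale}. If you retool your argument so that Bourgain is invoked on the pair $(R,R')$ with disjoint $\name{EX}$-intervals, and introduce the scale-separation decomposition rather than trying to bound the number of surviving scales directly at a point, the proof goes through.
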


That is, these vector fields easily fall within the scope of our analysis. 
As a corollary to Theorem~\ref{t.conditional}, we see that $ \operatorname H_v$ 
maps $ L ^{2}$ into itself.

\begin{figure}
\begin{tikzpicture}
\draw (0,0) rectangle (6,.75) node[right] {R};
\begin{pgftranslate}{\pgfxy(2,.6)}
\draw[rotate=20] (3, 0) rectangle(0,.75) node[above] {$ R'$};
\end{pgftranslate}
\draw  (0,.5) -- (6,.5); 
\draw[*->,thick] (5,.45) -- (5.9, .85); 
\draw[*->,thick] (2.8,1.3) -- (3.7, 2); 
\draw[-*,thick]  (2.87,1.3) -- (2.87,.45) ;
\draw (2.87,1.3) node (x'') {};
\draw (2.87,.5) node (x') {};
\draw (5.1,.5) node (x) {};
\draw (1,.5) node (l) {}; 
\draw (.5,-.6) node (l1) {$ \ell $};
\path[->] (l1) edge [out=-180, in=-60] (l);
\draw (2.8,-.6) node (x'1) {$ x'_0 $};
\path[->] (x'1) edge [out=160, in=-90] (x');
\draw (4.9,-.6) node (x1) {$ x_0 $};
\path[->] (x1) edge [out=160, in=-90] (x);
\draw (5.4,2) node (x''1) {$x''_0$};
\path[->]  (x''1) edge [out=-120,in=-10] (x'');
\end{tikzpicture}
\caption{Proof of \eqref{e.fortune}.}
\label{f.bourgain}
\end{figure}

\begin{proof}
Let us assume that $ \norm v . \textup{Lip}.=1$. 
Fix $ \delta >0$ and $ 0<\mathsf w< \epsilon _0$.  Let $ \mathcal R$ be 
the class of rectangles with $ \name L R < \kappa $ and satisfying 
$ \lvert  \name V R\rvert\ge \delta \lvert  R\rvert  $.  

Say that $ \mathcal R'\subset \mathcal R$ has \emph{scales separated by $ s>3$} 
iff for $ R,R'\in \mathcal R'$ the condition  $ 4\name L R<  \name L {R'}$ implies that 
$ 2 ^{s} \name L R<  \name L {R'} $.  One sees that $ \mathcal R$ can be decomposed 
into $ \simeq s$ sub-collections with scales separated by $ s$. 

The fortunate observation is this: Assuming \eqref{e.bour-un}, and taking $ s \simeq \log 1/ \delta $, 
any subset $ \mathcal R'\subset \mathcal R$ with scales separated by $ s$ further 
enjoys this property:  If $ R, R'\in \mathcal R'$  with  
$ C\name {EX} R \cap C\name {EX} {R'} =\emptyset $, with $ C$ a fixed constant, 
then 
\begin{equation}\label{e.fortune}
\name L R \simeq \name L {R'} \quad \textup{or} \quad 
 R\cap R'=\emptyset\,. 
\end{equation}

Let us see why this is true, arguing by contradiction. 
Thus we assume that $ \name L {R'}\le 2 ^{-s} \name L {R} $, $ R\cap R'\neq \emptyset $ 
and $ C\name {EX} R \cap C\name {EX} {R'} =\emptyset$. 
Since the rectangles have an essentially fixed width, it follows that 
$ 2 \lvert  \name {EX} {R'}\rvert\ge \lvert  \name {EX} R\rvert  $. 
Fix a line $ \ell $ in the long direction of $ 2 R$ with 
\begin{equation*}
\lvert  \{x\in \ell \mid v (x)\in \name V R \}\rvert \ge \frac \delta 2 \lvert  \ell
\rvert
= \frac \delta 2 \name L R \,. 
\end{equation*}
Let $ x_0$ be in the set above,  $x''_0\in V (R')$ and $x'_0$ is the projection 
of $x''_0$ onto the line $\ell $.  See Figure~\ref{f.bourgain}.  
Observe that we can estimate
\begin{align} \label{e..a}
\abs{v (x_0'')-v (x_0')} & \le 2 \lvert  v (x_0)- v (x_0'')\rvert \name L {R'} 
\end{align}
Therefore, for $C$ sufficiently large, we have 
\begin{align*}
\abs{v (x_0)- v (x_0')}&\ge 
\Abs{ \lvert  v (x_0')-v (x_0 '')\rvert - \lvert  v (x_0'') - v (x_0)\rvert  }
\\
& \ge \lvert  v (x_0'') - v (x_0)\rvert (1 - 2 \name L {R'}) 
\\
& \ge \lvert  \name {EX} {R'}\rvert 
\end{align*}
provided $ C$ is large enough.

 Now, after a moments thought, one sees that 
 \begin{equation*}
\lvert  \operatorname {det}[v (x_0), v (x_0')]\rvert
\simeq \operatorname {angle}( v (x_0), v (x_0'))\,. 
\end{equation*}
 Therefore, for any $ x\in \ell $
\begin{equation*}
\sup _{s \le  \name L R } \omega (x; s) \gtrsim 
\lvert  \name {EX} {R'}\rvert\,.  
 \end{equation*}

 But the vector field satisfies \eqref{e.bour-un}, which we will 
 apply with  
\begin{equation*}
\tau \simeq \frac {\name {EX} {R} } {\name {EX} {R'}} 
\simeq 
\frac {\name L {R'} }  {\name L R} \,. 
\end{equation*}
It follows that 
\begin{align*}
\frac \delta 2 \name L R 
& \le
\lvert  \{x \in \ell \mid \omega  (x;s)\le 
c \tau \lvert  \name {EX} {R'}\rvert
\}\rvert 
\\
& \le 
\lvert  \{x \in \ell \mid \omega  (x;s)\le 
\tau \sup _{\lvert  s\rvert\le \epsilon  } \omega (x,s )
\}\rvert       
\\
& \lesssim \tau ^{c} \name L R\,. 
\end{align*}
Therefore, we see 
\begin{equation*}
(\delta /2) ^{1/c} \lesssim 
\frac {\name L {R'} }  {\name L R} \,, 
\end{equation*}
which is a contradiction to $ \mathcal R'$ have scales separated by $ s$, and 
$ s \simeq 1+ \log 1/\delta $. 

\medskip 

Let us see how to prove the Proposition now that we have proved \eqref{e.fortune}. 
Take $ s \simeq \log 1/\delta $, and a finite sub-collection $ \mathcal R'\subset \mathcal R$
 of rectangles with scales separated by $ s$.  We may 
take a further subset $ \mathcal R''\subset \mathcal R'$ such that 
\begin{gather}\label{e.1cover}
\NOrm \sum _{R''\in \mathcal R''} \mathbf 1_{R''} . \infty . \lesssim \delta ^{-1} \,, 
\\ \label{e.2cover}
\ABs{ \bigcup _{R\in \mathcal R'- \mathcal R''} R''} 
\lesssim 
\sum _{R\in \mathcal R''}  \lvert  R''\rvert\,.  
\end{gather}
These are precisely the covering estimates needed to prove the weak $ L ^{1}$ estimate 
claimed in the proposition. 

But, in choosing $ \mathcal R''$ to satisfy \eqref{e.1cover}, it is clear that 
we need only be concerned about rectangles with a fixed length, and the separation in 
scales are \eqref{e.fortune} will control rectangles of distinct lengths.  

The procedure that we apply to select $ \mathcal R''$ is inductive.  
Set 
\begin{align*}
\mathcal R''&\leftarrow \emptyset\,,
\\
\mathcal S & \leftarrow \emptyset\,, 
\\
\mathsf{STOCK} & \leftarrow \mathcal R'\,.
\end{align*}
\textsf{WHILE} $ \mathsf {STOCK}\neq\emptyset$, select $ R\in \mathsf {STOCK}$ 
with maximal length, 
and update $ \mathcal R''\leftarrow \mathcal R'' \cup \{R\}$, as well as 
$ \mathsf{STOCK}  \leftarrow \mathsf {STOCK}- \{R\} $.  
In addition, for any $ R'\in \mathsf {STOCK}$ with $ R'\subset 4C R$,
where $ C\ge1$ is the constant that insures that  \eqref{e.fortune} holds, 
remove these rectangles from $ \mathsf {STOCK}$ and add them to $ \mathsf S$.  

Once the \textsf{WHILE} loop stops, we will have $ \mathsf {STOCK}=\emptyset$ 
and we have our decomposition of $ \mathcal R'$.  By construction, it is 
clear that \eqref{e.2cover} holds.  We need only check that \eqref{e.1cover} 
holds.  Now, consider $ R,R'\in \mathcal R'$, with  two rectangles have their 
scales separated, thus $ 2 ^{s}\name L {R'} < \name L R $.  
If it is the case that $ R\cap R'\neq \emptyset$ and $ C \name {EX} R \cap 
C \name {EX} {R'} \neq \emptyset$, then $ R$ would been selected to be in $ \mathcal R'$ 
first, whence $ R'$ would have been placed in $ \mathcal S$.  

Therefore, $  C \name {EX} R \cap 
C \name {EX} {R'} =\emptyset$, but then \eqref{e.fortune} implies that $ R\cap R'=\emptyset$.
Thus, the only contribution to the $ L ^{\infty }$ norm in \eqref{e.1cover} can come from 
rectangles of about the same length.  But Lemma~\ref{l.fixscale} then implies that 
such rectangles can overlap only about $ \delta ^{-1} $ times.  Our proof is complete. 
(As the interest in \eqref{e.bour-un} is in small values of $ c$, it will be more 
efficient to use Lemma~\ref{l.fixscale} to handle the case of the rectangles having 
approximately the same length.)
\end{proof}

\begin{remark}\label{r.bourgain}  To conclude that the Hilbert transform on 
vector fields is bounded, one could weaken Bourgain's condition \eqref{e.bour-un}
to 
\begin{equation*}
\lvert  \{ \lvert  t\rvert\le \epsilon 
\mid 
\omega (x; t)< \tau \sup _{\lvert  s\rvert\le \epsilon  } \omega (x,s )
\}\rvert 
\le C \operatorname {exp}(-(\log 1/\tau) ^{c} )\epsilon \,. 
\end{equation*}
This inequality is to hold universally in $x\in \mathbb R ^2 $, 
$0<\tau <1$, and $0<\epsilon < \norm v . \textup{Lip}.$. 
This is of interest for $ 0<c<1$. The proof above can be modified to show that 
the maximal functions $ \operatorname M _{v, \delta , \mathsf w}$ 
satisfy the weak $ L ^{1}$ inequality, with constant at most $ \lesssim \delta ^{-1-1/c}$. 
\end{remark}

\section*{Vector Fields that are a Function of One Variable} 

We specialize to the vector fields that are a function of just one real variable. 
Assume that the vector field $ v$ is of the form 
\begin{equation} \label{e.v2}
v (x_1,x_2)= (v_1 (x_2), v_2 (x_2))\,, 
\end{equation}
and for the moment we do not impose the condition that the vector 
field take values in the unit circle.  The point is simply this: 
If we are interested in transforms where the kernel is not localized, 
the restriction on the vector field is immaterial.  Namely, 
for \emph{any} vector field $ v$
\begin{align*}
\operatorname H _{v,\infty } f (x)
& = \textup{p.v.}\int _{-\infty } ^{\infty } f (x-y v (x))\; \frac {dy} y 
\\
& = \textup{p.v.}\int _{-\infty } ^{\infty } f (x-y \widetilde v (x))\; \frac {dy} y 
\,, \qquad \widetilde v (x)=\frac {v (x)} {\lvert  v (x)\rvert }\,. 
\end{align*}

We return to a theme implicit in the proof of Proposition~\ref{p.Stein=>Carleson}.  
This  proof only relies upon vector fields that are only a 
function of one variable. Thus, it is a significant subcase of the Stein 
Conjecture to verify it for Lipschitz vector fields of just one variable. 
Indeed, the situation is this. 

\begin{proposition}\label{p.justOne} Suppose that a choice of vector field 
$ v (x_1,x_2)=(1,v_1 (x_1)) $ is just a function of, say, the first coordinate.  Then, 
$ \operatorname H _{v,\infty }$ maps $ L ^2 (\mathbb R ^2 )$ into itself.  
\end{proposition}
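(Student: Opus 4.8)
The plan is to reduce the statement to Carleson's Theorem by freezing one Fourier variable; this is precisely the mechanism behind Proposition~\ref{p.Stein=>Carleson}, now run in the reverse direction, and since the transform here is the \emph{untruncated} one no smoothness of $v_1$ will be required. First I would write the operator out explicitly: with $v(x_1,x_2)=(1,v_1(x_1))$ one has $x-yv(x)=(x_1-y,\ x_2-yv_1(x_1))$, so
\begin{equation*}
\operatorname H _{v,\infty} f(x_1,x_2)=\textup{p.v.}\int _{\mathbb R} f(x_1-y,\ x_2-yv_1(x_1))\,\frac{dy}{y}\,.
\end{equation*}
The crucial feature is that the displacement in the $x_2$ coordinate is \emph{linear} in the integration variable $y$, so that a partial Fourier transform in $x_2$ converts it into a modulation.

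Let $\widehat{g}^{(2)}$ denote the Fourier transform of $g$ in its second argument only. A computation, to be justified as below, then shows that for each fixed $\xi_2$
\begin{equation*}
\widehat{(\operatorname H _{v,\infty} f)}^{(2)}(x_1,\xi_2)=\textup{p.v.}\int _{\mathbb R} e^{-i\xi_2 y\, v_1(x_1)}\,\widehat{f}^{(2)}(x_1-y,\xi_2)\,\frac{dy}{y}\,.
\end{equation*}
Fixing $\xi_2$ and writing $F_{\xi_2}(x_1):=\widehat{f}^{(2)}(x_1,\xi_2)$ and $N_{\xi_2}(x_1):=-\xi_2 v_1(x_1)$, the right-hand side is exactly $\mathcal C_{N_{\xi_2}}F_{\xi_2}(x_1)$, the linearized Carleson operator acting in the $x_1$ variable with the choice function $N_{\xi_2}$, which is a measurable function of $x_1$.

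Next I would invoke Carleson's Theorem in the form stated above: $\mathcal C_N$ is bounded on $L^2(\mathbb R)$ with operator norm independent of the measurable function $N$. Applying it with $N=N_{\xi_2}$, integrating the resulting inequality $\norm \mathcal C _{N_{\xi_2}} F_{\xi_2}.2.^2\lesssim \norm F_{\xi_2}.2.^2$ over $\xi_2\in\mathbb R$, and using Plancherel's theorem in the second variable on both sides, we obtain
\begin{align*}
\norm \operatorname H _{v,\infty} f.2.^2
&=\int _{\mathbb R}\norm \widehat{(\operatorname H _{v,\infty} f)}^{(2)}(\cdot,\xi_2).2.^2\,d\xi_2
\\
&\lesssim \int _{\mathbb R}\norm F_{\xi_2}.2.^2\,d\xi_2=\norm f.2.^2\,,
\end{align*}
which is the asserted bound.

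The remaining points are routine rather than genuine obstacles. One must justify interchanging the principal value with the partial Fourier transform; this is handled in the usual way, by first truncating the kernel to $\eta<\lvert y\rvert<\eta^{-1}$, applying Fubini, and letting $\eta\to0$, working initially with Schwartz $f$ and extending by density. One also needs the measurability of $\xi_2\mapsto\norm \mathcal C_{N_{\xi_2}}F_{\xi_2}.2.$, which follows by the same approximation. I do not anticipate any real difficulty here: the content of the Proposition is essentially that, once the truncation is dropped, the one-variable case of the Stein phenomenon is equivalent to Carleson's Theorem, and we are using the harder (sufficiency) direction of that equivalence.
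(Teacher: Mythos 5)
Your proof is correct and is essentially the paper's own argument made explicit: the paper's one-line proof ("the symbol is $\operatorname{sgn}(\xi_1+\xi_2 v_1(x_1))$, for each fixed $\xi_2$ this is a bounded symbol, and for the $L^2$ estimate this is enough") is exactly the observation that the partial Fourier transform in $x_2$ turns $\operatorname H_{v,\infty}$ into the Carleson operator $\mathcal C_{N_{\xi_2}}$ with $N_{\xi_2}(x_1)=-\xi_2 v_1(x_1)$ on each frequency slice, and Plancherel in $\xi_2$ then gives the two-dimensional bound. You have spelled out both the Carleson reduction and the Fubini/Plancherel steps that the paper leaves implicit.
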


\begin{proof}
The symbol of $ H _{v,\infty }$ is 
\begin{equation*}
\operatorname {sgn} (\xi _1+ \xi _2v_1 (x_1))\,. 
\end{equation*}
For each fixed $ \xi _2$, this is a bounded symbol.  And in the special case of 
the $ L ^2 $ estimate, this is enough to conclude the boundedness of the operator.  
\end{proof}

It is of interest to extend this Theorem in any $ L ^{p}$, for $ p\neq 2$, for 
some reasonable choice of vector fields.   

The corresponding questions for the maximal function are also of interest, and 
here the subject is much more developed.  
The paper  \cite{carbery} studies the 
maximal function $ \operatorname M _{v,\infty }$.  They proved the 
boundedness of this maximal function on $ L ^{p}$, $ p>1$, assuming that the 
vector field was of the form $ v (x)= (1,v_2 (x))$, that $ \operatorname D v_2$ 
was  positive, and increasing, and satisfied a third  more technical condition.
More recently, \cite{kim} has showed that the third condition is not needed. 
Namely the following is true. 

\begin{theorem}\label{t.kim} Assume that $ v (x)= (1,v_2 (x))$, 
and moreover that $\operatorname D v_2\ge0 $ and is 
monotonically increasing. Then, $ \operatorname M _{v, \infty }$ is 
bounded on $ L ^{p}$, for $ 1<p<\infty $.  
\end{theorem}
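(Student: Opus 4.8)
The plan is to follow the oscillatory-integral approach initiated by Carbery, Seeger, Wainger and Wright, adding the refinements that allow one to drop their auxiliary growth hypothesis, which is precisely what Kim accomplishes. The hypothesis that $\operatorname D v_2\ge 0$ and is monotone increasing says exactly that $v_2$ is convex, and convexity is what will supply ``curvature'' in the relevant curves and thereby rule out a Besicovitch-type obstruction.

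One begins with routine reductions. By a limiting argument it suffices to treat $v_2\in C^\infty$ with $\operatorname D v_2$ strictly increasing, and to obtain an estimate with constant independent of $v_2$. Replacing the supremum over all radii $h>0$ by the one over dyadic radii $h=2^{j}$ costs only a bounded factor, and after linearizing the supremum it is enough to bound on $L^p(\mathbb R^2)$, $1<p<\infty$, the operator $f\mapsto\sup_{j\in\mathbb Z}A_jf$, where
\begin{equation*}
A_jf(x_1,x_2)=2^{-j-1}\int_{\lvert t\rvert\le 2^{j}}\bigl\lvert f\bigl(x_1-t,\,x_2-t\,v_2(x_1)\bigr)\bigr\rvert\;dt .
\end{equation*}
Note that $A_jf(x)$ involves only the single number $v_2(x_1)$, so $A_j$ is an average over an honest line segment; the difficulty lies entirely in how the slope $v_2(x_1)$ varies with the base point $x_1$.

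Next I would decompose the $x_1$-line according to the dyadic size of $\operatorname D v_2$---since $\operatorname D v_2$ is increasing, each level set $\{2^{k}\le \operatorname D v_2<2^{k+1}\}$ is an interval---and pair each such piece with the averaging scales $2^{j}$ that interact with it. Two regimes occur. Where $v_2$ is, at the scale $2^{j}$, so nearly constant that the segments issuing from the piece are essentially parallel, a linear change of variables reduces $A_j$ on that piece to a fixed-direction average, pointwise dominated by the one-dimensional Hardy--Littlewood maximal operator in $x_1$, uniformly in the piece and in $j$. In the complementary regime $\operatorname D v_2$ varies genuinely across the piece at scale $2^{j}$; then, for a fixed source point, the set of base points it influences lies along a curve $x_2=y_2+(x_1-y_1)\,v_2(x_1)$ whose curvature is bounded below in terms of $\operatorname D v_2$ by convexity, and the contribution of the piece to $A_j$ is an averaging operator of the kind studied in the theory of maximal averages over curves with non-vanishing curvature. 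For such operators one has an $L^2$ operator-norm bound that decays geometrically in the relevant scale parameter (an oscillatory-integral / local-smoothing estimate), which can be upgraded to an $L^p$ maximal bound by a standard square-function argument.

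Finally one assembles the pieces. The nearly-constant contributions are controlled, for each $j$, by a single strong maximal function and then summed over $j$ using that the relevant averages nest; the genuinely-curved contributions are summed using the geometric decay in the scale, which yields the maximal bound on $L^2$, and interpolating against the trivial $L^\infty$ bound delivers every $1<p<\infty$ with constants independent of the truncation, so that one may pass to the limit. The main obstacle---and the reason that removing the Carbery--Seeger--Wainger--Wright condition is not automatic---is the bookkeeping at the boundary between the two regimes: without a uniform lower bound on the curvature the decomposition must be fully multi-scale, and one must verify that no constant degrades as the number of scales grows. I expect this uniform multi-scale estimate to be the technical heart of the matter; the remaining ingredients are, in one guise or another, classical.
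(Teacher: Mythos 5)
This theorem is not proved in the paper at all: it is a result of Kim \cite{kim}, extending Carbery--Seeger--Wainger--Wright \cite{carbery}, and the paper merely records the statement. The only indication the paper gives about the argument is the remark, immediately afterward, that ``the papers \cites{carbery,kim} cleverly exploit the Plancherel identity (in the independent variable), and other orthogonality considerations.'' So there is no proof in the paper against which your outline can be matched step by step.

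As a proof sketch, your proposal has two substantive problems. First, it omits the very mechanism the paper points to as decisive: because $v_2$ is a function of a single coordinate, one takes the Fourier transform in the \emph{other} coordinate and applies Plancherel, decoupling the two-dimensional maximal operator into a one-parameter family of one-dimensional oscillatory operators. That is what makes this class of vector fields tractable in a way the general Lipschitz field is not, and it is the structural reason the section is titled ``Vector Fields that are a Function of One Variable.'' Your sketch instead reaches for local-smoothing estimates for averages over curves with nonvanishing curvature, and the identification of $A_j$ with such an average via the curve $x_2=y_2+(x_1-y_1)v_2(x_1)$ is loose --- the operator integrates over straight segments, and the relevant ``curvature'' is the variation of the slope with the base point, not the shape of any single integration path; without the Plancherel reduction that dictionary is not set up. Second, and more seriously, you explicitly defer the ``uniform multi-scale estimate'' as ``the technical heart of the matter'' and leave it unargued. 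But that is precisely the content of Kim's theorem: the third, more technical hypothesis of Carbery--Seeger--Wainger--Wright existed exactly to supply a uniform bound of this type, and the whole point of \cite{kim} is showing it can be dropped. Naming the hard step and then punting on it leaves you with an outline, not a proof.
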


These vector fields present far fewer technical difficulties 
than a general Lipschitz vector field, and there are a richer set of 
proof techniques that one can bring to bear on them, as indicated in part  
in the proof of Proposition~\ref{p.justOne}. The papers \cites{carbery,kim} 
cleverly exploit the Plancherel identity (in the independent variable), 
and other orthogonality considerations to prove their results.

These considerations are not completely consistent with the dominant theme 
of this monograph, in which the transforms are localized. Nevertheless, it would 
be interesting to explore methods, possibly modifications of this memoir, that 
could provide an extension of Proposition~\ref{p.justOne}. 

In this direction, let us state a possible direction of study.  
The definition of the the sets $ \name V R $ for vector fields of magnitude 
$ 1$ is given as $ \name V R = R \cap v ^{-1} (\name {EX} R)$.  For vector 
fields of arbitrary magnitude, we define these sets to be 
\begin{equation*}
\name V R = \{x\in R \mid  \tfrac {v (x)} {\lvert  v (x)\rvert } \in \name {EX} R \}\,. 
\end{equation*}
Define a maximal function---an extension of our Lipschitz Kakeya Maximal Function---by 
\begin{equation}\label{e.tildeM}
\widetilde {\operatorname M} _{v,\delta }
f (x)=\sup _{\substack{ x\in R\\ \lvert  \name V R \rvert  \ge \delta \lvert  R\rvert  }} 
\lvert  R\rvert ^{-1} \int _{R} f (y)\; dy \,.  
\end{equation}
In this definition, we require the rectangles to have density $ \delta $, but do 
not restrict their eccentricities, or lengths.  

\begin{conjecture}\label{j.v2} Assume that the vector field is of the form $ v (x)
= (1, v_2 (x_2))$, and the derivative $ \operatorname D v \ge 0$ and is monotone. 
Then for all $ 0<\delta <1 $, we have the estimate
\begin{equation*}
\norm \widetilde {\operatorname M} _{v,\delta } .p. \lesssim \delta ^{-1} \,, 
\qquad  1<p<\infty \,.
\end{equation*}
\end{conjecture}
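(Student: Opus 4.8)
The plan is to run the Córdoba--Fefferman covering-lemma argument of Theorem~\ref{t.lipKakeya}, but to exploit that for $v(x_1,x_2)=(1,v_2(x_2))$ with $\operatorname D v_2\ge 0$ \emph{monotone} the direction field depends on the single variable $x_2$ and varies monotonically in it; this should organize the relevant rectangles into essentially totally ordered families, which is precisely what is needed to defeat the ``pocketknife'' obstruction of Figure~\ref{f.pocketknife} and the logarithmic loss of Proposition~\ref{p.3/2}. First I would reduce the claimed bound, for $1<p<2$, to a covering lemma: every finite family $\mathcal R$ of rectangles with $\abs{\name V R}\ge \delta\abs R$ splits as $\mathcal R=\mathcal R'\cup\mathcal R''$ satisfying the $L^{p'}$-analogue of \eqref{e.2<1} together with \eqref{e.bigcup}, where $p'=p/(p-1)$. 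Running this for each $p$, interpolating the resulting restricted weak type estimates, and then interpolating once more with the trivial bound $\lVert\widetilde{\operatorname M}_{v,\delta}\rVert_{L^\infty\to L^\infty}\le 1$ yields the conclusion for all $1<p<\infty$. The selection of $\mathcal R'$ and the estimate \eqref{e.bigcup} are carried out exactly as in Theorem~\ref{t.lipKakeya}; all the work is in the higher-integrability bound for $\sum_{R\in\mathcal R'}\mathbf 1_R$.

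Next I would set up the structural reductions specific to this $v$. Writing $\phi(x_2)=\arctan v_2(x_2)$, monotonicity of $\phi$ gives $v^{-1}(\name EX R)=\mathbb R\times S(R)$ with $S(R)=\phi^{-1}(\name EX R)$ an interval, so that $\name V R=R\cap(\mathbb R\times S(R))$; and the level sets $E_k=\{x_2:2^k\le \operatorname D v_2(x_2)<2^{k+1}\}$ are intervals. Splitting $\mathcal R'$ according to which $E_k$ the slab $S(R)$ principally meets, one finds that on a strip $\mathbb R\times E_k$ the field is comparable to an affine field of slope $\simeq 2^k$; after the associated $x_2$-shear the density hypothesis $\abs{\name V R}\ge\delta\abs R$ forces the eccentricity of $R$ to be bounded in terms of $\delta$, $\name W R$ and $2^k$, so that, after a further dyadic splitting in the width, each sub-family is a bounded-eccentricity Kakeya family. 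For such families Lemma~\ref{l.fixscale} (together with the one-scale Córdoba--Fefferman estimate) gives an overlap bound that I expect to be $\lesssim\delta^{-1}$; and since the strips $\mathbb R\times E_k$ are pairwise disjoint, these per-level contributions add with no further loss.

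The rectangles not controlled this way are those whose slab $S(R)$ spans many consecutive $E_k$'s; for these $\name EX R$ is so short that the direction of $v$ is essentially constant along the entire $x_2$-extent of $R$, i.e.\ $R$ is a near-lacunary rectangle pointing in the fixed direction determined by its vertical position. Here I would leave the purely combinatorial covering lemma and use the one-variable Fourier analysis underlying Proposition~\ref{p.justOne} and Theorem~\ref{t.kim}: the Fourier transform in $x_1$ turns the averages into multipliers whose phase is monotone in the frequency variable precisely because $\operatorname D v_2$ is monotone, and an oscillatory-sum or square-function estimate in the spirit of \cite{carbery,kim} should close the bound with constant $\lesssim\delta^{-1}$. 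Monotonicity of $\phi$ is again the crucial point: the eccentricity arcs of rectangles sharing a common point are then nested rather than fanned out, so the slabs $S(R)$ are totally ordered and no genuine pocketknife can occur. The two regimes are glued by bounding, for each rectangle, its true overlap by a ``within one $E_k$'' part plus a ``constant-direction'' part.

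The step I expect to be the main obstacle is the last one, and specifically the quantitative statement that must replace \eqref{e.uni1}--\eqref{e.uni3} in the $L^p$ range rather than the weak-$L^2$ range: one needs an estimate of the type $\sum_{\mathcal B}(\sharp\mathcal B)^2\,\mathsf w^2\,\theta(\mathcal B)^{-1}\lesssim \delta^{-1}\abs{R_{\textup{handle}}}$ that the general Lipschitz argument provably cannot supply. The hope is that for a monotone one-variable field this is true because the ``blades'' are forced to be nested; but making the nesting quantitative, and uniform across all dyadic scales of $\operatorname D v_2$ without reintroducing a factor $\log\mathsf w^{-1}$, is where the real difficulty lies. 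It is exactly the point at which the orthogonality intrinsic to the one-variable setting --- Plancherel in $x_1$ --- rather than any covering lemma, has to carry the estimate, and it is also where keeping the final constant as small as $\delta^{-1}$, as conjectured, is least clear.
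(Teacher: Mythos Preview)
This statement is a \emph{conjecture} in the paper, not a theorem; the paper offers no proof and in fact explains why the problem is delicate. So there is no ``paper's proof'' to compare your proposal to. What you have written is an outline of a possible program, and you are candid that the last step is unresolved. That is fair, but one of your structural claims is directly contradicted by the paper.

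You assert that for monotone one-variable fields ``the slabs $S(R)$ are totally ordered and no genuine pocketknife can occur.'' The paper says the opposite: immediately after stating the conjecture it constructs, for exactly this class of vector fields, pocketknife configurations in which a single hinge carries an \emph{unbounded} number of blades as the width tends to zero, and observes that the one-variable assumption ``greatly restricts, but does not completely forbid, the existence of additional hinges.'' This is what rules out any weak $L^1$ bound depending only on $\delta$. Your nesting heuristic therefore cannot hold at the level of generality you need it; the blades at a hinge have genuinely distinct, non-nested eccentricity arcs, and the combinatorics do not collapse to a chain. Any covering-lemma route will have to accommodate these pocketknives rather than argue them away.

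The decomposition by level sets $E_k=\{2^k\le \operatorname D v_2<2^{k+1}\}$ is a natural first move, but the step ``after the associated shear the density hypothesis forces bounded eccentricity'' is not justified: within a single $E_k$ the field is only approximately affine, and an affine one-variable field already supports rectangles of arbitrary eccentricity with density $\delta$ (take $R$ long in the direction of $v$). So the reduction to a bounded-eccentricity Kakeya family on each strip is not correct as stated. The honest summary is that you have identified the right enemy (the pocketknife term $\sum_{\mathcal B}(\sharp\mathcal B)^2\,\mathsf w^2\,\theta(\mathcal B)^{-1}$) and the right extra tool (Plancherel in $x_1$, as in Proposition~\ref{p.justOne} and Theorem~\ref{t.kim}), but the covering-lemma portion of the argument does not yet close, and the paper's own remarks suggest it cannot close by nesting alone.
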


One can construct examples which show that the $ L ^{1}$ to weak $ L ^{1}$ norm 
of the maximal function is not bounded in terms of $ \delta $.  
Indeed, recalling the `pocketknife' examples of Figure~\ref{f.pocketknife}, 
we comment that one can construct examples of vector fields with these properties, 
which we describe with the terminology associated with the pocketknife examples. 
\begin{itemize}
\item The width of all rectangles are fixed. And all rectangles have density $ \delta $. 
 \item The `handle' of the pocketknife has positive angle $ \theta $ with the $ x_1$ axis. 
 \item There is `hinge' whose blades have angles which are positive, and greater than 
 $ \theta $.  The number of blades can be unbounded, as the width of the rectangles 
 decreases to zero. 
\end{itemize}
The assumption that the vector field is only a function of $ x_2$ then greatly restricts, 
but does not completely forbid, the existence of additional hinges.  So the combinatorics 
of these vector fields, as expressed in the Lipschitz Kakeya Maximal Function, 
are not so simple.

\chapter[$ L^2$ Estimate for $ \operatorname H_v$]{The $ L^2$ 
Estimate for Hilbert Transform on Lipschitz Vector Fields}

We prove one of our main conditional results about the Hilbert transform on Lipschitz 
vector fields, the inequality \eqref{e.cond1} which is  the estimate at $ L ^2 $, for functions 
with frequency support in an annulus, assuming an appropriate estimate for the 
Lipschitz Kakeya Maximal Function.

We begin the proof by setting notation appropriate for phase plane analysis 
for functions $ f$ on the plane supported on an annulus.  With this notation, we 
can define appropriate discrete analogs of the Hilbert transform on vector fields. 
The Lemmas~\ref{l.model} and \ref{l.modelsumed} are the combinatorial analogs of 
our Theorem~\ref{t.laceyli1}.  We then take up the proofs. 
The main step in the proof is Lemma~\ref{l.dense+size} which combines the 
(standard) orthogonality considerations with the conjectures about the 
Lipschitz Kakeya Maximal Functions.

 \section*{Definitions and Principle Lemmas}

Throughout this chapter, $\kappa$ will denote a fixed small positive constant, 
whose exact value need not concern us.  $\kappa$ of the order of $10^{-3}$ would suffice. 
The following definitions are as in the authors' previous paper \cite{laceyli1}.

\begin{definition}\label{d.grid}
  A {\em grid} is a collection of intervals $\mathcal G$ so that for all $I,J\in\mathcal G$, we have 
$I\cap J\in\{\emptyset, I, J\}$.  The dyadic intervals are a grid. 
A grid $\mathcal G$ is {\em central} iff for all $I,J\in\mathcal G$, with $I\subset_{\not=}J$ we have $500 
\kappa ^{-20} I\subset J$. 
\end{definition}

The reader can find the details on how to construct such a central 
grid structure in \cite{gl1}.

Let $\rho$ be rotation on $\mathbb T$ by an angle of $\pi/2$.  Coordinate axes
for $\mathbb R^2$ are a pair of unit orthogonal vectors 
$(\operatorname e,\operatorname e_\perp)$  with $\rho \operatorname  e= \operatorname e_\perp$.

\begin{definition}\label{d.rectangle}

We say that $\omega\subset \mathbb R^2$ is a {\em rectangle} if it is a product of
intervals with respect to a choice of axes $(\operatorname e,\operatorname e_\perp)$ of $\mathbb R^2$.
We will say that $\omega$ is an {\em annular rectangle}
 if   $\omega=(-2^{l-1},2^{l-1})\times(a,2a)$ for an integer $l$ with
$2^l<\kappa a$, with respect to the axes $(\operatorname e,\operatorname e_\perp)$.
The dimensions of $\omega$ are said to be $2^{l}\times a$.  
 Notice that the face $(-2^{l-1},2^{l-1})\times a $ is tangent to the circle
$\abs \xi=a$ at the midpoint to the face, $(0,a )$.
We say that the {\em scale of}
$\omega$ is $\scl \omega:=2^l$ and that the {\em annular parameter of } $\omega$ is 
$\prm \omega :=a$.
  In referring to the coordinate axes of an annular
rectangle, we shall always mean  $(\operatorname e,\operatorname e_\perp)$ as above.  
\end{definition}

 Annular rectangles will decompose our functions in the frequency variables.  But 
our
methods must be sensitive to spatial considerations;  it is this and the
uncertainty principle that motivate the next definition.

\begin{definition}\label{d.dual}
  Two rectangles $R$ and $\mathsf R$ are said to be {\em dual }
if they are rectangles with respect to the same basis  $(\operatorname e,\operatorname e_\perp)$, thus
$R=r_1\times r_2$ and ${\mathsf R}= {\mathsf r}_1\times {\mathsf  r}_2$ for
intervals $r_i, {\mathsf r}_i$, $ i=1,2$.  Moreover, 
$1\le{}\abs{r_i}\cdot\abs{{\mathsf
r}_i}\le4$ for $ i=1,2$.   The product of two dual rectangles we shall refer to 
as a {\em phase rectangle.}  The first coordinate of a phase 
rectangle 
 we think of as a frequency component and
the second as a spatial component.     
\end{definition}


\begin{figure}
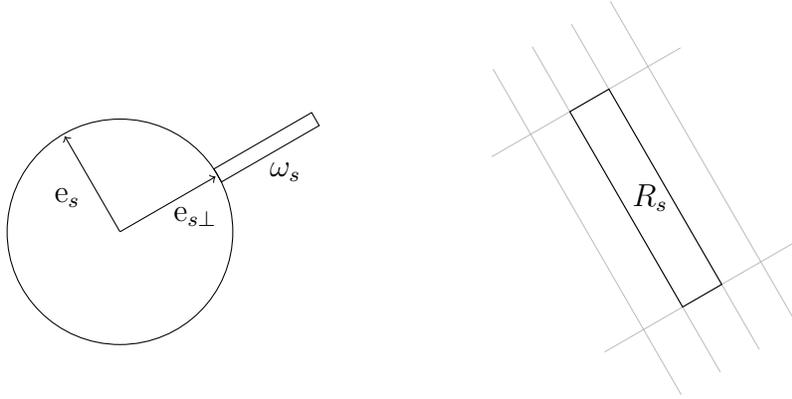
 
\begin{center}
  \begin{pgfpicture}{0cm}{0cm}{5cm}{5cm}
%
%
	\begin{pgftranslate}{\pgfpoint{-1.5cm}{2cm}}
	\pgfcircle[stroke]{\pgfpoint{0cm}{0cm}}{1.5cm} 
		\begin{pgfrotateby}{\pgfdegree{30}}
		\pgfrect[stroke]{\pgfpoint{1.5cm}{-.1cm}}{\pgfpoint{1.5cm}{0.2cm}}
		
		\pgfsetendarrow{\pgfarrowto}
		\pgfmoveto{\pgfpoint{0cm}{0cm}}\pgflineto{\pgfpoint{1.45cm}{0cm}}\pgfstroke
		\pgfmoveto{\pgfpoint{0cm}{0cm}}\pgflineto{\pgfpoint{0cm}{1.45cm}}\pgfstroke
		
		\end{pgfrotateby}
	\pgfputat{\pgfpoint{2.2cm}{.8cm}}{\pgfbox[center,center]{$\omega_s$}}
	\pgfputat{\pgfpoint{1cm}{.2cm}}{\pgfbox[center,center]{$\operatorname e_{s\perp}$}}
	\pgfputat{\pgfpoint{-.7cm}{.45cm}}{\pgfbox[center,center]{$\operatorname e_{s}$}}
	
	\end{pgftranslate}
	\begin{pgftranslate}{\pgfpoint{6.5cm}{1.3cm}}
		\begin{pgfrotateby}{\pgfdegree{30}}
			{\color{lightgray}
			\pgfgrid[step={\pgfpoint{0.6cm}{3cm}}]{\pgfpoint{-1.8cm}{-1cm}}{\pgfpoint{1.1cm}{4cm}}
			}		
		\pgfrect[stroke]{\pgfpoint{-.6cm}{0cm}}{\pgfpoint{0.6cm}{3cm}}
		\end{pgfrotateby}
	\pgfputat{\pgfpoint{-.95cm}{1.15cm}}{\pgfbox[center,center]{$R_s$}}
	\end{pgftranslate}
	\end{pgfpicture}
	\end{center} 
	\caption{ The two rectangles $\omega_s $ and $R_s $ whose product is a tile. The gray rectangles 
	are other possible locations for the rectangle $R_s $.} 
	 \label{f.tile}
	\end{figure}
   

We consider collections of phase rectangles $\mathcal {AT}$ which satisfy these
conditions.   For $s,s'\in\mathcal {AT}$ we write $s=\omega_s\times R_s$, and  require that 
\begin{gather}
\label{e.R1} 
\text{$\omega_s$ is an annular rectangle,}
\\\label{e.R2} 
\text{$R_s$ and  $\omega_s$ are  dual,}
\\
\text{The rectangles $R_s $ are from the product of  central grids.} 
\label{e.R-central}
\\
\label{e.R3.5}
\{ 1000\kappa ^{-100} R \mid  \omega_s \times R \in\mathcal {AT}\}\quad\text{covers $\mathbb R^2$, for all $\omega_s$.}
\\\label{e.R3} 
\prm{\omega_s}=2^j\quad \text{for some integer $j$,}
\\  \label{e.R4} 
\sharp\{\omega_s\mid \scl s=\Scl,\ \prm s=\Prm\}\ge{}c\frac {\Prm} {\Scl}\,,
\\ \label{e.R5}   
\scl s\le\kappa\prm s.
 \end{gather}
We assume that
there are auxiliary sets ${\boldsymbol \omega}_{s},{\boldsymbol \omega}_{s1},{\boldsymbol \omega}_{s2}\subset\mathbb T$ associated to $s$---or
more specifically $\omega_s$---which satisfy these properties.
\begin{gather}
\label{e.bw1} 
\text{$\boldsymbol \Omega:=
\{{\boldsymbol \omega}_{s},{\boldsymbol \omega}_{s1},{\boldsymbol \omega}_{s2}\mid s\in\mathcal {AT}\}$
is a grid in $\mathbb T$,}
\\\label{e.bw2} 
{\boldsymbol \omega}_{s1}\cap{\boldsymbol \omega}_{s2}=\emptyset,\qquad  \abs{{\boldsymbol \omega}_{s}}\ge32(\abs{{\boldsymbol \omega}_{s1}}+\abs{{\boldsymbol \omega}_{s2}}+\operatorname{dist}({\boldsymbol \omega}_{s1},{\boldsymbol \omega}_{s2}))
\\\label{e.bw2/3}
\text{${\boldsymbol \omega}_{s1}$ lies clockwise from ${\boldsymbol \omega}_{s2}$ on $\mathbb T$,}
\\\label{e.bw3} 
\abs{{\boldsymbol \omega}_{s}}\le{}K\frac{\scl{\omega_s}}{\prm {\omega_s}},
\\\label{e.bw4} 
\{\tfrac\xi{\abs{\xi}}\mid \xi\in\omega_s\}\subset\rho{\boldsymbol \omega}_{s1}. 
\end{gather}
In the top line, the intervals ${\boldsymbol \omega}_{s1}$ and ${\boldsymbol \omega}_{s2}$ are small subintervals of the unit circle, 
and we can define their dilate by a factor of 2 in an obvious way.
 Recall that $\rho$ is the rotation that takes $e$ into $\operatorname e_\perp$. Thus,
 $\operatorname e_{\omega_s}\in{\boldsymbol \omega}_{s1}$.  See the figures
 Figure~\ref{f.tile} and Figure~\ref{f.omega} for an illustration of these definitions.

	
	\begin{figure}
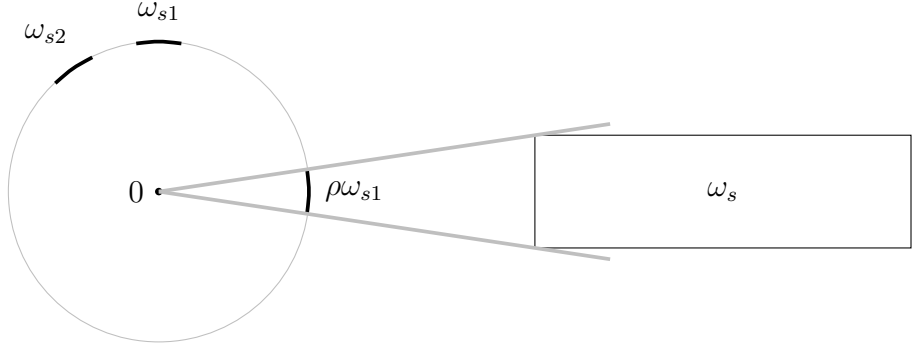

	
	 \begin{pgfpicture}{0cm}{0cm}{8cm}{4.5cm}
	 \begin{pgftranslate}{\pgfxy(-.15,2)}
	
	{\color{lightgray}  
		\pgfcircle[stroke]{\pgfxy(0,0)}{2cm}
	}
	
	\pgfcircle[fill]{\pgfxy(0,0)}{0.05cm}
	\pgfputat{\pgfxy(-.3,0)}{\pgfbox[center,center]{$0$}}

	\pgfrect[stroke]{\pgfxy(5,-.75)}{\pgfxy(5,1.5)}

		\pgfputat{\pgfxy(2.6,0)}{\pgfbox[center,center]{$\rho \omega_{s1} $}}
	\pgfputat{\pgfxy(0,2.4)}{\pgfbox[center,center]{$\omega_{s1} $}}
	\pgfputat{\pgfxy(-1.5,2.1)}{\pgfbox[center,center]{$\omega_{s2} $}}
	
	\pgfputat{\pgfxy(7.5,0)}{\pgfbox[center,center]{$\omega_s $}}
	
	\pgfsetlinewidth{1.4pt}
	\pgfmoveto{\pgfxy(1.97,.3)}
	\pgfcurveto{\pgfxy(2.005,0)}{\pgfxy(2.005,0)}{\pgfxy(1.97,-.3)}\pgfstroke
	\pgfmoveto{\pgfxy(-.3,1.97)}\pgfcurveto{\pgfxy(0,2.005)}{\pgfxy(0,2.005)}{\pgfxy(.3,1.97)} \pgfstroke
	\begin{pgfrotateby}{\pgfdegree{35}}
	\pgfmoveto{\pgfxy(-.3,1.97)}\pgfcurveto{\pgfxy(0,2.005)}{\pgfxy(0,2.005)}{\pgfxy(.3,1.97)} \pgfstroke
	\end{pgfrotateby}

	{\color{lightgray}  
			\pgfmoveto{\pgfxy(0,0)} 
			\pgflineto{\pgfxy(6,.9)} 	\pgfstroke
			\pgfmoveto{\pgfxy(0,0)} 
			\pgflineto{\pgfxy(6,-.9)}	\pgfstroke
	}

	\end{pgftranslate}
	\end{pgfpicture}
	
	\caption{An annular rectangular $\omega_s $, and three associated subintervals of $\rho \omega_{s1} $, $\omega_{s1} $, 
	and $\omega_{s2} $. }
	\label{f.omega} 
	\end{figure}
	

Note that 
$\abs{{\boldsymbol \omega}_{s}}\ge\abs{{\boldsymbol \omega}_{s1}}\ge\scl{\omega_s}/\prm{\omega_s}$.  Thus, $\operatorname e_{\omega_s}$ is in
${\boldsymbol \omega}_{s1}$, and  ${\boldsymbol \omega}_{s}$ serves as 
`the angle of uncertainty  associated to $R_s$.'
Let us be more precise
about the geometric information encoded into the angle of uncertainty.  Let $R_s=r_s\times
r_{s\perp}$ be as above. Choose another set of coordinate axes $(\operatorname e',\operatorname e'_\perp)$ with
$e'\in{\boldsymbol \omega}_{s}$ and let $R'$ be the product of the intervals $r_s$ and $r_{s\perp}$ in the new coordinate
axes.  Then $K^{-1}_0R'\subset R_s\subset K_0R'$ for an absolute constant $K_0>1$.

We say that {\em annular tiles} are  collections $\mathcal {AT}$ satisfying the
conditions \eqref{e.R1}---\eqref{e.bw4} above.   We
extend the definition of $\operatorname e_\perp$, $\operatorname e_{\omega\perp}$, $\prm{\omega}$ and $\scl{\omega}$ to
annular tiles in the obvious way, using the notation $\operatorname e_s$, $\operatorname e_{s\perp}$, $\prm s $ and
$\scl s $.

\medskip

A phase rectangle will have two distinct functions
associated to it.   In order to define these functions, set 
\begin{gather*}
\operatorname{T}_y f(x):=f(x-y),\quad y\in\mathbb R^2\quad \text{(Translation operator)}
\\
\operatorname{Mod}_\xi f(x):=\operatorname e^{i\xi\cdot x}f(x),
\quad \xi\in\mathbb R^2\quad \text{(Modulation operator)}
\\
\operatorname{D}_{R_1\times R_2}^p f(x_1,x_2):= \frac1{(\abs{R_1}\abs{R_2})^{1/p}} 
f\Bigl(\frac{x_1}{\abs{R_1}},\frac{x_2}{\abs{R_2}}\Bigr)\quad \text{(Dilation operator)}.
\end{gather*}
In the last display, $0<p\le\infty$, and $R_1\times R_2$ is a rectangle,
and the coordinates $(x_1,x_2)$ are those of the rectangle.  Note that the definition depends only on the side lengths of the rectangle, and not the location.  And that it preserves $L^p$ norm.

For a function $\varphi$ and tile $s\in\mathcal {AT}$ set 
\begin{equation} \label{e.zvfs}
 \varphi_s:=\operatorname{Mod}_{c(\omega_s)}
\operatorname{ T}_{c(R_s)}\operatorname{D}^2_{R_s} \varphi
\end{equation}
We shall  consider $\varphi$ to be a Schwartz function for which
$\widehat\varphi\ge0$ is supported  in a small ball, of radius $\kappa$,
about the origin in $\mathbb R^2$, and is identically $1$ on another smaller ball around the origin.  
(Recall that $\kappa$ is a fixed small constant.)

We introduce the tool to decompose the singular integral kernels.
In so doing, we consider a class of functions $\psi_t $, $t>0 $, so that 
\begin{gather}
\label{e.zc-Fourier}  \text{Each $\psi_t $ is supported in frequency in $[- \theta-\kappa, -\theta+\kappa ] $. } 
\\
\label{e.zc-Space}  \abs {\psi _t (x) } \lesssim C_N (1+\abs x )^{-N}\,, \qquad N>1\,.  
\end{gather}
 In the top line, $\theta $ is 
 a fixed positive constant  so that the second half of \eqref{e.dfs2} is true.

Define 
\begin{align}\label{e.dfs2} 
	\begin{split}
\phi_s(x):=&\inr \varphi_s(x-yv(x))\psi_s( y)\;dy
\\=&  
\ind {{\boldsymbol \omega}_{s2}}(v(x))\inr \varphi_s(x-yv(x))\psi_s( sy)\;dy.
	\end{split} 
\\  \label{e.zc_s} 
\psi_s(y ):=&\scl s \psi_{\scl s}(\scl s y).
\end{align}
An essential feature of this definition is that the support of the integral  is contained in the set 
${\{v(x)\in{\boldsymbol \omega}_{s2}\}}$, a fact which can be routinely verified.
That is, we can insert the indicator $\ind {{\boldsymbol \omega}_{s2}}(v(x))$ without loss of generality.
The set ${\boldsymbol \omega}_{s2}$ serves to localize the vector field, while 
${\boldsymbol \omega}_{s1}$ serves to identify the location of $\varphi_s $ in the frequency
coordinate.

The model operator we consider acts on a   Schwartz functions $f $,
and it  is defined by 
\begin{equation} \label{e.Cj}
\mathcal C_\Prm f:=  \sum_{\substack{s\in\mathcal {AT}(\Prm)\\ \scl s\ge\norm
v.\textup{Lip}.} }\ipf \phi_s. 
 \end{equation}
 In this display, $\mathcal {AT}(\Prm):=\{s\in\mathcal {AT}\mid \prm s=\Prm\}$, and we have deliberately formulated the 
 operator in a dilation invariant manner.


 \begin{lemma}\label{l.model}
   Assume that the vector field is Lipschitz, 
   and satisfies Conjecture~\ref{j.laceyli2}.   
   Then, for all $ \Prm\ge \norm v . \textup{Lip}. ^{-1} $, 
   the operator $\mathcal C_\Prm$ extends to a bounded map  from $L^2$ into itself, 
 with norm bounded by an absolute constant. 
\end{lemma}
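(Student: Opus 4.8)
\textbf{Proof plan for Lemma~\ref{l.model}.}
The strategy is the standard tree/size decomposition from the Carleson-theorem circle of ideas, adapted to the present setting where the phase-plane data is organized around annular tiles and the vector field enters through the localization $\ind{{\boldsymbol\omega}_{s2}}(v(x))$. First I would fix the annular parameter $\Prm$ and, since the operator $\mathcal C_\Prm$ is dilation invariant, normalize $\Prm$ so that $\norm v.\textup{Lip}.\simeq 1$; it then suffices to prove a uniform bound on $\norm f.2. = 1$. The natural quantities are the \emph{size} of a collection of tiles (a measure of the $L^2$-normalized coefficients $\ipf \phi_s.$ along trees) and the \emph{density} (the proportion of a tile's spatial support on which $v$ lands in the appropriate arc). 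I would state the two structural lemmas --- the tile-counting/organization lemma that sorts $\mathcal {AT}(\Prm)$ into trees with geometrically decaying size, and the single-tree estimate --- and then derive Lemma~\ref{l.model} by summing a geometric series over size and density levels, exactly as in \cite{laceyli1}.

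The core is then two sub-estimates. The \textbf{single-tree estimate}: for a tree $T$ (a collection of tiles with a common top whose frequency intervals ${\boldsymbol\omega}_s$ are nested), one has $\NOrm \sum_{s\in T}\ipf\phi_s..2. \lesssim \size T \cdot \density T^{1/2}\cdot \abs{\sh T}^{1/2}$, where $\sh T$ is the spatial shadow of $T$. This is proved by splitting the tree into ``tops'' and using almost-orthogonality of the $\phi_s$ across scales (the $\psi_s$ are frequency-supported near $-\theta\scl s$, so tiles at well-separated scales in a tree are essentially orthogonal), together with a John--Nirenberg / BMO argument on the tree top. The \textbf{size and density organization lemma}: the collection of all tiles with $\size \ge \sigma$ can be covered by trees whose shadows sum to $\lesssim \sigma^{-2}\norm f.2.^2$, and the collection with $\density \ge \delta$ is covered by trees whose shadows sum to $\lesssim \delta^{-1}\times(\text{a bounded factor})$. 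It is precisely the density-counting step where Conjecture~\ref{j.laceyli2} is invoked: the shadows of the high-density trees are controlled by the level sets of the Lipschitz Kakeya maximal function $\operatorname M_{v,\delta,\mathsf w}$ (with $\mathsf w \simeq 1/\Prm$ by Fourier uncertainty, which is why the $\mathsf w$-localized form of the conjecture is exactly what we need), and the assumed $L^p\to L^{p,\infty}$ bound with $\delta^{-N}$ growth gives $\sum_{T}\abs{\sh T} \lesssim \delta^{-N'}$ for some exponent.

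Putting the pieces together: decompose $\mathcal {AT}(\Prm) = \bigcup_{n,m} \mathcal T_{n,m}$ where $\mathcal T_{n,m}$ consists of trees with $\size \simeq 2^{-n}$ and $\density \simeq 2^{-m}$; apply the single-tree estimate on each tree and sum, obtaining a bound of the form $\sum_{n,m} 2^{-n}2^{-m/2}\bigl(\sum_{T\in\mathcal T_{n,m}}\abs{\sh T}\bigr)^{1/2} \lesssim \sum_{n,m} 2^{-n}2^{-m/2}\min\{2^{n},2^{mN'/2}\}$ (after Cauchy--Schwarz and the counting lemmas), and check this double sum converges --- here one needs $p$ close enough to $2$, equivalently $N'$ not too large, for the geometric series in $m$ to close, which matches the paper's remark that only $p$ near $2$ is currently reachable. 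The restriction $\scl s \ge \norm v.\textup{Lip}.$ in the definition of $\mathcal C_\Prm$, combined with $\scl s \le \kappa \Prm$, confines the relevant scales to a bounded range relative to the annulus, which keeps the scale sums finite.

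The step I expect to be the main obstacle is the \textbf{interface between the density count and the Lipschitz Kakeya maximal function}: one must verify that the spatial shadow of a high-density tree really is dominated pointwise by $\{\operatorname M_{v,\delta,\mathsf w}\mathbf 1_{E} > c\}$ for an appropriate set $E$ and width $\mathsf w\simeq 1/\Prm$, so that Conjecture~\ref{j.laceyli2} applies cleanly. This requires translating ``the tile $s$ has many points $x$ with $v(x)\in{\boldsymbol\omega}_{s2}$'' into ``the rectangle $R_s$ (or a dilate, in the tilted coordinates adapted to ${\boldsymbol\omega}_s$) has density $\gtrsim\delta$ with respect to $v$ in the sense of $\name V {R_s}$,'' which involves the geometric bookkeeping relating ${\boldsymbol\omega}_{s}$, the interval of uncertainty $\name{EX}{}$, and the constant $K_0$ comparing $R_s$ to its tilted version $R'$. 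The almost-orthogonality inside a single tree is the other technically heavy point, but it is by now fairly routine given the frequency support condition \eqref{e.zc-Fourier}; the density--Kakeya bridge is the genuinely new ingredient.
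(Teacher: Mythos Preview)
Your plan has the right skeleton, but there is a genuine gap precisely at the step you flagged. Your notion of density --- the fraction of $R_s$ on which $v$ lands in ${\boldsymbol\omega}_{s2}$ --- cannot support a counting lemma of the form $\sum_T|\sh{T}|\lesssim\delta^{-N'}$: take $v$ constant and every relevant tile has density $1$, yet there are infinitely many with disjoint shadows; there is no function for $\operatorname M_{v,\delta,\mathsf w}$ to act on. In the paper density is defined relative to a \emph{dual set} $G$ (see \eqref{e.DENSE}): the route is a restricted weak-type bound $|\langle\operatorname A_-\mathbf 1_F,\mathbf 1_G\rangle|\lesssim|F|^{1/p}$ for some $p_0<p<2$, and strong $L^2$ then follows by interpolation with the $p>2$ bounds already known from \cite{laceyli1} for measurable vector fields. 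With density so defined, the pure density count $\sum|R_{\mathbf T}|\lesssim\delta^{-1}$ is just $|G|\le1$ (see \eqref{e.last}) and has nothing to do with the Kakeya hypothesis; paired with the size count $\sigma^{-2}|F|$ it recovers only weak $L^2$, i.e.\ Theorem~\ref{t.laceyli1}, which is sharp for measurable $v$.

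What Conjecture~\ref{j.laceyli2} actually buys is a \emph{mixed} size--density count, the middle line of \eqref{e.large-1}. If a tree has size $\ge\sigma$ then Lemma~\ref{l.1-tree} gives $|F\cap\sigma^{-\kappa}R_{\mathbf T}|\gtrsim\sigma^{1+\kappa}|R_{\mathbf T}|$; if its top \emph{also} has density $\ge\delta$ relative to $G$, then (after some work, Lemma~\ref{l.add1}) $R_{\mathbf T}$ sits inside $\{\operatorname M_{v,\delta',\mathsf w}\mathbf 1_F>c\sigma^{1+\kappa}\}$, and the conjectured $L^p\to L^{p,\infty}$ bound yields $\sum|R_{\mathbf T}|\lesssim\delta^{-M}\sigma^{-p}|F|$. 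It is the exponent $p<2$ on $\sigma$, not an improved power of $\delta$, that pushes the restricted weak-type below $L^2$; the $\delta^{-M}$ is a nuisance absorbed in the summation \eqref{e.routine}. Two further ingredients you omit are not optional: a spatial truncation $\varphi_s=\alpha_s+\beta_s$, $\phi_s=a_{s-}+a_{s+}$ governed by $\gamma_s$ in \eqref{e.gamma}, needed so that the rectangles fed to the Lipschitz Kakeya maximal function obey the length restriction \eqref{e.shortlength}; and the $p>2$ theory of \cite{laceyli1}, used both to handle $\operatorname A_+$, $\operatorname B$ and to interpolate. (Your remark that the scales lie in a bounded range is also off: there are $\simeq\log(\Prm\cdot\norm v.\textup{Lip}.)$ of them, unbounded in $\Prm$.)
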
  

We remind the reader that for $2<p<\infty$ the only condition needed for the boundedness of $\mathcal C_\Prm$
is the measurability of the vector field, a principal result of Lacey and Li \cite{laceyli1}.
It is of course of great importance to add up the $\mathcal C_\Prm$ over $\Prm$.  
The method we use  for doing this are purely $L^2$  in nature, and lead to the estimate for $\mathcal C:=\sum_{j=1}^\infty \mathcal C_{2^j}$.

\begin{lemma}\label{l.modelsumed}
Assume that the vector field is of norm at most one in  $C^\alpha$ for some $\alpha>1$, 
and satisfies Conjecture~\ref{j.laceyli2}.   
 Then $\mathcal C$ maps $L^2$ into itself.
 In addition we have the estimate below, holding for all values of ${\mathsf {scl}}$. 
\begin{equation} \label{e.scalefixed}
\NOrm \sum_{\Prm=-\infty}^\infty{} \sum_{\substack{s\in\mathcal {AT}(\Prm)\\ \scl s={\mathsf {scl}}}} 
\ipf \phi_s .
2.\lesssim{}(1+ \log( 1+{\mathsf {scl}}^{-1} \lVert v\rVert_{C^\alpha})).
\end{equation}
Moreover, these operators are  unconditionally convergent in $s\in\mathcal {AT}$. 
\end{lemma}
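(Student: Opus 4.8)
The plan is to bootstrap from the single-scale estimate in Lemma~\ref{l.model}, which gives $\norm \mathcal C_\Prm .2\to2.\lesssim 1$ for each annular parameter $\Prm$, and then sum over $\Prm$ using $L^2$ orthogonality arguments. The first point is that for two distinct annular parameters $\Prm=2^j$ and $\Prm'=2^{j'}$ with $\abs{j-j'}$ large, the operators $\mathcal C_\Prm$ and $\mathcal C_{\Prm'}$ act on essentially disjoint frequency annuli: the functions $\phi_s$ for $s\in\mathcal {AT}(\Prm)$ have frequency support near the circle $\abs\xi\simeq\Prm$ (up to the kernel $\psi_s$, whose frequency support is confined to a band of width $\simeq\scl s\le\kappa\Prm$ around $-\theta\scl s$, hence negligible relative to $\Prm$). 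So I would first record a Fourier-support lemma showing $\mathcal C_\Prm f = \mathcal C_\Prm (\Delta_j f)$ for a Littlewood--Paley projection $\Delta_j$ onto an annulus $\abs\xi\simeq 2^j$, up to rapidly decaying tails. Granting this, $\NOrm \sum_\Prm \mathcal C_\Prm f.2.^2 \lesssim \sum_j \norm \mathcal C_{2^j} f.2.^2 \lesssim \sum_j \norm \Delta_j f.2.^2 \lesssim \norm f.2.^2$, by almost-orthogonality of the $\Delta_j$, which gives the boundedness of $\mathcal C$ on $L^2$.

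For the refined estimate \eqref{e.scalefixed}, one fixes $\scl s = \mathsf{scl}$ and sums $\sum_\Prm \sum_{s\in\mathcal {AT}(\Prm),\ \scl s=\mathsf{scl}}\ipf\phi_s$. Now the summands with a common scale but varying annular parameter $\Prm$ are \emph{not} frequency-disjoint in general: the frequency supports are annuli of width $\simeq\mathsf{scl}$ at radii $\Prm$, so they begin to overlap once $\Prm$ differs by less than $\mathsf{scl}$, i.e.\ for $\Prm\lesssim\mathsf{scl}$ there is genuine overlap and one expects a logarithmic loss, whereas for $\Prm\gtrsim\mathsf{scl}$ the annuli at dyadically separated radii are disjoint and one recovers orthogonality. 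This is precisely the dichotomy that produces the factor $1+\log(1+\mathsf{scl}^{-1}\norm v.C^\alpha.)$: the number of "overlapping" dyadic scales $\Prm$ is $\simeq\log(\norm v.C^\alpha./\mathsf{scl})$ once one remembers that the sum over $\Prm$ is restricted by $\scl s=\mathsf{scl}\ge\norm v.\textup{Lip}.$ on one side and, via the $C^\alpha$ hypothesis, the vector field's oscillation controls the effective range on the other. So I would split $\sum_\Prm = \sum_{\Prm\le \norm v.C^\alpha.}+\sum_{\Prm> \norm v.C^\alpha.}$: on the first (at most $\log$-many scales) use the triangle inequality together with Lemma~\ref{l.model} applied scale-by-scale; on the second use frequency disjointness and Lemma~\ref{l.model} to get a bounded ($\mathsf{scl}$-independent) contribution. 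Combining gives \eqref{e.scalefixed}.

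The $C^\alpha$ regularity with $\alpha>1$, rather than mere Lipschitz, enters exactly here: it is what lets one say that on scales $\Prm$ much larger than $\norm v.C^\alpha.$ the vector field is, at the relevant spatial scale $\scl s^{-1}$, so close to constant that the operator $\mathcal C_\Prm$ behaves like a classical Fourier multiplier localized to the annulus $\abs\xi\simeq\Prm$, restoring clean orthogonality; the extra $\eta=\alpha-1>0$ derivatives give a gain of $(\scl s/\Prm)^{\eta}$ in the error terms, which is summable. Unconditional convergence in $s\in\mathcal {AT}$ then follows from the same square-function bound: the partial sums over finite subsets of $\mathcal {AT}$ form a Cauchy net in the operator norm on $L^2$, since tail sums are controlled by $\big(\sum_{j\ \mathrm{large}} \norm\Delta_j f.2.^2\big)^{1/2}\to0$. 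The main obstacle I anticipate is making the frequency-overlap counting precise: one must show rigorously that the $C^\alpha$ norm (and not just the Lipschitz norm) is the correct threshold separating the "logarithmically many overlapping annuli" regime from the "orthogonal" regime, which requires a careful stationary-phase or integration-by-parts estimate on $\phi_s$ showing its Fourier transform decays away from the annulus $\abs\xi\simeq\Prm$ at a rate governed by $\norm v.C^\alpha.$ relative to $\scl s$. Everything else is bookkeeping with $L^2$ orthogonality.
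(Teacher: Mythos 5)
Your plan rests on the claim that the \emph{output} functions $\phi_s$, $s\in\mathcal{AT}(\Prm)$, ``have frequency support near the circle $\abs\xi\simeq\Prm$,'' so that the family $\{\mathcal C_{2^j}f\}_j$ is nearly orthogonal and $\NOrm\sum_\Prm\mathcal C_\Prm f.2.^2\lesssim\sum_j\norm\mathcal C_{2^j}f.2.^2$. This is the step that fails, and it is the crux of the Lemma. What is trivially true is that $\mathcal C_\Prm f=\mathcal C_\Prm(\operatorname P_\Prm f)$ for the input-side projection $\operatorname P_\Prm$ built into $\varphi_s$; this yields $\mathcal C_\Prm\mathcal C_{\Prm'}^*\equiv0$ for $\Prm\neq\Prm'$ but says nothing about $\mathcal C_\Prm^*\mathcal C_{\Prm'}$, which is what the square-of-a-sum computation actually requires. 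The output $\phi_s(x)=\ind{\boldsymbol\omega_{s2}}(v(x))\int\varphi_s(x-yv(x))\psi_s(y)\,dy$ involves composing with the \emph{variable} change of coordinates $x\mapsto x-yv(x)$ and multiplying by the rough indicator $\ind{\boldsymbol\omega_{s2}}(v(\cdot))$; neither preserves frequency localization, and the Fourier transform of $\phi_s$ genuinely spreads across many annuli. The entire content of the Fourier Localization Lemma~\ref{l.fl} --- with its exceptional sets $F_s$ and the estimates \eqref{e.Fs1}--\eqref{e.Fs2} --- is to quantify this spread and show that for a $C^\alpha$ vector field it decays at a rate $2^{-k\cdot O(\alpha-1)}$. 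Only after a long case analysis does the paper obtain $\norm\mathcal C_\Prm^*\mathcal C_{\Prm'}.2.\lesssim\max(\Prm,\Prm')^{-\delta}$ with $\delta=\tfrac1{128}(\alpha-1)$ (Lemmas~\ref{l.addC}, \ref{l.addC'}) and then sums by a Cotlar--Stein-type computation. What you defer at the end as a ``stationary-phase or integration-by-parts estimate'' that amounts to ``bookkeeping'' is in fact most of Chapter 5 of the paper, and it is exactly where the hypothesis $\alpha>1$ --- as opposed to mere Lipschitz --- does all the work. Note also that the mean-zero property of the kernel $\mathsf{scl}\,\psi(\mathsf{scl}\,y)$ is essential here; the paper remarks that without it \eqref{e.scalefixed} is false, and your sketch nowhere invokes cancellation of this kind.

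Two smaller corrections. For the ``logarithmically many'' part of \eqref{e.scalefixed}, the single-operator bound you need is $\norm\mathcal C_{\Prm,\Scl}.2.\lesssim1$ for \emph{fixed} scale and annulus, which is Lemma~\ref{l.fixedscale} (valid for merely measurable $v$); Lemma~\ref{l.model} controls the full sum over scales for a single $\Prm$ and cannot be ``applied scale-by-scale'' in the way you suggest. And the paper derives \eqref{e.scalefixed} not by choosing the cutoff $\norm v.C^\alpha.$ by hand, but by first proving the scale-invariant bound $\NOrm\sum_{\Prm\ge 8\norm v.C^\alpha.}\mathcal C_{\Prm,\Scl}.2.\lesssim1$ as a consequence of Lemma~\ref{l.addC'}, and then counting the $\lesssim1+\log(1+\mathsf{scl}^{-1}\norm v.C^\alpha.)$ remaining values of $\Prm$ in $\Scl\le\Prm\le 8\norm v.C^\alpha.$. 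Your count of the overlapping regime is correct; the gap is in how you propose to treat the complementary regime.
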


These are the principal steps towards the proof of Theorem~\ref{t.laceyli1}. 
In the course of the proof, we shall not invoke the additional notation needed to account 
for the unconditional convergence, as it is entirely notational. They can be added in by
 the reader. 

 Observe that \eqref{e.scalefixed} is only of interest when 
 ${\mathsf {scl}}<\norm v.C^\ensuremath{\alpha}.$. This inequality depends critically 
on the fact that the kernel ${\mathsf {scl}} \ensuremath{\psi}({\mathsf {scl}} y)$ 
has mean zero.  Without this assumption, this inequality is certainly 
false.

The proof of   Theorem~\ref{t.laceyli1} from these two lemmas is an argument in which one 
averages over translations, 
dilations and rotations of grids.  
The specifics of the approach are very close to the corresponding argument in \cite{laceyli1}. 
The details are omitted.

The operators $\mathcal C_\Prm$ and $\mathcal C$ are constructed from a a kernel 
which is a smooth analog of 
the truncated kernel ${\rm p.v.} \frac{1}{t}
1_{\{|t|\leq 1\}}$.  Nevertheless, our main theorem follows,\footnote{In the typical 
circumstance, one uses a maximal 
function to pass back and forth between truncated and smooth kernels.  
This route is  forbidden to us; there is no 
appropriate maximal function to appeal to.}
due to the observation that we can choose  a sequence of Schwartz kernels  
$\psi_{(1+\kappa)^n} $, for $n\in\mathbb Z $, which 
satisfy \eqref{e.zc-Fourier} and \eqref{e.zc-Space}, and so that  for 
\begin{equation*}
K(t):=\sum_{n\in \ensuremath{\mathbb Z} } 
a_n (1+\kappa)^n \ensuremath{\psi}_{(1+\kappa)^n}((1+\kappa)^n t).
\end{equation*}
we have ${\rm p.v.} \frac{1}{t}
1_{\{|t|\leq 1\}}=K(t)-\overline{K(t)} $.  
Here, for $n\ge0$ we have $\abs{a_n} \lesssim1$.  And for $n<0$, we have 
$\abs{a_n} \lesssim(1+\kappa)^n$.  
The principal sum is thus over $n\ge\max(0,\norm v.C^\alpha.)$, and this corresponds to the operator $\mathcal C$.   For those
$n<\max(0,\norm v.C^\alpha.)$, we use the  estimate \eqref{e.scalefixed}, and the rapid decay of the coefficient $a_n$.

\section*{Truncation and an Alternate Model Sum }
 
There are significant obstacles to proving the boundedness of 
the model sum  $ \mathcal C _{\Prm}$ 
on an $L^p$ space, for $1<p<2$.  In this section, we rely upon some naive $L^2$ estimates 
to define a new model sum which is bounded on  $L^p$, for some $1<p<2$. 

Our next Lemma is indicative of the  estimates we need.  For choices of  ${\mathsf {scl}}<\kappa\Prm$, set 
\begin{equation*}
\mathcal {AT}(\Prm,{\mathsf {scl}}):=\{s\in\mathcal {AT}(\Prm)\mid \scl s={\mathsf {scl}}\}.
\end{equation*}
 
\begin{lemma}\label{l.fixedscale}
  For   measurable vector fields $v$ and all choices of $\Prm$ and ${\mathsf {scl}}$. 
\begin{equation*}
\NOrm \sum_{s\in\mathcal {AT}(\Prm,{\mathsf {scl}})}\langle f,\varphi_s\rangle \phi_s .2.\lesssim\norm f.2.
\end{equation*}
\end{lemma}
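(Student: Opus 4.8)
\textbf{Plan for the proof of Lemma~\ref{l.fixedscale}.}

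The plan is to exploit the fact that, once both the scale $\scl s = {\mathsf {scl}}$ and the annular parameter $\prm s = \Prm$ are fixed, the operator $f \mapsto \sum_{s \in \mathcal {AT}(\Prm,{\mathsf {scl}})} \langle f, \varphi_s\rangle \phi_s$ is a genuinely single-scale object: the functions $\varphi_s$ have frequency support in a fixed annulus of radius $\simeq \Prm$ and width $\simeq {\mathsf {scl}}$, the spatial rectangles $R_s$ all have the fixed dimensions ${\mathsf {scl}}^{-1} \times \Prm^{-1}$ (up to the factor $4$ in Definition~\ref{d.dual}), and the convolution kernels $\psi_s$ all live at the single scale ${\mathsf {scl}}$. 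First I would normalize by dilation so that ${\mathsf {scl}} = 1$, which is legitimate since $\mathcal C_\Prm$ and all the pieces are formulated in a dilation-invariant manner; then the $R_s$ are translates of a fixed rectangle $R_0$ of unit scale and width $\Prm^{-1}$, and for each fixed annular rectangle $\omega$ the centers $c(R_s)$ with $\omega_s = \omega$ run over (a subset of) the lattice $c(R_0) + \abs{R_0}\mathbb Z^2$ coming from the central grid, while by \eqref{e.R4} there are $\simeq \Prm$ distinct directions $\omega$.

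The key steps, in order, are as follows. (i) \emph{Almost orthogonality in the frequency variable across directions.} For two tiles $s,s'$ with distinct direction rectangles $\omega_s \neq \omega_{s'}$, the supports of $\widehat{\varphi_s}$ and $\widehat{\varphi_{s'}}$ are essentially disjoint (they are subsets of thin boxes tangent to the circle $\abs\xi = \Prm$ at well-separated points, with the overlap controlled by $\kappa$ being small); the same is true, after the vector-field twist, for $\phi_s$ and $\phi_{s'}$ because of \eqref{e.bw4} and the localization $\ind{{\boldsymbol\omega}_{s2}}(v(x))$ in \eqref{e.dfs2}, which confines $\phi_s$ in frequency to a neighborhood of $\omega_s$. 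This reduces the estimate to a direction-by-direction bound plus a bounded-overlap argument, i.e. it suffices to prove $\sum_{\omega} \norm{\sum_{s : \omega_s = \omega} \langle f, \varphi_s\rangle \phi_s}.2.^2 \lesssim \norm f.2.^2$. (ii) \emph{Single-direction estimate.} Fix $\omega$. Here all tiles share the same direction, the $R_s$ tile the plane (by \eqref{e.R3.5} the translates cover $\mathbb R^2$ with bounded overlap), $\{\varphi_s : \omega_s=\omega\}$ is an almost-orthogonal family (translates of one bump with disjointly supported, essentially, spatial profiles against a fixed frequency box), so $\sum_s \abs{\langle f,\varphi_s\rangle}^2 \lesssim \norm f.2.^2$; and the output functions $\phi_s$ are likewise an almost-orthogonal family in $L^2$, since each $\phi_s$ is frequency-supported in the box $\omega_s$ and is spatially concentrated near $R_s$ with rapid decay (from \eqref{e.zc-Space}), the $R_s$ having bounded overlap. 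Hence $\norm{\sum_s \langle f,\varphi_s\rangle \phi_s}.2.^2 \lesssim \sum_s \abs{\langle f,\varphi_s\rangle}^2 \norm{\phi_s}.2.^2 \lesssim \sum_s \abs{\langle f,\varphi_s\rangle}^2$, using $\norm{\phi_s}.2. \lesssim 1$ (the $\psi_s$ are $L^1$-normalized and the convolution against $\varphi_s$ preserves $L^2$ up to a constant). (iii) \emph{Assembling.} Combining (i) and (ii), $\norm{\sum_{s \in \mathcal{AT}(\Prm,{\mathsf{scl}})}\langle f,\varphi_s\rangle\phi_s}.2.^2 \lesssim \sum_\omega \sum_{s:\omega_s=\omega}\abs{\langle f,\varphi_s\rangle}^2 \lesssim \norm f.2.^2$, which is the claim.

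\textbf{Main obstacle.} The delicate point is step (i)—or more precisely, controlling the interaction between $\phi_s$ and $\phi_{s'}$ when $\omega_s \neq \omega_{s'}$ but the two annular boxes are \emph{not} far apart, e.g. adjacent directions among the $\simeq \Prm$ of them. The vector-field substitution $x \mapsto x - yv(x)$ in \eqref{e.dfs2} smears out the clean frequency support of $\varphi_s$, and one must verify that, after this smearing, $\phi_s$ is still frequency-localized to within a controlled dilate of $\omega_s$; this is where the bandwidth bound \eqref{e.bw3}, the nesting \eqref{e.bw4}, and the Lipschitz control on $v$ enter, essentially as a Fourier-localization statement of the type used to prove Theorem~\ref{t.laceyli1} (cf. the Fourier Localization Lemma). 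The bookkeeping that turns ``adjacent boxes overlap boundedly many times'' into a Schur-test or $TT^*$ estimate with summable off-diagonal decay is routine once this localization is in hand, but it is the part that genuinely uses the structure of annular tiles rather than soft arguments. I would expect the cleanest write-up to phrase (i)–(ii) together as a single $TT^*$ computation: expand $\norm{\sum_s \langle f,\varphi_s\rangle\phi_s}.2.^2$, bound $\abs{\langle \phi_s,\phi_{s'}\rangle}$ by a quantity that decays rapidly in the spatial separation of $R_s,R_{s'}$ and is supported on pairs with $\omega_s,\omega_{s'}$ close, and then sum.
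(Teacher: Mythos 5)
Your overall shape—Bessel inequality for the $\varphi_s$ inputs plus near-orthogonality of the outputs $\phi_s$—is the right one, and your step (ii) for tiles sharing a direction (disjoint $R_s$ of a common dimension, rapid decay) matches what the paper does. But step (i) contains a conceptual misreading that, as written, would sink the argument.

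You cite the factor $\ind{{\boldsymbol\omega}_{s2}}(v(x))$ in \eqref{e.dfs2} but describe it as something that ``confines $\phi_s$ in frequency,'' and then you flag as the main obstacle a Fourier-localization step (requiring \eqref{e.bw3}, \eqref{e.bw4}, and ``Lipschitz control on $v$'') to control $\langle\phi_s,\phi_{s'}\rangle$ for adjacent directions. This is backwards: the indicator is a \emph{spatial} localization. Since $\phi_s(x)\ne 0$ forces $v(x)\in{\boldsymbol\omega}_{s2}\subset{\boldsymbol\omega}_s$, and at a fixed scale and annular parameter the arcs ${\boldsymbol\omega}_s$ come from a grid (so two distinct ones are disjoint), the functions $\phi_s$ and $\phi_{s'}$ with ${\boldsymbol\omega}_s\ne{\boldsymbol\omega}_{s'}$ have \emph{disjoint supports in $x$}. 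No frequency almost-orthogonality, no $TT^*$ sum with off-diagonal decay, and in particular no Fourier Localization Lemma is needed. That last point is not just a matter of elegance: the statement is for \emph{measurable} $v$, where the vector-field substitution gives you absolutely no frequency control, so the route you sketch—hinging on Lipschitz smoothness to tame the frequency smearing—would simply fail under the hypothesis of the Lemma. The paper's proof is exactly: (a) Bessel for $\{\varphi_s\}$, which is evident since scale and annulus are fixed; (b) for ${\boldsymbol\omega}_s\ne{\boldsymbol\omega}_{s'}$, disjoint spatial supports as above; (c) for ${\boldsymbol\omega}_s={\boldsymbol\omega}_{s'}$, disjoint congruent $R_s$'s plus rapid decay—and nothing else.

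So the gap to close is small but genuine: replace your step (i) by the observation that the $v$-localization makes cross-direction terms vanish identically, and drop the dilation normalization and the appeal to \eqref{e.R3.5}/\eqref{e.R4} (they play no role here). With that change your plan collapses to the paper's two-line argument and correctly covers the measurable case.
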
 

\begin{proof} The scale and annulus are fixed in this sum, making the Bessel inequality 
\begin{equation*}
\sum_{s\in\mathcal {AT}(\Prm,{\mathsf {scl}})}\abs{ \ip f,\varphi_s, }^2\lesssim{} \lVert f\rVert_2^2
\end{equation*}
 evident. For any two tiles $s$ and $s'$ that contribute to this sum, if 
 ${\boldsymbol \omega}_{s}\not={\boldsymbol \omega}_{s'}$, then 
$\phi_s$ and $\phi_{s'}$ are disjointly supported. And if  
${\boldsymbol \omega}_{s}={\boldsymbol \omega}_{s'}$, then $R_s$ and $R_s'$ 
are disjoint, but share the same dimensions and orientation in the plane.  The rapid decay of the 
functions $\phi_s$ then gives us the estimate 
\begin{align*}
\NOrm \sum_{s\in\mathcal {AT}(\Prm,{\mathsf {scl}})}
\langle f,\varphi_s\rangle \phi_s .2.\lesssim 
&
\Biggl[ \sum_{s\in\mathcal {AT}(\Prm,{\mathsf {scl}})}\abs{ \ip  f, \varphi_s, }^2\Biggr]^{1/2}
\\&\lesssim \lVert f\rVert_2
 \end{align*}
 \end{proof}

 Consider the variant of the operator \eqref{e.Cj} given by 
 \begin{equation}\label{e.Phi}
 \Phi f=\sum_{\substack{s\in\mathcal {AT}(\Prm) \\ \scl s\ge\kappa^{-1}\vLip } }\ipf \fss {} .
 \end{equation}
 As $\Prm$ is fixed, we shall begin to suppress it in our notations for operators. 
  The difference between $\Phi$ and $\mathcal C_{\Prm}$ is the absence of the 
  initial $\lesssim\log (1+\vLip)$ scales 
  in the former.  The $L^2$ bound for these missing scales is clearly provided by
  Lemma~\ref{l.fixedscale}, 
  and so it remains for us to establish 
  \begin{equation} \label{e.Phi2} 
  \norm \Phi .2.\lesssim{}1,
  \end{equation}
  the implied constant being independent of $\Prm$, and the Lipschitz norm of $v$.

  It is an important fact, the main result of  Lacey and Li \cite{laceyli1}, that 
  \begin{equation}\label{e.Phip} 
  \norm \Phi.p.\lesssim{}1,\qquad 2<p<\infty.
  \end{equation}
 This holds without the Lipschitz assumption.

We are now at a point where we can be more directly engaged with the 
construction of our alternate model sum. 
 We only consider tiles with $\kappa^{-1}\vLip\le\scl s\le\kappa\Prm $. 
 A parameter is introduced which is used 
 to make a spatial truncation of the functions $\varphi_s $; it is 
 \begin{equation}\label{e.gamma} 
 \gamma_s^2:= 100 ^{-2}\frac{\scl s}{\vLip}
 \end{equation}
  Write $\varphi_s=\alpha_s+\beta_s$ where 
$\alpha_s=(T_{c(R_s)}D^{\infty}_{\gamma_s R_s}\zeta)\varphi_s$, and $\zeta$ is a
smooth Schwartz function supported on $\abs x<1/2$, and equal to $1$ on $\abs x<1/4 $. 
 
 Write  for choices of tiles $s$, 
 \begin{equation} \label{e.psi_s} 
\psi _{s}( y)=\psi_{s-}(y)+\psi_{s+}(y)
 \end{equation}
 where $\psi_{s-}(y)$ is a Schwartz function on $\mathbb R$, 
 with  
\begin{equation*}
\operatorname {supp} (\psi _{s-})\subset \frac12\gamma_s (\scl s) ^{-1} [-1,1]\,,
\end{equation*}
 and equal to $ \psi_{\scl s}( y)$ 
 for $\abs y<\frac14\gamma_s  (\scl s) ^{-1} $. Then define 
 \begin{equation} \label{e.apm}
 a_{s\pm}(x)=\ind {{\boldsymbol \omega}_{s2}}(v(x))\int \phi_s(x-yv(x))\psi_{s\pm}(y)\; dy.
 \end{equation}
 Thus, $\phi_s=a_{s-}+a_{s+}$. Recalling 
 the notation $\operatorname S _{\Prm}$ in Theorem~\ref{t.laceyli1}, define 
 \begin{equation}\label{e.A}
 \operatorname A_\pm f:=\sum_{\substack{s\in\mathcal {AT}(\Prm) \\ \scl s\ge\kappa^{-1}\vLip }} 
 \ip \operatorname S_{\Prm} f, \alpha_s, a_{s\pm} 
 \end{equation}
 We will write $\Phi=\Phi\operatorname S_\Prm=\operatorname A_++\operatorname A_-+
 \operatorname B$, where $B$ is an operator defined in \eqref{e.Bdef} below.  
 The main fact we need concerns $\operatorname A_-$. 
 
 \begin{lemma}\label{l.A}
  There is a choice of $1<p_0<2$ so that 
 \begin{equation*}
 \norm \operatorname  A_- .p.\lesssim{}1,\qquad p_0<p<\infty.
 \end{equation*}
 The implied constant is independent of the value of $\Prm$, and the Lipschitz norm of $v$. 
 \end{lemma}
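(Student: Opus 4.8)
The plan is to prove Lemma~\ref{l.A} by exploiting the fact that $\operatorname A_-$ is built from the \emph{short} part $\psi_{s-}$ of the kernel, which is supported at scale $\gamma_s (\scl s)^{-1}$, together with the spatial truncation $\alpha_s$ of $\varphi_s$ to the dilate $\gamma_s R_s$. The key structural point is that on the support of $\psi_{s-}$, the displacement $yv(x)$ is of size at most $\tfrac12 \gamma_s(\scl s)^{-1}$, which is much smaller than the short side of $\gamma_s R_s$ (by the choice of $\gamma_s$ in \eqref{e.gamma}, since $\gamma_s R_s$ has dimensions $\gamma_s \scl s^{-1}\prm s \times \gamma_s \scl s^{-1}$ and $\gamma_s\gg1$). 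Hence $a_{s-}$ is, up to rapidly decaying tails, supported on a fixed dilate of $R_s$, essentially a translate/modulate/dilate of a single Schwartz bump adapted to $R_s$ --- it does \emph{not} see the vector field in an essential way beyond the localization $\ind{{\boldsymbol \omega}_{s2}}(v(x))$. This is the mechanism that converts $\operatorname A_-$ into an object amenable to a maximal-function/orthogonality argument rather than the full phase-plane machinery.

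Concretely, I would first record the pointwise bound $\abs{a_{s-}(x)} \lesssim \ind{{\boldsymbol \omega}_{s2}}(v(x)) \, \operatorname{D}^2_{\gamma_s' R_s}\Theta(x - c(R_s))$ for a fixed Schwartz $\Theta$ and a mild dilate $\gamma_s'$ of $\gamma_s$, and similarly $\abs{\alpha_s}\lesssim \operatorname{D}^2_{\gamma_s R_s}\Theta'$. Then, grouping the tiles by their angle interval ${\boldsymbol \omega}_{s}$ and by scale, the pieces with the same ${\boldsymbol \omega}_s$ have spatially disjoint (or finitely overlapping) supports in the relevant coordinates, as in the proof of Lemma~\ref{l.fixedscale}. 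The novelty over Lemma~\ref{l.fixedscale} is that here we must sum over all scales $\scl s \ge \kappa^{-1}\vLip$, and this is exactly where the Lipschitz Kakeya Maximal Function enters: the set where the vector field lies in ${\boldsymbol \omega}_{s2}$ inside a rectangle of the relevant dimensions has measure bounded below (density $\delta \gtrsim 1$ after the truncation, or more carefully a power of $\gamma_s$), so the operator norm of $\sum_s \ip{\operatorname S_\Prm f}{\alpha_s} a_{s-}$ is controlled by (a dyadic sum of) $\operatorname M_{v,\delta,\mathsf w}$-type maximal operators applied to $\operatorname S_\Prm f$. Using the hypothesis Conjecture~\ref{j.laceyli2}, these are bounded on $L^{p_0}$ for some $p_0 < 2$ with norm $\lesssim \delta^{-N}$; since the frequencies are localized to the annulus $\abs\xi \simeq \Prm$, the Fourier-uncertainty width is $\mathsf w \simeq \Prm^{-1}$, matching the parametrization of the maximal function, and the constant is uniform in $\Prm$ after rescaling. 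Interpolating this $L^{p_0}$ bound against the $L^p$ bounds for $p>2$ --- which follow from the unconditional $L^2$ bound of Lemma~\ref{l.fixedscale} summed geometrically in scale, or directly from \eqref{e.Phip} applied to the relevant truncations --- yields boundedness on the full range $p_0 < p < \infty$.

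The main obstacle I anticipate is the summation over scales for the $L^{p_0}$ estimate: unlike the single-scale Lemma~\ref{l.fixedscale}, one cannot simply invoke Bessel's inequality, and one must genuinely organize the tiles into a tree/stack structure and bound the resulting sum by a \emph{single} application of the Lipschitz Kakeya Maximal Function rather than a divergent sum of them. The honest version of this requires showing that the spatial supports of the $a_{s-}$, as $s$ ranges over tiles with a fixed angle ${\boldsymbol \omega}_s$ but varying scale, tile (with bounded overlap weighted by density) a region whose measure is controlled by the $L^1$ mass produced by the covering-lemma argument of Theorem~\ref{t.lipKakeya} --- i.e.\ one reuses the estimates \eqref{e.2<1} and \eqref{e.bigcup}, but now with the stronger $L^p$ input of Conjecture~\ref{j.laceyli2} in place of the $L^2$ input. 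A secondary technical point is controlling the error terms: the tails of $\psi_{s-}$ and of $\alpha_s$ outside their nominal supports, and the contribution of $\beta_s = \varphi_s - \alpha_s$, all of which should be absorbed by the rapid decay and reassigned to the operators $\operatorname A_+$ and $\operatorname B$ or shown to be negligible; these are routine but must be tracked to keep the constant independent of $\Prm$ and $\vLip$.
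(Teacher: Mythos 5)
Your high-level scaffolding is right---establish a bound strictly below $L^2$ and interpolate with the $p>2$ range, which as you note follows from \eqref{e.Phip} together with Lemmas~\ref{l.B} and~\ref{l.A+} since $\Phi=\operatorname A_-+\operatorname A_++\operatorname B$---but the heart of the argument is missing, and the place you propose to plug in the Lipschitz Kakeya hypothesis would not work as stated. You suggest that $a_{s-}$ is ``essentially a Schwartz bump adapted to $R_s$'' that ``does not see the vector field,'' making $\operatorname A_-$ amenable to a maximal-function/orthogonality argument rather than ``the full phase-plane machinery.'' This is not so. The paper's proof \emph{is} the full phase-plane machinery, in the restricted weak type formulation of Muscalu--Tao--Thiele: fix sets $F,G$ of finite measure, introduce $\dense{\cdot}$ and $\size{\cdot}$ (\eqref{e.DENSE}, \eqref{e.size}), decompose the tile set via the principal organizational Lemma~\ref{l.dense+size} into pieces $\mathbf S_{\delta,\sigma}^\ell$ with controlled density, size, and count, estimate each piece by the Tree Lemma~\ref{l.tree}, and sum the dyadic series in $(\delta,\sigma)$ using the delicate case analysis in the proof of \eqref{e.routine}. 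There is no shortcut around this: Bessel's inequality fails across scales, and the restricted-weak-type/John--Nirenberg structure is what allows the sum to converge with an exponent $p_0<2$.

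Moreover, you misidentify where Conjecture~\ref{j.laceyli2} enters. You propose applying $\operatorname M_{v,\delta,\mathsf w}$ to $\operatorname S_{\Prm}f$, directly, to control the operator norm. In fact the maximal function is applied to $\mathbf 1_F$ in a \emph{counting} argument for tree tops: the middle estimate of \eqref{e.large-1} bounds $\operatorname{count}(\mathbf S^1_{\text{large}})$, and the proof (via Lemma~\ref{l.add1}, equation \eqref{unionest}) bounds $\bigl\lvert\bigcup 2^kR_{\mathbf T}\bigr\rvert$ by the sublevel-set estimate $\lvert\{\operatorname M_{v,\delta',\mathsf w}\mathbf 1_F > \sigma^{1+\kappa/4N}\}\rvert$, which is the pivot to the conjectured weak $L^p$ bound. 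This requires inflating the tree-top rectangles to $\sigma^{-\kappa/10N}R_{\mathbf T}$ so they satisfy the length restriction \eqref{e.shortlength} and carry density inherited from $\dense{\cdot}$. Your suggestion to ``reuse the estimates \eqref{e.2<1} and \eqref{e.bigcup}'' is also off base---those are the covering-lemma estimates in the proof of the weak $L^2$ bound for the maximal function itself, not estimates used in the proof of Lemma~\ref{l.A}; here one needs the \emph{hypothesis} that the maximal function is bounded below $L^2$, which is exactly Conjecture~\ref{j.laceyli2} and cannot be derived from \eqref{e.2<1}--\eqref{e.bigcup}.
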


 The proof of this Lemma is given in the next section, modulo three additional Lemmata stated 
therein.    The following Lemma is  important for our  approach to the previous Lemma. 
 It is proved below. 
 
 \begin{lemma}\label{l.bessel}
 
  For each choice of   $\kappa^{-1}\vLip <{\mathsf {scl}}<\kappa\Prm$, we have the estimate 
 \begin{equation*}
 \sum_{s\in\mathcal {AT}(\Prm,{\mathsf {scl}})} \abs{\ip  \operatorname S_{\Prm} f, \alpha_s,}^2\lesssim{}\lVert f\rVert_2^2.
 \end{equation*}
 \end{lemma}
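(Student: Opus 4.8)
The plan is to treat $\alpha_s$ as a harmless truncation of the genuine wave packet $\varphi_s$ and to reduce the claim to the Bessel inequality $\sum_{s\in\mathcal {AT}(\Prm,{\mathsf {scl}})}\abs{\ip f,\varphi_s,}^2\lesssim\norm f.2.^2$ already recorded in the proof of Lemma~\ref{l.fixedscale}. Write $\alpha_s=\varphi_s-\beta_s$, where $\beta_s:=(1-T_{c(R_s)}D^{\infty}_{\gamma_sR_s}\zeta)\varphi_s$ is the part of $\varphi_s$ living outside $\tfrac14\gamma_sR_s$. Since $\abs{a-b}^2\le 2\abs a^2+2\abs b^2$, it suffices to bound $\sum_{s\in\mathcal {AT}(\Prm,{\mathsf {scl}})}\abs{\ip \operatorname S_{\Prm} f,\varphi_s,}^2$ and $\sum_{s\in\mathcal {AT}(\Prm,{\mathsf {scl}})}\abs{\ip \operatorname S_{\Prm} f,\beta_s,}^2$ separately by $\norm f.2.^2$. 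The first sum is immediate: apply the Bessel inequality of Lemma~\ref{l.fixedscale} to $g=\operatorname S_{\Prm} f$ and use $\lVert\operatorname S_{\Prm}\rVert_{2\to 2}\lesssim1$.

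For the tail sum, observe that on the fixed block $\mathcal {AT}(\Prm,{\mathsf {scl}})$ the truncation parameter $\gamma_s^2=100^{-2}{\mathsf {scl}}/\vLip$ is a single constant, and that $\scl s\ge\kappa^{-1}\vLip$ there, so $\gamma_s\ge 100^{-1}\kappa^{-1/2}$, which for $\kappa$ small is $\gg1$. Consequently $\|\beta_s\|_2\lesssim_N\gamma_s^{-N}$ for every $N$ (integrate the Schwartz tail of $\varphi_s$ over the complement of $\tfrac14\gamma_sR_s$), and, because multiplying $\varphi_s$ by a bump adapted to $\gamma_sR_s$ spreads $\widehat{\varphi_s}$ only by the factor $1+\gamma_s^{-1}$, the function $\widehat{\beta_s}$ is, up to rapidly decaying tails, supported in a bounded dilate $C\omega_s$ of $\omega_s$; by \eqref{e.R5} this dilate still sits inside $\{|\xi|\simeq\prm s\}$, and the $\simeq\prm s/\scl s$ such dilates available at this scale and annulus (see \eqref{e.R4}) have bounded overlap. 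Grouping the tiles by $\omega_s$: for fixed $\omega_s$ the $R_s$ are translates of a fixed lattice of rectangles of dimensions $\simeq\scl s^{-1}\times\prm s^{-1}$, and the Gram matrix $[\langle\beta_s,\beta_{s'}\rangle]$ decays rapidly in the lattice distance between $R_s$ and $R_{s'}$ (where both bumps equal $1$ it coincides with $\langle\varphi_s,\varphi_{s'}\rangle$; otherwise there is the additional smallness from $\|\beta_s\|_2$), so a Schur estimate bounds its $\ell^2$ operator norm by $\ll1$. Inserting a smooth frequency projection adapted to $C\omega_s$ and summing the resulting estimates over the boundedly overlapping $\omega_s$ gives $\sum_s\abs{\ip \operatorname S_{\Prm} f,\beta_s,}^2\lesssim\lVert\operatorname S_{\Prm} f\rVert_2^2\lesssim\norm f.2.^2$.

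The main obstacle is the bookkeeping in the second paragraph: one must check, with constants uniform in $\prm s$, $\scl s$ and $\vLip$, that the rough spatial cutoff in the definition of $\alpha_s$ destroys neither the rapid off-diagonal decay of the Gram matrix nor the frequency concentration near $\omega_s$ --- the latter being exactly what lets the sum over directions $\omega_s$ decouple. Apart from this, the argument is the same almost-orthogonality already used for $\varphi_s$ in Lemma~\ref{l.fixedscale}; in particular $\operatorname S_{\Prm}$ plays no essential role, and the inequality in fact holds with $f$ in place of $\operatorname S_{\Prm} f$.
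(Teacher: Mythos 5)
Your decomposition $\alpha_s=\varphi_s-\beta_s$ is a valid reduction, and the strategy is sound: the Bessel inequality for $\{\varphi_s\}$ (established in the proof of Lemma~\ref{l.fixedscale}) handles the main term, and the tails $\beta_s$ carry the additional smallness $\lVert\beta_s\rVert_2\lesssim\gamma_s^{-N}$. This is organized differently from the paper's argument, which works directly with the full family $\{\operatorname S_\Prm\alpha_s\}$: there one groups tiles by $\omega$, sets $\operatorname T_\omega a=\sum_{s:\,{\boldsymbol\omega}_s={\boldsymbol\omega}}a_s\operatorname S_\Prm\alpha_s$, and applies Cotlar--Stein via a Schur test on $\sup_{s'}\sum_s\abs{\ip \operatorname S_\Prm\alpha_s,\operatorname S_\Prm\alpha_{s'},}$, the off-diagonal decay in the $\omega$-parameter coming from the identity $\widehat\alpha_s=(\operatorname{Mod}_{-c(R_s)}\operatorname D^1_{\gamma_s^{-1}\omega_s}\widehat\zeta)*\widehat\varphi_s$ together with the annulus restriction $\operatorname S_\Prm$. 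Your observation that $\operatorname S_\Prm$ can in fact be dropped is correct, since all $\omega_s$ in the sum already share the same annular parameter and are separated only in angle.

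Two cautions. First, the split does not actually buy you a shortcut: to make rigorous the step where you insert a frequency projection adapted to $C\omega_s$ and sum over boundedly overlapping directions, you still must quantify how fast $\widehat{\beta_s}=\widehat{\varphi_s}-\widehat{\alpha_s}$ decays away from $\omega_s$ in the angular variable, and for that you end up performing exactly the Fourier-side computation the paper carries out for $\widehat\alpha_s$ (the $\rho^{-20}$ estimate). The Gram-matrix Schur bound within a single $\omega$ likewise needs the two-sided decay $\abs{\ip\beta_s,\beta_{s'},}\lesssim\min(\gamma_s^{-N},\operatorname{dist}(R_s,R_{s'})^{-N})$; this is true, but the one-line assertion that the row sums are $\ll1$ needs that intermediate step spelled out. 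Second, the citations of \eqref{e.R4} and \eqref{e.R5} do not carry the weight you assign them: \eqref{e.R5} is the relation $\scl s\le\kappa\prm s$ and says nothing about a bounded dilate of $\omega_s$ staying in the annulus (that is a consequence of Definition~\ref{d.rectangle}), and \eqref{e.R4} is a \emph{lower} bound on the number of $\omega$'s, not the bounded-overlap assertion you need (which again follows from the geometry of annular rectangles). With those two points tightened, the argument stands.
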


 Define 
 \begin{equation} \label{e.Bdef}
\operatorname  Bf:=\sum_{\substack{s\in\mathcal {AT}(\Prm) \\ \scl s\ge\kappa^{-1}\vLip }} \ip  \operatorname S_{\Prm} f, \beta_s,\phi_s 
 \end{equation}
 
 \begin{lemma}\label{l.B}
  For a Lipschitz vector field $v$, we have 
 \begin{equation*}
 \norm \operatorname B.p.\lesssim1,\qquad 2\le{}p<\infty.
 \end{equation*}
 \end{lemma}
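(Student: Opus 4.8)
The plan is to extract from $\beta_s=\varphi_s-\alpha_s$ the smallness coming from Schwartz decay, and then sum a geometric series over scales. Recall from \eqref{e.gamma} that $\gamma_s^2=100^{-2}\scl s/\vLip$ depends only on $\scl s$, and that on the range $\scl s\ge\kappa^{-1}\vLip$ occurring in \eqref{e.Bdef} it is bounded below by a fixed constant and grows like $2^{l/2}$ as $\scl s=2^l$ increases. By construction $\beta_s$ is supported where the cutoff $T_{c(R_s)}D^\infty_{\gamma_s R_s}\zeta$ differs from $1$, i.e.\ at $R_s$-rescaled distance $\gtrsim\gamma_s$ from $c(R_s)$; since $\varphi_s$ is a fixed $L^2$-normalized Schwartz bump adapted to $R_s$, this forces, whenever $\gamma_s\ge1$ and for every $M$,
\begin{equation*}
\beta_s=\gamma_s^{-M}\widetilde\beta_s,\qquad \norm{\widetilde\beta_s}.2.\lesssim_M1,
\end{equation*}
where $\widetilde\beta_s$ retains rapid decay away from the $\gamma_s$-dilate of $R_s$. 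This factor $\gamma_s^{-M}$ is the entire source of the estimate.

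First I would split $\operatorname B=\sum_{{\mathsf {scl}}\ge\kappa^{-1}\vLip}\operatorname B_{{\mathsf {scl}}}$ according to the fixed dyadic value ${\mathsf {scl}}=\scl s$, where $\operatorname B_{{\mathsf {scl}}}f:=\sum_{s\in\mathcal{AT}(\Prm,{\mathsf {scl}})}\ip\operatorname S_{\Prm}f,\beta_s,\phi_s$ and $\Prm$ is fixed throughout. The claim is that for every $2\le p<\infty$ and every $M$,
\begin{equation*}
\norm{\operatorname B_{{\mathsf {scl}}}}.p\to p.\lesssim_{p,M}\max(1,\gamma_{{\mathsf {scl}}})^{-M},
\end{equation*}
uniformly in $\Prm$ and $\vLip$. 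Granting this, the only scales with $\gamma_{{\mathsf {scl}}}<1$ are $O(1)$ in number (they satisfy $\kappa^{-1}\vLip\le{\mathsf {scl}}\lesssim\vLip$), while for the remaining scales $\gamma_{{\mathsf {scl}}}$ runs over a geometric sequence tending to $\infty$; summing the resulting geometric series in $l$ gives $\norm{\operatorname B}.p\to p.\lesssim_p1$, which is the Lemma.

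For the single-scale claim I would argue as in the proof of Lemma~\ref{l.fixedscale}. At a fixed annulus $\Prm$ and scale ${\mathsf {scl}}$, if ${\boldsymbol \omega}_{s}\ne{\boldsymbol \omega}_{s'}$ then $\phi_s,\phi_{s'}$ have essentially disjoint frequency supports (inside the disjoint rectangles $\omega_s,\omega_{s'}$), while if ${\boldsymbol \omega}_{s}={\boldsymbol \omega}_{s'}$ then $R_s,R_{s'}$ are disjoint of equal dimensions, so the $\phi_s$ sharing an angle have boundedly overlapping spatial supports, modulo rapid tails. Since moreover $\operatorname S_{\Prm}f$ is frequency-supported in $\abs\xi\sim\Prm$, a square-function estimate over the angles ${\boldsymbol \omega}$---this is the step that forces $p\ge2$, via a Rubio de Francia type square function for the decomposition of $\abs\xi\sim\Prm$ into the $\omega_s$---followed by the spatial disjointness within each angle, reduces $\norm{\operatorname B_{{\mathsf {scl}}}f}.p.$ to a coefficient expression built from the $\ip\operatorname S_{\Prm}f,\beta_s,$; for $p=2$ this is just the argument of Lemma~\ref{l.fixedscale} with $\beta_s$ replacing $\varphi_s$, and for $p>2$ one may alternatively interpolate the $L^2$ instance of the bound with a crude large-exponent estimate whose polynomial $\gamma_{{\mathsf {scl}}}$-loss is harmless. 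This coefficient expression is then handled by a Bessel-type inequality, as in the proofs of Lemmas~\ref{l.fixedscale} and \ref{l.bessel}: the raw bound $\sum_s\abs{\ip\operatorname S_{\Prm}f,\beta_s,}^2\lesssim\norm f.2.^2$ already holds with an absolute constant (so the claim holds with right side $1$ when $\gamma_{{\mathsf {scl}}}<1$), and when $\gamma_{{\mathsf {scl}}}\ge1$ one gains by writing $\beta_s=\gamma_{{\mathsf {scl}}}^{-M}\widetilde\beta_s$: the $\widetilde\beta_s$ remain frequency-localized inside the $\omega_s$ (hence disjoint across angles) and overlap spatially only polynomially in $\gamma_{{\mathsf {scl}}}$ within each fixed angle, so $\{\widetilde\beta_s\}$ has Bessel constant $\lesssim\gamma_{{\mathsf {scl}}}^{C}$ for a fixed $C$; taking $M$ large beats this and yields $\max(1,\gamma_{{\mathsf {scl}}})^{-M}$.

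The main obstacle is the bookkeeping in this single-scale estimate: making ``essentially disjoint in frequency'' and ``boundedly overlapping in space'' precise through the Schwartz tails, correctly tracking the $L^p$-normalizations of $\phi_s$ and $\widetilde\beta_s$ through the square-function and disjointness steps, and checking that the net exponent of $\gamma_{{\mathsf {scl}}}$ is a large negative number once $M$ is taken large. None of this is deep; everything else---the reduction to a single scale, the geometric summation, and the uniformity in $\Prm$ and $\vLip$---is routine.
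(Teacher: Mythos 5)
Your proposal is correct and follows essentially the same strategy as the paper: at each fixed scale you extract geometric decay in $\gamma_{\mathsf{scl}}$ from the fact that $\beta_s$ is supported off $\tfrac12\gamma_s R_s$, obtain the $L^p$ estimate for $2\le p<\infty$ by interpolation, and sum a geometric series over scales. The paper packages the interpolation as a vector-valued bound ($L^\infty\to\ell^\infty$ versus $L^2\to\ell^2$) for the coefficient map $f\mapsto\{\ip\operatorname S_{\Prm}f,\beta_s,/\sqrt{\abs{R_s}}\}$ followed by the disjointness argument of Lemma~\ref{l.fixedscale}, which is precisely the fallback route you sketch; the Rubio de Francia variant you offer first is a workable alternative but not what the paper uses.
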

\begin{proof}  For choices of integers $\kappa^{-1}\vLip\le{\mathsf {scl}}<\kappa\Prm$, consider the vector valued operator 
 \begin{align*}
 \operatorname T_{j,k}f:=\Bigl\{ \frac{\ip   \operatorname S_{\Prm} f, \beta_s,}
 {\sqrt{\abs{R_s}}} {\bf 1}_{ \{v(x)\in {\boldsymbol \omega}_{s2} \}}
  & {\operatorname{T}}_{c(R_s)}
 {\operatorname{D}}_{R_s}^\infty(\frac{1}{(1+|\cdot|^2)^N})(x)
 \\
 &\mid s\in\mathcal {AT}(\Prm,{\mathsf {scl}})\Bigr\}\,,
 \end{align*}
where $N$ is a fixed large integer.

Recall that $\beta_s$ is supported off of $\frac12\gamma_s R_s$. 
 This is bounded linear operator from $L^\infty(\mathbb R^2)$ to
 $\ell^\infty(\mathcal {AT}(\Prm,{\mathsf {scl}}))$. It has norm 
$\lesssim({\mathsf {scl}}/\vLip)^{-10}$.  Routine considerations will verify that 
\begin{equation*}
\operatorname T_{j,k} \,:\, L^2(\mathbb R^2) \longrightarrow \ell^2(\mathcal {AT}(\Prm,{\mathsf {scl}}))
\end{equation*}
 with a similarly favorable estimate on 
its norm. By interpolation, 
we achieve the same estimate for $\operatorname T_{j,k}$ from $L^p(\mathbb R^2)$ into $\ell^p(\mathcal {AT}(\Prm,{\mathsf {scl}}))$, $2\le{}p<\infty$. 

It is now very easy to conclude the Lemma by summing over scales in a brute force way, and using the 
methods of Lemma~\ref{l.fixedscale}. 
\end{proof}

We turn to  $\operatorname A_+$, as defined in \eqref{e.A}.  

\begin{lemma}\label{l.A+}
  We have the estimate 
\begin{equation*}
\norm \operatorname A_+.p.\lesssim{}1\qquad 2\le{}p<\infty.
\end{equation*}
\end{lemma}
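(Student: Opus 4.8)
The plan is to treat the two ranges of $p$ separately. For $2<p<\infty$ nothing new is needed: the identity $\Phi=\Phi\operatorname S_\Prm=\operatorname A_++\operatorname A_-+\operatorname B$ recorded just before Lemma~\ref{l.A} gives $\operatorname A_+=\Phi\operatorname S_\Prm-\operatorname A_--\operatorname B$, and on $L^p$ with $p>2$ each operator on the right is bounded with norm $\lesssim1$, uniformly in $\Prm$ and in $\vLip$ --- the operator $\Phi\operatorname S_\Prm$ by the Lacey--Li bound \eqref{e.Phip}, which requires only measurability of $v$; the operator $\operatorname A_-$ by Lemma~\ref{l.A}; and $\operatorname B$ by Lemma~\ref{l.B}. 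Thus the substance of the lemma is the endpoint $p=2$, and this must be obtained \emph{unconditionally}, since it is one of the three ingredients feeding the conditional $L^2$ bound \eqref{e.Phi2} for $\Phi$.

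For $p=2$ I would argue one scale at a time. Write $\operatorname A_+=\sum_{{\mathsf {scl}}}\operatorname A_+^{{\mathsf {scl}}}$, the sum over integers ${\mathsf {scl}}$ with $\kappa^{-1}\vLip\le{\mathsf {scl}}\le\kappa\Prm$, where $\operatorname A_+^{{\mathsf {scl}}}f:=\sum_{s\in\mathcal {AT}(\Prm,{\mathsf {scl}})}\langle\operatorname S_\Prm f,\alpha_s\rangle\,a_{s+}$. The key quantitative input is that $\psi_{s+}$ from \eqref{e.psi_s} is precisely the \emph{tail} of the kernel $\scl s\,\psi_{\scl s}(\scl s\,\cdot)$ beyond radius $\tfrac14\gamma_s(\scl s)^{-1}$, whereas $\gamma_s^2=100^{-2}\scl s/\vLip\ge100^{-2}\kappa^{-1}$ on the admissible range of scales; hence \eqref{e.zc-Space} gives, for every $N$ (with implied constant depending on $N$),
\begin{equation*}
\lVert\psi_{s+}\rVert_1+(\scl s)^{-1}\lVert\psi_{s+}\rVert_\infty\lesssim(1+\gamma_s)^{-N}=:G_{{\mathsf {scl}}},\qquad\sum_{{\mathsf {scl}}}G_{{\mathsf {scl}}}\lesssim1,
\end{equation*}
once $N$ is large, the last sum being geometric in $\sqrt{{\mathsf {scl}}/\vLip}$ and independent of $\Prm$ and $\vLip$. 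Because $x\mapsto x-yv(x)$ has Jacobian comparable to $1$ on the relevant range $\lvert y\rvert\vLip\lesssim\kappa$, these bounds pass to $a_{s+}$: one gets $\lVert a_{s+}\rVert_2\lesssim G_{{\mathsf {scl}}}$, with $a_{s+}$ spatially concentrated (with rapid tails) on a $\gamma_s$-fold dilate of $R_s$ and with support in $x$ contained in $\{v(x)\in{\boldsymbol \omega}_{s2}\}$, exactly as for $\phi_s$.

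It then suffices to prove $\lVert\operatorname A_+^{{\mathsf {scl}}}\rVert_{2\to2}\lesssim G_{{\mathsf {scl}}}$ and sum in ${\mathsf {scl}}$. Expanding the square,
\begin{equation*}
\lVert\operatorname A_+^{{\mathsf {scl}}}f\rVert_2^2=\sum_{s,s'\in\mathcal {AT}(\Prm,{\mathsf {scl}})}\langle\operatorname S_\Prm f,\alpha_s\rangle\,\overline{\langle\operatorname S_\Prm f,\alpha_{s'}\rangle}\,\langle a_{s+},a_{s'+}\rangle.
\end{equation*}
Exactly as in the proof of Lemma~\ref{l.fixedscale}, $\langle a_{s+},a_{s'+}\rangle=0$ unless ${\boldsymbol \omega}_{s}={\boldsymbol \omega}_{s'}$ (the supports in $x$ being disjoint otherwise), and for such pairs the rapid spatial decay of the functions $a_{s+}$, together with the size bound above, yield a Schur estimate $\sum_{s'}\lvert\langle a_{s+},a_{s'+}\rangle\rvert\lesssim G_{{\mathsf {scl}}}^2$ for each $s$, any polynomial-in-$\gamma_s$ loss coming from the dilation being absorbed into $G_{{\mathsf {scl}}}$. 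Combining this with $2\lvert\langle\operatorname S_\Prm f,\alpha_s\rangle\,\overline{\langle\operatorname S_\Prm f,\alpha_{s'}\rangle}\rvert\le\lvert\langle\operatorname S_\Prm f,\alpha_s\rangle\rvert^2+\lvert\langle\operatorname S_\Prm f,\alpha_{s'}\rangle\rvert^2$ and with Lemma~\ref{l.bessel} gives
\begin{equation*}
\lVert\operatorname A_+^{{\mathsf {scl}}}f\rVert_2^2\lesssim G_{{\mathsf {scl}}}^2\sum_{s\in\mathcal {AT}(\Prm,{\mathsf {scl}})}\lvert\langle\operatorname S_\Prm f,\alpha_s\rangle\rvert^2\lesssim G_{{\mathsf {scl}}}^2\lVert f\rVert_2^2,
\end{equation*}
which is the desired fixed-scale estimate.

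The step I expect to be the true obstacle is that last orthogonality among the output functions $a_{s+}$ at a fixed scale. Because $v$ is merely Lipschitz, the curved averages defining $a_{s+}$ are not exactly frequency-localised, so one must verify --- as in Lemma~\ref{l.fixedscale} --- that the localisation of $\varphi_s$ on $R_s$ and the rapid decay in \eqref{e.zc-Space} genuinely force $\langle a_{s+},a_{s'+}\rangle$ to be negligible once ${\boldsymbol \omega}_{s}\ne{\boldsymbol \omega}_{s'}$, or once the rectangles $R_s,R_{s'}$ are spatially separated; here the slack $\vLip\ll\scl s$ is what makes the estimate go through, and the super-polynomial gain $G_{{\mathsf {scl}}}$ makes the remaining book-keeping --- which is the same as in the proof of Lemma~\ref{l.B} --- very forgiving.
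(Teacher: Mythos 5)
Your $p=2$ argument is sound and is in substance the same mechanism the paper uses: the gain comes from the tail estimate on $\psi_{s+}$, which the paper records in the pointwise form \eqref{e.AaA}, $\abs{a_{s+}}\lesssim(\mathsf{scl}/\vLip)^{-10}\abs{R_s}^{-1/2}(\operatorname M\mathbf 1_{R_s})^{100}$; combined with fixed-scale orthogonality (Lemma~\ref{l.fixedscale} and Lemma~\ref{l.bessel}) and a brute-force sum over scales, this gives the $L^2$ bound exactly as you describe.

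However, your $2<p<\infty$ branch is circular. The paper's proof of Lemma~\ref{l.A} begins by \emph{deducing} the bound $\norm{\operatorname A_-}.p.\lesssim1$ for $2<p<\infty$ from \eqref{e.Phip}, Lemma~\ref{l.B}, \emph{and Lemma~\ref{l.A+}}, via $\operatorname A_-=\Phi-\operatorname A_+-\operatorname B$. So Lemma~\ref{l.A+} is an input to Lemma~\ref{l.A}, and writing $\operatorname A_+=\Phi\operatorname S_\Prm-\operatorname A_--\operatorname B$ and then citing Lemma~\ref{l.A} for $\operatorname A_-$ reverses that dependency: nothing has been proved. The paper avoids this by proving Lemma~\ref{l.A+} and Lemma~\ref{l.B} directly, for all $2\le p<\infty$ at once, through the vector-valued operator $\operatorname T_{j,k}f=\bigl\{\abs{R_s}^{-1/2}\ip{\operatorname S_\Prm f,\alpha_s}\,\mathbf 1_{\{v(x)\in\boldsymbol\omega_{s2}\}}\operatorname T_{c(R_s)}\operatorname D^\infty_{R_s}(1+\abs{x}^2)^{-N}\bigr\}_{s}$, which maps $L^p\to\ell^p(\mathcal{AT}(\Prm,\mathsf{scl}))$ with norm $\lesssim1$ for $2\le p<\infty$ (interpolating the $L^2\to\ell^2$ estimate --- essentially Lemma~\ref{l.bessel} --- with a trivial $L^\infty\to\ell^\infty$ bound); the pointwise bound \eqref{e.AaA} then converts this into the $L^p$ estimate for the fixed-scale piece of $\operatorname A_+$ with a factor $(\mathsf{scl}/\vLip)^{-10}$ to spare, and the scales are summed as in Lemma~\ref{l.B}. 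Your per-scale computation can be upgraded to $p>2$ along exactly those lines, without appealing to Lemma~\ref{l.A}; as written, though, the $p>2$ case is not established.
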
 

\begin{proof}  We redefine the vector valued operator $\operatorname T_{j,k}$ to be  
 \begin{align*}
 \operatorname T_{j,k}f:=\Bigl\{ \frac {\ip  \operatorname S_{\Prm} f, \alpha_s,}
 {\sqrt{\abs{R_s}}} & {\bf 1}_{\{v(x)\in {\boldsymbol \omega}_{s2}\}} 
 {\operatorname{T}}_{c(R_s)}
 {\operatorname{D}}_{R_s}^\infty\bigl( \frac{1}{(1+ \lvert  x\rvert ^2)^N } \bigr)
\\& \qquad 
\mid s\in\mathcal {AT}(\Prm,{\mathsf {scl}})\Bigr\}\,,
 \end{align*}
where $N$ is a fixed large integer.
 This operator is bounded from 
 \begin{equation*}
L^p(\mathbb R^2) \longrightarrow \ell^p(\mathcal {AT}(\Prm,{\mathsf {scl}}))
\,, \qquad 2\le{}p<\infty
\end{equation*}
 Its norm is at most ${}\lesssim1$. 
 
 But, for $s\in\mathcal {AT}(\Prm,{\mathsf {scl}})$, we have 
 \begin{equation}  \label{e.AaA}
 \abs{a_{s+}}\lesssim{}({\mathsf {scl}}/\vLip)^{-10}\abs{R_s}^{-1/2} (\operatorname M \ind {R_s}) ^{100}.
 \end{equation}
 Here $\operatorname M$ denotes the strong maximal function in the plane in the coordinates determined by $R_s$. 
 This permits one to again adapt the estimate of Lemma~\ref{l.fixedscale} to conclude the Lemma. 
 \end{proof}

 Now we  conclude that $\norm \Phi.2.\lesssim1$.  And since $\Phi=\operatorname A_-+
 \operatorname A_++\operatorname B$, it follows from the Lemmata of this section.
 	
	
\section*{Proofs of Lemmata } \label{s.lemmata}

 \subsection*{Proof of Lemma~\ref{l.A}}

 We have $\Phi=\operatorname A_-+\operatorname A_++\operatorname B$, so from \eqref{e.Phip}, Lemma~\ref{l.B} and   
Lemma~\ref{l.A+}, we deduce that 
$\norm \operatorname A_-.p.\lesssim{}1$ for all $2<p<\infty$.  
It remains for us to verify that $\operatorname A_-$ is of 
restricted weak type $p_0$ for some choice of $1<p_0<2$.  That is, we should verify that 
for all sets $F, G\subset\mathbb R^2$ of finite measure 
\begin{equation}\label{e.weak} 
\abs{ \ip  \operatorname A_- \ind F, \ind G ,}\lesssim{}\abs F^{1/p}\abs G^{1-1/p},\qquad p_0<p<2. 
\end{equation}
Since $\operatorname A_-$ maps $L^p$ into itself for $2<p<\infty$, it suffices to 
consider the case of $\abs F <\abs G$.  Since we assume only that 
the vector field is Lipschitz, we can use a dilation to assume that $1<\abs G<2$, 
and so this set will not explicitly enter into 
our estimates.

We fix the data  $F\subset\mathbb R^2$ of finite measure, ${\Prm}$, and 
vector field $v$ with $\norm v.\text{Lip}.\le\kappa\Prm$.  
Take $p_0=2-\kappa^2$.  
 We need a set of definitions that are inspired by the approach of Lacey and Thiele 
 \cite{laceythiele}, and are also used in  Lacey and Li \cite{laceyli1}.  
 For subsets $\mathcal S\subset\mathcal {A}_v:=\{s\in\mathcal {AT}(\Prm)\mid \kappa^{-1}\vLip\le{}\scl s<\kappa \Prm\}$, set 
 \begin{equation*}
 A^\mathcal S=\sum_{s\in\mathcal S}\ip  \operatorname S_{\Prm}\ind F, \alpha_s, a_{s-}
 \end{equation*}
 Set $\chi(x)=(1+\abs x)^{-1000/\kappa}$.  Define 
 \begin{equation}\label{e.zq}
 \chi_{R_s}^{(p)}:=\chi_s^{(p)}= \operatorname T_{c(R_s)}\operatorname D^p_{R_s}\chi, \qquad 0\le{}p\le\infty.
 \end{equation}
 And set $\widetilde{\chi}_s^{(p)}=\ind {\gamma_s R_s}\chi_s ^{(p)}$.

\begin{remark}\label{r.noPartialOrder}    It is typical to define a partial order on 
tiles, following an observation of C.~Fefferman \cite{feff}.  In this case, there doesn't 
seem to be an appropriate partial order.  Begin with this assumption on 
the  order relation `$ <$' on tiles: 
\begin{equation}\label{e.<property}
\text{If ${\boldsymbol \omega}_{s}\times R_{s}\cap {\boldsymbol \omega}_{s'}
\times R_{s'}\not=\emptyset$, then  $s$ and $s'$ are comparable under `$<$'.}
\end{equation}
It follows from transitivity of a partial order that  
that one can have tiles $ s_1 ,\dotsc, s_J$, with $ s_{j+1}<s_{j}$ for $ 1\le j<J$, 
$ J \simeq \log (\vLip \cdot \Prm)$, and yet  the rectangles $ R _{s_J}$ and $ R _{s_1}$ can 
be  far apart, namely $  R _{s_J} \cap  (cJ) R _{s_1}$, for a positive constant $ c$.  See 
Figure~\ref{f.noPartialOrder}.  (We thank the referee for directing us towards this
conclusion.)  Therefore, one cannot make the order relation transitive, and maintain 
control of the approximate localization of spatial variables, as one would wish. 
The partial order is essential to the argument of \cite{feff}, but 
while it is used in \cite{laceythiele}, it is not essential to that argument.  

\end{remark}

We recall a fact about the eccentricity.  There is an absolute constant $ K'$ so that 
for any two tiles $ s,s'$
\begin{equation}\label{e.K'}
\omega _{s}\supset \omega _{s'}\,, \ R_s\cap R _{s'}\neq \emptyset 
\quad \textup{implies} \quad  R_s \subset K' R _{s'}\,. 
\end{equation}
Figure~\ref{f.kappa} illustrates this in the case where the two rectangles 
$ R_s$ and $ R _{s'}$ have different widths, which is not the case here.

We define an order relation on tiles by  $ s \lesssim  s' $ iff 
$ \omega _{s} \supsetneq \omega _{s'}$ and $ R_s \subset \kappa  ^{-10} R _{s'}$. 
Thus, \eqref{e.<property} holds for this order relation, and it is certainly 
not transitive.

\begin{figure}
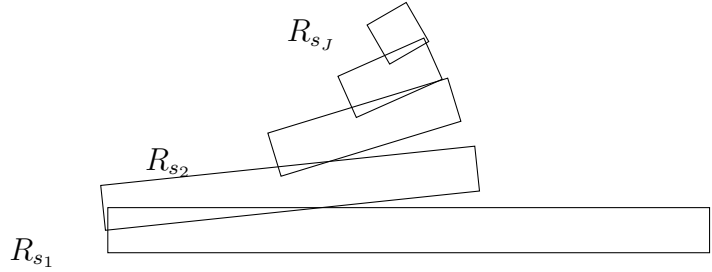
 
\begin{center} 
	\begin{pgfpicture}{0cm}{0cm}{5cm}{4cm}
	\pgfrect[stroke]{\pgfpoint{0cm}{0cm}}{\pgfpoint{8cm}{0.6cm}}
	\begin{pgfrotateby}{\pgfdegree{6}}
	\pgfrect[stroke]{\pgfpoint{0cm}{.3cm}}{\pgfpoint{5cm}{0.6cm}}
	\end{pgfrotateby}
	\begin{pgfrotateby}{\pgfdegree{17}}
	\pgfrect[stroke]{\pgfpoint{2.5cm}{.3cm}}{\pgfpoint{2.5cm}{0.6cm}}
	\end{pgfrotateby}
	\begin{pgfrotateby}{\pgfdegree{24}}
	\pgfrect[stroke]{\pgfpoint{3.75cm}{.3cm}}{\pgfpoint{1.25cm}{0.6cm}}
	\end{pgfrotateby}
	\begin{pgfrotateby}{\pgfdegree{30}}
	\pgfrect[stroke]{\pgfpoint{4.5cm}{.3cm}}{\pgfpoint{.6cm}{0.6cm}}
	\end{pgfrotateby}
	
	\pgfputat{\pgfxy(-1,0)} {\pgfbox[center,center]{$ R_ {s_1}$}}
	\pgfputat{\pgfxy(.8,1.2)} {\pgfbox[center,center]{$ R_ {s_2}$}}
	\pgfputat{\pgfxy(2.7,2.9)} {\pgfbox[center,center]{$ R_ {s_J}$}}
	\end{pgfpicture}
\end{center} 
	\caption{The rectangles $ R _{s_1} ,\dotsc, R _{s_J}$ of
	Remark~\ref{r.noPartialOrder}.}
	\label{f.noPartialOrder}	
\end{figure}

A {\em tree} is   a collection of tiles $\mathbf  T\subset\mathcal {A}_v$, for 
which there is a (non--unique) tile $\omega_{\mathbf T}\times R_{\mathbf T}
\in\mathcal {AT}(\Prm)$ with $R_s \subset 100 \kappa ^{-10} R_{\mathbf T}$, and
$ \omega _{s}\supset \omega _{\mathbf T}$ for all $s\in\mathbf  T$.  
Here, we deliberately use a somewhat larger constant $ 100 \kappa ^{-10} $ than 
we used in the definition of the order relation `$ \lesssim $.'


For $j=1,2$, call $\mathbf  T$ a $i$--tree if the tiles for all $ s,s'\in \mathbf T$, 
if $ \scl s \neq \scl {s'}$, then $ \omega _{si}\cap \omega _{s'i}=\emptyset$. 
$1 $--trees are especially important.  A few tiles in such a tree are depicted in Figure~\ref{f.tree}. 


\begin{remark}\label{r.pseudo}  This remark about the partial order `$ \lesssim $' and 
trees is useful to us below.  Suppose that we have two trees $ \mathbf T$, with 
top $ s (\mathbf T)$ and $ \mathbf T'$ with top $ s (\mathbf T')$.   Suppose in 
addition that $ s (\mathbf T') \lesssim s (\mathbf T)$.  Then, it is the case that 
$ \mathbf T \cup \mathbf T'$ is a tree with top $ s (\mathbf T)$.   
Indeed,  we must necessarily have $ \omega _{\mathbf T} 
\subsetneq \omega _{\mathbf T}$, since the $ R_s$ are from products of a central grid. 
Also, $ 100 \kappa ^{-1} R _{\mathbf T'}\subset 100 \kappa ^{-1} R _{\mathbf T}$.  
And so every tile in $ \mathbf T'$ could also be a tile in $ \mathbf T$.  
\end{remark}

Our proof is organized around these parameters and functions  associated to tiles and sets of tiles. 
Of particular note here are the first definitions of `density,' which have to be formulated 
to accommodate the lack of transitivity in the partial order.  
Note that in the first definition, the supremum is taken over tiles $ s'\in \mathcal {AT}$ 
of the same annular parameter as $ s$.  We choose the collection $ \mathcal {AT}$ as it 
is `universal,' covering all scales in a uniform way, due to different assumptions including 
\eqref{e.R3.5}. 
\begin{gather}
 \label{e.DENSE} 
\begin{split}
\dense {\mathbf  S}&:=\sup _{s'\in \mathcal {\mathcal AT} }
\Bigl\{  
 \int_{G\cap v^{-1}({\boldsymbol \omega}_{s'})} \widetilde\chi^{(1)}_{s'} \; dx
 \mid  \exists\, s\,,\, s'' \in \mathbf S \,:\, 
 \\ & \qquad 
  \boldsymbol \omega _{s}\supset \boldsymbol \omega _{s'} \supset \boldsymbol \omega _{s''}
  \,,\  R_s\subset 100  \kappa ^{-10} R _{s'}\,,
 \\ & \qquad 
 \,  R _{s'} \subset100  \kappa ^{-10} R _{s'} \Bigr\}
\end{split}
\\  \label{e.zD}
\Delta(\mathbf  T)^2:=\sum_{s\in\mathbf  T}\frac{\abs{\ip  \operatorname S_{\Prm} 1_F, \alpha_s,}^2}{\abs{R_{s}}}\ind {R_{ s}},\qquad \text{$\mathbf  T$ is a $1$--tree,} 
\\  \label{e.size}
\size {\mathbf  S}:=\sup_{\substack{ \mathbf  T\subset \mathbf  S \\
\text{$\mathbf  T$ is a $1$--tree}}} \dashint_{R_{\mathbf T}}\Delta(\mathbf  T)\; dx .
\end{gather}
Observe that $ \dense {\mathbf S}$ only really applies to `tree-like' sets of tiles, 
and that---and this is important---the tile $ s'$ that appear in \eqref{e.DENSE} are 
\emph{not} in $ \mathbf S$, but only assumed to be in $ \mathcal {AT}$.  
Finally, note that 
\begin{equation*}
\dense {s} \simeq 
 \int_{G\cap v^{-1}({\boldsymbol \omega}_{s})} \widetilde\chi^{(1)}_{s} \; dx
\end{equation*}
with the implied constants only depending upon $ \kappa $, $ \chi $, and other 
fixed quantities.

Observe these points about size.  First, it is computed relative to the truncated 
functions $ \alpha _{s}$, recall \eqref{e.gamma}.  Second, that for $p>1$,
\begin{equation}\label{e.square}
\norm \Delta(\mathbf  T).p. \lesssim \abs{ F}^{1/p}\,,
\end{equation}
because of a standard $L^p$ estimate for a Littlewood-Paley square function.
Third, that $\size {{\mathcal A}_v(\Prm) } \lesssim1$.   
 And fourth, that one has an estimate of John-Nirenberg type. 
 \begin{lemma}\label{l.jn} For a $ 1$-tree $ \mathbf T$ we have the estimate 
 \begin{equation*}
 \norm \Delta (\mathbf T). p. \lesssim \size {\mathbf T} \lvert  R _{\mathbf T}\rvert ^{1/p}
 \,, \qquad 1<p<\infty \,. 
 \end{equation*}
 \end{lemma}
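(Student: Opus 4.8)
The plan is to deduce the $L^p$ bound from the $L^2$ identity built into the definition of $\size{\mathbf T}$ by a John–Nirenberg / good-$\lambda$ argument, exploiting the fact that $\Delta(\mathbf T)^2$ is a sum of terms $\frac{\abs{\ip \operatorname S_\Prm 1_F,\alpha_s,}^2}{\abs{R_s}}\ind{R_s}$ indexed by a $1$-tree, so the frequency intervals $\boldsymbol\omega_{s1}$ at distinct scales are pairwise disjoint; this is exactly the orthogonality needed to run the square-function machine. First I would record the base case: by the definition \eqref{e.size}, for every subtree, $\dashint_{R_{\mathbf T}}\Delta(\mathbf T)\,dx\le\size{\mathbf T}$, and more generally $\Delta$ restricted to any subtree has the same averaged bound; combined with the trivial $L^2$ orthogonality $\sum_{s\in\mathbf T}\abs{\ip \operatorname S_\Prm 1_F,\alpha_s,}^2\lesssim\abs F$ (cf.\ Lemma~\ref{l.bessel}), one already gets $\norm \Delta(\mathbf T).2.^2\le\size{\mathbf T}\,\abs{R_{\mathbf T}}\cdot(\text{something})$ — but we want the clean power $\abs{R_{\mathbf T}}^{1/p}$ for all $p$, so a stopping-time decomposition is the right tool.

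The main step is a Calderón–Zygmund stopping time on the tree. Fix a large constant $C$; select maximal tiles $s\in\mathbf T$ (maximal in the order `$\lesssim$', i.e.\ with $\omega_s$ largest / $R_s$ smallest) such that $\dashint_{R_s}\Delta(\mathbf T)\,dx > C\,\size{\mathbf T}$, and let these $s$ be the roots of subtrees $\mathbf T_k$. Because $\mathbf T$ is a $1$-tree, the scales appearing in each $\mathbf T_k$ are still frequency-separated, so each $\mathbf T_k$ is itself a $1$-tree with top $R_{\mathbf T_k}=R_{s_k}$, and $\size{\mathbf T_k}\le\size{\mathbf T}$ while by maximality $\dashint_{R_{\mathbf T_k}}\Delta(\mathbf T_k)\,dx\lesssim\size{\mathbf T}$ as well. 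The key packing estimate is $\sum_k\abs{R_{\mathbf T_k}}\lesssim C^{-2}\abs{R_{\mathbf T}}$, which follows by Tchebyshev from the $L^2$ control: $\sum_k\abs{R_{s_k}}\,(C\size{\mathbf T})^2\le\sum_k\int_{R_{s_k}}\Delta(\mathbf T)^2\lesssim\int_{R_{\mathbf T}}\Delta(\mathbf T)^2$, and the last integral is controlled by $\size{\mathbf T}^2\abs{R_{\mathbf T}}$ by the $L^2$ endpoint of the very inequality we are proving (established by orthogonality plus the $L^2$ version of \eqref{e.square} with the averaging definition of size — this $p=2$ case needs to be done honestly first, without circularity, using only Bessel and the definition of $\size{\mathbf T}$). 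Off the union $\bigcup_k R_{\mathbf T_k}$ one has the pointwise bound $\Delta(\mathbf T\setminus\bigcup_k\mathbf T_k)\lesssim\size{\mathbf T}$ by maximality of the stopping tiles, so that part contributes $\lesssim\size{\mathbf T}^p\abs{R_{\mathbf T}}$ to the $L^p$ norm. On each $R_{\mathbf T_k}$ we recurse, and the geometric decay $\sum_k\abs{R_{\mathbf T_k}}\le\tfrac12\abs{R_{\mathbf T}}$ (choosing $C$ large) makes the iteration converge, yielding $\norm\Delta(\mathbf T).p.^p\lesssim\size{\mathbf T}^p\abs{R_{\mathbf T}}$, i.e.\ the claim.

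The hard part will be the honest $p=2$ base case and, relatedly, controlling the \emph{tails}: the functions $\alpha_s$ and the square function $\Delta(\mathbf T)^2$ are built from rapidly decaying but not compactly supported bumps, so $\Delta(\mathbf T)$ is not literally supported in $R_{\mathbf T}$, and I must show $\int_{\mathbb R^2\setminus R_{\mathbf T}}\Delta(\mathbf T)^p\lesssim\size{\mathbf T}^p\abs{R_{\mathbf T}}$ using the decay factor in $\chi_{R_s}^{(p)}$ and the $1$-tree frequency separation (so that the scales sum geometrically). A second technical point is that the order `$\lesssim$' is not transitive, so "maximal stopping tiles" and the resulting subtrees must be handled with the care indicated in Remark~\ref{r.pseudo} — in particular verifying that each stopping subtree $\mathbf T_k$ genuinely satisfies the tree axioms with the enlarged constant $100\kappa^{-10}$. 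Once these two issues are dispatched, the good-$\lambda$/stopping-time iteration is routine, and interpolating between the $p=2$ estimate and the trivial $L^\infty$-type bound off the exceptional set covers the full range $1<p<\infty$.
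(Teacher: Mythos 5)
The paper does not actually prove this lemma; it defers to the literature, citing \cite{gl1} and \cite{MR1945289}. So there is no proof of record to compare yours against, and the stopping-time/John--Nirenberg framework you outline is indeed the standard route.

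Within that framework, your outline is mostly sound. In particular the step you seem least sure of --- the pointwise bound $\Delta(\mathbf T\setminus\bigcup_k\mathbf T_k)\lesssim\size{\mathbf T}$ off $\bigcup_k R_{s_k}$ --- actually does follow from an average-based stopping rule, because of the lattice structure of the rectangles: if $x\notin\bigcup_k R_{s_k}$ and $s^*$ is the finest tile of the residual tree with $x\in R_{s^*}$, then the chain of tiles $s$ with $x\in R_s$ all have $R_s\supseteq R_{s^*}$, and each of the corresponding terms $\lvert\ip{\operatorname S_\Prm\ind F},{\alpha_s},\rvert^2/\lvert R_s\rvert\cdot\ind{R_s}$ is constant on $R_{s^*}$. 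Hence $\Delta(\mathbf T)\ge\Delta(\mathbf T)(x)$ on $R_{s^*}$, so $\Delta(\mathbf T)(x)\le\dashint_{R_{s^*}}\Delta(\mathbf T)\le C\,\size{\mathbf T}$, with the last inequality holding because $s^*$ was \emph{not} selected.

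Two things do need fixing. First, a small slip: $s\lesssim s'$ means $\omega_s\supsetneq\omega_{s'}$ and $R_s\subset\kappa^{-10}R_{s'}$, so a \emph{maximal} stopping tile has the \emph{smallest} $\omega_s$ and the \emph{largest} $R_s$, the opposite of what your parenthetical says. You do need the selected $R_{s_k}$ to be essentially disjoint, so this orientation matters.

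Second, and more importantly, your packing estimate is circular: you run Chebyshev against $\int_{R_{\mathbf T}}\Delta(\mathbf T)^2$, which is precisely the $p=2$ endpoint you are trying to establish, and your proposed repair ("using only Bessel and the definition of $\size{\mathbf T}$") does not close the gap --- Bessel and \eqref{e.square} give \emph{global} bounds of the form $\lesssim\lvert F\rvert$, not the \emph{local} bound $\lesssim\size{\mathbf T}^2\lvert R_{\mathbf T}\rvert$ that you need. The non-circular resolution is to run Chebyshev at the $L^1$ level, which is literally what the definition \eqref{e.size} hands you: since the $R_{s_k}$ are essentially disjoint and each satisfies $\dashint_{R_{s_k}}\Delta(\mathbf T)>C\,\size{\mathbf T}$, one has
\begin{equation*}
C\,\size{\mathbf T}\sum_k\lvert R_{s_k}\rvert
\le\sum_k\int_{R_{s_k}}\Delta(\mathbf T)\,dx
\lesssim\int\Delta(\mathbf T)\,dx
\lesssim\size{\mathbf T}\,\lvert R_{\mathbf T}\rvert\,,
\end{equation*}
yielding $\sum_k\lvert R_{s_k}\rvert\lesssim C^{-1}\lvert R_{\mathbf T}\rvert$ directly, with no appeal to an $L^2$ bound. (The last inequality above also hides the tails issue you already flagged: the $R_s$ are only contained in $100\kappa^{-10}R_{\mathbf T}$, so one must first check that the $L^1$-average over the dilated top is still $\lesssim\size{\mathbf T}$; this is routine given the definition of a tree and the central-grid structure, but it is where the tails really enter, not in the pointwise bound.) With this correction the iteration gives the exponential distributional estimate $\lvert\{\Delta(\mathbf T)>nC\,\size{\mathbf T}\}\rvert\lesssim C^{-n}\lvert R_{\mathbf T}\rvert$ and hence the $L^p$ bound for every finite $p$. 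Your remaining concerns about non-transitivity of $\lesssim$ are legitimate but, as you note, handled by Remark~\ref{r.pseudo}.
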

 Proofs of results of this  type  are well represented in the literature. See 
 \cites{gl1,MR1945289}.

%
%


\begin{figure}
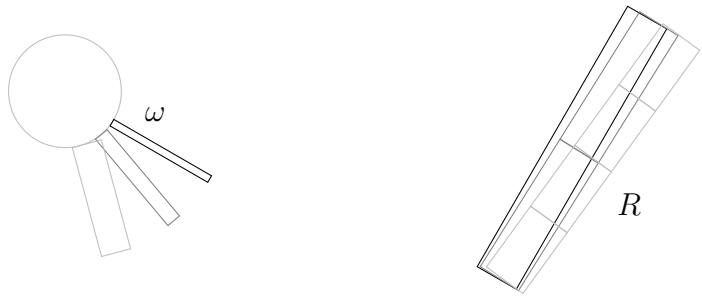
  
\begin{center} 
	\begin{pgfpicture}{0cm}{0cm}{5cm}{4cm}
	\begin{pgftranslate}{\pgfpoint{-1.5cm}{3cm}}
	{\color{lightgray}
	\pgfcircle[stroke]{\pgfpoint{0cm}{0cm}}{0.75cm} 
	}
		\begin{pgfrotateby}{\pgfdegree{-30}}
		\pgfrect[stroke]{\pgfpoint{.75cm}{-.1cm}}{\pgfpoint{1.5cm}{0.1cm}}
		\end{pgfrotateby}
		\pgfputat{\pgfpoint{1.2cm}{-.3cm}}{\pgfbox[center,center]{$\omega$}}
		{\textcolor{gray}{ 
		\begin{pgfrotateby}{\pgfdegree{-50}}
		\pgfrect[stroke]{\pgfpoint{.75cm}{-.1cm}}{\pgfpoint{1.5cm}{0.2cm}}
		\end{pgfrotateby}
		}}
		{\textcolor{lightgray}{ 
		\begin{pgfrotateby}{\pgfdegree{-75}}
		\pgfrect[stroke]{\pgfpoint{.75cm}{-.1cm}}{\pgfpoint{1.5cm}{0.4cm}}
		\end{pgfrotateby}
		}}
	\end{pgftranslate}
	\begin{pgftranslate}{\pgfpoint{4cm}{-.5cm}}
	\begin{pgfrotateby}{\pgfdegree{-30}}
	{
	\pgfrect[stroke]{\pgfpoint{-0.6cm}{1cm}}{\pgfpoint{0.6cm}{4cm}}
	}\end{pgfrotateby}
	{\textcolor{gray}{ 
	\begin{pgfrotateby}{\pgfdegree{-32}}
	\pgfrect[stroke]{\pgfpoint{-0.6cm}{1cm}}{\pgfpoint{0.6cm}{2cm}}
	\pgfrect[stroke]{\pgfpoint{-0.6cm}{3cm}}{\pgfpoint{0.6cm}{2cm}}
	\end{pgfrotateby}
	}}
	{\textcolor{lightgray}{ 
	\begin{pgfrotateby}{\pgfdegree{-36}}
	\pgfrect[stroke]{\pgfpoint{-0.6cm}{1cm}}{\pgfpoint{0.6cm}{1cm}}
	\pgfrect[stroke]{\pgfpoint{-0.6cm}{2cm}}{\pgfpoint{0.6cm}{1cm}}
	\pgfrect[stroke]{\pgfpoint{-0.6cm}{3cm}}{\pgfpoint{0.6cm}{1cm}}
	\pgfrect[stroke]{\pgfpoint{-0.6cm}{4cm}}{\pgfpoint{0.6cm}{1cm}}
	\end{pgfrotateby}
	}}
	\pgfputat{\pgfpoint{2cm}{2cm}}{\pgfbox[center,center]{$R$}}
	\end{pgftranslate}
	\end{pgfpicture}
	\end{center} 
	
	\caption{A few possible tiles in a $1 $--tree.  Rectangles $\omega_s $ 
	are on the left in different shades of gray. 
	Possible locations of $R_s $ are in the same shade of gray. } 
	\label{f.tree}
	\end{figure}

Given a  set of tiles, say that $\operatorname{count}(\mathbf  S)<A$ iff $\mathbf  S$ is a union of 
 trees 
$\mathbf  T\in\mathcal T$ for which 
\begin{equation*}
\sum_{T\in\mathcal T}\abs{\sh{\mathbf T} }<A.
\end{equation*}
We will also use the notation $\operatorname{count}(\mathbf  S)\lesssim{}A$, implying the existence of an absolute 
constant $K$ for which $\operatorname{count}(\mathbf  S)\le{}KA$.  

  The principal organizational Lemma is

  
\begin{lemma}\label{l.dense+size}
  
Any finite  
 collection of tiles $\mathbf  S\subset\mathcal A_v$ is a union of four 
 subsets 
\begin{equation*}
\mathbf  S_{\text{\rm light}},\quad \mathbf  S_{\text{\rm small}},\quad  
\mathbf  S_{\text{\rm large}}^\ell,\ \ell=1,2. 
\end{equation*}
They  satisfy these properties.  
\begin{gather} \label{e.small}
\size {\mathbf  S_{\text{\rm small}}}<\tfrac12 \size{\mathbf S} ,
\\ \label{e.light}
\dense {\mathbf  S_{\text{\rm light}}}<\tfrac12 \dense{\mathbf S} ,
\end{gather}
and both  $\mathbf  S_{\text{\rm large}}^\ell$ are  unions of  
 trees $\mathbf  T\in \mathcal T^\ell $, for which we have the estimates
\begin{gather}  \label{e.large-1}
\operatorname{count}(\mathbf  S_{\text{\rm large}}^1)\lesssim{}
	\begin{cases}
	{\size{\mathbf S}}^{-2-\kappa}\abs F  &\\
	\size{\mathbf S}^{-p}\dense {\mathbf  S}^{-M}\abs F 
	 \\ \qquad+\,{\size{\mathbf S}}^{1/\kappa}
            \dense{\mathbf S}^{-1}   &\\ 
	\dense{\mathbf S}^{-1}
	\end{cases}
\\ \label{e.large-2}
\operatorname{count}(\mathbf  S_{\text{\rm large}}^2)\lesssim{}
	\begin{cases}
	{\size{\mathbf S}}^{-2}(\log 1/\size{\mathbf S} )^3\abs F &\\
	\size{\mathbf S}^{\kappa/50}\dense{\mathbf S}^{-1}
	\end{cases}
\end{gather}
\end{lemma}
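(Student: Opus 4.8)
The plan is to run the standard density/size stopping-time tree selection, adapted to the (non-transitive) order relation $\lesssim$ fixed above, and to read off the four counting estimates from three distinct inputs: the Bessel inequality Lemma~\ref{l.bessel}, the square-function bound \eqref{e.square} and the John--Nirenberg estimate Lemma~\ref{l.jn}, and a Kakeya-type maximal estimate --- the classical one (Theorem~\ref{t.strom}) for the collection $\mathbf{S}^2_{\textup{large}}$, where the loss $(\log 1/\size{\mathbf S})^3$ is affordable, and the Lipschitz Kakeya estimate of Conjecture~\ref{j.laceyli2} (together with the weak $L^2$ bound Theorem~\ref{t.lipKakeya} and the single-scale overlap bound Lemma~\ref{l.fixscale}) for $\mathbf{S}^1_{\textup{large}}$.

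First I would carry out the density reduction. List the tiles $s'\in\mathcal {AT}(\Prm)$ with $\int_{G\cap v^{-1}({\boldsymbol \omega}_{s'})}\widetilde\chi^{(1)}_{s'}\,dx\ge\tfrac12\dense{\mathbf S}$, process them from largest scale to smallest, and to each attach all $s\in\mathbf S$ still in stock that sandwich $s'$ in the sense of \eqref{e.DENSE}, convexifying this set into a bounded number of trees with tops controlled by $s'$; then remove these tiles. What survives is $\mathbf{S}_{\textup{light}}$, and \eqref{e.light} holds by construction. Because each such $s'$ carries a $\tfrac12\dense{\mathbf S}$-share of the mass of $G$ through $v^{-1}({\boldsymbol \omega}_{s'})$, and these contributions have bounded overlap, the shadows of the trees produced here sum to $\lesssim\dense{\mathbf S}^{-1}\abs{G}\simeq\dense{\mathbf S}^{-1}$; this gives the last lines of \eqref{e.large-1} and \eqref{e.large-2}, and, after coupling with the size refinement below, the hybrid terms $\size{\mathbf S}^{1/\kappa}\dense{\mathbf S}^{-1}$ and $\size{\mathbf S}^{\kappa/50}\dense{\mathbf S}^{-1}$.

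Next I would run the size reduction on the remaining tiles: while some $1$-tree $\mathbf T$ has $\dashint_{R_{\mathbf T}}\Delta(\mathbf T)\,dx\ge\tfrac12\size{\mathbf S}$, select such a tree with extremal top $s(\mathbf T)$ (largest annular rectangle, then extremal spatial position), adjoin every remaining tile $s\lesssim s(\mathbf T)$ to form a maximal tree $\overline{\mathbf T}$ --- again a tree, by Remark~\ref{r.pseudo} --- remove $\overline{\mathbf T}$, and record the pair $(\mathbf T,\overline{\mathbf T})$. The leftover is $\mathbf{S}_{\textup{small}}$, giving \eqref{e.small}. For the $\abs F$-terms of the counting bounds, the selected $1$-trees satisfy $\dashint_{R_{\mathbf T}}\Delta(\mathbf T)\,dx\gtrsim\size{\mathbf S}$, so by Lemma~\ref{l.jn}, \eqref{e.square} and Lemma~\ref{l.bessel} the sum of the $\abs{\sh{\overline{\mathbf T}}}$ is controlled once one bounds the overlap of the rectangles $R_{\mathbf T}$; this overlap is essentially a Kakeya quantity, since the selected trees have density $\simeq\dense{\mathbf S}$ in $v$, so the sets $\name V {R_{\mathbf T}}$ fill a $\gtrsim\dense{\mathbf S}$-fraction of $R_{\mathbf T}$. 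For $\mathbf{S}^2_{\textup{large}}$ one bounds this overlap by Theorem~\ref{t.strom}, paying $(\log 1/\size{\mathbf S})^3$; for $\mathbf{S}^1_{\textup{large}}$ one instead invokes Conjecture~\ref{j.laceyli2} with $\delta\simeq\dense{\mathbf S}$ (and $\mathsf w$ dictated by the fixed annulus $\Prm$), producing $\size{\mathbf S}^{-p}\dense{\mathbf S}^{-M}\abs F$, and, using the weak $L^2$ estimate Theorem~\ref{t.lipKakeya} and the restricted-type exponent $p_0=2-\kappa^2$, the cleaner $\size{\mathbf S}^{-2-\kappa}\abs F$. The dichotomy assigning a selected large tree to $\mathcal T^1$ or $\mathcal T^2$ is made according to whether its shadow is captured by the Lipschitz Kakeya covering estimate or only by the classical one.

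The main obstacle I expect is exactly this orthogonality step feeding the $\abs F$-bounds. Because $\lesssim$ is not transitive (Remark~\ref{r.noPartialOrder}), the Fefferman-type argument that normally localizes a tree's square function to its top's shadow must be rebuilt with the enlarged constants $100\kappa^{-10}$ and $1000\kappa^{-100}$ and the covering property \eqref{e.R3.5}; one must keep the shadows $\sh{\overline{\mathbf T}}$ of bounded overlap while still exhausting all high-size $1$-trees, and this bookkeeping is the delicate point. A secondary difficulty is matching the density parameter $\dense{\mathbf S}$ to the $\delta$ in Conjecture~\ref{j.laceyli2}: one must verify that the geometry relating ${\boldsymbol \omega}_{s2}$, $v^{-1}({\boldsymbol \omega}_{s2})$ and the sets $\name V R$ really produces rectangles of density $\gtrsim\dense{\mathbf S}$, so that the resulting $\delta^{-N}$ turns into the asserted $\dense{\mathbf S}^{-M}$ factor.
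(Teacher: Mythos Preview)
Your proposal has the right broad shape but misidentifies both the $\mathbf S^1_{\textup{large}}/\mathbf S^2_{\textup{large}}$ dichotomy and the mechanism behind the $\lvert F\rvert$-bounds, and this is a genuine gap, not a detail.

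In the paper the two large collections are built \emph{sequentially}, not via a parallel Kakeya-based split. The density step is trivial: $\mathbf S_{\textup{light}}$ is just the set of $s\in\mathbf S$ with no $s'\in\mathcal{AT}$ of density $\ge\delta/2$ satisfying $s\lesssim s'$; no trees are produced here. Then on $\mathbf S_1=\mathbf S\setminus\mathbf S_{\textup{light}}$ one runs a size selection harvesting trees $\mathbf T$ subject to \emph{two} constraints you omit: the top $s(\mathbf T)$ must have $\dense{s(\mathbf T)}\ge\delta/4$, and the $1$-tree threshold is the slightly lowered $\tfrac12\sigma^{1+\kappa/100}$. Only after these trees are removed does a second size selection, now at threshold $\sigma/2$ and with \emph{no} density constraint on tops, build $\mathbf S^2_{\textup{large}}$; the residue is $\mathbf S_{\textup{small}}$. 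The crucial gain $\sigma^{\kappa/50}$ in the second line of \eqref{e.large-2} comes precisely from this order: each $\mathbf T\in\mathcal T^2_{\textup{large}}$ sits under some maximal tile $s$ of density $\ge\delta/2$, and the $1$-tree below $s$ can now only carry normalized charge $\le\sigma^{1+\kappa/100}$ (otherwise it would have been taken in the first pass), which forces $\sum_{\mathbf T}|R_{\mathbf T}|\lesssim\sigma^{\kappa/50}\sum_{s\in\mathbf S^*}|R_s|\lesssim\sigma^{\kappa/50}\delta^{-1}$. A single-pass dichotomy of the kind you describe has no mechanism to produce this positive power of $\sigma$.

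Your source for the $\lvert F\rvert$-bounds is also wrong. The first lines of \eqref{e.large-1} and \eqref{e.large-2} are not Kakeya overlap estimates and Theorem~\ref{t.strom} is never invoked; they come from the $TT^*$/Size-Lemma argument of \cite{laceyli1}, using that the selected $1$-trees (tops chosen minimal-then-most-clockwise) have almost orthogonal wave packets $\varphi_s$, together with the $\alpha_s/\beta_s$ truncation bookkeeping. The Lipschitz Kakeya Conjecture~\ref{j.laceyli2} enters only for the middle line of \eqref{e.large-1}, and via a route different from ``bounding tree overlap'': Lemma~\ref{l.1-tree} converts the size lower bound into $\lvert F\cap\sigma^{-\kappa}R_{\mathbf T}\rvert\gtrsim\sigma^{1+\kappa}\lvert R_{\mathbf T}\rvert$, so the dilated tops lie inside a superlevel set of $\operatorname M_{v,\delta',\mathsf w}\mathbf 1_F$, to which the conjectured $L^p$ bound applies (this is packaged as Lemma~\ref{l.add1}). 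Making this step rigorous also requires the good/bad split on $\gamma_{\mathbf T}$ so that the dilated rectangles satisfy the length cap \eqref{e.shortlength}; your sketch omits this, and without it the maximal function does not apply.
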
 
What is most important here is the middle estimate in \eqref{e.large-1}.  Here, 
$ p$ is as in Conjecture~\ref{j.laceyli2}, and $ M>0$ is only a function of $N  $ 
in that Conjecture.

The estimates that involve $\size{\mathbf S}^{-2}\abs F$ are those that follow from 
orthogonality considerations.  
The estimates in $\dense{\mathbf S} ^{-1}$ are those that follow from density 
considerations which are less complicated.
However, in the second half of \eqref{e.large-2},  the small positive power of
size is essential for us. 
All of these estimates are all variants of those in  \cite{laceythiele}.

The middle estimate of \eqref{e.large-1} is not of this type, and is the key ingredient that permits us to 
obtain an estimate below $L^2$.  Note that it gives the best bound for collections with moderate density and size. 
For it we shall appeal to our assumed Conjecture~\ref{j.laceyli2}.

Logarithms, such as those that arise in \eqref{e.large-2}, arise from our truncation 
arguments, associated with the parameters $ \gamma _s$ in \eqref{e.gamma}.

For individual trees, we need two estimates. 

\begin{lemma}\label{l.1-tree}
  If $\mathbf  T$ is a $1$--tree with 
$\dashint_{R_{\mathbf T}}\Delta(\mathbf  T)\ge\sigma$, then we have  
\begin{equation}\label{e.Tlowersigma} 
\abs{F\cap \sigma^{-\kappa}R_{\mathbf T}}\gtrsim\sigma^{1+\kappa}\abs{R_{\mathbf T}}.
\end{equation}
\end{lemma}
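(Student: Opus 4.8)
\textbf{Proof plan for Lemma~\ref{l.1-tree}.}

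The plan is to unwind the definitions of $ \Delta (\mathbf T)$ and $ \alpha _s $ to convert the hypothesis into information about the distribution of $ v $ on a large multiple of $ R _{\mathbf T}$, and then to feed this into the weak $ L ^2 $ estimate for the Lipschitz Kakeya Maximal Function, Theorem~\ref{t.lipKakeya}, applied with the density parameter $ \delta \simeq \sigma ^{\kappa } $. First I would normalize by dilation so that $ R _{\mathbf T}$ has length comparable to $ 1 $; this is legitimate because $ \Delta (\mathbf T)$, the quantity $ \sigma $, and the conclusion are all dilation invariant in the appropriate sense. Next, since $ \mathbf T$ is a $ 1 $-tree, the intervals $ \{{\boldsymbol \omega}_{s1} \mid s \in \mathbf T \}$ corresponding to distinct scales are pairwise disjoint, so the functions $ \varphi _s $ (hence the $ \alpha _s $) are almost orthogonal in a strong, scale-separated sense. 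This means $ \int _{R _{\mathbf T}} \Delta (\mathbf T) \ge \sigma \abs{ R _{\mathbf T}}$ forces a lower bound
\begin{equation*}
\sum _{s \in \mathbf T} \frac{\abs{ \ip \operatorname S _{\Prm} \ind F, \alpha _s, }^2 }{\abs{ R_s }} \ind {R_s} (x) \gtrsim \sigma ^2
\end{equation*}
on a subset of $ R _{\mathbf T}$ of measure $ \gtrsim \sigma \abs{ R _{\mathbf T}}$, by Chebyshev together with the $ L ^2 $ bound $ \norm \Delta (\mathbf T).2.^2 \lesssim \abs F $ (or the John--Nirenberg estimate, Lemma~\ref{l.jn}) controlling the part where $ \Delta (\mathbf T)$ is large.

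The heart of the argument is then a pigeonholing: at each point $ x $ of this subset there is a tile $ s = s(x) \in \mathbf T$ with $ x \in R_s $ and $ \abs{ \ip \operatorname S _{\Prm} \ind F, \alpha _s, } \gtrsim \sigma \sqrt{ \abs {R_s}}$. I would next recall that $ \phi _s $, and hence the pairing $ \ip \operatorname S _{\Prm} \ind F, \alpha _s, $ (after reinserting the indicator $ \ind {{\boldsymbol \omega}_{s2}}(v(\cdot))$ which is implicit in the definition \eqref{e.dfs2}), is supported on $ \{ v \in {\boldsymbol \omega}_{s2}\}$, and that by the uncertainty relations \eqref{e.bw1}--\eqref{e.bw4} the set $ \name V {R_s'} $ for the companion rectangle $ R_s' $ of width $ \scl s $ and the same orientation is large precisely when the vector field genuinely points into $ {\boldsymbol \omega}_{s}$ on a fixed fraction of $ R_s $. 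A size lower bound of $ \sigma \sqrt{ \abs {R_s}}$ on $ \ip \operatorname S _{\Prm} \ind F, \alpha _s, $ forces, via $ \alpha _s $ being an $ L ^2 $-normalized bump adapted to $ R_s $ and the Cauchy--Schwarz inequality applied on $ \gamma _s R_s $, that
\begin{equation*}
\abs{ \{ x \in K R_s \mid v(x) \in {\boldsymbol \omega}_{s2}\} } \gtrsim \sigma ^{O(\kappa )} \abs{ R_s }\,,
\end{equation*}
for an absolute $ K $; this is exactly the assertion that a slightly fattened $ R_s $ has density $ \gtrsim \sigma ^{c \kappa }$ with respect to $ v $, so it is admissible for the maximal function $ \operatorname M _{v, \delta }$ with $ \delta \simeq \sigma ^{c\kappa}$. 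Applying the maximal-function lower bound pointwise — namely, $ \operatorname M _{v, \sigma ^{c\kappa }} \ind F (x) \gtrsim \sigma ^2 $ on our subset of measure $ \gtrsim \sigma \abs{ R _{\mathbf T}}$, all contained in $ \sigma ^{-\kappa } R _{\mathbf T}$ once the fattening factors are absorbed — and invoking the weak-type $ (2,2) $ bound \eqref{max1} with norm $ \lesssim \delta ^{-1/2} \simeq \sigma ^{-c\kappa /2}$ gives
\begin{equation*}
\sigma ^{1 + O(\kappa )} \abs{ R _{\mathbf T}} \lesssim \sigma ^{-4} \cdot \sigma ^{-c\kappa } \norm { \ind F }.2.^2 = \sigma ^{-4 - c\kappa } \abs F \,,
\end{equation*}
which rearranges — after adjusting the implied powers of $ \kappa $, and using that $ \abs{F} \lesssim 1 \lesssim \sigma ^{-4}\abs F $ is harmless since $ \sigma \le 1 $ — into $ \abs{ F \cap \sigma ^{-\kappa } R _{\mathbf T}} \gtrsim \sigma ^{1 + \kappa } \abs{ R _{\mathbf T}}$, as claimed. (One should double check the bookkeeping of the exponent: the conclusion wants $ \sigma ^{1+\kappa }$, and the Frequency-support operator $ \operatorname S _{\Prm}$ contributes only bounded, scale-uniform factors, so the only $ \kappa $-powers arise from the $ \gamma _s $ truncation and the $ \delta ^{-1/2}$ in Theorem~\ref{t.lipKakeya}, both of which can be made $ \sigma ^{O(\kappa )}$ with $ \kappa $ small.)

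The main obstacle I anticipate is the passage from the single-tile size lower bound $ \abs{ \ip \operatorname S _{\Prm} \ind F, \alpha _s, } \gtrsim \sigma \sqrt{\abs {R_s}}$ to the genuine \emph{density} statement $ \abs{\name V {R_s} } \gtrsim \sigma ^{O(\kappa )} \abs{R_s}$: one has to argue that the pairing cannot be large merely because $ \ind F $ is concentrated, but must instead reflect that $ v $ lies in $ {\boldsymbol \omega}_{s2}$ on a substantial portion of $ R_s $ — this uses the explicit form \eqref{e.dfs2} of $ \phi _s $ as an average of $ \varphi _s (x - y v(x))$ against a mean-zero bump $ \psi _s $, together with the support localization $ \{ v \in {\boldsymbol \omega}_{s2}\}$, and it is where the $ \gamma _s $-truncation (so that $ \alpha _s $ actually lives on $ \gamma _s R_s $, a set of controlled size relative to $ R_s $) is genuinely needed. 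A secondary but routine difficulty is verifying that the scattered rectangles $ R_s $ selected by the pigeonholing, when fattened by $ K $, can indeed be organized so that the pointwise bound $ \operatorname M _{v,\delta }\ind F \gtrsim \sigma ^2 $ holds on a set of the asserted measure inside $ \sigma ^{-\kappa} R _{\mathbf T}$ — here one uses that all these $ R_s $ are subsets of $ 100 \kappa ^{-10} R _{\mathbf T}$ by the definition of a tree, so fattening by an absolute constant keeps everything inside $ \sigma ^{-\kappa } R _{\mathbf T}$ for $ \sigma $ bounded.
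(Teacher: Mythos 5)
Your strategy cannot be executed, and the difficulty you flag as the ``main obstacle'' is in fact fatal. The square function $ \Delta (\mathbf T) $ in \eqref{e.zD} is built from the coefficients $ \ip \operatorname S _{\Prm} \mathbf 1_F, \alpha _s, $, and $ \alpha _s = (\operatorname T _{c(R_s)} \operatorname D ^{\infty} _{\gamma _s R_s}\zeta)\varphi _s $ is just a spatial truncation of the wave packet $ \varphi _s $. It carries \emph{no} information about the vector field: the indicator $ \mathbf 1 _{{\boldsymbol \omega} _{s2}}(v(\cdot))$ appears only in the \emph{output} functions $ \phi _s $ and $ a _{s\pm}$, never on the analysis side. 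So the step where you try to pass from $ \lvert \ip \operatorname S _{\Prm} \mathbf 1 _F, \alpha _s, \rvert \gtrsim \sigma \lvert R_s\rvert ^{1/2}$ to the density statement $ \lvert \{x \in K R_s : v(x) \in {\boldsymbol \omega} _{s2}\}\rvert \gtrsim \sigma ^{O(\kappa)}\lvert R_s\rvert $ has no basis; the coefficient can be large simply because $ F $ is concentrated near $ R_s $, with $ v$ pointing wherever it likes. There is therefore nothing to feed into Theorem~\ref{t.lipKakeya}. And even setting this aside, the numerology breaks: the large-coefficient tiles give a lower bound of the form $ \dashint _{2R_s}\mathbf 1_F \gtrsim \sigma $, so you would only be able to apply the weak-$(2,2)$ Kakeya estimate at level $ \lambda \simeq \sigma $, which yields something like $ \sigma \lvert R _{\mathbf T}\rvert \lesssim \delta ^{-1}\sigma ^{-2}\lvert F\rvert $, hence $ \lvert F\rvert \gtrsim \sigma ^{3 + O(\kappa)}\lvert R _{\mathbf T}\rvert $ — far short of the claimed $ \sigma ^{1+\kappa}$.

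The correct reading of the lemma is that it is a pure, vector-field-free wave-packet localization statement, and the paper proves it as such. One observes the localized Littlewood--Paley estimate \eqref{e..b}, namely $ \norm \Delta (\mathbf T). p. \lesssim \norm \chi ^{(\infty)} _{R _{\mathbf T}} f. p. $ for $ 1<p<\infty $, applies it with $ p = 1 + \kappa /100 $ and $ f = \operatorname S _{\Prm}\mathbf 1 _F$, and then uses Jensen: $ \sigma ^{p} \le \dashint _{R _{\mathbf T}} \Delta (\mathbf T)^{p}\; dx \lesssim \lvert R _{\mathbf T}\rvert ^{-1} \int _F \chi ^{(\infty)} _{R _{\mathbf T}}\; dx $. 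Because $ \chi $ decays rapidly, this forces $ \lvert F \cap \sigma ^{-\kappa} R _{\mathbf T}\rvert \gtrsim \sigma ^{1+\kappa}\lvert R _{\mathbf T}\rvert $. No maximal function, no density, no Kakeya input. You should discard the Kakeya route and prove (or take as known) the $L^p$ square-function bound \eqref{e..b} instead.
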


\begin{lemma}\label{l.tree}
  For 
 trees $\mathbf  T$ we have the estimate 
\begin{equation} \label{e.tree} 
\sum_{s\in\mathbf  T} \abs{\ip  {\operatorname S_{\Prm}} \ind F, \alpha_{s},\ip  a_{s-}, \ind G ,}
\lesssim\Psi\bigl(\dense {\mathbf  T} \size {\mathbf  T}\bigr)  \abs {\sh{\mathbf T} }.
\end{equation}
Here $\Psi(x)=x\abs{\log c x} $, and inside the logarithm, $c $ is a small fixed constant,
to insure that $c\dense {\mathbf  T }\cdot \size {\mathbf  T }<\frac12 $, say.
\end{lemma}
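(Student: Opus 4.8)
Fix a tree $\mathbf T$ with top data $\omega_{\mathbf T}\times R_{\mathbf T}$, and abbreviate $\delta=\dense{\mathbf T}$, $\sigma=\size{\mathbf T}$. The left side of \eqref{e.tree} is a sum over $s\in\mathbf T$ of products of three inner products. I would first decompose $\mathbf T$ into its $1$-subtrees and $2$-subtrees in the standard way: since for tiles at different scales in a $2$-tree the $\boldsymbol\omega_{s2}$ are disjoint, and for tiles in a $1$-tree the $\boldsymbol\omega_{s1}$ are disjoint, each tree splits as a bounded overlap union of a $1$-tree and a family of $2$-trees; the $2$-tree pieces are where the ``error'' terms $a_{s-}$ genuinely live (the $\boldsymbol\omega_{s2}$ localize the vector field), and the bulk of the estimate reduces to estimating a single $1$-tree. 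So the first step is this reduction, and the decomposition should be organized so that the shadows $\sh{\mathbf T}$ of the sub-trees are controlled by $|\sh{\mathbf T}|$.

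For the main $1$-tree estimate I would proceed by duality/Cauchy--Schwarz in the spatial variable, splitting the line segment of integration in $y$ at scale $\gamma_s(\scl s)^{-1}$ (this is exactly why $\psi_{s-}$ was introduced in \eqref{e.psi_s} with support in $\tfrac12\gamma_s(\scl s)^{-1}[-1,1]$, and why $\alpha_s$ is the $\gamma_s R_s$-truncation of $\varphi_s$). One bounds $|\ip a_{s-},\ind G,|$ pointwise by $\delta$ times a normalized bump adapted to $R_s$ supported on $\{v(x)\in\boldsymbol\omega_{s2}\}$ — this is where $\dense{\mathbf T}$ enters, via the definition \eqref{e.DENSE} and the observation that $\dense s\simeq\int_{G\cap v^{-1}(\boldsymbol\omega_s)}\widetilde\chi^{(1)}_s\,dx$. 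Then $\sum_{s\in\mathbf T}|\ip \operatorname S_\Prm\ind F,\alpha_s,|\cdot\delta\cdot(\text{bump}_s)$ is handled by Cauchy--Schwarz against the square function $\Delta(\mathbf T)$: one gets $\delta\cdot\|\Delta(\mathbf T)\|_{1}$ roughly, and then invokes the John--Nirenberg estimate Lemma~\ref{l.jn}, $\|\Delta(\mathbf T)\|_p\lesssim\sigma|R_{\mathbf T}|^{1/p}$, to conclude $\lesssim\delta\sigma|R_{\mathbf T}|\lesssim\delta\sigma|\sh{\mathbf T}|$.

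The logarithm $\Psi(x)=x|\log cx|$ — and the need to keep $c\,\delta\sigma<\tfrac12$ — comes from summing over scales. The truncation parameter $\gamma_s^2=100^{-2}\scl s/\vLip$ from \eqref{e.gamma} means that at scale $\scl s$ the spatial truncation is at distance $\gamma_s(\scl s)^{-1}$, and as $\scl s$ ranges over its $\simeq\log(\Prm\vLip)$ admissible dyadic values inside a fixed tree the overlapping contributions of the truncated tails accumulate a single logarithmic factor; more precisely, one partitions the scales $\scl s$ into blocks and on each block uses the rapid decay of the Schwartz tails to sum geometrically, but the number of blocks that actually interact with a fixed spatial point is $\lesssim\log(1/(\delta\sigma))$, which produces $\Psi(\delta\sigma)$ rather than $\delta\sigma$. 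I would carry this out by: (i) fixing $x$, (ii) noting that only scales $\scl s$ with $\scl s\gtrsim\delta\sigma\cdot(\text{dist to }R_{\mathbf T})^{-1}$ can contribute nontrivially because of the $\delta$-weighting together with the normalization, and (iii) summing the resulting geometric-in-scale series, losing one logarithm.

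**Expected main obstacle.** The delicate point is the interplay between the spatial truncation (the $\alpha_s$ versus $\varphi_s$ and $\psi_{s-}$ versus $\psi_{\scl s}$ splittings) and the fact that, as Remark~\ref{r.noPartialOrder} stresses, there is no transitive partial order — so one cannot simply iterate a tree-top localization down through scales and must instead track, at each scale, where the relevant $R_s$ sit relative to $R_{\mathbf T}$ and where $v(x)\in\boldsymbol\omega_{s2}$. Making the geometric summation over scales honest, while only spending one logarithm and correctly matching it to $\Psi(\dense{\mathbf T}\size{\mathbf T})$, is the technical heart of the argument; everything else is the familiar tree estimate from \cite{laceythiele} adapted to the $a_{s-}$ kernels.
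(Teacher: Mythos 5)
Your plan has the right set of ingredients (density of the tree top, the square function $\Delta(\mathbf T)$ with John--Nirenberg, the $a_{s-}$ truncation) but it misidentifies the mechanism that produces the logarithm, and that is the one place where a genuine argument is required. You say the $|\log c\,\delta\sigma|$ accumulates from "overlapping contributions of the truncated tails" across the $\simeq\log(\Prm\vLip)$ scales in the tree, and you propose a pointwise observation "(ii)" that only scales with $\scl s\gtrsim\delta\sigma\cdot(\text{dist to }R_{\mathbf T})^{-1}$ contribute; as written this is dimensionally suspect ($\scl s$ is a frequency-like quantity while distance to $R_{\mathbf T}$ is spatial), and more importantly it does not explain why the exponent of the log depends on $\delta\sigma$ and not on $\Prm\vLip$.

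What actually happens in the paper's proof is a clean dichotomy on the size of $\gamma_s\sim\sqrt{\scl s/\vLip}$ relative to $K(\sigma\delta)^{-1}$. If $\gamma_s\ge K(\sigma\delta)^{-1}$ for all $s\in\mathbf T$, one can pass from the truncated quantities $\dense{\cdot},\size{\cdot}$ to the untruncated quantities $\operatorname{\sf dense},\operatorname{\sf size}$ (defined with $\chi^{(1)}_{R_s}$ and $\varphi_s$ in place of $\widetilde\chi^{(1)}_s$ and $\alpha_s$) with comparable values, invoke the Tree Lemma of \cite{laceyli1} wholesale to get $\operatorname{\sf dense}(\mathbf T)\operatorname{\sf size}(\mathbf T)|R_{\mathbf T}|$, and then control the errors $\varphi_s-\alpha_s=\beta_s$, $\phi_s-a_{s-}-a_{s+}$, and $a_{s+}$ brute-force using $\int|\beta_s|\lesssim\gamma_s^{-2}\sqrt{|R_s|}$, \eqref{e.AaA}, and summability of $\gamma_s^{-1}|R_s|$ over the tree; this regime contributes only $\delta\sigma|R_{\mathbf T}|$ with no logarithm. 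The logarithm comes entirely from the complementary regime $\gamma_s\le K(\sigma\delta)^{-1}$: because $\gamma_s\sim\sqrt{\scl s/\vLip}$ there are only $O(|\log(\delta\sigma)|)$ scales in this range, and for each \emph{fixed} scale the tree is automatically a $1$-tree so that Cauchy--Schwarz plus the density bound already gives $\delta\sigma|R_{\mathbf T}|$ per scale; summing over the $O(|\log(\delta\sigma)|)$ scales yields $\Psi(\delta\sigma)|R_{\mathbf T}|$. Your proposal also rebuilds a $1$-tree/$2$-tree decomposition from scratch rather than quoting the Tree Lemma of \cite{laceyli1}; that is legitimate, but then the $1$-tree/$2$-tree estimates you sketch have to be carried out carefully (the paper short-circuits exactly that work by citing \cite{laceyli1}), and it still would not, by itself, produce the case split on $\gamma_s$ that the logarithm needs.
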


Set 
\begin{equation*}
\operatorname{Sum}(\mathbf  S):=\sum_{s\in\mathbf  S} \abs{\ip  {\operatorname S_{\Prm}} \ind F, \alpha_{s},\ip  a_{s-}, \ind G ,}
\end{equation*}
We want to provide the  bound $\operatorname{Sum}(\mathcal A_v)\lesssim{} \abs F^{1/p}$ for $p_0<p<2$.  We have the 
trivial bound 
\begin{equation} \label{e.dense-size-count}
\operatorname{Sum}(\mathbf  S)\lesssim{}\Psi\bigl(\dense{\mathbf S}  \size{\mathbf S}\bigr) \operatorname{count}(\mathbf  S). 
\end{equation}
It is incumbent on us to provide a decomposition of $\mathcal A_v $ into sub-collections for which this last estimate 
is effective.

By inductive application of our principal organizational Lemma~\ref{l.dense+size},  $\mathcal A_v$ is the union of $\mathbf  S_{\delta,\sigma}^\ell $, $\ell=1,2$   
for $\delta,\sigma\in {\mathbf 2}:=\{2^n\mid n\in\mathbb Z\,, n\leq 0\}$, satisfying 
\begin{gather} 
\dense {\mathbf  S_{\delta,\sigma}^\ell} \lesssim\delta,
\\
\size {\mathbf  S_{\delta,\sigma}^\ell} \lesssim\sigma, 
\\
\operatorname{count}(\mathbf  S_{\delta,\sigma}^\ell)\lesssim{} 
\begin{cases} 
\min( \sigma^{-2-\kappa}\abs F , \delta^{-M}\sigma^{-p}{\abs F} 
                + \sigma^{1/\kappa}\delta^{-1}
                         , \delta^{-1}) & \ell=1,
\\
\min( \sigma^{-2}(\log 1/\sigma)^3\abs F, \delta^{-1}\sigma^{\kappa/50} ) & \ell=2 
\end{cases} 
\end{gather}
Using \eqref{e.dense-size-count}, we see that 
\begin{equation} \label{e.sum}
\begin{split}
\operatorname{Sum}(\mathbf  S_{\delta,\sigma}^1)&\lesssim{} 
\min( \Psi(\delta)\sigma^{-1-\kappa}\abs F , \delta^{-M+1}\sigma^{-p+1}{\abs F} 
                     + \sigma^{1/\kappa+1}, \sigma)
\\
\operatorname{Sum}(\mathbf  S_{\delta,\sigma}^2)&\lesssim{} 
\min( \Psi(\delta)\sigma^{-1}(\log 1/\sigma)^4\abs F, \sigma^{1+\kappa/50} )
\end{split}
\end{equation}

One can  check that for $\ell=1,2$, 
\begin{equation} \label{e.routine}
\sum_{\delta,\sigma\in\mathbf 2 }\operatorname{Sum}(\mathbf  S_{\delta,\sigma}^\ell)\lesssim{}\abs F^{1/p},\qquad p_0<p<2. 
\end{equation}
This completes the proof of Lemma~\ref{l.A}, aside from the proof of Lemma~\ref{l.dense+size}.

\begin{proof}[Proof of \eqref{e.routine}.] 
We can assume that $ \lvert  G\rvert =1$, and that $ \lvert  F\rvert \le 1 $, 
for otherwise the result follows from the known $ L ^{p}$ estimates, for $ p>2$ 
and measurable vector fields, see Theorem~\ref{t.laceyli1}.

The case of $ \ell =2$ in \eqref{e.routine} is straightforward.  
Notice that in \eqref{e.sum}, for $ \ell =2$, the two terms in the minimum
are roughly comparable, ignoring logarithmic terms, for 
\begin{equation*}
\delta \lvert  F\rvert \simeq \sigma ^{2+\kappa /50}\,.  
\end{equation*}
Therefore, we set 
\begin{align*}
T_1 &= \{ (\delta ,\sigma )\in \mathbf 2 \times \mathbf 2 
\mid  \delta \lvert  F\rvert\le \sigma ^{2+\kappa /50} \le \lvert  F\rvert \}\,, 
\\
T_2 &= \{ (\delta ,\sigma )\in \mathbf 2 \times \mathbf 2 
\mid  \sigma ^{2+\kappa /50}\le \delta \lvert  F\rvert\}
\end{align*}
and $ T_3=\mathbf 2 \times \mathbf 2-T_1-T_2$.  

We can estimate 
\begin{align*}
\sum _{ (\delta ,\sigma )\in T_1 } 
\operatorname{Sum}(\mathbf  S_{\delta,\sigma}^2)
& \lesssim 
\sum _{ (\delta ,\sigma )\in T_1 } 
\Psi(\delta)\sigma^{-1}(\log 1/\sigma)^4\abs F 
\\
& \lesssim 
\sum _{  \substack{\sigma \in \mathbf 2\\ \sigma ^{2+\kappa /50}  \le \lvert  F\rvert } }  
\sigma ^{1+\kappa /75}
\\
& \lesssim 
\lvert  F\rvert ^{1/p_0}\,, \qquad  p_0= \frac {2+\kappa /50} {1+\kappa /75}<2\,.  
\end{align*}
Notice that we have absorbed harmless logarithmic terms into a slightly smaller 
exponent in $ \sigma $ above. 

The second term is 
\begin{align*}
\sum _{ (\delta ,\sigma )\in T_2} 
\operatorname{Sum}(\mathbf  S_{\delta,\sigma}^2)
& \lesssim 
\sum _{ (\delta ,\sigma )\in T_2 } 
\sigma^{1+\kappa/50}
\\
& \lesssim 
\sum _{  \substack{\delta  \in \mathbf 2  } }  
(\delta F) ^{1/p_1}\,, \qquad p_1 = \frac {2+\kappa /50} {1+\kappa /50}<2\,, 
\\
& \lesssim 
\lvert  F\rvert ^{1/p_1}\,.
\end{align*}

The third term is 
\begin{align*}
\sum _{ (\delta ,\sigma )\in T_3} 
\operatorname{Sum}(\mathbf  S_{\delta,\sigma}^2)
& \lesssim 
\sum _{ (\delta ,\sigma )\in T_3 } 
\Psi(\delta)\sigma^{-1}(\log 1/\sigma)^4\abs F 
\\
& \lesssim 
\sum _{  \substack{\sigma \in \mathbf 2\\ \sigma ^{2+\kappa /50}  \ge \lvert  F\rvert } }  
\sigma ^{-1} \lvert  F\rvert ^{1- \kappa /75} 
\\
& \lesssim 
\lvert  F\rvert ^{1/p_0}\,.
\end{align*}
Here, we have again absorbed harmless logarithms into a slightly smaller 
power of $ \lvert  F\rvert $, and $ p_0<2$ is as in the first term.  

\bigskip 

The novelty in this proof is the proof of \eqref{e.routine} in the case of $ \ell =1$. 
We comment that if one uses the proof strategy just employed, that is only relying 
upon the first and last estimates from the minimum in \eqref{e.sum}, in the case 
of $ \ell =1$, one will only show that $ \lvert  F\rvert ^{1/2}  $.  

In the definitions below, we will have a choice of $ 0<\tau <1$, where 
$ \tau = \tau (M,p) \simeq M ^{-1} \cdot (2-p)  $ will only depend upon $ M$ and $ p$ in \eqref{e.sum}. 
($ \tau $ enters into the definition of $ T_4$ and $ T_5$ below.) 
The choice of $ 0<\kappa < \tau $ will be specified below.  
\begin{align*}
T_1 &= \{ (\delta ,\sigma )\in \mathbf 2 \times \mathbf 2 
\mid  \lvert  F\rvert ^{\frac 1 {(2+\kappa ) (1+\kappa )} }\le \sigma \}\,, 
\\
T_2 &= \{ (\delta ,\sigma )\in \mathbf 2 \times \mathbf 2 
\mid   \sigma < \lvert  F\rvert ^{\frac 1 {2-\kappa } }  \,,
\  \delta \ge \sigma ^{1/\kappa } \}\,, 
\\
T_3 &= 
\{ (\delta ,\sigma )\in \mathbf 2 \times \mathbf 2 
\mid   \sigma < \lvert  F\rvert ^{\frac 1 {2-\kappa } }  \,,
\  \delta >\sigma ^{1/\kappa } \}\,, 
\\
T_4 &= \{ (\delta ,\sigma )\in \mathbf 2 \times \mathbf 2
\mid     \lvert  F\rvert ^{\frac 1 {2-\kappa } } \le  \sigma < \lvert F\rvert ^{\frac 1 {(2+\kappa ) (1+\kappa )} }   \,,
\  \delta >\sigma ^{\tau  } \}\,, 
\\
T_5&=
\{ (\delta ,\sigma )\in \mathbf 2 \times \mathbf 2
\mid     \lvert  F\rvert ^{\frac 1 {2-\kappa } } \le  \sigma < \lvert F\rvert ^{\frac 1 {(2+\kappa ) (1+\kappa )} }   \,,
\  \delta \le \sigma ^{\tau  } \}\,,
\end{align*}
Let 
\begin{equation*}
\mathcal T (T)=\sum _{ (\delta ,\sigma )\in T} 
\operatorname{Sum}(\mathbf  S_{\delta,\sigma}^1)\,. 
\end{equation*}

Note that  for $ T_1$ we can use the first term in the minimum in \eqref{e.sum}. 
\begin{align*}
\mathcal T (T_1) & \lesssim 
	\sum _{(\delta ,\sigma )\in T_1  } 
	\delta \sigma ^{-1-\kappa }\lvert  F\rvert
\\
& \lesssim \sum _{\substack{\sigma  \in \mathbf 2\\ \sigma  \ge \lvert  F\rvert 
^{\frac 1 {(2+\kappa ) (1+\kappa )} }}} 
\sigma^{-1-\kappa } \lvert  F\rvert 
\\
& \lesssim \lvert  F\rvert ^{1- \frac 1 {2+\kappa }}\,. 
\end{align*}
This last exponent on $ \lvert  F\rvert $ is strictly larger than $ \frac12$ as desired. 
The point of the definition of $T_1 $ is that when it comes time to use the 
middle term of the minimum 
for $ \ell =1$ in \eqref{e.sum}, we can restrict attention to the term 
\begin{equation*}
 \delta^{-M+1}\sigma^{-p+1}{\abs F} \,. 
\end{equation*}

For the collection $ T_2$,  use the last term in the minimum in 
\eqref{e.sum}.  
\begin{align*}
\mathcal T (T_2) & \lesssim 
	\sum _{(\delta ,\sigma )\in T_2  } 
	\sigma 
\\
& \lesssim  
\sum _{\substack{\sigma \in \mathbf 2\\ \sigma  \le \lvert  F\rvert ^{\frac 1 {2-\kappa }}  }} 
\sigma \log 1/\sigma 
\\
& \lesssim 
\lvert  F\rvert ^{ \frac 1 {2-\kappa /2}}\,.
\end{align*}
Again, for $ 0<\kappa <1$, the exponent on $ \lvert  F\rvert $ 
above is strictly greater than $ 1/2$. 

The term $ T_3$ can be controlled with the first term in the minimum in 
\eqref{e.sum}.  
\begin{align*}
\mathcal T (T_3) & \lesssim 
	\sum _{(\delta ,\sigma )\in T_3  }   
	\delta \sigma ^{-1-\kappa } \lvert  F\rvert
\\
& \lesssim \sum _{\sigma \in \mathbf 2} \sigma \lvert  F\rvert \lesssim \lvert  F\rvert \,. 
\end{align*}

The term $ T_4$ is the heart of the matter.  It is here that we use the 
middle term in the minimum of \eqref{e.sum}, and that the role of $ \tau $ becomes 
clear. We estimate 
\begin{align*}
\mathcal T (T_4)  &\lesssim 
	\sum _{(\delta ,\sigma )\in T_4  }   
\delta ^{-M} \sigma ^{-p+1} \lvert  F\rvert
\\
& \lesssim \sum _{\substack{\delta \in \mathbf 2\\ \delta \ge \lvert  F\rvert ^{\tau }  }}
\delta ^{-M} \lvert  F\rvert ^{1- \frac {p-1} {2-\kappa }}
\\
& \lesssim \lvert  F\rvert ^{1- \frac {p-1} {2-\kappa }- M \tau }\,. 
\end{align*}
Recall that $ 1<p<2$, so that $ 0<p-1<1$.  Therefore, for $ 0<\kappa  $ 
sufficiently small, of the order of $ 2-p$, we will have 
\begin{equation*}
1- \frac {p-1} {2-\kappa } > \tfrac 12 + \tfrac {2-p}4\,. 
\end{equation*}
Therefore, choosing $ \tau \simeq (2-p)/M$ will leave us with a power on $ \lvert  F\rvert $
that is strictly larger than $ \tfrac 12$. 

The previous term did not specify $ \kappa >0$.  Instead it shows that for 
$ 0<\kappa <1$ sufficiently small, we can make a choice of $ \tau $,  that is 
independent of $ \kappa $, for which $ \mathcal T (T_4)$ admits the required control. 
The bound in the last term will specify a choice of $ \kappa $ on us.  We estimate 
\begin{align*}
\mathcal T (T_5)  &\lesssim 
	\sum _{(\delta ,\sigma )\in T_5  }   
\delta \sigma ^{-1-\kappa } \lvert  F\rvert 
\\& \lesssim 
\sum _{\substack{\sigma \in \mathbf 2\\ \sigma \ge  \lvert  F\rvert ^{\frac 1 {2-\kappa }}  }}
\sigma ^{-1-\kappa } \lvert  F\rvert ^{1+\tau } 
\\
& \lesssim 
\lvert  F\rvert ^{1+ \tau + \frac {1+\kappa } {2-\kappa }} 
\end{align*}
Choosing $ \kappa = \tau /6$ will result in the estimate 
\begin{equation*}
\lvert  F\rvert ^{\frac {1+\tau} 2 }\,, 
\end{equation*}
which is as required, so our proof is finished. 
\end{proof}

\begin{remark}\label{r.conditional} The resolution of Conjecture~\ref{j.conditional} would 
depend upon refinements of Lemma~\ref{l.dense+size}, as well as using the restricted 
weak type approach of \cite{mtt}. 
\end{remark}

\subsection*{Proof of Lemma~\ref{l.bessel}}
  We only consider tiles 
$s\in\mathcal {AT}(\Prm,{\mathsf {scl}})$, and sets ${\boldsymbol \omega}\in{\boldsymbol \Omega}$ which are associated to one of these tiles. 
For an element $a=\{a_s\}\in\ell^2( \mathcal {AT}(\Prm,{\mathsf {scl}}))$, 
\begin{equation*}
\operatorname T_{{\boldsymbol \omega}}a=\sum_{s\,:\,{\boldsymbol \omega}_{s}={\boldsymbol \omega}} a_s {\operatorname S}_\Prm\alpha_s 
\end{equation*}
For $\abs{{\boldsymbol \omega}_{s}}=\abs{{\boldsymbol \omega}_{s'}}$, note that 
$\operatorname {dist}({\boldsymbol \omega}_{s},{\boldsymbol \omega}_{s'})$ is measured in units of ${\mathsf {scl}}/\Prm$.  

By a lemma of Cotlar and Stein, it suffices to provide the estimate 
\begin{equation*}  
\norm \operatorname T_{{\boldsymbol \omega}}
\operatorname T_{{\boldsymbol \omega}'}^* .2.\lesssim{}\rho^{-3},\qquad \rho=1+\frac{\Prm}{{\mathsf {scl}}}\text{dist}({\boldsymbol \omega},{\boldsymbol \omega}').
\end{equation*}
Now, the estimate $\norm \operatorname T_{{\boldsymbol \omega}}.2.\lesssim1$ 
is obvious.  For the case ${\boldsymbol \omega}\not={\boldsymbol \omega}'$, 
by Schur's test, it suffices to see that 
\begin{equation} \label{e.schur} 
\sup_{s'\,:\,{\boldsymbol \omega}_{s'}={\boldsymbol \omega}'}
\sum_{s\,:\,{\boldsymbol \omega}_{s}=
{{\boldsymbol \omega}}}
\abs{\ip  {\operatorname S}_\Prm\alpha_s, {\operatorname S}_\Prm\alpha_{s'},}
\lesssim{}\rho^{-3}.
\end{equation}

For tiles $s'$ and $s$ as above, recall that $\ip  \varphi_s, \varphi_{s'},=0$, note that 
\begin{equation*}
\frac {\abs{R_{s'}\cap R_s }}{\abs{R_s}}
\lesssim{}\frac{\mathsf {scl}}{\Prm\,{\text{dist}({\boldsymbol \omega},
{\boldsymbol \omega}')}}
 \simeq \rho^{-1},
\end{equation*}
and in particular, for a fixed $s'$,  let $\mathbf  S_{s'}$ be those $s$ for which 
$\rho R_s \cap \rho R_{s'}\not=\emptyset$. 
Clearly,   
\begin{equation*}
\operatorname{card}(\mathbf  S_{s'})\lesssim{}
\frac {\abs{\rho R_s}}{\abs{2\rho R_{s'}
     \cap 2\rho R_s }}\rho \simeq\rho^2 
\end{equation*}

If for $r>1$,  $rR_s \cap rR_{s'}=\emptyset$, then 
it is routine to show that 
\begin{equation*}
\abs{ \ip  {\operatorname S}_\Prm\alpha_s, {\operatorname S}_\Prm\alpha_{s'},}\lesssim{} 
r^{-10}
\end{equation*}
And so we may directly sum over those $s\not\in\mathbf  S_{s'}$, 
\begin{equation*} 
\sum_{s\not\in\mathbf  S_{s'}}\abs{\ip  {\operatorname S}_\Prm\alpha_s, {\operatorname S}_\Prm\alpha_{s'},}\lesssim{} \rho^{-3}.
\end{equation*}

For those $s\in\mathbf  S_{s'}$, we estimate the inner product in frequency variables.  Recalling the definition of $\alpha_s=
(\operatorname T_{c(R_s)}\operatorname D^{\infty}_{\gamma_s R_s}\zeta)\varphi_s$, 
we have 
\begin{equation*}
\widehat\alpha_s=(\operatorname{Mod}_{-c(R_s)}
\operatorname D^1_{\gamma_s^{-1}\omega_s}\widehat\zeta)* \widehat\varphi_s.
\end{equation*}
Recall that $\zeta$ is a smooth compactly supported Schwartz function.
We estimate the inner product 
\begin{equation*}
\abs{ \ip  \widehat{{\operatorname S}_\Prm\alpha_s}, \widehat{{\operatorname S}_\Prm\alpha_{s'}},}
\end{equation*}
 without  appealing to cancellation.  Since we choose the function $\widehat\lambda$ 
to be supported in an annulus $\frac12 <\abs\xi<\frac32$ so that 
$\widehat{\lambda_\Prm}=\widehat{\lambda}(\cdot/\Prm)$ is supported in the annulus
$ \frac12 \Prm< |\xi|< \frac32\Prm$.
 We can restrict our attention to this same range of $\xi$. 
  In the region $\abs\xi>\Prm/4$, suppose, without loss of generality, that $\xi$ is closer to $\omega_s$ than $\omega_{s'}$. Then since $\omega_s$ and $\omega_{s'}$ are separated by an amount $\gtrsim{} \Prm\, 
  \text{dist}({\boldsymbol \omega},{\boldsymbol \omega}')$, 
\begin{align*}
\abs{ \widehat\alpha_s(\xi)\widehat\alpha_{s'}(\xi)}\lesssim{}&
\chi^{(2)}_{\omega_s}(\xi)\chi^{(2)}_{\omega_{s'}}(\xi)
\Bigl(\frac{\Prm}{\mathsf {scl}} \text{dist}({\boldsymbol \omega},{\boldsymbol \omega})\Bigr)^{-20}
\\&\lesssim \chi^{(2)}_{\omega_s}(\xi)\chi^{(2)}_{\omega_{s'}}(\xi)
 \rho^{-20}.
\end{align*}
Here, $\chi$ is the non--negative bump function in \eqref{e.zq}.
Hence, we have the estimate 
\begin{equation*}
\int \abs{{\widehat{\lambda_{\Prm}}}(\xi)}^2\abs{ \widehat\alpha_s(\xi)\widehat\alpha_{s'}(\xi)}d\xi\lesssim \rho^{-10}.
\end{equation*}
This is summed over the ${}\lesssim \rho^2$ possible choices of $s\in\mathbf  S_{s'}$, giving the estimate 
\begin{equation*}
\sum_{s\in \mathbf  S_{s'}}
\abs{ \ip  {\operatorname S}_\Prm\alpha_s,  {\operatorname S}_\Prm\alpha_{s'}, } \lesssim{} \rho^{-8}\lesssim{}\rho^{-3}.
\end{equation*}
This is the   proof of \eqref{e.schur}. And this concludes the proof of Lemma~\ref{l.bessel}.


\subsection*{Proof of the Principal Organizational Lemma~\ref{l.dense+size}}

Recall that we are to decompose  $\mathbf  S$   into four distinct 
 subsets satisfying the favorable estimates of that Lemma. 
 For the remainder of the proof set 
$\dense{\mathbf S}:=\delta$ and $\size{\mathbf S}:=\sigma$.  
Take $\mathbf  S_{\text{light}}$ to be all those 
$s\in\mathbf  S$ for which there is no tile $s' \in \mathcal {AT}$
of density at least $\delta/2$ for which $s \lesssim s'$.  It is clear that  this set so constructed has 
density at most $\delta/2$,  that this is a 
  set of tiles, and that  $\mathbf  S_1:=\mathbf  S-\mathbf  S_{\text{light}}$ is also 
. 

\smallskip

The next Lemma and proof comment on the method we use to obtain the  middle estimate in \eqref{e.large-1} which
depends upon the Lipschitz Kakeya Maximal Function Conjecture~\ref{j.laceyli2}. 
It will be used to obtain the important inequality \eqref{goodest} below.

\begin{lemma}\label{l.add1}
Suppose we have a collection of trees $\mathbf  T\in\mathcal T$, 
with  these conditions. 
\begin{description}
\item[a] For $ \mathbf T\in \mathcal T$ there is a $ 1$-tree $ \mathbf T_1\subset \mathbf T$
with 
\begin{equation}\label{e.add1}
\int _{R _{\mathbf T}} \Delta (\mathbf T_1) \; dx \ge \kappa \sigma \,,
\end{equation}
\item  [b] Each tree has  top element $s(\mathbf  T):= \omega_{\mathbf T} \times R_{\mathbf T}$  
of density at least $\delta$.  

\item  [c] The collections of tops $ \{ s (\mathbf T) \mid 
\mathbf T\in \mathcal T\}$ are pairwise incomparable under the order relation `$ \lesssim $'. 

\item [d]  For all $ \mathbf T\in \mathcal T$, 
$\gamma_{\mathbf  T}=\gamma_{\omega_{\mathbf T}\times R_{\mathbf T}} \geq \kappa^{-1/2}
\sigma^{-\kappa/5 N}$. Here, $N$ is the exponent on $\delta $ in 
Conjecture~\ref{j.laceyli2}.
\end{description}
Then we have
\begin{align} \label{e.middle}    
\sum_{\mathbf  T\in\mathcal T} \abs {R_{\mathbf T}}\lesssim{}&\delta^{-Np-1} \sigma^{-p(1+\kappa/4)}{\abs F}
                            + \sigma^{1/\kappa}\delta^{-1}.
\\
\label{e.last}
\sum_{\mathbf  T\in\mathcal T} \abs { R_{\mathbf T}}\lesssim{}&\delta^{-1}.
\end{align}
\end{lemma}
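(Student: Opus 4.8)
The plan is to establish the two inequalities \eqref{e.middle} and \eqref{e.last} by a covering-lemma argument that feeds the geometry of the tops $R_{\mathbf T}$ into the Lipschitz Kakeya Maximal Function estimate of Conjecture~\ref{j.laceyli2}. First I would record the two consequences of the hypotheses that make the tops look like the rectangles in the definition \eqref{e.MdefW}: by hypothesis (b) each top $s(\mathbf T)$ has density at least $\delta$, which after unwinding \eqref{e.DENSE} means that $R_{\mathbf T}$—inflated by the fixed constant $\kappa^{-10}$ and intersected with $v^{-1}$ of the angle of uncertainty—carries a $\widetilde\chi^{(1)}$-mass comparable to $\delta$; in particular $\abs{\name V {R_{\mathbf T}}}\gtrsim \delta\abs{R_{\mathbf T}}$ up to the harmless Schwartz tails, so each $R_{\mathbf T}$ is (essentially) an admissible rectangle for $\operatorname M_{v,\delta,\mathsf w}$ with $\mathsf w\simeq \gamma_{\mathbf T}(\scl s)^{-1}$. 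Hypothesis (d) is exactly what guarantees that the width parameter is in the range demanded in \eqref{e.MdefW}, and that the truncation scale $\gamma_{\mathbf T}$ is large enough that the power loss $\sigma^{-\kappa/5N}$ it introduces can be absorbed into a $\sigma^{-p\kappa/4}$ factor after raising to the $p$-th power.

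Next I would prove \eqref{e.last}, which is the cheap bound. Hypothesis (c) says the tops are pairwise incomparable under `$\lesssim$', so if two of them intersect then neither $\omega$ contains the other properly, forcing the annular rectangles to be of comparable scale; combined with the density lower bound $\abs{\name V{R_{\mathbf T}}}\gtrsim\delta\abs{R_{\mathbf T}}$ and Lemma~\ref{l.fixscale} (whose proof only uses the Lipschitz bound and the density, cf.\ Remark~\ref{r.rho}), the overlap function $\sum_{\mathbf T}\mathbf 1_{R_{\mathbf T}}$ is pointwise $\lesssim \delta^{-1}$. Since each $R_{\mathbf T}$ meets $G$ (through the density condition) and $\abs G\simeq 1$ after the normalization in the proof of Lemma~\ref{l.A}, integrating the overlap function over the union gives $\sum_{\mathbf T}\abs{R_{\mathbf T}}\lesssim \delta^{-1}\abs G\lesssim\delta^{-1}$.

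For \eqref{e.middle}, the main point, I would argue as follows. By hypothesis (a) each $R_{\mathbf T}$ supports a $1$-tree $\mathbf T_1$ with $\dashint_{R_{\mathbf T}}\Delta(\mathbf T_1)\ge\kappa\sigma$, so Lemma~\ref{l.1-tree} yields $\abs{F\cap\sigma^{-\kappa}R_{\mathbf T}}\gtrsim\sigma^{1+\kappa}\abs{R_{\mathbf T}}$. Thus each top rectangle is $\sigma^{1+\kappa}$-dense in $F$ after a mild dilation, while simultaneously being $\delta$-dense with respect to the vector field. Now run the Córdoba–Fefferman covering split from the proof of Theorem~\ref{t.lipKakeya} on the collection $\{\sigma^{-\kappa}R_{\mathbf T}\}$: select a subcollection whose $L^p$-norm squared (here $L^p$, $p$ as in Conjecture~\ref{j.laceyli2}) is controlled by its $L^1$-norm times $\delta^{-Np}$, the remainder being absorbed into the union bound. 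Testing $\operatorname M_{v,\delta,\mathsf w}$ against $\mathbf 1_F$ and using its $L^p\to L^{p,\infty}$ bound with norm $\lesssim\delta^{-N}$ then converts $\sum_{\mathbf T}\abs{R_{\mathbf T}}\cdot\sigma^{p(1+\kappa)}$ into $\delta^{-Np}\abs F$ plus a term in which the density (rather than size) does the work, producing the $\sigma^{1/\kappa}\delta^{-1}$ tail; the extra $\delta^{-1}$ is the price of the final normalization. I expect the main obstacle to be bookkeeping the dilation factors $\sigma^{-\kappa}$ and the truncation factors $\gamma_{\mathbf T}$ so that, after the $p$-th power, all the stray powers of $\sigma$ combine into the single benign factor $\sigma^{-p\kappa/4}$ claimed in \eqref{e.middle}, and in checking that the rectangles $\sigma^{-\kappa}R_{\mathbf T}$ really do satisfy the hypotheses of Conjecture~\ref{j.laceyli2} uniformly—in particular that the widths stay in a dyadic block of size comparable to $\mathsf w$, which is where hypothesis (d) and the incomparability hypothesis (c) must be used together.
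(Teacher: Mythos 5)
Your high-level strategy—use hypothesis (a) and Lemma~\ref{l.1-tree} to show that the inflated tops lie in the superlevel set of the Lipschitz Kakeya maximal function applied to $\mathbf 1_F$, then invoke Conjecture~\ref{j.laceyli2}—is indeed what drives \eqref{e.middle}, and your observation that hypothesis (d) controls the admissible width range is also correct. But there are two genuine gaps.

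First, your argument for \eqref{e.last} does not hold up. You claim that pairwise incomparability of the tops under the relation `$\lesssim$' forces intersecting $R_{\mathbf T}$'s to have comparable length, so that Lemma~\ref{l.fixscale} yields a pointwise overlap bound of $\delta^{-1}$. That is not what incomparability gives: $s\lesssim s'$ requires \emph{both} $\boldsymbol\omega_s\supsetneq\boldsymbol\omega_{s'}$ and $R_s\subset\kappa^{-10}R_{s'}$, so two incomparable tops may perfectly well have strictly nested $\boldsymbol\omega$'s and $R$'s of very different lengths, provided only that the spatial containment into the fixed dilate $\kappa^{-10}R_{s'}$ fails (see Remark~\ref{r.noPartialOrder}, which is devoted to exactly this phenomenon). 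If incomparability forced comparable scales, the greedy selection the paper runs would be unnecessary. The actual proof decomposes $\mathcal T=\bigcup_k\mathcal T_k$ (see below), and within each $\mathcal T_k$ selects trees of minimal scale, discards every tree whose dilated top rectangle meets the selected one with $\boldsymbol\omega_{\mathbf T}\subset\boldsymbol\omega_{\mathbf T'}$, and uses incomparability only to show those discarded rectangles are \emph{disjoint} and contained in $5\cdot 2^kR_{\mathbf T}$; the bound $\delta^{-1}$ comes from the disjointness of the sets $(2^kR_{\mathbf T})\cap v^{-1}(\boldsymbol\omega_{\mathbf T})\cap G$ over the selected trees together with $\abs G\lesssim 1$.

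Second, you dismiss the Schwartz tails in the density too quickly. In \eqref{e.DENSE} the density of a top is computed against the truncated Schwartz weight $\widetilde\chi^{(1)}$, so a top of density $\ge\delta$ may realize almost all of that mass in a distant dyadic annulus $2^kR_{\mathbf T}\setminus 2^{k-1}R_{\mathbf T}$. This is precisely what the stratification $\mathcal T=\bigcup_k\mathcal T_k$ handles: each tree is assigned the minimal $k$ at which the genuine (unweighted) density satisfies $\abs{(2^kR_{\mathbf T})\cap v^{-1}(\boldsymbol\omega_{\mathbf T})\cap G}\ge 2^{20k/\kappa^2-1}\delta\abs{R_{\mathbf T}}$, and the Kakeya estimate is then applied at the (possibly much larger) density $\delta'=2^{20k/\kappa^2-1}\delta$ and the dilate $2^k$ carried through the superlevel-set bound. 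Without this stratification you cannot pass from the weighted density in hypothesis (b) to an admissible rectangle for $\operatorname M_{v,\delta',\mathsf w}$, and you have no mechanism for producing the second term $\sigma^{1/\kappa}\delta^{-1}$ in \eqref{e.middle}, which arises from summing the geometric tail $2^k>\sigma^{-\kappa/10}$ with only $\abs G\lesssim 1$ in hand. Finally, note that there is no need to re-run a C\'ordoba--Fefferman covering split: Conjecture~\ref{j.laceyli2} \emph{is} the weak-type inequality, and one simply observes that the inflated tops sit inside $\{\operatorname M_{v,\delta',\mathsf w}\mathbf 1_F>\sigma^{1+\kappa/4N}\}$ and applies it directly.
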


Concerning the role of $\gamma _{\mathbf T}$, recall from the definition, \eqref{e.gamma}, that 
$\gamma _s$ is a quantity that grows as does the 
ratio $ \scl s/ \vLip $, hence there are only $ \lesssim \log \sigma ^{-1} $ scales of tiles 
that do not satisfy the assumption \textbf{d} above.

\begin{proof}  
 Our primary 
interest is in \eqref{e.middle}, which is a consequence of our assumption 
about the Lipschitz Kakeya Maximal Functions, Conjecture~\ref{j.laceyli1}.

 Set 
\begin{equation*}
\mathsf s(\mathbf  T):={\omega}_{\mathbf T}\times \sigma^{-\kappa/10 N} R_{\mathbf T}\,. 
\end{equation*}
Let us begin by noting that 
\begin{align} \label{e.a1}
\kappa^{-1}\vLip\le{}\scl {\mathsf s(\mathbf  T)}&\le{}\kappa\prm {\mathsf s(\mathbf  T)},
\qquad \mathbf  T\in\mathcal T\,,
\\ \label{e.a2}
\dense {\mathsf s(\mathbf  T)}\ge{}&\delta\sigma^{\kappa/10 N}\,,
\qquad \mathbf  T\in\mathcal T\,,
  \\ \label{e.a3}
  \abs{ F \cap R_{\mathsf s(\mathbf  T)} }
  \ge{}& \sigma^{1+\kappa/4 N}\abs {R_{\mathsf s(\mathbf  T)}}\, .
  \end{align}
The conclusion \eqref{e.a1} is straightforward, as is \eqref{e.a2}.   
The inequality  \eqref{e.a3} follows from Lemma~\ref{l.1-tree}. 

Note that the length of $\sigma^{-\kappa/10 N} R_{\mathbf T}$ satisfies 
\begin{equation}\label{e.add2}
\begin{split}
\sigma^{-\kappa/10 N} \name L {R_{\mathbf T}} 
& \le \gamma _{\mathbf T} \name L {R_{\mathbf T}} 
\\
& \le \sqrt {\frac {\scl s} {\vLip}} \le(100 \vLip) ^{-1} \,.  
\end{split}
\end{equation}
This  is the condition \eqref{e.shortlength} that we impose in the definition 
of the Lipschitz Kakeya Maximal Functions.

Observe that we can regard $\prm {\mathsf s(\mathbf  T)}
\simeq \sigma^{\kappa/10}\Prm$ as a constant independent of $\mathbf  T$.  

  The point of these observations is that  our assumption about the Lipschitz Kakeya 
  Maximal Function applies to the  maximal function formed over the 
  set of tiles $\{{\mathsf s(\mathbf  T)} \mid \mathbf  T\in\mathcal T\}$.  
  And it will be applied below. 

 Let $\mathcal T_k$ be the collection of trees so that $ \mathbf T\in \mathcal T _k$ 
 if $ k\ge 0 $ is the smallest integer such that 
\begin{equation}\label{e..v}  
|(2^kR_{\mathbf T})\cap v^{-1}({\boldsymbol \omega_{\mathbf T}})\cap G|\geq 
      2^{20k/\kappa^2-1}\delta \abs{R _{\mathbf T}}\,.
\end{equation}
 Then since the density of $s(\mathbf  T)$ for every tree $\mathbf  T\in\mathcal T$ 
is at least $\delta$, we have $\mathcal T=\bigcup_{k=0}^{\infty}\mathcal T_k$.
We can apply Conjecture~\ref{j.laceyli1} to these collections, with the value of $ \delta $ in that 
Conjecture being $  2^{20k/\kappa^2-1}\delta$.

For each $\mathcal T_k$, we decompose it by the following algorithm. 
Initialize 
$$\mathcal T_k^{\rm selected}\leftarrow\emptyset,\,\,\,\,\,
  \mathcal T^{\rm stock}_k \leftarrow\mathcal T_k\,. $$
While $\mathcal T_k^{\rm stock}\neq \emptyset$, select $\mathbf  T\in \mathcal T^{\rm stock}_k $
such that ${\rm scl}(s(\mathbf  T))$ is minimal. Define $\mathcal T_k(\mathbf  T)$ by
$$ 
\mathcal T_k(\mathbf  T)= \{\mathbf  T'\in \mathcal T_k\,:\, (2^kR_{\mathbf T})\cap(2^kR_{\mathbf  T'})\neq \emptyset\,\, 
 {\rm and}\,\, {\boldsymbol \omega_{\mathbf T}}\subset {\boldsymbol \omega_{\mathbf  T'}}\}\,. 
$$
Update 
$$
\mathcal T_k^{\rm selected}\leftarrow \mathcal T_k^{\rm selected}\cup \{\mathbf  T\}\, ,
\qquad 
\mathcal T^{\rm stock}_k \leftarrow \mathcal T^{\rm stock}_k \backslash \mathcal T_k(\mathbf  T)\,.
$$
Thus we decompose $\mathcal T_k$ into 
$$
\mathcal T_k = \bigcup_{\mathbf  T\in \mathcal T_k^{\rm selected}}\bigcup_{\mathbf  T'\in\mathcal T_k(\mathbf  T) }
\{\mathbf  T'\}\,.
$$
And
$$
\sum_{\mathbf  T\in\mathcal T_k} {\abs {R_{\mathbf T}}} =
            \sum_{\mathbf  T\in \mathcal T_k^{\rm selected}}\sum_{\mathbf  T'\in \mathcal T_k(\mathbf  T)}  
                  {\abs {R_{\mathbf  T'}}}\,.
$$                  
Notice that $R_{\mathbf  T'}$'s are disjoint for all $\mathbf  T'\in\mathcal T_k(\mathbf  T)$ and 
they are contained in $5(2^kR_{\mathbf  T})$. 
This is so, since the tops of the trees are assumed to be incomparable with respect to the 
order relation `$ \lesssim $' on tiles. 

Thus we have  
\begin{align*}
\sum_{\mathbf  T\in\mathcal T_k} {\abs {R_{\mathbf T}}} 
  &\lesssim  \sum_{\mathbf  T\in \mathcal T_k^{\rm selected}}2^{2k}|R_{\mathbf T}| \\
  &\lesssim \delta^{-1}2^{-10k/\kappa^2}\sum_{\mathbf  T\in  \mathcal T_k^{\rm selected} } 
          \abs  {(2^kR_{\mathbf T})\cap v^{-1}({\boldsymbol \omega_{\mathbf T})\cap G}}\,. 
\end{align*}
Observe that $(2^kR_{\mathbf T})\cap v^{-1}({\boldsymbol \omega}_{\mathbf T})$'s are disjoint 
for all $\mathbf  T\in \mathcal T_k^{\rm selected}$. This and the fact that $\abs G\le{}1$ proves 
\eqref{e.last}.  To argue for \eqref{e.middle},   we  see that 
\begin{align*}
\sum_{\mathbf  T\in\mathcal T_k} {\abs {R_{\mathbf T}}} &\lesssim 
         \delta^{-1}2^{-10k/\kappa^2}\bigg|
          \bigcup_{\mathbf  T\in  \mathcal T_k^{\rm selected} } 
          {(2^kR_{\mathbf T})\cap v^{-1}({\boldsymbol \omega}_{\mathbf T})\cap G}\bigg|
\\
 & \lesssim    \delta^{-1}2^{-10k/\kappa^2}\bigg|
          \bigcup_{\mathbf  T\in  \mathcal T_k} 
          {(2^kR_{\mathbf T})\cap G}\bigg|\,.
\end{align*} 
At this point, Conjecture~\ref{j.laceyli1} enters.  Observe that we can estimate 
\begin{equation}\label{unionest}
\begin{split}
\ABs{ \bigcup_{\mathbf  T\in \mathcal T _k} 2 ^{k} R_{\mathbf T}}
& \lesssim 
\abs{ \{ \operatorname M _{ \delta ',v, (\sigma^{\kappa/10}\Prm) ^{-1} } \mathbf 1_{F} > 
\sigma^{1+\kappa/4 N} \} } 
\\
& \lesssim  (\delta')^{-Np}   \sigma^{-p(1+\kappa/4 N)}\abs F. 
\\
& \lesssim  (\delta)^{-Np}  2 ^{-k}  \sigma^{-p(1+\kappa/4 N)}\abs F. 
\end{split}
\end{equation}
Here, $ \delta '= 2 ^{20 k/ \kappa ^2 -1} \delta $, the choice of $ \delta '$ permitted 
to us by \eqref{e..v}, and we have used \eqref{e.a3} 
in the first line, to pass to the Lipschitz Kakeya Maximal Function.

Hence, 
\begin{align*}
\sum_{\mathbf  T\in\mathcal T} {\abs {\sh{\mathbf T}}} &\lesssim\sum_{k=0}^{\infty}
               \sum_{\mathbf  T\in\mathcal T_k} {\abs {R_{\mathbf T}}} \\
  &\lesssim \delta^{-1}\sum_{k=0}^{\infty}  2^{-10k/\kappa^2}      
           \bigg| \bigcup_{\mathbf  T\in \mathcal T_k}(2^kR_{\mathbf T})\cap G  \bigg|
\\
  &\lesssim \delta^{-1}\!\sum_{k\,:\,1\leq 2^k\leq \sigma^{-\kappa/10}}
                       \!2^{-k}
                \bigg| \bigcup_{\mathbf  T\in \mathcal T_k}(2^kR_{\mathbf T})\bigg|
 \\& \qquad +
                \delta^{-1}\!\sum_{k\,:\,2^k>\sigma^{-\kappa/10}}\!2^{-10k/\kappa^2}|G|\,.
\end{align*}
On the first sum in the last line, we use \eqref{unionest}, and on the second, 
we just sum the geometric series, and recall that $ \lvert  G\rvert =1$. 

\end{proof}

We can now begin the principal line of reasoning for the proof of Lemma~\ref{l.dense+size}.

\medskip

\subsubsection*{The Construction of $\mathbf  S_{\text{\rm large}}^1$.}  
We use an orthogonality, or $\operatorname T \operatorname T^\ast$ argument that has been used many times before, especially in 
\cite{laceythiele} and \cite{laceyli1}.  (There is a feature of the current application of the argument 
that is present due to the fact that we are working on the plane, and it is detailed by Lacey and Li \cite{laceyli1}.)
 
We may assume that all intervals ${\boldsymbol \omega}_{s}$ are contained in the upper half of the unit circle in the plane. 
Fix $\mathbf  S\subset\mathcal A_v$, and  $\sigma=\size{\mathbf S} $.

We construct a collection of trees $\mathcal T_{\text{\rm large}}^1$ for the collection 
$\mathbf  S_1$, and a corresponding collection of 
$1$--trees $\mathcal T_{\text{\rm large}}^{1,1}$,  with particular properties. 
We begin the recursion by  initializing 
\begin{gather*}
\mathcal T_{\text{\rm large}}^1\leftarrow\emptyset,\qquad \mathcal T_{\text{\rm large}}^{1,1}\leftarrow\emptyset, \\
\mathbf  S_{\text{\rm large}}^1\leftarrow\emptyset,\qquad  \mathbf  S^{\text{\rm stock}}\leftarrow\mathbf  S_1 .
\end{gather*}
In the recursive step, if 
$\size {\mathbf  S^{\text{\rm stock}}}<\frac12\sigma^{1+\kappa/100}$, 
then this  recursion stops.
Otherwise, we select a tree $\mathbf  T\subset\mathbf  S^{\text{\rm stock}}$ 
such that three conditions are met. 
\begin{description}
\item 
[a] The top of the tree $s(\mathbf  T)$ 
 (which need not be in the tree) satisfies $\dense {s(\mathbf  T)}\ge\delta/4$. 
 
\item [b]   $\mathbf  T$  contains a $ 1$--tree $ \mathbf T^1$ with 
\begin{equation}  \label{e.756}
\dashint _{R _{\mathbf T}} \Delta (\mathbf T^1)\; dx \ge \tfrac12\sigma^{1+\kappa/100}\,.
\end{equation}

\item [c]  And that ${\boldsymbol \omega}_{\mathbf T}$ is in the first place minimal and 
and in the second most clockwise among all possible choices of $\mathbf  T$. 
(Since all ${\boldsymbol \omega}_{s}$ are in the upper half of the unit circle, this condition 
can be fulfilled.)
\end{description}
  We take $\mathbf  T$ to be the maximal  tree in 
$\mathbf  S^{\text{\rm stock}}$  which satisfies these conditions.

We then update 
$$
\mathcal T_{\text{\rm large}}^1\leftarrow\{\mathbf  T\}\cup 
\mathcal T_{\text{\rm large}},\qquad \mathcal T_{\text{\rm large}}^{1,1}
\leftarrow\{\mathbf  T^1\}\cup \mathcal T_{\text{\rm large}}^{1,1},
$$
$$
 \mathbf  S^1_{\rm large}\leftarrow \mathbf  T \cup \mathbf  S^1_{\rm large} 
 \qquad \mathbf  S^{\text{\rm stock}}\leftarrow\mathbf  S^{\text{\rm stock}}-\mathbf  T.
$$

The recursion then repeats.   Once the recursion stops, we update 
\begin{equation*}
\mathbf  S_1\leftarrow\mathbf  S_{\text{\rm stock}}
\end{equation*}
It is this collection that we analyze in the next subsection. 

Note that it is a consequence of the recursion, and Remark~\ref{r.pseudo}, that 
the tops of the trees $ \{ s (\mathbf  T) \mid \mathbf T\in \mathcal T ^{1}
_{\textup{large}}\}$ are pairwise incomparable under $ \lesssim $.

\smallskip 
The bottom estimate of \eqref{e.large-1} is then immediate from the construction and \eqref{e.last}.  

First, we turn to the deduction of the first estimate of \eqref{e.large-1}.  
Let $\mathcal T_{\text{large}}^{1,(1)}$ be the set
$$
\mathcal T_{\text{large}}^{1,(1)}=\big\{\mathbf  T\in\mathcal T_{\text{large}}^{1}:\, \sum_{s\in\mathbf  T^1}\abs{\langle 
\operatorname S_{\Prm}\ind F, \beta_s  \rangle}^2 < \tfrac{1}{16}
  \sigma^{2+\kappa/50}|R_{\mathbf T}| \big\}\,.
$$
And let $\mathcal T_{\text{large}}^{1,(2)}$ be the set
$$ \mathcal T_{\text{large}}^{1,(2)}=
\big\{\mathbf  T\in\mathcal T_{\text{large}}^{1}:\, \sum_{s\in\mathbf  T^1}\abs{\langle 
\operatorname S_{\Prm}\ind F, \beta_s  \rangle}^2 \geq \tfrac{1}{16}
  \sigma^{2+\kappa/50}|R_{\mathbf T}| \big\}\,.
$$
In the inner products, we are taking $ \beta _s$, which is supported off of 
$ \gamma _s R_s$. 
Since $\mathbf  T\in\mathcal T_{\text{large}}^{1}$ satisfies 
\begin{equation}\label{e.charge1}
\dashint _{R _{\mathbf T}} \Delta (\mathbf T)\; dx\ge \tfrac12\sigma^{1+\kappa/100}\,,
\end{equation}
we have 
$$
\sum_{s\in\mathbf  T^1}\abs{\langle 
\operatorname S_{\Prm}\ind F, \alpha_s  \rangle}^2 \geq \tfrac{1}{4}
  \sigma^{2+\kappa/50}|R_{\mathbf T}|\,.
$$ 
Thus, if $\mathbf  T \in \mathcal T_{\text{large}}^{1,(1)}$, we have
$$
\sum_{s\in\mathbf  T^1}\abs{\langle 
\ind F, \varphi_s  \rangle}^2 \geq \tfrac{1}{8}
  \sigma^{2+\kappa/50}|R_{\mathbf T}|\,.
$$
The replacement of $\alpha_{s}$ by $\varphi_s$ in the inequality above 
is an important  point for us. That we can then drop the $\operatorname S_{\Prm}$ is immediate.

With this construction and observation, we claim that 
\begin{equation}  \label{e.upper-1}  
\sum_{\mathbf T\in \mathcal T_{\text{\rm large}}^{1,(1)}}\abs{R_{\mathbf T}}\lesssim{}
(\log 1 /\sigma ) ^2 
\sigma^{-2-\kappa/50}\abs F .
\end{equation}

\begin{proof}[Proof of \eqref{e.upper-1}.]  
This is a variant of the the argument for the `Size Lemma' in \cite{laceyli1}, and so we 
will not present all details.   Begin by making a further decomposition of the 
trees $ \mathbf T\in \mathcal T_{\text{\rm large}}^{1,(1)}$.  To each such tree, we have 
a $ 1$-tree $ \mathbf T ^{1}\subset \mathbf T$ which satisfies \eqref{e.756}.    
We decompose $ \mathbf T ^{1}$.  Set 
\begin{align*}
\mathbf T ^{1} (0) 
&= 
\Bigl\{ s\in \mathbf T ^{1} \mid  \frac {\lvert  \ip f ,\varphi _s ,\rvert } 
{\sqrt {\lvert  R_s\rvert }}
< \sigma ^{1+ \kappa /100} 
\Bigr\} \,, 
\\
\mathbf T ^{1} (j) 
&= 
\Bigl\{ s\in \mathbf T ^{1} \mid  4 ^{j-1} 
 \sigma ^{1+ \kappa /100}  \le 
\frac {\lvert  \ip f ,\varphi _s ,\rvert } 
{\sqrt {\lvert  R_s\rvert }}
< 4 ^{j} \sigma ^{1+ \kappa /100} 
\Bigr\} \,, 
\\& \qquad  1 \le  j \le  j_0=C \log 1/ \sigma  \,.  
\end{align*}

Now, set $ \mathcal T (j)$ to be those $ \mathbf T\in \mathcal T_{\text{\rm large}}^{1,(1)} $
for which 
\begin{equation} \label{e..F}
 \sum _{s\in \mathbf T ^{1} (j)} \lvert  \ip f, \varphi _s, \rvert ^2 
\ge (2j_0) ^{-1}  \sigma ^{2+ \kappa /50}  \lvert  R _{\mathbf T}\rvert \,, \qquad 0\le j\le j_0\,. 
\end{equation}
It is the case that each $ \mathbf T \in \mathcal T_{\text{\rm large}}^{1,(1)}$ 
is in some $ \mathcal T (j)$, for $ 0\le j\le j_0$.  

The central case is that of $ j=0$.  We can apply the `Size Lemma' of \cite{laceyli1} to deduce that 
\begin{align*}
\sum _{\mathbf T\in \mathcal T (0)} \lvert  R _{\mathbf T}\rvert 
& \le (2 j_0)  \sigma ^{-2-\kappa /50} \sum _{\mathbf T\in \mathcal T (0)} 
\sum _{ s\in \mathbf T ^{1} (0)} \lvert  \ip f ,\varphi _s ,\rvert ^2 
\\
&  \lesssim   (\log 1/ \sigma ) \sigma ^{-2-\kappa /50} \lvert  F\rvert\,.  
\end{align*}
The point here is that to apply the argument in the `Size Lemma' one needs 
an average case estimate, namely \eqref{e..F}, as well as a uniform control, namely 
the  condition defining $ \mathbf T ^{1} (0)$.   
This proves \eqref{e.upper-1} in this case.  

For $ 1\le j\le j_0$, we can apply 
the `Size Lemma' argument to the individual tiles in the collection 
\begin{equation*}
\bigcup \{ \mathbf T ^{1} (j) \mid T \in \mathcal T (j)\}\,. 
\end{equation*}
The individual tiles satisfy the definition of a $ 1$-tree.  And the defining 
condition of $ \mathbf T ^1 (j)$ is both the average case estimate, and the uniform 
control needed to run that argument.  In this case we conclude that 
\begin{align*}
 \sum _{\mathbf T \in \mathcal T (j)} 
\sum _{ s\in \mathbf T ^{1} (j)} \lvert  \ip f ,\varphi _s ,\rvert ^2  
\lesssim \lvert  F\rvert \,.  
\end{align*}
Thus, we can estimate 
\begin{equation*}
\sum _{\mathbf T\in \mathcal T (j)} \lvert  R _ {\mathbf T}\rvert \lesssim 
(\log 1/ \sigma ) \sigma ^{ -1 - \kappa /50 } \lvert  F\rvert \,. 
\end{equation*}
This summed over $ 1\le j \le j_0= C \log 1/ \sigma $ proves \eqref{e.upper-1}. 
\end{proof}

For $\mathcal T_{\text{\rm large}}^{1,(2)}$, we have 
\begin{align*}
\sum_{T\in \mathcal T_{\text{\rm large}}^{1,(2)}}\abs{R_{\mathbf T}} &\lesssim 
       \sigma^{-2-\kappa/50}\sum_{{\mathsf {scl}} \geq \kappa^{-1}\|v\|_{\rm Lip}}
        \sum_{s: {\rm scl}(s)={\mathsf {scl}}} \abs{\langle 
         \operatorname S_{\Prm}\ind F, \beta_s  \rangle}^2\\
  &\lesssim  \sigma^{-2-\kappa/50} |F|\sum_{{\mathsf {scl}} \geq \kappa^{-1}\|v\|_{\rm Lip}}
            \bigg(\frac{\|v\|_{\rm Lip}}{{\mathsf {scl}}}\bigg)^{100}\\
  &\lesssim  \sigma^{-2-\kappa/50} |F|\,,  
\end{align*}
since $\beta_s$ has fast decay. 
The Bessel inequality in the last display
can be obtained by using the same argument in the proof of Lemma~\ref{l.bessel}. 
Hence we get 
\begin{equation}  \label{e.upper-2}  
\sum_{T\in \mathcal T_{\text{\rm large}}^{1,(2)}}\abs{R_{\mathbf T}}\lesssim{}\sigma^{-2-\kappa/50}\abs F .
\end{equation}
Combining \eqref{e.upper-1} and \eqref{e.upper-2}, we obtain the first estimate 
of \eqref{e.large-1}.    

\bigskip

Second, we turn to  the deduction of the middle estimate of \eqref{e.large-1}, which 
relies upon the Lipschitz Kakeya Maximal Function. 
Let $\mathcal T_{\text{large}}^{1,\text{good}}$ be the set
$$
\big\{\mathbf  T\in\mathcal T_{\text{large}}^{1}:\, \gamma_{\mathbf  T}\geq \kappa^{-1/2}\sigma^{-\kappa/5N}
 \big\}\,.
$$
And let $\mathcal T_{\text{large}}^{1,\text{bad}}$ be the set
$$
\big\{\mathbf  T\in\mathcal T_{\text{large}}^{1}:\, \gamma_{\mathbf  T} < \kappa^{-1/2}\sigma^{-\kappa/5N}
 \big\}\,.
$$
The `good' collection can be controlled by facts which we have already marshaled together. 
In particular, we have been careful to arrange the construction so that Lemma~\ref{l.add1} 
applies.   
By the main conclusion of that Lemma, \eqref{e.middle}, we have 
\begin{equation}\label{goodest}
\sum_{\mathbf  T \in \mathcal T_{\text{large}}^{1,\text{good}} }\abs{R_{\mathbf T}} \lesssim 
     \delta^{-M}\sigma^{-1-3\kappa/4}{\abs{F}} + \sigma^{1/\kappa}\delta^{-1}\,.
\end{equation}
Here, $ M$ is a large constant that only depends upon $ N$ in Conjecture~\ref{j.laceyli2}.

For $\mathbf  T\in\mathcal T_{\text{large}}^{1,\text{bad}}$, there are at most 
$K=O(\log(\sigma^{-\kappa}))$ many possible scales for ${\rm scl}(\omega_{\mathbf T}\times 
R_{\mathbf T})$. Let ${\rm scl}(\mathbf  T)= {\rm scl}(\omega_{\mathbf T}\times R_{\mathbf T})$. Thus we have 
$$
\sum_{\mathbf  T\in\mathcal T_{\text{large}}^{1,\text{bad}}}\abs{R_{\mathbf T}} \lesssim 
 \sum_{m=0}^{K} \;
\sum_{\mathbf  T: {\rm scl}(\mathbf  T)=2^m\kappa^{-1}\|v\|_{\rm Lip}}|R_{\mathbf T}|\,.
$$ 
Since $\mathbf  T$ satisfies \eqref{e.charge1},  we have 
$$
{\abs{F\cap \gamma_{\mathbf T} R_{\mathbf T}}}\gtrsim \sigma^{1+\kappa/2}{\abs{R_{\mathbf T}}}\,.
$$ 
Thus, we get
$$
\sum_{\mathbf  T\in\mathcal T_{\text{large}}^{1,\text{bad}}}\abs{R_{\mathbf T}} \lesssim 
         \sigma^{-1-\kappa/2}\sum_{m=0}^{K}
         \int_F \sum_{\mathbf  T: {\rm scl}(\mathbf  T)=2^m\kappa^{-1}\|v\|_{\rm Lip}}
           \ind {\sigma ^{- \kappa } R_{\mathbf T}}(x)dx\,.
$$
For the tiles with a fixed scale, we have the following inequality, 
which is a consequence of Lemma~\ref{l.fixedscale}.
$$
\NOrm  \sum_{\mathbf  T: {\rm scl}(\mathbf  T)=2^m\kappa^{-1}\|v\|_{\rm Lip}}
               \ind { \sigma ^{- \kappa } R_{\mathbf T}}        .\infty.
 \lesssim \sigma^{-\kappa/5}\delta^{-1}\,.  
$$ 
Hence we obtain 
\begin{equation}\label{badest}
\sum_{\mathbf  T \in \mathcal T_{\text{large}}^{1,\text{bad}} } \abs{R_{\mathbf T}}\lesssim 
     \delta^{-1}\sigma^{-1-3\kappa/4}{\abs{F}}\,.
\end{equation}

Combining \eqref{goodest} and \eqref{badest}, we obtain
the middle estimate of \eqref{e.large-1}. Therefore, we complete the proof
of \eqref{e.large-1}.

\bigskip


\subsubsection*{The Construction of $\mathbf  S_{\text{\rm large}}^2$.}
It is important to keep in mind that we have only removed trees of nearly 
maximal size, with tops of a given density.  In the collection of tiles that 
remain, there can be trees of large size, but they cannot have a top 
with nearly maximal density.  We repeat the $\operatorname T \operatorname  T^ \ast $ 
construction of the previous step in the proof, with two significant changes.

We construct a collection of trees $\mathcal T_{\text{\rm large}}^2$ from  the collection 
$\mathbf  S_1$, and a corresponding collection of 
$1$--trees $\mathcal T_{\text{\rm large}}^{2,1}$,  with particular properties. 
We begin the recursion by  initializing 
\begin{gather*}
\mathcal T_{\text{\rm large}}^2\leftarrow\emptyset\,,\qquad \mathcal T_{\text{\rm large}}^{2\,,1}\leftarrow\emptyset\,, \\
\mathbf  S_{\text{\rm large}}^2\leftarrow\emptyset\,,\qquad  \mathbf  S^{\text{\rm stock}}\leftarrow\mathbf  S_1 \,.
\end{gather*}
In the recursive step, if $\size {\mathbf  S^{\text{\rm stock}}}<\sigma/2$, then this  recursion stops.
Otherwise, we select a tree $\mathbf  T\subset\mathbf  S^{\text{\rm stock}}$ 
such that two conditions are met:  
\begin{description}
\item [a] $\mathbf  T$ satisfies $\|\Delta(\mathbf  T)\|_2\geq 
\frac{\sigma}{2}|R_{\mathbf T}|^{1/2} $. 

\item [b]  
${\boldsymbol \omega}_{\mathbf T}$ is both minimal and 
most clockwise among all possible choices of $\mathbf  T$. 
\end{description}
We take $\mathbf  T$ to be the maximal 
 tree in $\mathbf  S^{\text{\rm stock}}$  
which satisfies these conditions. 
We take $\mathbf  T^1\subset\mathbf  T$ to be a $ 1$--tree so that 
\begin{equation}\label{e.charge2}
\dashint _{R _{\mathbf T}} \Delta (\mathbf T^1)\; dx \ge \kappa \sigma \,. 
\end{equation}
This last inequality must hold by Lemma~\ref{l.jn}.

We then update 
\begin{gather*}
\mathcal T_{\text{\rm large}}^2\leftarrow \{\mathbf  T\}\cup \mathcal T_{\text{\rm large}},
\qquad \mathcal T_{\text{\rm large}}^{2,1}\leftarrow \{\mathbf  T^1\}\cup \mathcal T_{\text{\rm large}}^{2,1},
\\ \mathbf  S^{\text{\rm stock}}\leftarrow \mathbf  S^{\text{\rm stock}}-\mathbf  T.
\end{gather*}
The recursion then repeats.

Once the recursion stops, it is clear that the size of $\mathbf  S^{\text{stock}}$ 
is at most $\sigma/2$, and so we take 
$\mathbf  S_{\text{small}}:=\mathbf  S_{\text{stock}}$.  

The estimate 
\begin{equation*}
\sum_{\mathbf  T\in\mathcal T_{\text{large}}^2}\abs{R_{\mathbf T}}
\lesssim{}\sigma^{-2}\abs F 
\end{equation*} 
then is a consequence of the $TT^*$ method, as indicated in the previous step of the proof. 
That is the first estimate claimed in 
\eqref{e.large-2}.

\medskip 

What is significant is the second estimate of \eqref{e.large-2}, 
which involves the density. The point to observe is this. 
Consider any tile $s$ of density
at least $\delta/2$.  
Let $\mathcal T_s$ be those trees $\mathbf  T\in\mathcal T_{\text{large}}^2$ with top $\omega _{s(\mathbf  T)}\supset \omega _{s} $ 
and $ R _{s (\mathbf T)} \subset K R _{s}$.
By the construction of $\mathbf  S^1_{\text{large}}$, we must have
\begin{equation*}
\dashint _{R _{\mathbf s}} \Delta (\mathbf T^1)\; dx \le  \sigma ^{1+ \kappa /100}\,, 
\end{equation*}
for the maximal $ 1$--tree $ T ^{1} $ contained in   $\bigcup_{\mathbf  T\in\mathcal T_s}\mathbf  T$.
But, in addition, the tops of the trees in $\mathcal T_{\text{large}}^2$ are pairwise incomparable with respect to 
the order relation `$ \lesssim $,'
hence we conclude that 
\begin{equation*}
\frac{\sigma^2}4\sum_{\mathbf  T\in\mathcal T_s}\abs{R_{\mathbf T}}\lesssim\sigma^{2+\kappa/50}\abs{R_s}.
\end{equation*}
Moreover, by the construction of $\mathbf  S_{\text{light}}$, for each 
$\mathbf  T\in\mathcal T_{\text{large}}^2$ we must be able to select some tile 
$s$ with density at least $\delta/2$ and $ \omega _{s(\mathbf  T)} \supset \omega _s$ 
and $ R _{s (\mathbf T)} \subset K R _{s}$.  

Thus, we let $\mathbf  S^*$ be the maximal tiles of density at least $\delta/2$. 
Then, the inequality \eqref{e.last} applies to this collection. 
And, therefore, 
\begin{equation*}
\sum_{\mathbf  T\in\mathcal T_{\text{large}}^2}\abs{R_{\mathbf T}}{}\le{}
	\sigma^{\kappa/50}\sum_{s\in\mathbf  S^*}\abs{R_s}\lesssim\sigma^{\kappa/50}\delta^{-1}.
\end{equation*}
This completes the proof of second estimate of \eqref{e.large-2}. 
\qed

\subsection*{The Estimates For a Single Tree} 

\subsubsection*{The Proof of Lemma~\ref{l.1-tree}.}
It is a routine matter to check that for any $1 $--tree we have 
\begin{equation*}
\sum _{s\in \mathbf  T } \abs { \ip  f, \varphi_s , }^2 \lesssim \norm f.2.^2. 
\end{equation*}
Indeed, there is a strengthening of this estimate relevant to our concerns here.  
Recalling the notation \eqref{e.zq}, we have 
\begin{equation}  \label{e..b}
\NOrm \Bigl[
\sum _{s\in \mathbf  T } \frac {\abs { \ip  f, \varphi_s , }^2} {\lvert  R_s\rvert }
\mathbf 1_{R_s} 
\Bigr] ^{1/2} .p. 
\lesssim{} \norm \chi_{R_{\mathbf T}}^{(\infty)} f.p. \,, \qquad 1<p<\infty \,. 
\end{equation}
This is variant of the Littlewood-Paley inequalities, with some additional spatial  localization 
in the estimate.

Using this inequality for $ p=1 +\kappa /100$ and the assumption of the Lemma, we have 
\begin{align} \notag 
\sigma^ {1+ \kappa /100}&{}\le{}\Bigl[\dashint_{R_{\mathbf T}} \Delta_{\mathbf  T} \; dx\Bigr] ^{1+ \kappa /100}
\\&{}\le{}\dashint_{R_{\mathbf T}}\Delta_{\mathbf  T}^{1+ \kappa /100} \; dx
\notag 
\\ \notag 
&{}\le{}\abs{R_{\mathbf T}}^{-1} 
\NOrm \Bigl[
\sum _{s\in \mathbf  T } \frac {\abs { \ip  f, \varphi_s , }^2} {\lvert  R_s\rvert }
\mathbf 1_{R_s} 
\Bigr] ^{1/2} . {1+ \kappa /100}. ^{{1+ \kappa /100}} 
\\  \label{e..c}
& \lesssim \abs{R_{\mathbf T}}^{-1}\int_F \chi_{R_{\mathbf T}}^{(\infty)}\; dx.
\end{align}
This inequality can only hold if  $\abs{F\cap \sigma ^{-\kappa} 
R_{\mathbf T} }\ge{}\sigma ^{1+\kappa}\abs{R_{\mathbf T}} $.
\qed

\subsubsection*{The Proof of Lemma~\ref{l.tree}.}

This Lemma is closely related to the Tree Lemma of \cite{laceyli1}. 
Let us recall that result in a form that we need it.  
We need analogs of the definitions of density and size that do not incorporate truncations of the various 
functions involved.  Define 
\begin{equation*}
\operatorname {\sf dense}(s):=  \int_{G \cap v^{-1}({\boldsymbol \omega}_{s}) } \chi_{R_s} ^{(1)} (x) \; dx.  
\end{equation*}
 (Recall the notation from \eqref{e.zq}.)   
 \begin{equation*}
  \operatorname {\sf dense}( \mathbf  T):=\sup_{s\in\mathbf  T}  \operatorname {\sf dense}(s). 
 \end{equation*}
 Likewise define 
 \begin{equation*}
 \operatorname {\sf size}(\mathbf  T):=\sup_{\substack{ \mathbf  T'\subset \mathbf  T 
 \\ \text{$\mathbf  T' $ is a \tree 1 } } } 
 \Bigl[ \abs{R_ {\mathbf  T'} }^{-1 }\sum _{s\in\mathbf  T' }\abs{ \ip  \ind F ,  \varphi_s
 ,}^2 \Bigr] ^{1/2} 
 \end{equation*}
 Then, the proof of the  Tree Lemma of \cite{laceyli1} will give us this inequality:
 For $\mathbf  T $ a 
 tree,
 \begin{equation}  \label{e.tree-ll}
 \sum_{s\in\mathbf  T} \abs{\ip  \operatorname S_{\Prm} \ind F, \varphi_{s},\ip  \phi_s, \ind G ,}
\lesssim\operatorname {\sf dense} (\mathbf  T)  
\operatorname {\sf size} (\mathbf  T) \abs {R_{\mathbf T} }.
\end{equation}

Now,  consider a 
 tree $\mathbf  T $ with $\dense{\mathbf T} =\delta$, and $\size{\mathbf T} =\sigma$, where we insist upon using 
the original definitions of density and size.  If in addition, 
$\gamma_s\ge{}K (\sigma \delta)^{-1}  $ for all $s\in \mathbf  T $, we would then have the inequalities 
\begin{align*}
\operatorname {\sf dense }(\mathbf  T) \lesssim\delta, 
\\ 
\operatorname {\sf size }(\mathbf  T) \lesssim\sigma, 
\end{align*}
This places \eqref{e.tree-ll} at our disposal, but this is not quite the 
estimate we need, as the functions $\varphi_s $ and $\phi_s $ that occur in \eqref{e.tree-ll} are not truncated in the 
appropriate way, and it is this matter that we turn to next.  
Recall that
\begin{equation*}
\varphi_s=\alpha_s+\beta_s\,, 
\qquad \int \alpha _{s} (x-y v (x)) \psi _{ s} (y)\; dy= \alpha _{s-} (x)+ \alpha _{s+}
(y)\,.  
\end{equation*}
One should recall the displays \eqref{e.psi_s}, \eqref{e.apm}, and  \eqref{e.AaA}.

As an immediate consequence  of the definition of $\beta_s $, we have 
\begin{equation*}
\int_{\mathbb R^2} \abs{\beta_s(x)} \; dx \lesssim\gamma_s^{-2}\sqrt{\abs{R_s}}. 
\end{equation*}
Hence,  if we replace $\varphi_s $  by $\beta_s $, we have 
\begin{align*}
\sum_{s\in\mathbf  T} \abs{\ip  \operatorname S_{\Prm} \ind F, \beta_{s},\ip  \phi_s, \ind G ,}
& \lesssim \sum_{s\in\mathbf  T} \gamma_s^{-2}\sqrt{\abs{R_s}} \abs{ \ip  \phi_s, \ind G , } 
\\& \lesssim\sigma\delta{} \sum_{s\in\mathbf  T} \gamma_s^{-1}\abs{R_s} 
\\& \lesssim\sigma\delta\abs{R_{\mathbf T}}.
\end{align*}
And by a very similar argument, one sees corresponding bounds, in which we replace the $\phi_s $ by different functions. 
Namely, recalling the definitions of $a_{s\pm} $ in \eqref{e.apm} and estimate \eqref{e.AaA}, we have 
\begin{align} \label{e..AaA}
 \sum_{s\in\mathbf  T} \sqrt{\abs{R_s}}\abs{\ip  a_{s+}, \ind G ,}
& \lesssim \sigma  \sum_{s\in\mathbf  T}  
\sqrt {\lvert  R_s\rvert } \int _G \Bigl( \frac { \norm v. \textup{Lip}. } {\name {scl} s}  \Bigr)
^{10} \chi ^{(2)} _{R_s} (x) \; dx  
\\ \notag 
& \lesssim \sigma \delta  \sum_{s\in\mathbf  T}  
\Bigl( \frac { \norm v. \textup{Lip}. } {\name {scl} s}  \Bigr)
^{10} \lvert  R_s\rvert
\\ \notag 
 & \lesssim \sigma \delta \lvert  R _{\mathbf T}\rvert \,.  
\end{align}
Similarly, we have 
\begin{align*}
\sum_{s\in\mathbf  T} \sqrt{\abs{R_s}}\abs{\ip  \phi_s-a_{s+}-a_{s-}, \ind G ,} &\lesssim  \sigma \delta \abs{R_{\mathbf T}}, 
\end{align*}
Putting these estimates together proves our Lemma, in particular \eqref{e.tree}, under the assumption that 
$\gamma_s\ge{}K (\sigma \delta)^{-1}  $ for all $s\in \mathbf  T $. 

\medskip 

Assume that $\mathbf  T $ is a tree with $\scl s=\scl {s'} $ for all $s,s'\in\mathbf  T $. That is, the scale of the tiles in the tree is fixed. 
Then, $\mathbf  T $ is in particular a \tree1, so that by an application of the definitions and Cauchy--Schwartz, 
\begin{align*}
\sum_{s\in\mathbf  T} \abs{\ip  \operatorname S_{\Prm} \ind F, \alpha_{s},\ip  a _{s-}, \ind G ,} 
&{}\le{}\delta\sum_{s\in\mathbf  T} \abs{\ip  \operatorname S_{\Prm} \ind F, \alpha_s,} \sqrt{\abs{R_s}}
\\&{}\le{} \delta\sigma\abs{R_{\mathbf T} }.  
\end{align*}
But,  $\gamma_s\ge1 $ increases as does $\sqrt{\scl s}$.
Thus,  any tree $\mathbf  T $ with $\gamma_s\le{}K (\sigma \delta)^{-1}  $ for all $s\in \mathbf  T $, 
 is a union of $O(\abs{ \log \delta\sigma}) $ trees for which the last estimate holds.  
\qed


\chapter[Almost Orthogonality]{Almost Orthogonality Between Annuli}

\section*{Application of the Fourier Localization Lemma} 

We are to prove Lemma~\ref{l.modelsumed}, and in doing so rely upon 
a technical lemma on Fourier localization, 
Lemma~\ref{l.fl} below. 
We can take a choice of $1<\ensuremath{\alpha}<\frac98$, 
and assume, after a dilation, that 
$\norm v.C^\ensuremath{\alpha}.=1$.

The first inequality we establish is this.
\begin{lemma}\label{l.addC}  Using the notation of of Lemma~\ref{l.modelsumed}, 
and assuming that $ \norm v . C ^{\alpha }. \lesssim 1$, we have the estimate  
$\norm \mathcal C.2.\lesssim{}1$, where 
\begin{equation*}
\mathcal C=\sum_{\Prm\ge1}\mathcal C_\Prm
\end{equation*}
where the $\mathcal C_\Prm$ are defined in \eqref{e.Cj}.   
\end{lemma}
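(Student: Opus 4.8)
The plan is to show that the operators $\mathcal C_\Prm$, $\Prm\ge 1$, are essentially orthogonal once summed, so that $\norm \mathcal C.2.\lesssim 1$ follows from the uniform bound $\norm \mathcal C_\Prm.2.\lesssim 1$ of Lemma~\ref{l.model}. Recall that $\mathcal C_\Prm f=\sum_{s\in\mathcal {AT}(\Prm),\ \scl s\ge \vLip}\ipf\phi_s$, and each $\phi_s$ has frequency support governed by the annular rectangle $\omega_s$ with annular parameter $\prm{\omega_s}=\Prm=2^j$. Morally, the output $\mathcal C_\Prm f$ is concentrated in frequency near the annulus $\abs\xi\simeq \Prm$, so that distinct $\Prm$'s give almost orthogonal outputs; similarly the adjoint $\mathcal C_\Prm^\ast$ sees only the piece of $f$ with frequency near $\abs\xi\simeq\Prm$, which is where the Fourier localization enters. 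The first step is therefore to reduce matters to a Cotlar--Stein / $TT^\ast$ estimate: it suffices to prove
\begin{equation*}
\norm \mathcal C_\Prm \mathcal C_{\Prm'}^\ast .2. + \norm \mathcal C_\Prm^\ast \mathcal C_{\Prm'} .2.
\lesssim \Bigl( \tfrac{\min(\Prm,\Prm')}{\max(\Prm,\Prm')} \Bigr)^{c}
\end{equation*}
for some $c>0$, summing over the dyadic values $\Prm=2^j$.

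The second step is the heart of the matter: controlling the ``off-diagonal'' pairs where $\Prm\ll\Prm'$ (say). Here I would invoke the Fourier Localization Lemma, Lemma~\ref{l.fl}, announced for this section. The key subtlety, flagged right after Lemma~\ref{l.modelsumed}, is that $\phi_s$ is built by convolving $\varphi_s$ against $\psi_{\scl s}(\scl s\,\cdot)$ \emph{along the vector field} $v$, and although $\psi_{\scl s}$ has mean zero and is frequency-localized in a band of width $\simeq\kappa\scl s$, the composition with $v$ smears the frequency support. The Fourier Localization Lemma should say, roughly, that because $v\in C^{\alpha}$ with $\alpha>1$ and $\scl s\le \kappa\Prm$, the function $\phi_s$ still has ``most'' of its frequency mass within a fixed multiple of the annulus $\abs\xi\simeq\Prm$, with a quantitative tail bound of the form $(\scl s/\Prm)^{N}$ or $(\text{distance to annulus})^{-N}$, in an appropriate $L^2$ sense after integrating in $x$. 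Granting that, for $\Prm'\gg\Prm$ the function $\mathcal C_{\Prm'}^\ast g$ has frequency content essentially disjoint from the frequency support of the $\varphi_s$ entering $\mathcal C_\Prm$, i.e. $\ip \mathcal C_{\Prm'}^\ast g, \varphi_s,$ is negligible, which gives the desired power decay. I would run this comparison at fixed scale $\scl s=\Scl$ first, using Lemma~\ref{l.fixedscale} (for measurable $v$) to control each scale, and then sum over $\Scl$ between $\vLip$ and $\kappa\Prm$; the mean-zero property of $\psi$ is exactly what makes this scale-sum converge, losing at worst a logarithm, which is consistent with \eqref{e.scalefixed}.

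The third step is bookkeeping: combine the almost-orthogonality estimate with Lemma~\ref{l.model} via Cotlar--Stein to conclude $\norm \mathcal C.2.\lesssim 1$. I expect the main obstacle to be making the Fourier localization quantitative enough — specifically, transferring the mean-zero/band-limited property of the one-dimensional kernels $\psi_{\scl s}$ through the nonlinear substitution $x\mapsto x-yv(x)$ into a genuine two-dimensional frequency-support-with-tails statement for $\phi_s$, uniformly over the tiles $s\in\mathcal {AT}(\Prm)$. This is precisely what Lemma~\ref{l.fl} is designed to encapsulate, so in this Lemma I would simply set up the $TT^\ast$ scheme and cite Lemma~\ref{l.fl} for the crucial localization input, deferring its proof (which uses the $C^{\alpha}$ regularity, $\alpha>1$, in an essential way) to the subsequent section. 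A secondary technical point is that the inequality is genuinely one-sided in the sense that \eqref{e.scalefixed} already builds in a $\log(1+\Scl^{-1}\norm v.C^\alpha.)$ loss at each fixed scale; I would need to check that after summing over $\Prm$ these logarithms are absorbed, which they are because the number of relevant scales for a given annulus is itself only logarithmic and the annuli decouple geometrically.
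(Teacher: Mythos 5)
Your plan matches the paper's proof: a direct $TT^\ast$ expansion in the annular parameter, reducing to power decay in $\max(\Prm,\Prm')$ for $\mathcal C_\Prm^\ast \mathcal C_{\Prm'}$ with $\Prm\neq\Prm'$, which is then established at fixed scales $\Scl,\Scl'$ via Schur's test and the Fourier Localization Lemma~\ref{l.fl}, the $O(\log\Prm)$ factor from counting scales being absorbed by a slightly stronger exponent in the fixed-scale estimate. One free simplification you overlook: $\mathcal C_\Prm \mathcal C_{\Prm'}^\ast \equiv 0$ for $\Prm\neq\Prm'$, since the input functions $\varphi_s$ in $\mathcal C_\Prm$ have frequency supports near $\lvert\xi\rvert\simeq\Prm$ and hence are disjoint across distinct annular parameters, so only the composition $\mathcal C_\Prm^\ast \mathcal C_{\Prm'}$ actually requires an estimate.
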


We have already established Lemma~\ref{l.model}, and so in particular know that
$\norm \mathcal C_\Prm.2. \lesssim1$.   Due to the imposition of the Fourier restriction 
in the definition of these operators, it is immediate that $\mathcal C_\Prm\mathcal C_{\Prm'} ^{\ast } 
\equiv 0$ for $ \Prm\neq \Prm'$.   We establish that 
\begin{equation} \label{e.cotlar}
\begin{split}
\norm \mathcal C_\Prm^*\mathcal C_{\Prm'}.2.
\lesssim
\max(\Prm,\Prm')^{-\delta}\,,
\\
\delta=\tfrac1{128}(\ensuremath{\alpha}-1)\,, \qquad \abs{ \log \Prm(\Prm') ^{-1} } >3 \,.
\end{split}
\end{equation}
Then, it is entirely elementary to see that $\mathcal C$ is a bounded operator. 
Let $\operatorname P_\Prm$ be the Fourier projection of 
$f$ onto the frequencies $\Prm<\abs{\ensuremath{\xi}}<2\Prm$.  Observe, 
\begin{align*}
\norm \mathcal C f .2.^2=& \NOrm \sum_{\Prm\ge1}\mathcal C_\Prm \operatorname P_\Prm f .2.^2
\\{}\le{}&{}\sum_{\Prm\ge1}\sum_{\Prm'>1}\ip \mathcal C_{\Prm}\operatorname P_\Prm f,\mathcal C_{\Prm'}\operatorname P_{\Prm'}f ,
\\ {}\le{}& 2\norm f.2.\sum_{\Prm\ge1} \sum_{\Prm' >1} \norm \mathcal C_\Prm^*\mathcal C_{\Prm'}\operatorname P_{\Prm'} f.2.
\\ &\lesssim \norm f.2.^2\Bigl(1+\sum_{\Prm\ge1} \sum_{\Prm' >1}  \max(\Prm,\Prm')^{-\delta} \Bigr)
\\ &\lesssim \norm f.2.^2. 
\end{align*}

\smallskip

There are only $O(\log \Prm)$ possible values of $\Scl$ that contribute to $\mathcal C_\Prm$,
and likewise for $\mathcal C_{\Prm'}$.  
Thus, if we define 
\begin{equation}\label{e.sclscl'}
\mathcal C _{\Prm,\Scl} f = 
\sum_{\substack{s\in\mathcal {AT}(\Prm)\\ \scl s=\Scl} }
\ip f,\varphi_s, \phi _s \,, 
\end{equation}
it suffices to prove 

\begin{lemma}\label{l.addC'}
 Using the notation of of Lemma~\ref{l.modelsumed}, 
and assuming that $ \norm v . C ^{\alpha }. \lesssim 1$, 
 we have 
\begin{equation*}
\norm \mathcal C _{\Prm,\Scl} ^{\ast } 
\mathcal C _{\Prm',\Scl'} .2. \lesssim (\max (\Prm, \Prm')) ^{-\delta }\,. 
\end{equation*}
Here, we can take $ \delta '=\tfrac 1 {100} (\alpha -1)$, and the inequality holds 
for all $ \lvert  \log \Prm (\Prm') ^{-1} \rvert>3 $, $ 1< \Scl \le \Prm$ and $ 1< \Scl' \le \Prm'$. 
\end{lemma}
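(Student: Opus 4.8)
\textbf{Proof plan for Lemma~\ref{l.addC'}.}

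The plan is to exploit the mean-zero property of the convolution kernels $\psi_{\scl s}$ together with the H\"older regularity of the vector field to gain a small power of the larger annular parameter. Without loss of generality assume $\Prm' \ge \Prm$, so the target bound reads $\norm \mathcal C _{\Prm,\Scl} ^{\ast }\mathcal C _{\Prm',\Scl'} .2. \lesssim (\Prm')^{-\delta'}$. Expanding the operator product, we must control $\sum_{s,s'} \ip f, \varphi_{s'}, \ip \phi_{s'}, \phi_s, \overline{\ip g, \varphi_s,}$ where $s$ ranges over $\mathcal{AT}(\Prm)$ with $\scl s = \Scl$ and $s'$ over $\mathcal{AT}(\Prm')$ with $\scl{s'} = \Scl'$. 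First I would record the two crude but uniform facts that are already available: the Bessel inequalities $\sum_{s} \abs{\ip f, \varphi_s,}^2 \lesssim \norm f.2.^2$ at fixed scale/annulus (as in Lemma~\ref{l.fixedscale} and Lemma~\ref{l.bessel}), and the fact that each $\mathcal C_{\Prm,\Scl}$ is itself $L^2$-bounded with an absolute constant (a consequence of Lemma~\ref{l.model}, since it is one of the $O(\log\Prm)$ pieces of $\mathcal C_\Prm$, and no better than $\log\Prm$-many of them overlap in the Cotlar--Stein sense). So the whole content is in the decay, and it suffices to estimate the single scalar inner product $\abs{\ip \phi_{s'}, \phi_s,}$ and show it carries a factor $(\Prm')^{-c(\alpha-1)}$ against a summable geometric kernel in the spatial and directional separation of $s$ and $s'$.

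The key step is the pointwise/frequency analysis of $\ip \phi_{s'}, \phi_s,$. Recall $\phi_s(x) = \int \varphi_s(x - yv(x)) \psi_s(y)\,dy$ with $\psi_s(y) = \scl s\, \psi_{\scl s}(\scl s\, y)$, and $\widehat{\psi_{\scl s}}$ supported near frequency $-\theta$ with $\int \psi_{\scl s} = 0$ (the mean-zero property emphasized after \eqref{e.scalefixed}). The Fourier support of $\varphi_s$ lies in the annular rectangle $\omega_s$ at radius $\simeq \Prm$, and of $\varphi_{s'}$ at radius $\simeq \Prm'$; the twisting by $v(x)$ spreads these supports, but only by an amount controlled by the $C^\alpha$ regularity. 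Because $v$ has $\alpha > 1$ derivatives with $\norm v. C^\alpha. = 1$, on the spatial scale of $R_{s'}$ — whose long side has length $\simeq \Scl' / \Prm' $ — the vector field $v$ is, up to an error of size $O((\Scl'/\Prm')^{\alpha})$, affine. Linearizing $v$ this way turns $\phi_{s'}$ into (a superposition of) honest modulated bump functions whose frequency support sits in a genuine annulus of radius $\simeq \Prm'$; since $\Prm' > 8\Prm$, these are frequency-disjoint from $\varphi_s$'s support and from the support generated by $\phi_s$, and the inner product would vanish but for the linearization error. That error, weighted by the mean-zero cancellation of $\psi$, is what produces the gain: one integrates by parts (or uses the $\int\psi = 0$ to subtract the constant term of the Taylor expansion of $v$) to see that $\abs{\ip \phi_{s'}, \phi_s,} \lesssim (\Prm'/\Prm)^{-c} \cdot (\Prm')^{-c(\alpha-1)} \cdot \sqrt{\abs{R_s}\abs{R_{s'}}} \cdot (\text{rapidly decaying in the distance between } R_s, R_{s'})$. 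Summing this kernel against the two Bessel inequalities — using Schur's test in the directional variable ${\boldsymbol\omega}$ and the rapid spatial decay exactly as in the proof of Lemma~\ref{l.bessel} — yields the claimed bound with $\delta' = \tfrac1{100}(\alpha-1)$, the loss of $\log\Prm$ factors being absorbed into the slightly smaller exponent.

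\textbf{The main obstacle.} The delicate point is making the linearization of the vector field rigorous \emph{uniformly} over all the tiles $s'$ at scale $\Scl'$ while genuinely extracting the cancellation from $\int \psi_{\scl s} = 0$. One cannot simply replace $v$ by a linear function globally; instead one must do it locally on translates of $R_{s'}$, track the Taylor remainder in the $C^\alpha$ norm, and check that the remainder term, after using the mean-zero property of $\psi$ to kill the zeroth-order contribution, is small in the right operator sense rather than merely pointwise. This is precisely the content of the Fourier Localization Lemma (\ref{l.fl}) advertised just before the statement of this lemma, so the honest plan is: invoke that lemma to legitimize the frequency-support heuristics, reduce $\ip \phi_{s'}, \phi_s,$ to the frequency-disjoint model plus a controlled error, and then run the Cotlar--Stein/Schur bookkeeping. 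I expect the frequency-support reduction via Lemma~\ref{l.fl} to consume most of the real work, with the summation at the end being routine given the two Bessel inequalities.
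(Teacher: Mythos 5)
Your broad outline is the right one — you correctly identify that the work is to be done by the Fourier Localization Lemma~\ref{l.fl}, that the mean-zero property of $\psi$ is the engine of the gain, and that the bookkeeping should go through Bessel inequalities and a Schur test — and you honestly flag the delicate point. But there is a material gap in the middle of the argument: you assert that a \emph{single} inner product $\abs{\ip \phi_{s'},\phi_s,}$ carries a factor $(\Prm')^{-c(\alpha-1)}$ against a summable geometric tail in the separation of $s$ and $s'$. This is what one would hope for, but it is not what Lemma~\ref{l.fl} delivers, and it is not what the paper proves. The exceptional-set term $\abs{R_s}^{-1/2}\ind{F_s}$ in \eqref{e.tech3} has no pointwise decay at all in the annular parameter — it only has good measure estimates \eqref{e.Fs1} — and after pairing it against $\operatorname I_e\phi_{s'}$ and summing over one of the two tile variables, the resulting Schur row-sum can in fact \emph{grow} like $(\Prm')^{\epsilon}$ for certain parameter configurations (this is exactly the content of \eqref{e.caseI-b} and \eqref{e.caseIII-a}, and of Remarks~\ref{r......} and \ref{r....}). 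The decay in $\Prm'$ is recovered only because the quantitative Schur test, Proposition~\ref{p.Schur}, bounds the operator norm by the \emph{geometric mean} of the row and column sums, so that a small increase on one side is more than compensated by a strong decay on the other.

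To make this work you therefore cannot treat the inner product as a uniformly decaying kernel and "sum against the Bessel inequalities." You have to commit to the full bilateral Schur argument: split the pairs $(s,s')$ according to the relative tile geometry into the three classes $\mathcal S_1,\mathcal S_2,\mathcal S_3$ of \eqref{e.S1}--\eqref{e.S3}, refine further by the spatial separation parameter $j$, and for each class prove both Schur estimates (row and column) separately, observing that the measure of the exceptional set $F_s$ gives decay when summing over $s'$ and a different mechanism gives decay when summing over $s$. The factors $\sqrt{\Scl'\Prm'/(\Scl\Prm)}$ and its reciprocal that appear in the paper's estimates \eqref{e.caseI-a}--\eqref{e.caseI-b}, etc., are not decorations — they are the exact book-balancing terms that make the product in Proposition~\ref{p.Schur} close. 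So while your invocation of Lemma~\ref{l.fl} and your final Cotlar--Stein sentence are sound, the claim that the summation is routine once you have Bessel and a pointwise kernel estimate is where the proof would stall; that summation is the bulk of the paper's argument.
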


\begin{proof}[Proof of Lemma~\ref{l.modelsumed}.]
In this proof, we assume that Lemma~\ref{l.addC} and Lemma~\ref{l.addC'} are 
established.  The first Lemma clearly establishes the first (and more important) 
claim of the Lemma.

Let us prove the inequality \eqref{e.scalefixed}.  Using the notation of this 
section, this inequality is as follows.
\begin{equation}  \label{e.Q}
\NOrm \sum _{\Prm=-\infty } ^{\infty } \mathcal C _{\Prm,\Scl} . 2. 
\lesssim{}(1+ \log( 1+{\mathsf {scl}}^{-1} \lVert v\rVert_{C^\alpha})).
\end{equation}
This inequality holds for all choices of $ C ^{\alpha }$ vector fields $ v$. 

Note that Lemma~\ref{l.addC'} implies immediately 
\begin{equation*}
\NOrm \sum _{\Prm=3 } ^{\infty } \mathcal C _{\Prm,\Scl} . 2. 
\lesssim 1 \,, \qquad \norm v. C ^{\alpha }. =1 \,. 
\end{equation*}
We are however in a scale invariant situation, so that this inequality implies 
this equivalent form, independent of assumption on the norm of the vector field. 
\begin{equation} \label{e.QQ}
\NOrm \sum _{\Prm\ge 8  \norm v. C ^{\alpha }. } ^{\infty } \mathcal C _{\Prm,\Scl} . 2. 
\lesssim 1 \,.
\end{equation}

On the other hand, Lemma~\ref{l.fixedscale}, implies that independent of any assumption 
other than measurability, we have have the inequality 
\begin{equation*}
\norm \mathcal C _{\Prm,\Scl} .2. \lesssim 1\,. 
\end{equation*}
To prove \eqref{e.Q}, use the inequality \eqref{e.QQ}, and this last inequality 
together with the simple fact that for a fixed value of $ \Scl$, there are at most 
$ \lesssim  1+ \log( 1+{\mathsf {scl}}^{-1} \lVert v\rVert_{C^\alpha})) $ values 
of $ \Prm$ with $ \Scl \le \Prm \le 8\lVert v\rVert_{C^\alpha} $. 

\end{proof}

We use the notation 
\begin{gather*}
\mathcal {AT}(\Prm,\Scl):=\{  s\in\mathcal {AT}(\Prm)\;:\; \scl s=\Scl\}\,,
\end{gather*}
Observe that as the scale is fixed,  we have a Bessel inequality for 
the functions $ \{\varphi _{s}\mid  s \in \mathcal {AT}(\Prm,\Scl) \}$.  Thus, 
\begin{align*}
\norm \mathcal C _{\Prm,\Scl} ^{\ast } 
\mathcal C _{\Prm',\Scl'} f.2. ^2 
& = \NOrm 
\sum _{s\in \mathcal {AT}(\Prm,\Scl)} 
\sum _{s\in \mathcal {AT}(\Prm',\Scl')} 
\ip \phi _s, \phi _{s'}, \ip \varphi _{s'}, f, \varphi _{s} .2. ^2 
\\
& \lesssim 
\sum _{s\in \mathcal {AT}(\Prm,\Scl)} \ABs{ 
\sum _{s\in \mathcal {AT}(\Prm',\Scl')} 
\ip \phi _s, \phi _{s'}, \ip \varphi _{s'}, f,   }^2\,.  
\end{align*}
At this point, the Schur test suggests itself, and indeed, we need a quantitative 
version of the test, which we state here.

\begin{proposition}\label{p.Schur} Let $ \operatorname A= \{a _{{i,j}}\}$ be a matrix acting on 
$ \ell ^2 (\mathbb N )$ by 
\begin{equation*}
\operatorname A x= \Bigl\{ \sum _{j} a _{i,j} x_j \Bigr\} \,. 
\end{equation*}
Then, we have the following bound on the operator norm of $ \operatorname A$. 
\begin{equation*}
\norm \operatorname  A . . ^2 
\lesssim 
\sup _{j} \sum _{i} \lvert a _{i,j}\rvert   
\cdot
\sup _{i} \sum _{j}  \lvert  a _{i,j}\rvert 
\end{equation*}
\end{proposition}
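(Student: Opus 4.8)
\textbf{Proof proposal for the quantitative Schur test (Proposition~\ref{p.Schur}).}

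The plan is to give the standard Schur-test argument with the symmetric weight. Since the two ``sup'' quantities on the right-hand side are finite only if the sums converge, I may assume
\begin{equation*}
C_{\mathrm{col}} := \sup_j \sum_i \lvert a_{i,j}\rvert < \infty, \qquad
C_{\mathrm{row}} := \sup_i \sum_j \lvert a_{i,j}\rvert < \infty,
\end{equation*}
as otherwise there is nothing to prove. First I would fix $x = \{x_j\} \in \ell^2(\mathbb N)$ and estimate $\lvert (\operatorname A x)_i \rvert$ pointwise in $i$. The key step is to insert a splitting of the weight $\lvert a_{i,j}\rvert = \lvert a_{i,j}\rvert^{1/2} \cdot \lvert a_{i,j}\rvert^{1/2}$ and apply Cauchy--Schwarz in the variable $j$:
\begin{equation*}
\lvert (\operatorname A x)_i \rvert
\le \sum_j \lvert a_{i,j}\rvert^{1/2} \bigl( \lvert a_{i,j}\rvert^{1/2} \lvert x_j\rvert \bigr)
\le \Bigl( \sum_j \lvert a_{i,j}\rvert \Bigr)^{1/2} \Bigl( \sum_j \lvert a_{i,j}\rvert\, \lvert x_j\rvert^2 \Bigr)^{1/2}
\le C_{\mathrm{row}}^{1/2} \Bigl( \sum_j \lvert a_{i,j}\rvert\, \lvert x_j\rvert^2 \Bigr)^{1/2}.
\end{equation*}

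Next I would square this, sum over $i$, and interchange the order of summation (all terms are nonnegative, so Tonelli applies):
\begin{equation*}
\norm \operatorname A x .\ell^2. ^2 = \sum_i \lvert (\operatorname A x)_i\rvert^2
\le C_{\mathrm{row}} \sum_i \sum_j \lvert a_{i,j}\rvert\, \lvert x_j\rvert^2
= C_{\mathrm{row}} \sum_j \lvert x_j\rvert^2 \sum_i \lvert a_{i,j}\rvert
\le C_{\mathrm{row}}\, C_{\mathrm{col}} \norm x .\ell^2. ^2.
\end{equation*}
Taking square roots gives $\norm \operatorname A ..  \le C_{\mathrm{row}}^{1/2} C_{\mathrm{col}}^{1/2}$, which is the asserted bound (indeed with implied constant $1$).

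I do not anticipate any real obstacle here: this is a textbook computation and the only points requiring a word of care are the convergence caveat (handled by assuming both suprema finite) and the justification of the interchange of summation (monotone/Tonelli, since everything is nonnegative). If one wants to be scrupulous about $\operatorname A x$ even being well-defined, the first display already shows the defining series for each $(\operatorname A x)_i$ converges absolutely whenever $x \in \ell^2$, so no separate argument is needed.
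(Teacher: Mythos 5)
Your proof is correct, and in fact the paper states Proposition~\ref{p.Schur} without supplying any proof at all (it is invoked as a classical quantitative Schur test), so there is nothing to compare against: the standard Cauchy--Schwarz argument with the weight splitting $\lvert a_{i,j}\rvert = \lvert a_{i,j}\rvert^{1/2}\cdot\lvert a_{i,j}\rvert^{1/2}$, followed by Tonelli to swap the order of summation, is exactly the intended route. You even obtain the sharp constant $1$ where the paper only asks for $\lesssim$, and your remark that absolute convergence of each $(\operatorname A x)_i$ follows from the first display is a correct observation that $\operatorname A$ is well-defined on $\ell^2$ under the stated hypotheses.
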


We assume   that $1\le\Prm<\tfrac 18\Prm'$.  For a subset $ \mathcal S
\subset  \mathcal {AT}(\Prm,\Scl) \times  \mathcal {AT}(\Prm',\Scl')$ 
Consider the operator and definitions below.
\begin{align*}
\operatorname A _{\mathcal S} f
&= \sum _{ (s,s')\in \mathcal S} 
 \ip \phi _s, \phi _{s'}, \ip \varphi _{s'}, f, \varphi _{s} \,,
\\
\operatorname {FL} (s, \mathcal S)
& = 
\sum _{s'\in  \mathcal {AT}(\Prm',\Scl')} \lvert  \ip \phi _{s}, \phi _{s'}, \rvert \,,  
\\
\operatorname {FL} ( \mathcal S)
& = \sup _{s} \operatorname {FL} (s, \mathcal S)\,.
\end{align*}
Here `$\operatorname {FL} $' is for 
`Fourier Localization' as this term is to be controlled by Lemma~\ref{l.fl}.   
We will use the notations $ \operatorname {FL} (s',\mathcal S)$, 
and $ \operatorname {FL}' (\mathcal S)$, 
which are defined similarly, 
with the roles of $ s$ and $ s'$ reversed.  By Proposition~\ref{p.Schur}, 
we have the inequality 
\begin{equation} \label{e.addU}
\norm \operatorname A _{\mathcal S} . 2.  ^2 
\lesssim 
\operatorname {FL} ( \mathcal S) 
\cdot 
\operatorname {FL}' ( \mathcal S)\,. 
\end{equation}
We shall see that typically $ \operatorname {FL} ( \mathcal S) $ 
will be somewhat large, but is balanced out by  $\operatorname {FL} '( \mathcal S)  $.

We partition $  \mathcal {AT}(\Prm,\Scl) \times  \mathcal {AT}(\Prm',\Scl')$
into three disjoint subcollections $ \mathcal S _{u}$, $ u=1,2,3$, defined as 
follows.   In this display, $ (s,s')\in \mathcal {AT}(\Prm,\Scl) \times  \mathcal
{AT}(\Prm',\Scl')$.
\begin{align}\label{e.S1}
\mathcal S_1 
&= \Bigl\{ (s,s')  
\mid \frac{\Scl'}{\Prm'}\ge\frac\Scl\Prm\Bigr\}\,,
\\ \label{e.S2}
\mathcal S_2
&= \Bigl\{ (s,s')  
\mid \frac{\Scl'}{\Prm'}<\frac\Scl\Prm\,, \  \Scl<\Scl'\Bigr\} \,, 
\\ \label{e.S3}
\mathcal S_3
&= \Bigl\{ (s,s')  
\mid \frac{\Scl'}{\Prm'}<\frac\Scl\Prm\,, \ \Scl'<\Scl\Bigr\} \,. 
\end{align}

A further modification to these collections must be made, but it is not 
of an essential nature.  
For an integer $ j\ge1$, and $ (s,s')\in \mathcal S _{u}$, for $ u=1,2,3$, 
write $  (s,s')\in \mathcal S _{u,j}$ if $ j$ is the smallest integer such that 
$ 2 ^{j+2} R_s \cap 2 ^{j+2} R _{s'}\neq \emptyset$.

We apply the inequalities (\ref{e.addU}) 
to the collections $ \mathcal S _{u,j}$, to prove the inequalities 
\begin{equation}\label{e.Suj}
\norm \operatorname A _{\mathcal S _{u,j}}. 2. 
\lesssim
2 ^{-j} (\Prm') ^{-\delta '}
\end{equation}
where $ \delta '=\tfrac 1 {100} (\alpha -1)$.  This proves Lemma~\ref{l.addC'}, 
and so completes the proof of Lemma~\ref{l.addC}.

In applying (\ref{e.addU}) it will be very easy 
to estimate  $ \operatorname {FL}(s, \mathcal S)$, with a term that decreases 
like say $ 2 ^{-10j}$.  The difficult part is to estimate either 
$ \operatorname {FL} (s, \mathcal S)$ or $ \operatorname {FL}' ( \mathcal S) $
by a term with decreases faster than a small power of $ (\Prm') ^{-1} $. 
for which we use Lemma~\ref{l.fl}.

Considering a term $ \ip \phi _{s}, \phi _{s'},$, the inner product is trivially 
zero if $ \boldsymbol \omega _{s}\cap \boldsymbol \omega _{s'}=\emptyset$.  We assume 
that this is not the case below. 
To apply Lemma~\ref{l.fl},  
fix $e\in{\boldsymbol \omega}_{s} '\cap{\boldsymbol \omega}_{s}{} $. 
Let \ensuremath{\alpha} be a Schwartz function on \ensuremath{\mathbb R} 
with $\widehat {\ensuremath{\alpha}}$ 
supported on $[\Prm',2\Prm']$, and identically one on $\frac34[\Prm',2\Prm']$. 
Set $\widehat{\beta} (\theta):= \widehat{\alpha}({\theta}-\frac32\Prm')$.  
We will convolve $\phi_s$ with \ensuremath{\beta } in the direction $e$, and $\phi_{s'}$
with \ensuremath{\alpha}  also in the direction 
$e$, thereby obtaining orthogonal functions.  
 
Define
\begin{gather} 
\label{e.Ie}
\operatorname I _{e} g (x) =
\int_ \ensuremath{\mathbb R} g(x-ye) \beta  (y)\; dy,
\\
\label{e.DD} 
\begin{split}
\Delta_{s}&=\ensuremath{\phi}_s- \operatorname I _{e} \phi _{s} 
\\
\Delta_{s'}&=\ensuremath{\phi}_{s'}- \operatorname I _{e} \phi _{s'}
\end{split}
\end{gather}
By construction, we have 
\begin{align*}
\ip \phi _{s}, \phi _{s'}, &= 
\ip \operatorname I _{e} \phi _{s}+\Delta _{s} , 
 \operatorname I _{e} \phi _{s'}+\Delta _{s'} , 
\\
&= 
\ip \operatorname I _{e} \phi _{s}, \Delta _{s'}, 
+\ip \Delta _{s},  \operatorname I _{e} \phi _{s'}  , 
+\ip \Delta_s,\Delta_{s'},\,.
\end{align*}
It falls to us to estimate terms like 
\begin{gather} \label{e.001}
\sup _{s'}
\sum_{s\in\mathbf S _{\ell ,j}}\abs{\ip \Delta_s, \operatorname I _{e}\phi_{s'},},
\\ \label{e.002}
\sup _{s'} \sum_{s\in\mathbf S _{\ell ,j}}\abs{\ip \operatorname I _{e}  \phi_s,\Delta_{s'},},
\\ \label{e.003}
\sup _{s'}\sum_{s\in\mathbf S _{\ell ,j}}\abs{\ip \Delta_s,\Delta_{s'},}.
\end{gather}
as well as the dual expressions, with the roles of $s $ and $ s'$ reversed. 

\medskip

The differences  $ \Delta _s$ and $ \Delta  _{s'}$ are frequently 
controlled by Lemma~\ref{l.fl}. 
Concerning application of this Lemma to $ \Delta _{s}$, observe that 
\begin{equation*}
\operatorname {Mod} _{-c (\omega _s)} \Delta _{s} 
= \operatorname {Mod} _{-c (\omega _s)} \phi _s - 
\int [ \operatorname {Mod} _{-c (\omega _s)} \phi _s (x-y e)] \widetilde \beta (y)\; dy
\end{equation*}
where $ \widetilde \beta (y)= \operatorname e ^{ (c (\omega _s) \cdot e)y} \beta (y)$. 
Now the Fourier transform of $ \beta $ is identically one in a neighborhood of the 
origin of width comparable to $ \Prm'$, where as $ \lvert  c (\omega _s) \cdot e\rvert $ 
is comparable to $ \Prm$.  
Since we can assume that $ \Prm'>\Prm+3$, say, the function $ \widetilde \beta $ 
meets the hypotheses of Lemma~\ref{l.fl}, namely it is Schwarz function with Fourier 
transform identically one in a neighborhood of the origin, and the width of that 
neighborhood is comparable to $ \Prm'$. 
And so $ \Delta _s$ is bounded by the 
bounded by the three terms in \eqref{e.tech1}---\eqref{e.tech3} below.  In 
these estimates, we  take $ 2 ^{k} \simeq \Prm' >1 $. 
By a similar argument, one sees that Lemma~\ref{l.fl} also applies to $ \Delta _{s'}$.

We will let 
$\Delta_{s,m}$, for $m=1,2,3$, denote the
terms that come from \eqref{e.tech1}, \eqref{e.tech2}, and \eqref{e.tech3} 
respectively.  We use the corresponding notation for $ \Delta_{s,m}$, for $ m=1,2,3$. 
A nice feature of these estimates, is that while $\Delta_{s}$
and $\Delta_{s'}$ depend 
upon the choice of $e\in{\boldsymbol \omega}_{s'}\cap {\boldsymbol \omega}_{s}{}$,
the upper bounds in the first two estimates do not depend upon the choice of $e$.
While the third estimate does, the dependence of the set $F_s$ on the choice of $e$ is rather weak.

In application of (\ref{e.tech2}), the functions $ \Delta _{s,2}$ will be very small, 
due to the term $ (\Prm') ^{-10}$ which is on the right in (\ref{e.tech2}).  
This term is so much smaller than all other terms involved in this argument that these 
terms are very easy to control.  So we do not explicitly discuss the case 
of $ \Delta _{s,2}$, or $ \Delta _{s',2}$ below.

In the analysis of the terms \eqref{e.001} and \eqref{e.002}, we frequently only 
need to use an inequality such as $ \lvert  \operatorname I _{e} \phi _{s'}\rvert 
\lesssim \chi ^{(2)} _{R _{s'}}$.  When it comes to the analysis of 
\eqref{e.003}, the function $ \Delta _{s'}$ obeys the same inequality, so that 
these sums can be controlled by the same analysis that controls \eqref{e.001}, 
or \eqref{e.002}.   So we will explicitly discuss these cases below. 

In order for $ \ip \phi _{s}, \phi _{s'}, \neq \emptyset $, 
we must necessarily have $ \boldsymbol \omega _{s}\cap  \boldsymbol \omega _{s'} \neq 
\emptyset $. 
Thus, we update all $ \mathcal S _{\ell ,j}$ as follows. 
\begin{equation*}
\mathcal S _{\ell ,j}
\leftarrow \{ (s,s')\in \mathcal S _{\ell ,j} \mid  \boldsymbol \omega _{s}\cap
\boldsymbol
\omega _{s'} \neq\emptyset \} \,. 
\end{equation*}

\bigskip

\subsection*{The Proof  of (\ref{e.Suj}) for $ \mathcal S _{1,j}$, $ j\ge 1$.} 
Recall the definition of $ \mathcal S _{1,j}$ from (\ref{e.S1}).  
In particular, for $ (s,s')\in \mathcal S _{1,j}$, we must have 
$ \boldsymbol  \omega _{s}\subset \boldsymbol  \omega _{s'}$. 

We will use the inequality \eqref{e.addU}, and show that for $ 0<\epsilon <1$, 
\begin{align}  \label{e.caseI-a} 
\operatorname{FL}(\mathcal S _{1,j})&
 \lesssim2^{-10j}   (\Prm')^{-\widetilde \alpha } 
 \sqrt {\frac {\Scl' \cdot \Prm'} {\Scl \cdot \Prm}}
\\ \label{e.caseI-b} 
\operatorname {FL}'(\mathcal S _{1,j}) &\lesssim2^{2j} (\Prm')^{\epsilon  } 
 \cdot  \sqrt {\frac {\Scl \cdot \Prm} {\Scl' \cdot \Prm'}} \,. 
\end{align}
Notice that in the second estimate, we permit some slow increase in the 
estimates as a function of $ 2 ^{j}$ and $ \Prm'$.  But, due to the 
form of the estimate of the Schur test in \eqref{e.addU}, this 
slow growth is acceptable.  

The  terms inside the square root in these two estimates 
cancel out.  These inequalities conclude the proof of the inequality 
\eqref{e.Suj} for the collection  $ \mathcal S _{1,j}$, $ j\ge 1$. 

\smallskip

We prove \eqref{e.caseI-a}.  For this, we use Lemma~\ref{l.fl}.  
That is, we should bound the several terms 
\begin{gather} 
\label{e.1case1}
 \sum _{s'\,:\, (s,s')\in \mathbf S_{1,j}}
 \lvert  \ip \Delta_{s}, \operatorname I _{e}\phi _{s'},\rvert  \,,
\\ \label{e.2case1}
 \sum _{s'\,:\, (s,s')\in \mathbf S_{1,j}}
\lvert  \ip \operatorname I _{e} \phi  _{s}, \Delta  _{s'},\rvert  \,,
\\ \label{e.3case1}
 \sum _{s'\,:\, (s,s')\in \mathbf S_{1,j}}
\lvert  \ip  \Delta_{s}, \Delta  _{s'},\rvert  \,.
\end{gather}
Here $\Delta _s $ and $ \Delta _{s'}$ are as in \eqref{e.DD}.
And, $ \operatorname I _{e}$ is defined as in \eqref{e.Ie}. We can 
regard the tile $s $ as fixed, and so fix a choice of  $ e\in \boldsymbol \omega _{s}$.  
In the next two cases, we will need to estimate the same expressions as above. 
In all three cases,  Lemma~\ref{l.fl} is applied with $ 2 ^{k} \simeq \Prm'$, 
and we can take  $ \epsilon $ in this Lemma to be $ \epsilon = \tfrac 1 {400} (\alpha -1)$. 
For ease of notation, we set 
\begin{equation} \label{e.tilde-a}
\widetilde \alpha = (\alpha -1)(1-\epsilon )  ^2- \epsilon >0 
\end{equation}

As we have already mentioned, we do not explicitly discuss the upper bound 
on the estimate for \eqref{e.3case1}.

\smallskip 

\subsubsection*{The Upper Bound on \eqref{e.1case1}}
We write $ \Delta _{s}=\Delta _{s,1}+\Delta _{s,2}+\Delta _{s,3}$, where 
these three terms are those on the right in \eqref{e.tech1}---\eqref{e.tech3} respectively. 
Note that 
\begin{equation} \label{e.I<}
\lvert  \operatorname I _{e} \phi _{s'}\rvert \lesssim \chi ^{(2) } _{R _{s'}}\,,  
\end{equation}
since $ \operatorname I _{e}$ is convolution in the long direction of $ R _{s'}$, at the scale 
of $ (\Prm') ^{-1} $, which is much smaller than the length of $ R _{s'}$ in the direction $
e$.  Therefore, we can estimate  the term in \eqref{e.1case1} by 
\begin{align} \notag 
 \sum _{s'\,:\, (s,s')\in \mathbf S_{1,j}} 
\lvert  \ip \Delta _{s,1},  \operatorname I _{e}\phi _{s'},\rvert 
& \lesssim  
(\Prm') ^{-\widetilde \alpha }  2 ^{-10j} 
\\ & \qquad  \notag
\int \chi ^{(2) } _{R_s} \Bigl\{ \sum _{s'\,:\, (s,s')\in \mathbf S_{1,j}} 
\chi ^{(2)} _{R _{s'}}\Bigr\} \; dx 
\\  \label{e.Vv}
& \lesssim (\Prm') ^{-\widetilde \alpha  }  2 ^{-10j} 
 \sqrt {\frac {\Scl' \cdot \Prm'} {\Scl \cdot \Prm}}\,. 
\end{align}
This is as required to prove \eqref{e.caseI-a} for these sums.

For the terms associated with $ \Delta _{s,3}$, we have 
\begin{align*}
\sum _{s'\,:\, (s,s')\in \mathbf S_{1,j}} 
\lvert  \ip \Delta _{s,3}, \operatorname I _{e}\phi _{s'},\rvert 
& \lesssim 
\sum _{s'\,:\, (s,s')\in \mathbf S_{1,j}} 
\int _{F_s} \lvert  R_s\rvert ^{-1/2} \cdot \chi ^{(2) } _{R_s'} \; dx 
\\
& \lesssim 2 ^{-10j} \lvert  F_s\rvert \sqrt{ \Prm' \cdot \Scl' \cdot \Prm \cdot \Scl}
\\
& \lesssim 2 ^{-10j} (\Prm') ^{-\alpha +\epsilon }
 \sqrt {\frac {\Scl' \cdot \Prm'} {\Scl \cdot \Prm}}\,. 
\end{align*}
That is, we only rely upon the estimate \eqref{e.Fs1}. 
This completes the analysis of \eqref{e.1case1}.  
(As we have commented above, we do not explicitly discuss the case of 
$ \Delta _{s,2}$.)

\smallskip

\subsubsection*{The Upper Bound for \eqref{e.2case1}.}
Since $ \boldsymbol  \omega _{s}\subset \boldsymbol  \omega _{s'}$, the only 
facts about $ \Delta _{s'}$ we need are 
\begin{equation}\label{e.789789}
\begin{split}
\int  _{ (\Prm') ^{\epsilon }R_s'} \lvert  \Delta _{s'}\rvert\; dx 
& \lesssim (\Prm') ^{-\widetilde \alpha +\epsilon } 
 \sqrt {\frac 1 {\Scl' \cdot \Prm'} }\,,
 \\
 \lvert  \Delta _{s'} (x)\rvert &\lesssim 
(\Prm') ^{-\widetilde \alpha } 
 \chi ^{(2)} _{R _{s'}} (x)\,, 
 \qquad x\not\in (\Prm') ^{\epsilon } R _{s'}. 
\end{split}
\end{equation}
Indeed, this estimate is a straightforward consequence of the 
various conclusions of Lemma~\ref{l.fl}. (We will return to this estimate 
in other cases below.)

These inequalities, with $ \lvert  \operatorname I _{e} \phi _{s}\rvert 
\lesssim \chi ^{(2)} _{R _{s}}$, permit us to estimate 
\begin{align*}
\eqref{e.2case1} 
& \lesssim 
2 ^{-20j}  \lvert  R_s\rvert ^{-1/2} 
 \sum _{s'\,:\, (s,s')\in \mathbf S_{1,j}} 
\int  _{ (\Prm') ^{\epsilon }R_s'} \lvert  \Delta _{s'}\rvert\; dx 
\\
& \lesssim 
2 ^{-20j} (\Prm') ^{-\widetilde \alpha }
   \sqrt {\frac  {\Scl \cdot \Prm} {\Scl' \cdot \Prm'}} 
 \times \sharp \{s'\,:\, (s,s')\in \mathbf S_{1,j}\}
 \\& 
\lesssim 2 ^{-20j} (\Prm') ^{-\widetilde \alpha }
 \sqrt {\frac {\Scl' \cdot \Prm'} {\Scl \cdot \Prm}}\,. 
\end{align*}
which is the required estimate. Here of course we use the estimate 
\begin{equation*}
 \sharp \{s'\,:\, (s,s')\in \mathbf S_{1,j}\}
 \lesssim 2 ^{2j} \frac {\Scl' \cdot \Prm'} {\Scl \cdot \Prm}\,. 
\end{equation*}

\bigskip

We now turn to the proof of \eqref{e.caseI-b}, where it is important that 
we justify the  small term 
\begin{equation*}
 \sqrt {\frac {\Scl \cdot \Prm} {\Scl' \cdot  \Prm'}}  
\end{equation*}
on the right in \eqref{e.caseI-b}.  
We  estimate the terms dual to \eqref{e.1case1}---\eqref{e.3case1}, namely 
\begin{gather} 
\label{e.1case4}
 \sum _{s\,:\, (s,s')\in \mathbf S_{1,j}}
 \lvert  \ip \Delta_{s}, \operatorname I _{e _{s}} \phi _{s'},\rvert  \,,
\\ \label{e.2case5}
 \sum _{s\,:\, (s,s')\in \mathbf S_{1,j}}
\lvert  \ip \operatorname I _{e _{s}} \phi  _{s}, \Delta  _{s'},\rvert  \,,
\\ \label{e.3case6}
 \sum _{s\,:\, (s,s')\in \mathbf S_{1,j}}
\lvert  \ip \Delta_{s}, \Delta  _{s'},\rvert  \,.
\end{gather}
Here, for each choice of tile $s $, we make a choice of $ e _{s}\in 
\boldsymbol \omega _{s}\subset \boldsymbol \omega _{s'}$.

\subsubsection*{The Upper Bound on \eqref{e.1case4}.}
We have an inequality analogous to \eqref{e.I<}. 
\begin{equation}\label{e.I<<}
\lvert  \operatorname I _{e _{s}} \phi _{s'}\rvert  
\lesssim \chi ^{(2)} _{R _{s'}}\,. 
\end{equation}
Note that as we can view $ s'$ as fixed, all the tiles 
$ \{s\,:\, (s,s')\in \mathbf S_{1,j}\}$ have the same 
approximate spatial location. 
Let us single out a tile $ s_0$ in this collection.  Then, for 
all $ s$, we have $ R_s \subset 2 ^{j+2} R _{s_0}$.

Recalling the specific information about the support of 
the functions of $ \Delta _{s}$ from \eqref{e.tech1},  \eqref{e.tech3} and \eqref{e.Fs2},
it follows that 
\begin{align*}
\sum _{s\,:\, (s,s')\in \mathbf S_{1,j}} \lvert  \Delta _{s}\rvert 
\lesssim 2 ^{2j} (\Prm') ^{\epsilon }
\chi ^{(2)} _{2 ^{j+2} R _{s_0}} \,. 
\end{align*}
In particular, we do not claim any decay in $ \Prm'$ in this estimate. 
(The small growth of $ (\Prm') ^{\epsilon }$ above arises from the overlapping 
supports of the functions  $ \Delta _{s}$, as detailed in Lemma~\ref{l.fl}.) 
Therefore, we can estimate 
\begin{align*}
\sum _{s\,:\, (s,s')\in \mathbf S_{1,j}}
 \lvert  \ip \Delta_{s}, \operatorname I _{e _{s}} \phi _{s'},\rvert 
 & \lesssim 2 ^{2j} (\Prm') ^{ \epsilon } 
 \int \chi ^{(2)} _{2 ^{j} R _{s_0}} \chi ^{(2)} _{R _{s'}} \; dx 
 \\
 & \lesssim 2 ^{-10j} (\Prm') ^{\epsilon } 
\sqrt {\frac {\Scl \cdot \Prm} {\Scl' \cdot  \Prm'}}  \,. 
 \end{align*}
This is as required in \eqref{e.caseI-b}.

\begin{remark}\label{r......} It is the analysis of the term 
\begin{equation*}
\sum _{s\,:\, (s,s')\in \mathbf S_{1,j}}
 \lvert  \ip \Delta_{s,3}, \operatorname I _{e _{s}} \phi _{s'},\rvert 
\end{equation*}
which prevents us from obtaining a decay in $ \Prm'$, at least in some 
choices of the parameters $ \Scl\,, \Prm\,, \Scl'$, and $ \Prm'$. 
\end{remark}

\subsubsection*{The Upper Bound on \eqref{e.2case5}.}
The fact about $ \Delta _{s'}$ we need is the 
simple inequality  $ \lvert  \Delta _{s'}\rvert \lesssim \chi ^{(2)} _{R _{s'}} $. 

As in the previous case, we turn to the fact that all the 
tile $ \{s\,:\, (s,s')\in \mathbf S_{1,j}\}$ have the same 
approximate spatial location. 
Single out a tile $ s_0$ in this collection, so that 
$ R_s \subset 2 ^{j+2} R _{s_0}$ for all such $ s$. 

Our claim is that 
\begin{equation} \label{e.t67}
\sum _{s\,:\, (s,s')\in \mathbf S_{1,j}}
 \lvert  \operatorname I _{e_s} \phi _{s} \rvert
\lesssim  2 ^{2j} \chi ^{(2)} _{2 ^{2j} R _{s}} \,. 
\end{equation}
(We will have need of related inequalities below.)
Suppose that $ s \in \{s\,:\, (s,s')\in \mathbf S_{1,j}\}$. 
These intervals all have the same length, namely $ \Scl/\Prm$.
And $ x\not\in \operatorname {supp} (\phi _s)$ implies $ v (x)\not\in \boldsymbol  \omega _{s}$, so
that by the Lipschitz assumption on the vector field 
\begin{equation*}
\operatorname {dist} ( x, \operatorname {supp} (\phi _s)) 
 \gtrsim \operatorname {dist} (v (x), \boldsymbol  \omega _{s})
 \,. 
\end{equation*}
This means that 
\begin{equation}\label{e.dist}
\lvert  \operatorname I _{e _{ s}} \phi _{ s} (x)\rvert
\lesssim  
\chi ^{(2)} _{R _{ s}}  \Bigl( 1+ 
\Prm'  \cdot \operatorname {dist} (v(x), \boldsymbol  \omega _{s})
\Bigr) ^{-10}\,. 
\end{equation}
Here, we recall that the operator  $ \operatorname I _{e}$ is dominated by 
the operator which averages on 
spatial scale $ (\Prm') ^{-1} $ in the direction $ e$. Moreover, we have 
\begin{equation} \label{e.WWW}
\Prm'  \cdot \operatorname {dist} (\boldsymbol  \omega _{s},  \overline{\boldsymbol \omega} )
\gtrsim \Scl\,. 
\end{equation}
Here, we partition the unit circle into 
disjoint intervals $ \overline{\boldsymbol \omega} \in 
\overline \Omega $ of length $ \lvert  \overline{\boldsymbol \omega}\rvert 
\simeq \Scl/\Prm$, so that for all $ s\in \{s\,:\, (s,s')\in \mathbf S_{1,j}\}$, 
we have $ \boldsymbol  \omega _{s}\in \overline \Omega $.

In fact, the  term on the left   in \eqref{e.WWW} can be taken to be integer multiples of 
$ \Scl$. 
Combining these observations proves \eqref{e.t67}. 
Indeed, we can estimate the term in \eqref{e.t67} as follows.  
For $ x$, fix $ \overline{\boldsymbol \omega}\in \overline \Omega $ with $ v (x)\in 
\overline{\boldsymbol \omega}$.  Then,  
 \begin{align*}
\sum _{s\,:\, (s,s')\in \mathbf S_{1,j}}
 \lvert  \operatorname I _{e_s} \phi _{s} \rvert
& \lesssim 
\sum _{s\,:\, (s,s')\in \mathbf S_{1,j}}
\chi ^{(2)} _{R _{\overline s}}  (x) \Bigl( 1+ 
\Prm'  \cdot \operatorname {dist} ( \overline{\boldsymbol \omega}, \boldsymbol  \omega _{s})
\Bigr) ^{-10}\,. 
\end{align*}
The important point is that the term involving the distance allows us to 
sum over the possible values of $ \boldsymbol  \omega _{s}\subset \boldsymbol  \omega _{s'}$
to conclude \eqref{e.t67}. 

To finish this case, we can estimate 
\begin{align*}
\sum _{s\,:\, (s,s')\in \mathbf S_{1,j}}
 \lvert  \ip  \operatorname I _{e_s } \phi _{s}, \Delta _{s'},\rvert 
 &   
\lesssim 2 ^{-10j} 
\sqrt {\frac {\Scl \cdot \Prm} {\Scl' \cdot  \Prm'}}  \,. 
\end{align*}
This completes the upper bound on \eqref{e.2case5}.

\begin{figure}
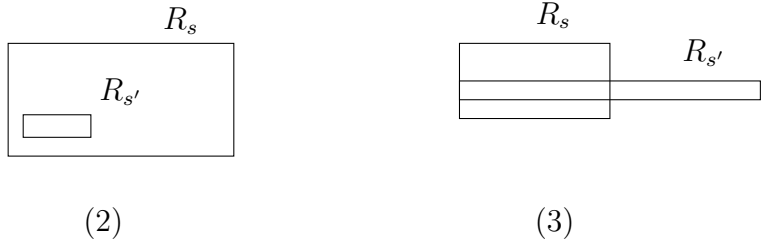
 \label{f.cases}  \centering 
\begin{pgfpicture}{0cm}{0cm}{10cm}{3cm}
%
%
\pgfrect[stroke]{\pgfxy(1,1)}{\pgfxy(3,1.5)} 
\pgfrect[stroke]{\pgfxy(1.2,1.25)}{\pgfxy(.9,.3)} 
\pgfputat{\pgfxy(3.3,2.8)}{\pgfbox[center,center]{$R_s$}}
\pgfputat{\pgfxy(2.5,1.85)}{\pgfbox[center,center]{$R_{s'}$}}
\pgfputat{\pgfxy(2.0,0)}{\pgfbox[base,left]{(2)}}
\pgfrect[stroke]{\pgfxy(7,1.5)}{\pgfxy(2,1)} 
\pgfrect[stroke]{\pgfxy(7,1.75)}{\pgfxy(4,.25)} 
\pgfputat{\pgfxy(10.25,2.2)}{\pgfbox[center,bottom]{$R_{s'}$}}
\pgfputat{\pgfxy(8.25,2.7)}{\pgfbox[center,bottom]{$R_{s}$}}
\pgfputat{\pgfxy(8,0)}{\pgfbox[base,left]{(3)}}
\end{pgfpicture}\caption{The relative positions of $R_s$ and $R_{s'}$ in 
for pairs $ (s,s')\in \mathcal S _{\ell }$, for $ \ell =2$ and $ \ell =3$ respectively.} 
\end{figure}

\subsection*{The Proof (\ref{e.Suj}) for $ \mathcal S _{2,j}$, $ j\ge 1$.} 
In this case, note that the assumptions imply that we can 
assume that ${\boldsymbol \omega}_{s'}\subset{\boldsymbol \omega}_{s}$, and that 
dimensions of the rectangle $R_{s'}$ are smaller than those for $R_s$ in both directions. 
See Figure~\ref{f.cases}.

We should  show these two inequalities, in analogy to \eqref{e.caseI-a} and 
\eqref{e.caseI-b}. 
\begin{align}  \label{e.caseII-a} 
\operatorname{FL}(\mathcal S _{2,j})&
 \lesssim2^{-10j}   (\Prm')^{-\widetilde \alpha } 
 \sqrt {\frac {\Scl' \cdot \Prm'} {\Scl \cdot \Prm}}
\\ \label{e.caseII-b} 
\operatorname {FL}'(\mathcal S _{2,j}) &\lesssim2^{-10j} 
 (\Prm')^{-\widetilde \alpha } 
 \cdot  \sqrt {\frac {\Scl \cdot \Prm} {\Scl' \cdot \Prm'}} \,. 
\end{align}
Here, $ \widetilde \alpha $ is as in \eqref{e.tilde-a}.

For the proof of \eqref{e.caseII-a}, we should analyze the sums 
\begin{gather} 
\label{e.1case2}
 \sum _{s'\,:\, (s,s')\in \mathbf S_{2,j}}
 \lvert  \ip \Delta_{s}, \operatorname I _{e_{s'}}\phi _{s'},\rvert  \,,
\\ \label{e.2case2}
 \sum _{s'\,:\, (s,s')\in \mathbf S_{2,j}}
\lvert  \ip \operatorname I _{e_{s'}} \phi  _{s}, \Delta  _{s'},\rvert  \,,
\\ \label{e.3case2}
 \sum _{s'\,:\, (s,s')\in \mathbf S_{2,j}}
\lvert  \ip  \Delta_{s}, \Delta  _{s'},\rvert  \,.
\end{gather}
These inequalities are in analogy to \eqref{e.1case1}---\eqref{e.3case1}, 
and $ e _{s'}\in {\boldsymbol \omega}_{s'}\subset{\boldsymbol \omega}_{s}$.

\subsubsection*{The Upper Bound on \eqref{e.1case2}.}
Fix the tile $ s$.  Fix a translate $ \overline {R}_s$ of $ R_s$ with 
$ 2 ^{j} R_s\cap 2 ^{j} \overline {R}_s = \emptyset $, but 
$ 2 ^{j+1}R_s \cap 2 ^{j+2} \overline {R}_s\neq \emptyset $.  Let us consider 
\begin{equation} \label{e.OverLine}
\overline {\mathcal S} _{2,j} =\{ (s,s') \in \mathcal S _{2,j}\mid  R _{s'} 
\subset \overline {R}_s
\}
\end{equation}
and we restrict the the sum in \eqref{e.1case2} to this collection of tiles. 
Note that with $ \lesssim 2 ^{2j}$ choices of $ \overline {R}_s$, we can exhaust the 
collection $ \mathcal S _{2,j}$.  So we will prove a slightly stronger estimate 
in the parameter $ 2^j$ for the restricted collection  $ \overline {\mathcal S} _{2,j}$. 

The point of this restriction is that we can appeal to an inequality similar to 
\eqref{e.t67}.  Namely, 
\begin{equation} \label{e.T67}
\sum _{s'\,:\, (s,s')\in \overline {\mathcal S} _{2,j}}
 \lvert  \operatorname I _{e_s'} \phi _{s'} \rvert
\lesssim    
 \sqrt {\frac {\Scl' \cdot \Prm'} {\Scl \cdot \Prm}}
\chi ^{(2)} _{\overline {R}_s} \,. 
\end{equation}
Note that the term in the square root takes care of the differing $ L ^2 $ 
normalizations of $ \phi _{s'}$ and $ \chi ^{(2)} _{\overline {R}_s} $. 
Indeed, the proof of \eqref{e.t67} is easily modified to give this inequality.

Next, we observe that the analog of \eqref{e.789789} holds for $ \Delta _{s}$. 
Just replace $ s'$ in \eqref{e.789789} with $ s$. 
It is a consequence that we have 
\begin{equation*}
\sum _{s'\,:\, (s,s')\in \overline {\mathcal S} _{2,j}}
\lvert  \ip \Delta _{s}, \operatorname I _{e _{s'}} \phi _{s'}, \rvert 
\lesssim  2 ^{-12j}
(\Prm') ^{-\widetilde \alpha } 
 \sqrt {\frac {\Scl' \cdot \Prm'} {\Scl \cdot \Prm}}\,. 
\end{equation*}
This is enough to finish  this case.

\subsubsection*{The Upper Bound on \eqref{e.2case2}.}
Let us again appeal to the notations $ \overline {R}_s$ and $ \overline {\mathcal S} _{2,j}$ 
as in \eqref{e.OverLine}. 

We have the estimates 
\begin{gather*}
\lvert  \operatorname I _{e _{s'}} \phi _{s}\rvert  
\lesssim \chi ^{(2)} _{R _{s}}\,. 
\end{gather*}
As for the sum over $ \Delta _{s'}$, we have an analog of the estimates 
\eqref{e.789789}.  Namely, 
\begin{align*}
\sum _{s'\,:\, (s,s')\in \overline {\mathcal S} _{2,j}}
\int _{ (\Prm') ^{\epsilon } \overline {R}_s}
\lvert  \Delta _{s',1} \rvert \; dx 
& \lesssim  
 (\Prm') ^{-\widetilde \alpha + \epsilon  } 
\sqrt {\frac {\Scl' \cdot \Prm'} {\Scl \cdot \Prm}} 
\\
\sum _{s'\,:\, (s,s')\in \overline {\mathcal S} _{2,j}}
\lvert  \Delta _{s',1} \rvert 
&\lesssim 
\sqrt {\frac {\Scl' \cdot \Prm'} {\Scl \cdot \Prm}} 
\chi ^{(2)} _{\overline {R}_s} \,, 
\qquad x\not\in (\Prm') ^{\epsilon }\overline {R}_s\,. 
\end{align*}
Note that we again have to be careful to accommodate the different normalizations 
here.  The proof of \eqref{e.789789} can be modified to prove this estimate.

Putting these two estimates together clearly proves that 
\begin{equation*}
 \sum _{s'\,:\, (s,s')\in \mathbf S_{2,j}}
\lvert  \ip \operatorname I _{e_{s'}} \phi  _{s}, \Delta  _{s'},\rvert  
\lesssim 2 ^{-10j} (\Prm') ^{-\widetilde \alpha } 
 \sqrt {\frac {\Scl' \cdot \Prm'} {\Scl \cdot \Prm}} \,, 
\end{equation*}
as is required.

\medskip 

We now turn to the proof of the inequality \eqref{e.caseII-b}, which 
will follow from appropriate upper bounds on the sums below. 
\begin{gather} 
\label{e.1case22}
 \sum _{s\,:\, (s,s')\in \mathbf S_{2,j}}
 \lvert  \ip \Delta_{s}, \operatorname I _{e }\phi _{s'},\rvert  \,,
\\ \label{e.2case22}
 \sum _{s\,:\, (s,s')\in \mathbf S_{2,j}}
\lvert  \ip \operatorname I _{e } \phi  _{s}, \Delta  _{s'},\rvert  \,,
\\ \label{e.3case22}
 \sum _{s\,:\, (s,s')\in \mathbf S_{2,j}}
\lvert  \ip  \Delta_{s}, \Delta  _{s'},\rvert  \,.
\end{gather}
Here, we can regard $  s'$ as a fixed tile, and 
$ e\in \boldsymbol \omega _{s'}\subset \boldsymbol \omega _{s}$. 
In this case, observe that we  have the inequality 
\begin{equation} \label{e.s2js2j}
\sharp \{s\,:\, (s,s')\in \mathbf S_{1,j}\} \lesssim 2 ^{2j}\,. 
\end{equation}
This is so since $ R _{s}$ has larger dimensions in both directions than 
does $ R _{s'}$.

\subsubsection*{The Upper Bound on \eqref{e.1case22}.}
We use the decomposition of $ \Delta _{s}=\Delta _{s,1}+\Delta _{s,2}+\Delta _{s,3}$. 
In the first case, we can estimate 
\begin{align*}
 \sum _{s\,:\, (s,s')\in \mathbf S_{2,j}}
 \lvert  \ip \Delta_{s,1}, \operatorname I _{e }\phi _{s'},\rvert  
 & \lesssim 
 2 ^{2j} \sup  \lvert  \ip \Delta_{s,1}, \operatorname I _{e }\phi _{s'},\rvert  
 \\
 & \lesssim 2 ^{-10j} (\Prm') ^{-\widetilde \alpha } 
 \sqrt {\frac {\Scl \cdot \Prm} {\Scl' \cdot \Prm'} }\,. 
\end{align*}

For the last case, of $ \Delta _{s,3}$, we estimate 
\begin{align*} 
 \sum _{s\,:\, (s,s')\in \mathbf S_{2,j}}
 \lvert  \ip \Delta_{s,3}, \operatorname I _{e }\phi _{s'},\rvert  
 & \lesssim 
 2 ^{2j} \sup  \lvert  \ip \Delta_{s,3}, \operatorname I _{e }\phi _{s'},\rvert  
 \\ 
 & \lesssim  2 ^{2j}\min \Bigl\{ 
\abs{ F_s} \cdot  \sqrt{\Scl \cdot \Prm  \cdot \Scl' \cdot \Prm'}
\,,\, 
\\ & \qquad 
2 ^{-30j}
 \sqrt {\frac {\Scl \cdot \Prm} {\Scl' \cdot \Prm'} } 
 \Bigr\}\,.
\end{align*}
Examining the two terms of the minimum, note that by \eqref{e.Fs2}, 
\begin{align*}
\abs{ F_s} \cdot  \sqrt{\Scl \cdot \Prm  \cdot \Scl' \cdot \Prm'}
& \lesssim (\Prm') ^{-\alpha +\epsilon }  
 \sqrt {\frac {\Scl' \cdot \Prm'} {\Scl \cdot \Prm}}
 \\
 & \lesssim (\Prm') ^{-\alpha +1 +\epsilon }
  \sqrt {\frac 1{\Scl' \cdot \Prm' \cdot \Scl \cdot \Prm}}
  \\
  & \lesssim (\Prm') ^{-\alpha +1 +\epsilon }
   \sqrt {\frac {\Scl \cdot \Prm} {\Scl' \cdot \Prm'}}\,. 
\end{align*}
Here it is essential that we have the estimate \eqref{e.Fs1} as stated, 
with $ \lvert  F_s\rvert \lesssim (\Prm') ^{-\alpha +\epsilon } \lvert  R_s\rvert  $. 
This is an estimate of the desired form, but without any decay in the parameter $ j$. 
The second term in the minimum does have the decay in $ j$, but does not have the 
decay in $ \Prm'$.  Taking the geometric mean of these two terms finishes the proof, 
provided $ (\alpha -\epsilon)/2> \widetilde \alpha  $, which we can assume by taking 
$ \alpha $ sufficiently close to one.

\subsubsection*{The Upper Bound on \eqref{e.2case22}.}
Using the inequality $ \lvert  \operatorname I _{e} \phi _{s}\rvert 
\lesssim  \chi ^{(2)} _{R _{s}}$, and the inequalities \eqref{e.789789} and \eqref{e.s2js2j}, 
it is easy to see that 
\begin{equation*}
 \sum _{s\,:\, (s,s')\in \mathbf S_{2,j}}
\lvert  \ip \operatorname I _{e } \phi  _{s}, \Delta  _{s'},\rvert  
\lesssim  2 ^{-12j}
(\Prm') ^{-\widetilde \alpha }    \sqrt {\frac {\Scl \cdot \Prm} {\Scl' \cdot \Prm'}}\,. 
\end{equation*}
This is the required estimate. 

\subsection*{The Proof  of (\ref{e.Suj}) for $ \mathcal S _{3,j}$, $ j\ge 1$.} 
In this case, we have that the length of the rectangles $R_{s'}$ are greater
than those of the rectangles $R_s$, as 
depicted in Figure~\ref{f.cases}.  We  show that 
\begin{align}  \label{e.caseIII-a} 
\operatorname{FL}(\mathcal S _{3,j} )& \lesssim (\Prm') ^{\epsilon }
\sqrt {\frac {\Scl' \cdot \Prm'} {\Scl \cdot \Prm}} 
\\ \label{e.caseIII-b}  
\operatorname{FL}'(\mathcal S _{3,j} )& \lesssim2^{-10j}
(\Prm')^{-\widetilde \alpha   }  \sqrt {\frac {\Scl \cdot \Prm} {\Scl' \cdot \Prm'} }\,.  
\end{align}
In particular, we do not claim any decay in the term $ \operatorname {FL} (\mathcal S
_{3,j})$, in fact permitting a small increase in the parameter $ \Prm'$.  Recall that 
$ 0<\epsilon <1$ is a small quantity.  See \eqref{e.tilde-a}. 
But due to the form of the estimate in Proposition~\ref{p.Schur}, with the 
decay  in $ 2 ^{j} $ and $ \Prm'$ in the estimate \eqref{e.caseIII-b}, these 
two estimates still prove \eqref{e.Suj} for $ \mathcal S _{3,j}$.

For the proof of \eqref{e.caseIII-a}, we analyze the sums 
\begin{gather} 
\label{e.1case3}
 \sum _{s'\,:\, (s,s')\in \mathbf S_{3,j}}
 \lvert  \ip \Delta_{s}, \operatorname I _{e_{s'}}\phi _{s'},\rvert  \,,
\\ \label{e.2case3}
 \sum _{s'\,:\, (s,s')\in \mathbf S_{3,j}}
\lvert  \ip \operatorname I _{e_{s'}} \phi  _{s}, \Delta  _{s'},\rvert  \,,
\\ \label{e.3case3}
 \sum _{s'\,:\, (s,s')\in \mathbf S_{3,j}}
\lvert  \ip  \Delta_{s}, \Delta  _{s'},\rvert  \,.
\end{gather}
Here, $ e _{s'}\in \boldsymbol \omega _{s'}\subset \boldsymbol \omega _{s}$. 

\subsubsection*{The Upper Bound on \eqref{e.1case3}.}
Regard $ s$ as fixed.  We employ a variant of the notation established in 
\eqref{e.OverLine}.  Let $ \widetilde R_s$ be a rectangle with 
in the same coordinates axes as $ R_s$. In the direction $ e_s$, let it have 
length $ 1/\Scl'$, that is the (longer) length of the rectangles $ R _{s'}$, 
and let it have the same width of $ R _{s}$.   Further assume that 
$ 2 ^{j} R _s \cap \widetilde R_s= \emptyset $ but $ 2 ^{j+4} R_s 
\cap \widetilde R_s\neq  \emptyset $. (There is an obvious change in these 
requirements for $ j=1$.)
Then, define 
\begin{equation*}
\widetilde {\mathcal S} _{3,j}
= \{ (s,s') \in \mathcal S _{3,j} \mid R _{s'} \subset \widetilde R_s\}\,.
\end{equation*}
With $ \lesssim 2 ^{2j}$ choices of $ \widetilde R_s$, we can exhaust the 
collection $ \mathcal S _{3,j}$.  Thus, we prove a slightly stronger 
estimate in the parameter $ 2 ^j$  for the collection 
$ \widetilde {\mathcal S} _{3,j}$. 

The main point here is that we have an analog of the estimate \eqref{e.t67}: 
\begin{equation*}
\sum _{s' \,:\, (s,s')\in \widetilde {\mathcal S} _{3,j}} 
\lvert  \operatorname I _{e} \phi _{s'}\rvert
\lesssim 
\sqrt {\frac {\Prm'} {\Prm}} \chi ^{(2)} _{\widetilde R_s}\,. 
\end{equation*}
The term in the square root takes 
into account the differing $ L^2$ normalizations 
between the $ \phi _{s'} $ and $ \chi ^{(2)} _{\widetilde R_s}$. 
The proof of \eqref{e.t67} can be modified to prove the estimate above. 

We also have the analogs of the estimate \eqref{e.789789}.  Putting these two together 
proves that 
\begin{align*}
\sum _{s'\,:\, (s,s')\in \mathbf S_{3,j}}
 \lvert  \ip \Delta_{s}, \operatorname I _{e_{s'}}\phi _{s'},\rvert  
 & \lesssim
    2 ^{-10j}(\Prm') ^{-\widetilde \alpha } 
    \sqrt {\frac {\lvert  R_s\rvert }  {\widetilde R_s}}
    \sqrt {\frac {\Prm'} {\Prm}}  
 \\
 & \lesssim 2 ^{-10j}(\Prm') ^{-\widetilde \alpha } 
 \sqrt {\frac {\Scl' \cdot \Prm'} {\Scl \cdot \Prm}}\,. 
\end{align*}  
That is, we get the estimate we want with decay in $ \Prm'$, we do not claim in general.

\subsubsection*{The Upper Bound on \eqref{e.2case3}.}
We use the inequality 
\begin{equation*}
\lvert  \operatorname I _{e_{s'}} \phi _{s}\rvert \lesssim 
\chi ^{(2)} _{R _{s}} \,. 
\end{equation*}
And we use the decomposition $ \Delta _{s'}=\Delta _{s',1}+\Delta _{s',2}+\Delta _{s',3}$. 

For the case of $ \Delta _{s',1}$, we have $ \boldsymbol \omega _{s'}\subset 
\boldsymbol \omega _{s}$.  And the supports of the functions 
$ \Delta _{s'}$ are well localized with respect to the vector field.  See \eqref{e.tech1}. 
Thus, in particular we have 
\begin{equation*}
\sum _{s'\,:\, (s,s')\in \mathbf S_{3,j}} 
 \lvert  \Delta _{s'}\rvert \lesssim  (\Prm') ^{\epsilon }
 \sqrt { {\Scl' \cdot \Prm'} } \,. 
\end{equation*}

Hence, we have 
\begin{align*}
\sum _{s'\,:\, (s,s')\in \mathbf S_{3,j}}
 \lvert  \ip \chi ^{(2)} _{R _{s}} , \Delta _{s'},\rvert
 & \lesssim 
 (\Prm') ^{-\epsilon  } 
 \sqrt {\frac {\Scl' \cdot \Prm'} {\Scl \cdot \Prm}}
\end{align*}
which is the desired estimate.

\begin{remark}\label{r....} It is the analysis of the sum 
\begin{equation*}
\sum _{s'\,:\, (s,s')\in \mathbf S_{3,j}}
 \lvert  \ip \operatorname I _{e} \phi _{s}, \Delta _{s',3},\rvert
\end{equation*}
that prevents us from obtaining decay in the parameter $ \Prm'$ 
for certain choices of parameters $ \Scl\,,\Prm\,, \Scl'$ and $ \Prm'$. 
This is why we have formulated \eqref{e.caseIII-a} the way we have. 
\end{remark}

\bigskip

For the proof of \eqref{e.caseIII-b}, we analyze the sums 
\begin{gather} 
\label{e.1case33}
 \sum _{s\,:\, (s,s')\in \mathbf S_{3,j}}
 \lvert  \ip \Delta_{s}, \operatorname I _{e}\phi _{s'},\rvert  \,,
\\ \label{e.2case33}
 \sum _{s\,:\, (s,s')\in \mathbf S_{3,j}}
\lvert  \ip \operatorname I _{e} \phi  _{s}, \Delta  _{s'},\rvert  \,,
\\ \label{e.3case33}
 \sum _{s\,:\, (s,s')\in \mathbf S_{3,j}}
\lvert  \ip  \Delta_{s}, \Delta  _{s'},\rvert  \,.
\end{gather}
Here $ e _{s'}\in \boldsymbol  \omega _{s'}\subset \boldsymbol  \omega _{s} $, 
and one can regard the interval $ \boldsymbol  \omega _{s'}$ as fixed. 
It is essential that we obtain the decay in $ 2 ^{j}$ and $ \Prm'$ in these 
cases. 

Indeed, these cases are easier, as the sum is over $ s$.  For fixed $ s'$, there 
is a unique choice of interval $ \boldsymbol  \omega _{s}\supset \boldsymbol  \omega _{s'}$. 
And the rectangles $ R_s $ are shorter than $ R _{s'}$, but wider.  Hence, 
\begin{equation}\label{e.4<4}
\sharp \{ s\,:\, (s,s')\in \mathbf S_{3,j}\} \lesssim 2 ^{2j} \frac {\Scl} {\Scl'}\,. 
\end{equation}

\subsubsection*{The Upper Bound on \eqref{e.1case33}.}
We use the decomposition $ \Delta _{s}=\Delta _{s,1}+\Delta _{s,2}+\Delta _{s,3}$, 
and the inequality $ \lvert  \operatorname I _{e_{s'}}\phi _{s'},\rvert \lesssim 
\chi ^{(2)} _{R _{s'}}$.  

For the sum associated with $ \Delta _{s,1}$, we have 
\begin{align*}
\sum _{s\,:\, (s,s')\in \mathbf S_{3,j}}
 \lvert  \ip \Delta_{s,1}, \operatorname I _{e_{s'}}\phi _{s'},\rvert  
 & \lesssim  (\Prm') ^{-\widetilde \alpha }
 \sum _{s\,:\, (s,s')\in \mathbf S_{3,j}}
 \ip \chi ^{(2)} _{R _{s}}, \chi ^{(2)} _{R _{s'}},
 \\
 & \lesssim 2 ^{-12j}  (\Prm') ^{-\widetilde \alpha } 
  \sqrt {\frac {\Scl' \cdot \Prm} {\Scl \cdot \Prm'}}
 \\ & \qquad \times 
 \sharp \{ s\,:\, (s,s')\in \mathbf S_{3,j}\}
 \\
 & \lesssim 
 2 ^{-10j} (\Prm') ^{-\widetilde \alpha } 
 \sqrt {\frac {\Scl' \cdot \Prm'} {\Scl \cdot \Prm}}\,. 
 \end{align*}
This is the required estimate.

For the sum associated with $ \Delta _{s,3}$, the critical properties 
are those of the corresponding sets $ F _{s}$, described in \eqref{e.Fs1} 
and \eqref{e.Fs2}.  Note that the sets 
\begin{equation*}
\sum _{s\,:\, (s,s')\in \mathbf S_{3,j}} \mathbf 1 _{F_s} \lesssim (\Prm') ^{2 \epsilon }\,.
\end{equation*}
 On the other hand, 
\begin{align*}
\sum _{s\,:\, (s,s')\in \mathbf S_{3,j}} \lvert  F_s\rvert
&
\lesssim 2 ^{2j} \frac {\Scl} {\Scl'}
\sup _{s\,:\, (s,s')\in \mathbf S_{3,j}} \lvert  F_s\rvert
\\
& \lesssim 2 ^{2j} (\Prm') ^{-\alpha +\epsilon } 
\frac 1 {\Scl' \cdot \Prm}\,. 
\end{align*}
Here, we have used the estimate \eqref{e.4<4}.  

This permits us to estimate 
\begin{align*}
\sum _{s\,:\, (s,s')\in \mathbf S_{3,j}}
 \lvert  \ip \Delta_{s,3}, \operatorname I _{e_{s'}}\phi _{s'},\rvert  
 & \lesssim   2 ^{-10j}(\Prm') ^{- \alpha +3\epsilon  }
 \sqrt {\frac {\Scl \cdot \Prm'} {\Scl' \cdot \Prm}}
\end{align*}
Note that the parity between the `primes' is broken in this estimate. 
By inspection, one sees that this last term is at most 
\begin{equation*}
\lesssim 2 ^{-10j} (\Prm') ^{-\widetilde \alpha } 
 \sqrt {\frac {\Scl' \cdot \Prm'} {\Scl \cdot \Prm}}\,.
\end{equation*}
Indeed, the claimed inequality amounts to 
\begin{equation*}
(\Prm') ^{-\alpha +3\epsilon } \Scl \lesssim (\Prm') ^{-\widetilde \alpha }\Scl' \,. 
\end{equation*}
We have to permit $ \Scl'$ to be as small as $ 1$, whereas $ \Scl$ can be 
as big as $ \Prm$.  But $ \alpha >1$, and  $ \Prm<\Prm'$, 
so the inequality above is trivially true.  
This completes the analysis of \eqref{e.1case33}.

\subsubsection*{The Upper Bound on \eqref{e.2case33}.}
We only need to use the inequality $ \lvert  \operatorname I _{e _{s'}} \phi _{s}\rvert 
\lesssim \chi ^{(2)} _{R _{s}}$, and the inequalities \eqref{e.789789}. 
It follows that 
\begin{align*}
\sum _{s\,:\, (s,s')\in \mathbf S_{3,j}}
\lvert  \ip \operatorname I _{e_{s'}} \phi  _{s}, \Delta  _{s'},\rvert  
& \lesssim 
\sum _{s\,:\, (s,s')\in \mathbf S_{3,j}} 
\ip \chi ^{(2)} _{R _{s}}, \lvert  \Delta _{s'}\rvert ,
\\
& \lesssim 2 ^{-10j}  (\Prm') ^{-\widetilde \alpha } 
 \sqrt {\frac {\Scl' \cdot \Prm'} {\Scl \cdot \Prm}} \,. 
\end{align*}

\section*{The Fourier Localization Estimate}

The precise form of the inequalities quantifying the Fourier localization
effect follows.

\begin{fl}\label{l.fl}
  Let $1<\alpha<2$,  $\ensuremath{\epsilon}<(\ensuremath{\alpha}-1)/20$, 
  and  $v$ be a vector field with 
$\norm  v.C^\alpha.\le1$.
 Let  $s$ be a tile with 
\begin{equation*}
1<\scl s=\Scl\le\prm s=\Prm<\tfrac1{16}2^k.
\end{equation*}
Let 
\begin{equation*}
f_s=\operatorname{Mod}_{-c(\omega_s)}\phi_s
\end{equation*}
Let $\zeta$ be a smooth function on \ensuremath{\mathbb R},  with $\ind {(-2,2)}
\le\widehat\zeta\le\ind {(-3,3)}$ and set 
$\zeta_{2^k}(y)=2^{k}\zeta(y2^{k})$.  We have this inequality
valid for all unit vectors $e$ with $\abs{e-e_s}\le\abs{{\boldsymbol \omega}_s}$. 
\begin{align} \notag 
\ABs{ f_s(x)-\int_\ensuremath{\mathbb R} & f_s(x-ye)\zeta_{2^k}(y)\; dy}
\\ \label{e.tech1}
\lesssim & (\Scl 2^{(\alpha -1) k})^{-1+\epsilon} 
	 \chi_{R_s}^{(2)}(x) \ind {\widetilde{{\boldsymbol\omega}_s}}(v(x))
\\
\label{e.tech2} &\quad+(2^{k}\Scl)^{-10} \chi^{(2)}_{R_s} (x)
\\
\label{e.tech3}{}&{}\quad+\abs{R_s}^{-1/2}\ind {F_s}(x)\,,
\end{align}
where $\widetilde{{\boldsymbol\omega}_s}$ is a sub arc of the 
unit circle, with $\widetilde{{\boldsymbol\omega}_s}=\lambda{\boldsymbol\omega}_s$,
and $1<\lambda<2^{\ensuremath{\epsilon}k}$.
Moreover,  the sets $F_s\subset\mathbb R^2$ satisfy 
\begin{gather} 
\label{e.Fs1} 
\abs{F_s}\lesssim{}2^{-(\alpha -\epsilon )k}(1+\Scl^{-1})^{\alpha-1} \abs {R_s}, 
\\ \label{e.Fs2} 
F_s\subset 2^{\ensuremath{\epsilon} k}R_s\cap v^{-1}(\widetilde{{\boldsymbol\omega}_s})
\cap
\Bigl\{ \ABs{\frac{\partial (v\!\cdot \!e_{s\perp})}{\partial e}
             }>2^{(1-\epsilon)k} \frac \Scl\Prm \Bigr\}.
\end{gather}
\end{fl}

The appearance of the set $ F_s$ is explained in part because
the only way for the function $ \phi _s$ to oscillate quickly along the 
direction $ e _{s}$ is that the vector field moves back and forth across the 
interval $ \boldsymbol \omega _s$ very quickly.  This sort of behavior, as it 
turns out, is the only obstacle to the frequency localization described in this Lemma. 

Note  that the degree of localization improves in $k$.  In \eqref{e.tech1}, it is 
important that we have the localization in terms of the directions of the vector 
field. The terms in \eqref{e.tech2} will be very small in all the instances that 
we apply this lemma. The third estimate \eqref{e.tech3} is the most complicated, as 
it depends upon the exceptional set.  The form of the exceptional set in \eqref{e.Fs2}
 is not so important, but the size estimate, as a function of $ \alpha >1$, in \eqref{e.Fs1}
 is.  

\smallskip

\begin{proof}
We collect  some elementary  estimates.  Throughout this argument, $\vec y:=y\, e\in\mathbb R^2$. 
\begin{equation} \label{e.st1} 
\int_{\abs y>t2^{-k}} \abs {y2^k}\abs{\zeta_{2^k}(y)}\; dy\lesssim{}t^{-N}, \qquad t>1.
\end{equation}
This estimate holds for all $N>1$.  Likewise, 
\begin{equation} \label{e.st2}  
\int_{\abs u>t\Scl^{-1}}\abs{u\Scl}\abs{\Scl\, \psi(\Scl \, u)}\; du\lesssim{}t^{-N}, \qquad t>1.
\end{equation}
More significantly, we have for all $  x\in\mathbb R^2$,  
\begin{equation}\label{e.ft1} 
\int_{\mathbb R^2} \operatorname e^{i\xi_0 y}\varphi_{R_s}^{(2)}(x-\vec y)\zeta_{2^k}(y)\; dy=\varphi_{R_s}^{(2)}(x) 
\qquad   -2^k<\xi_0< 2^{k},
\end{equation}
where $\varphi^{(2)}_{R_s}={\operatorname T}_{c(R_s)}{\operatorname D}^2_{R_s}
\varphi$.
This is seen by taking the Fourier transform. Likewise, by \eqref{e.zc-Fourier}, 
for vectors $v_0$ of unit length,
\begin{equation*}
\int_{\mathbb R} \operatorname  e^{-2\pi iu\lambda_0}\varphi_{R_s}^{(2)}(x-u v_0) \Scl\, \psi(\Scl \, u)\; du\not=0
\end{equation*}
implies that 
\begin{equation}\label{e.ft2} 
\Scl\le\lambda_0+\xi \cdot v_0\le{}\tfrac98\Scl,\quad
\text{for some $\xi\in\text{supp}(\widehat{\varphi_{R_s}^{(2)}})$.}
\end{equation}

At this point, it is useful to recall that we have specified the 
frequency support of $\varphi$ to be in a small ball of radius 
$\kappa$ in \eqref{e.zvfs}.  This has the implication that 
\begin{equation}\label{e.supp}
\abs{\xi\cdot e_s}\le{}\kappa\Scl,\quad \abs{\xi\cdot e_{s\perp}}\le\kappa\Prm \qquad 
\xi\in\text{supp}(\widehat{\varphi_{R_s}^{(2)}})
\end{equation} 

\smallskip 
We begin the main line of the argument, which comes in two stages.  In the first stage, we address 
the issue of the derivative below exceeding a `large' threshold.  
\begin{equation*}
e\cdot\mathbf D v(x) \cdot e_{s\perp}=\frac{\partial v \cdot e_{s\perp}} {\partial e} 
\end{equation*}
We shall find that this happens on a relatively small set, the set $F_s$ of the Lemma. 
Notice that due to the eccentricity of the rectangle $R_s$, we can only hope to have some control 
over the derivative in the long direction of the rectangle, and $e$ essentially points in the 
long direction.   We are interested in derivative in  the direction $e_{s\perp}$   
as that is the direction that $v$ must move to cross the interval ${\boldsymbol \omega} _{s2}$. 
A substantial portion of the 
technicalities below are forced upon us due to the few choices of
scales $1\le{}\Scl\le{} 2^{\varepsilon{}k}$, for 
some small positive \ensuremath{\varepsilon}.\footnote{The scales of approximate length one 
are where the smooth character of the vector field helps the least. 
The argument becomes especially easy in the case 
that $\sqrt{\Prm}\le{}\Scl$, as in the case, $\abs{{\boldsymbol\omega}_s} {}\gtrsim{}\Scl^{-1}$.}

Let $0<\varepsilon_1,\varepsilon_2<\epsilon $ to be specified in the argument below. 
In particular, we take 
\begin{equation*}
0<\varepsilon_1\le{}\min\bigl( \tfrac 1{1200},\kappa\tfrac{\alpha-1}{20}\bigr),\qquad 0<\varepsilon_2<\tfrac1{18}(\alpha-1).
\end{equation*}
We have the estimate 
\begin{equation*}
\abs{f_s(x)}+\ABs{\int_\ensuremath{\mathbb R}f_s(x-\vec y)\zeta_{2^k}(y)\; dy}
\lesssim{}2^{-10k} \chi_{R_s}^{(2)}(x), \qquad x\not\in 2^{\varepsilon_1 k}R_s.
\end{equation*}
This follows from \eqref{e.st1} and the fact that the direction $e$ differs from $e_s$ by an no more than the measure of the 
angle of uncertainty for $R_s$.  This  is as claimed in \eqref{e.tech2}.
We need only consider $x\in  2^{\varepsilon_1 k}R_s$.

Let us define the sets $F_s$, as in \eqref{e.tech3}.   
Define 
\begin{equation*}
\lambda_s:=\begin{cases} 
	2^{\varepsilon_1 k} & \frac\Scl\Prm<2^{-2\varepsilon_1 k} 
	\\
	8 & \text{otherwise}
	\end{cases}
\end{equation*}
Let $\lambda \omega_s$ denote the interval on the unit circle  
with length $\lambda \abs{{\boldsymbol \omega _s}}$,
and the
same center as ${\boldsymbol \omega}_s$.\footnote{We have defined
$\lambda_s$ this way so that $\lambda_s {\boldsymbol \omega}_s$ makes sense.} 
This is our $ \widetilde {\boldsymbol \omega} $ of the Lemma; the set 
$ F_s$ of the Lemma is 
\begin{equation}\label{e.FsDefined}
F_s:=2^{\varepsilon_1 k}R_s\cap v^{-1}(\lambda_s {\boldsymbol\omega}_s)\cap 
\Bigl\{ \ABs{\frac{\partial (v\!\cdot \!e_{s\perp})}{\partial e}
             }>2^{(1-\varepsilon_2)k} \frac \Scl\Prm \Bigr\}.
\end{equation}
And so to satisfy \eqref{e.Fs2}, we should take $\varepsilon_1<1/1200$.

Let us argue that the measure of $F_s$ satisfies \eqref{e.Fs1}. 
Fix a line $\ell$ in the direction of $e$.  We should see that 
the one dimensional measure 
\begin{equation} \label{e.FcapLine}
\abs{ \ell\cap F_s }
\lesssim2^{-k(\alpha - \epsilon )}(1+\Scl^{-1})^{\alpha-1}\Scl^{-1}.
\end{equation}
For we can then integrate over the choices of $\ell$ to get the estimate in  \eqref{e.Fs1}.

The set $ \ell \cap F_s$ is viewed as a subset of $ \mathbb R $. 
It consists of open intervals $A_n=(a_n,b_n)$, $1\le n\le N$.  List them so that
$b_n<a_{n+1}$ for all $n$.  Partition the integers 
$\{1,2,\ldots,N\}$ into sets of consecutive integers $I_\sigma=[m_\sigma,n_\sigma]\cap\mathbb N$ 
so that for all points  $x$ between the left-hand endpoint of 
$A_{m_\sigma}$ and the right-hand endpoint of $A_{n_\sigma}$, 
 the derivative  $\partial (v\cdot e_{s\perp})/\partial e$ has the same sign. 
  Take the intervals  of integers $I_\sigma$ to be maximal with respect to this property. 

For $x\in F_s$, the partial derivative of $v$, in the direction that is transverse 
to $\lambda_s {\boldsymbol \omega}_s$,
 is large with respect to the length of $\lambda_s {\boldsymbol \omega}_s$. 
 Hence, $v$ must pass across $\lambda_s {\boldsymbol \omega}_s$ in a small amount of time: 
\begin{equation*}
\sum_{m\in I_\sigma}\abs{A_m}\lesssim{}2^{-(1-\varepsilon_1-\varepsilon_2)k}\qquad \text{for all $\sigma$.}
\end{equation*}

Now consider intervals $A_{n_\sigma}$ and $A_{1+n_\sigma}=A_{m_{\sigma+1}}$. 
By definition, there must be a change of sign 
of $\partial v(x)\cdot e_{s\perp}/\partial e$ between these two intervals.  
And so there is a change 
in this derivative that is at least as big as $2^{(1-\varepsilon_2)k} \frac\Scl\Prm$. 
The partial derivative is also H\"older   
continuous of index $\alpha-1$, which implies that $A_{n_\sigma}$ and $A_{m_{\sigma+1}}$ 
cannot be very close, specifically 
\begin{equation*}
\text{dist}(A_{n_\sigma},A_{m_{\sigma+1}})
\ge{}\bigl(2^{(1-\varepsilon_2)k} \frac \Scl\Prm \bigr)^{\alpha-1}
\end{equation*}
As all of the intervals $A_{n}$ lie in an interval of length $2^{\varepsilon_1k}\Scl^{-1}$, 
it follows that there can be at most 
\begin{equation*}
1\le{}\sigma{}
\lesssim2^{\varepsilon_1 k}\Scl^{-1}
\bigl(2^{(1-\varepsilon_2)k} \frac \Scl\Prm \bigr)^{-\alpha+1}
\end{equation*}
intervals $I_\sigma$. Consequently, 
\begin{align*}
\abs{\ell\cap F_s  } 
&\lesssim2^{-(1-2\varepsilon_1-\varepsilon_2+(1-\varepsilon_2)(\alpha-1))k}
\Scl^{-1} \bigl( \frac \Prm\Scl\bigr)^{\alpha-1}
\\&\lesssim 
2^{-(\alpha -2\varepsilon_1-2\varepsilon_2)k}\Scl^{-\alpha+1}\Scl^{-1} 
\end{align*}
We have already required  $0<\varepsilon_1<\frac \epsilon {600}$ and taking  
$0<\varepsilon_2<\frac \epsilon {600} $ will  achieve the estimate  \eqref{e.FcapLine}. 
This completes the proof of \eqref{e.Fs1}.

\medskip 

The second stage of the proof begins, in which we make a detailed estimate of the
difference in question, seeking 
to take full advantage of the Fourier properties \eqref{e.st1}---\eqref{e.ft2}, 
as well as the derivative information encoded into the 
set $F_s$.   

We   consider the    difference in \eqref{e.tech1} in the case of 
$x\in 2^{\varepsilon_1 k}R_s-{} v^{-1}(\lambda_s {\boldsymbol\omega}_s)$.  
In particular, $x$ is not in the support of $f_s$, and due to the smoothness of the 
vector field, the distance of $x$ to the support of $f_s$ is at least 
\begin{equation*}
\gtrsim2^{\varepsilon_1 k}\frac\Scl\Prm 
\end{equation*}
so that by \eqref{e.st1}, we can estimate 
\begin{align*}
\ABs{f_s(x)-\inr f_s(x-\vec y)\zeta_{2^k}(y)\; dy}{}
\lesssim{}& (2^{\varepsilon_1 k}\Scl)^{-N}\abs{R_s}^{-1/2}
\end{align*}
which is the  estimate \eqref{e.tech2}. 

\smallskip 

We turn to the proof of \eqref{e.tech1}. 
For $x\in 2^{\varepsilon_1 k}R_s \cap v^{-1}  (\lambda_s {\boldsymbol\omega}_s)$, 
we always have  the bound 
\begin{equation*}
\ABs{f_s(x)-\inr f_s(x-\vec y)\zeta_{2^k}(y)\; dy}\lesssim{}2^{10\varepsilon_1 k/\kappa} 
\chi^{(2)}_{R_s}(x)^{10}\ind {\lambda _s{\boldsymbol\omega}_s}(x).
\end{equation*}
It is essential that we have $\abs{e-e_s}\le{}\abs{{\boldsymbol \omega}_s}$ for this to be true, 
and \ensuremath{\kappa} enters in on the right hand side 
through the  definition \eqref{e.zq}.  

We establish the bound 
\begin{equation}\label{e.100k}
\begin{split}
\ABs{f_s(x)-\int _{\mathbb R }f_s(x- \vec y) \zeta_{2^k}(y)\; dy}
&\lesssim{}(\Scl 2^{(\alpha-1) k})^{-1}\abs{R_s}^{-1/2}, 
\\
x\in 2^{\varepsilon_1 k}R_s\cap v^{-1} & (\lambda_s {\boldsymbol\omega}_s)\cap F_s^c.
\end{split}\end{equation}
We take the geometric mean of these two estimates, and specify that 
$0<\varepsilon_1<\kappa\frac{\alpha-1}{20}$ to conclude 
 \eqref{e.tech1}.

\smallskip

It remains to consider $x\in 2^{\varepsilon_1 k}R_s\cap v^{-1}(\lambda_s
{\boldsymbol\omega}_s)\cap F_s^c$, and now some detailed calculations are needed. 
To ease the burden of notation, we set 
\begin{equation*}
\operatorname{exp}(x):=e^{-2\pi i u c(\omega_s)\cdot v(x)},\qquad 
\Phi(x,x')=\varphi_{R_{s}}^{(2)}(x-uv(x')),
\end{equation*}
 with the dependency on $u$ being suppressed, and define
\begin{equation*}
w(du,d\vec y):=\Scl\,\psi(\Scl \,  u)\zeta_{2^k}(y)\;du\,d \vec y.
\end{equation*}
In this notation, note that 
\begin{align*}
f_s & = \operatorname {Mod} _{-c (\omega _s)} \phi _s 
\\
& = \int _{\mathbb R } \operatorname e ^{ c (\omega _s) (x-u v (x)- c (\omega _s)) x}
 \varphi ^{(2)} _{R_s} (x-u v (x)) \, \mathsf {scl} \psi (\mathsf {scl} u) \; du
 \\
 & = \int _{\mathbb R } \operatorname {exp} (x) \Phi (x,x,) \mathsf {scl} \psi (\mathsf {scl}
 u) \; du
 \\
 & = \int _{\mathbb R ^2  } \int _{\mathbb R }  \operatorname {exp} (x) \Phi (x,x,)  
  w(du,d\vec y)\,, 
\end{align*}
since $ \zeta $ has integral on $ \mathbb R ^2 $.   In addition, we have 
\begin{align*}
\int _{\mathbb R ^2 } f _{s} (x-\vec y e ) \zeta _{2 ^{k}} (\vec y)\; d \vec y 
& = 
\int _{\mathbb R ^2 } \int _{\mathbb R } 
\operatorname e ^{ c (\omega _s) (x-u v (x-\vec y)- c (\omega _s)) x}
\\& \qquad \times 
 \varphi ^{(2)} _{R_s} (x-u v (x-\vec y)) \, \mathsf {scl} \psi (\mathsf {scl} u) \; du \, d\vec y
 \\
 & = 
 \int _{\mathbb R ^2 } \int _{\mathbb R }  
 \operatorname{exp}(x-\vec y)\Phi(x-
\vec y,x-\vec y) w(du,d\vec y) \,. 
\end{align*}
We are to estimate the difference between these two expressions, which is the 
 difference  of 
\begin{gather*}
\operatorname{Diff}_1(x):=\inrr \inr \operatorname{exp}(x)\Phi(x,x)-
\operatorname{exp}(x-\vec y)\Phi(x-\vec y,x) w(du,d\vec y)
\\
\operatorname{Diff}_2(x):=\inrr \inr \operatorname{exp}(x-\vec y)\{\Phi(x-
\vec y,x-\vec y)-\Phi(x-\vec y,x)\} w(du,d\vec y)
\end{gather*}
The analysis of both terms is quite similar.  We begin with the first term. 

\medskip

Note that by \eqref{e.ft1}, we have 
\begin{equation*}
\operatorname{Diff}_1(x)=\inr \inr \{\operatorname{exp}(x)-\operatorname{exp}(x-\vec y)\}\Phi(x-\vec y,x) w(du,d\vec y).
\end{equation*}  
We make a first order approximation to the difference above. 
Observe that 
\begin{align} \nonumber 
\operatorname{exp}(x)-\operatorname{exp}(x-\vec y)&= \operatorname{exp}(x)\{1-\operatorname{exp}(x-\vec y)
	\overline{\operatorname{exp}(x)}\}
\\ \label{e.e(x)}
&= \operatorname{exp}(x)\{1-e^{-2\pi iu[c(\omega_s)\cdot {\mathbf D} v(x) \cdot e ] \vec y}\}
\\ \nonumber & \qquad +O(\abs u\Prm \abs{\vec y}^\alpha).
\end{align}
In the Big--Oh term, $\abs u$ is typically of the order $\Scl^{-1}$, and 
$\abs {\vec y}$ is of the order $2^{-k}$.  Hence, direct integration leads to the 
estimate of this term by 
\begin{align*}
\inr \inr  \abs u\Prm \abs{y}^\alpha\abs{\Phi(x-\vec y,x)} \cdot & \abs{w(du,d\vec y)} 
\\
\lesssim & \abs{R_s}^{-1/2}\frac \Prm \Scl 2^{-\alpha{}k}
\\
\lesssim & \abs{R_s}^{-1/2}(\Scl 2^{(\alpha-1)k})^{-1}.
\end{align*}
This is  \eqref{e.100k}. 

The term left to estimate is 
\begin{align*}
\operatorname{Diff}_1'(x):=
\inr \inr \operatorname{exp}(x)(1-&e^{-2\pi iu[c(\omega_s)\cdot {\mathbf D} v(x) \cdot e ] \vec y})
\\&\Phi(x-\vec y,x) w(du,d\vec y)\,.
\end{align*}
Observe that by \eqref{e.ft1}, the integral in $y$ is zero if 
\begin{equation*}
\abs{u[c(\omega_s)\cdot {\mathbf D} v(x) \cdot e ]}\le{} 2^k. 
\end{equation*}
Here we recall that $c(\omega_s)=\frac32\Prm\, e_{s\perp}$. By the definition of 
$F_s$,  the partial derivative is small, namely 
\begin{equation*}
\abs{e_{s\perp}\cdot {\mathbf D} v(x) \cdot e}\lesssim{}2^{(1-\varepsilon_1)k}\frac \Scl\Prm .
\end{equation*}
Hence, the integral in $y$ in $\operatorname{Diff}_1'(x)$ can be non--zero only for 
\begin{equation*}
\Scl \abs{u}\gtrsim2^{\varepsilon_1 k}.
\end{equation*}
By \eqref{e.st2}, it follows that in this case we have the estimate 
\begin{equation*}
\abs{ \operatorname{Diff}_1'(x)}\lesssim{}2^{-2k}\abs{R_s}^{-1/2}
\end{equation*}
This estimate holds for $x\in 2^{\varepsilon_1 k}R_s\cap v^{-1}
(\lambda_s {\boldsymbol\omega}_s)\cap F_s^c$  and this completes the 
proof of the upper bound \eqref{e.100k} for the first difference.

\smallskip 
We consider the second difference  $\operatorname{Diff}_2$. The term $v(x-\vec y)$ occurs twice
in this term,  in $\operatorname{exp}(x-\vec y)$, and in $\Phi(x-\vec y,x-\vec y)$.  
We will use the approximation \eqref{e.e(x)}, and similarly, 
\begin{align*}
\Phi(x-\vec y,x-&\vec y)-\Phi(x-\vec y,x)
\\
&=\varphi_{R_s}^{(2)}(x-\vec y-uv(x-\vec y))-\varphi_{R_s}^{(2)}(x-\vec y-uv(x))
\\
&= \varphi_{R_s}^{(2)}(x-\vec y-uv(x)-u{\mathbf D} v(x)\vec y)
\\&\qquad-\varphi_{R_s}^{(2)}(x-\vec y-uv(x))
+O(\Prm\,\abs u\abs{y}^\alpha)
\\
&=\Delta\Phi(x,\vec y)+O(\Prm\,\abs u\abs{\vec y}^\alpha)
\end{align*}
The Big--Oh term gives us, upon integration in $u$ and $\vec y$,  a term that is no more than 
\begin{equation*}\lesssim \abs{R_s}^{-1/2}\frac \Prm\Scl 2^{-\alpha k}
\lesssim\abs{R_s}^{-1/2}(\Scl 2^{(\alpha-1)k})^{-1}.
\end{equation*}
This is as required by \eqref{e.100k}. 

We are left with estimating 
\begin{equation*}
\operatorname{Diff}_2'(x):=\inr\inr 
e^{-2\pi iuc(\omega_s)\cdot (v(x)- {\mathbf D} v(x)\cdot \vec y)}\Delta\Phi(x,\vec y)\, w(du,d\vec y).
\end{equation*}
By \eqref{e.ft1}, the integral in $y$ is zero if both of these conditions hold. 
\begin{gather*}
\abs{uc(\omega_s){\mathbf D} v (x)\cdot e }<2^{k},
\\
\abs{[uc(\omega_s){\mathbf D} v(x) -\xi-u\xi {\mathbf D}v(x)]\cdot e }<2^{k},
\qquad \xi\in\text{supp}(\widehat {\varphi_{R_s}^{(2)}})
\end{gather*}
Both of these conditions are phrased in terms of the derivative which is controlled as $x\not\in F_s$.  
In fact, the first condition already occurred in the first case, and it is satisfied if 
\begin{equation*}
\Scl\abs u\lesssim{}2^{\varepsilon_1 k}.
\end{equation*}
Recalling the conditions \eqref{e.supp}, the second condition is also satisfied for 
the same set of values for $u$.  The application of \eqref{e.st2}
 then yields a very small bound after integrating $\abs{u}\gtrsim2^{\varepsilon_1 k}
\Scl^{-1} $.    This completes the proof our  technical Lemma. 
\end{proof}

\backmatter

\begin{bibsection}
\begin{biblist}

\bib{MR1960122}{article}{
    author={Alfonseca, Angeles},
    author={Soria, Fernando},
    author={Vargas, Ana},
     title={A remark on maximal operators along directions in ${\mathbb R}\sp
            2$},
   journal={Math. Res. Lett.},
    volume={10},
      date={2003},
    number={1},
     pages={41\ndash 49},
      issn={1073-2780},
    review={MR1960122 (2004j:42010)},
}

\bib{MR1942421}{article}{
    author={Alfonseca, Angeles},
     title={Strong type inequalities and an almost-orthogonality principle
            for families of maximal operators along directions in $\mathbb R \sp
            2$},
   journal={J. London Math. Soc. (2)},
    volume={67},
      date={2003},
    number={1},
     pages={208\ndash 218},
      issn={0024-6107},
    review={MR1942421 (2003j:42015)},
}

\bib{bourgain}{article}{
    author={Bourgain, J.},
     title={A remark on the maximal function associated to an analytic
            vector field},
 booktitle={Analysis at Urbana, Vol.\ I (Urbana, IL, 1986--1987)},
    series={London Math. Soc. Lecture Note Ser.},
    volume={137},
     pages={111\ndash 132},
 publisher={Cambridge Univ. Press},
     place={Cambridge},
      date={1989},
    review={MR 90h:42028},
}

\bib{MR1945289}{article}{
   author={Muscalu, Camil},
   author={Tao, Terence},
   author={Thiele, Christoph},
   title={Uniform estimates on paraproducts},
   note={Dedicated to the memory of Thomas H.\ Wolff},
   journal={J. Anal. Math.},
   volume={87},
   date={2002},
   pages={369--384},
   issn={0021-7670},
   review={\MR{1945289 (2004a:42023)}},
}

 \bib{carbery}{article}{
    author={Carbery, Anthony},
    author={Seeger, Andreas},
    author={Wainger, Stephen},
    author={Wright, James},
     title={Classes of singular integral operators along variable lines},
   journal={J. Geom. Anal.},
    volume={9},
      date={1999},
    number={4},
     pages={583\ndash 605},
      issn={1050-6926},
    review={MR 2001g:42026},
}

\bib{car}{article}{
    author={Carleson, Lennart},
     title={On convergence and growth of partial sumas of Fourier series},
   journal={Acta Math.},
    volume={116},
      date={1966},
     pages={135\ndash 157},
    review={MR 33 \#7774},
}

\bib{MR2000j:42023}{article}{
    author={Christ, Michael},
    author={Nagel, Alexander},
    author={Stein, Elias M.},
    author={Wainger, Stephen},
     title={Singular and maximal Radon transforms: analysis and geometry},
   journal={Ann. of Math. (2)},
    volume={150},
      date={1999},
    number={2},
     pages={489\ndash 577},
      issn={0003-486X},
    review={MR 2000j:42023},
}

\bib{MR0476977}{article}{
    author={C{\'o}rdoba, A.},
    author={Fefferman, R.},
     title={On differentiation of integrals},
   journal={Proc. Nat. Acad. Sci. U.S.A.},
    volume={74},
      date={1977},
    number={6},
     pages={2211\ndash 2213},
    review={MR0476977 (57 \#16522)},
}

\bib{feff}{article}{
    author={Fefferman, Charles},
     title={Pointwise convergence of Fourier series},
   journal={Ann. of Math. (2)},
    volume={98},
      date={1973},
     pages={551\ndash 571},
    review={MR 49 \#5676},
}

\bib{MR45:5661}{article}{
    author={Fefferman, Charles},
     title={The multiplier problem for the ball},
   journal={Ann. of Math. (2)},
    volume={94},
      date={1971},
     pages={330\ndash 336},
    review={MR 45 \#5661},
}

\bib{gl1}{article}{
    author={Grafakos, Loukas},
    author={Li, Xiaochun},
     title={Uniform bounds for the bilinear Hilbert transform, I},
   journal={Ann. of Math.},
    volume={159},
     date={2004},
   pages={889\ndash 933},
    }

\bib{nets}{article}{
    author={Katz, Nets Hawk},
     title={Maximal operators over arbitrary sets of directions},
   journal={Duke Math. J.},
    volume={97},
      date={1999},
    number={1},
     pages={67\ndash 79},
      issn={0012-7094},
    review={MR 2000a:42036},
}

\bib{MR1979942}{article}{
    author={Katz, Nets Hawk},
     title={A partial result on Lipschitz differentiation},
 booktitle={Harmonic analysis at Mount Holyoke (South Hadley, MA, 2001)},
    series={Contemp. Math.},
    volume={320},
     pages={217\ndash 224},
 publisher={Amer. Math. Soc.},
     place={Providence, RI},
      date={2003},
    review={1 979 942},
}

\bib{kim}{article}{
	author={Kim, Joonil},
	title={Maximal Average Along Variable Lines},
	date={2006},
}

\bib{laceyli1}{article}{
 author={Lacey, Michael T.},
   author={Li, Xiaochun},
   title={Maximal theorems for the directional Hilbert transform on the
   plane},
   journal={Trans. Amer. Math. Soc.},
   volume={358},
   date={2006},
   number={9},
   pages={4099--4117 (electronic)},
   issn={0002-9947},
   review={\MR{2219012}},
}

%
%
\bib{MR1491450}{article}{
    author={Lacey, Michael T.},
    author={Thiele, Christoph},
     title={$L\sp p$ estimates on the bilinear Hilbert transform for
            $2<p<\infty$},
   journal={Ann. of Math. (2)},
    volume={146},
      date={1997},
    number={3},
     pages={693\ndash 724},
      issn={0003-486X},
    review={MR1491450 (99b:42014)},
}

\bib{MR1619285}{article}{
   author={Lacey, Michael T.},
   author={Thiele, Christoph},
   title={On Calder\'on's conjecture for the bilinear Hilbert transform},
   journal={Proc. Natl. Acad. Sci. USA},
   volume={95},
   date={1998},
   number={9},
   pages={4828--4830 (electronic)},
   issn={1091-6490},
   review={\MR{1619285 (99e:42013)}},
}

\bib{MR1689336}{article}{
   author={Lacey, Michael},
   author={Thiele, Christoph},
   title={On Calder\'on's conjecture},
   journal={Ann. of Math. (2)},
   volume={149},
   date={1999},
   number={2},
   pages={475--496},
   issn={0003-486X},
   review={\MR{1689336 (2000d:42003)}},
}

\bib{MR1425870}{article}{
   author={Lacey, Michael T.},
   author={Thiele, Christoph},
   title={$L\sp p$ estimates for the bilinear Hilbert transform},
   journal={Proc. Nat. Acad. Sci. U.S.A.},
   volume={94},
   date={1997},
   number={1},
   pages={33--35},
   issn={0027-8424},
   review={\MR{1425870 (98e:44001)}},
}

\bib{laceythiele}{article}{
    author={Lacey, Michael},
    author={Thiele, Christoph},
     title={A proof of boundedness of the Carleson operator},
   journal={Math. Res. Lett.},
    volume={7},
      date={2000},
    number={4},
     pages={361\ndash 370},
      issn={1073-2780},
    review={MR 2001m:42009},
}

\bib{mtt}{article}{
    author={Muscalu, Camil},
    author={Tao, Terence},
    author={Thiele, Christoph},
     title={Multi-linear operators given by singular multipliers},
   journal={J. Amer. Math. Soc.},
    volume={15},
      date={2002},
    number={2},
     pages={469\ndash 496 (electronic)},
      issn={0894-0347},
    review={MR 2003b:42017},
}

\bib{MR81a:42027}{article}{
    author={Nagel, Alexander},
    author={Stein, Elias M.},
    author={Wainger, Stephen},
     title={Hilbert transforms and maximal functions related to variable
            curves},
 booktitle={Harmonic analysis in Euclidean spaces (Proc. Sympos. Pure Math.,
            Williams Coll., Williamstown, Mass., 1978), Part 1},
    series={Proc. Sympos. Pure Math., XXXV, Part},
     pages={95\ndash 98},
 publisher={Amer. Math. Soc.},
     place={Providence, R.I.},
      date={1979},
    review={MR 81a:42027},
}

\bib{MR88i:42028b}{article}{
    author={Phong, D. H.},
    author={Stein, Elias M.},
     title={Hilbert integrals, singular integrals, and Radon transforms. II},
   journal={Invent. Math.},
    volume={86},
      date={1986},
    number={1},
     pages={75\ndash 113},
      issn={0020-9910},
    review={MR 88i:42028b},
}
\bib{MR88i:42028a}{article}{
    author={Phong, D. H.},
    author={Stein, Elias M.},
     title={Hilbert integrals, singular integrals, and Radon transforms. I},
   journal={Acta Math.},
    volume={157},
      date={1986},
    number={1-2},
     pages={99\ndash 157},
      issn={0001-5962},
    review={MR 88i:42028a},
}

 \bib{stein-icm}{article}{
    author={Stein, Elias M.},
     title={Problems in harmonic analysis related to curvature and
            oscillatory integrals},
 booktitle={Proceedings of the International Congress of Mathematicians,
            Vol. 1, 2 (Berkeley, Calif., 1986)},
     pages={196\ndash 221},
 publisher={Amer. Math. Soc.},
     place={Providence, RI},
      date={1987},
    review={MR 89d:42028},
}

\bib{stein}{book}{
    author={Stein, E. M.},
     title={Harmonic analysis: real-variable methods, orthogonality, and
            oscillatory integrals},
    series={Princeton Mathematical Series},
    volume={43},
      note={With the assistance of Timothy S. Murphy;
            Monographs in Harmonic Analysis, III},
 publisher={Princeton University Press},
     place={Princeton, NJ},
      date={1993},
     pages={xiv+695},
      isbn={0-691-03216-5},
    review={MR 95c:42002},
} 

\bib{MR0481883}{article}{
    author={Str{\"o}mberg, Jan-Olov},
     title={Maximal functions associated to rectangles with uniformly
            distributed directions},
   journal={Ann. Math. (2)},
    volume={107},
      date={1978},
    number={2},
     pages={399\ndash 402},
    review={MR0481883 (58 \#1978)},
}

\bib{MR0487260}{article}{
    author={Str{\"o}mberg, Jan-Olov},
     title={Weak estimates on maximal functions with rectangles in certain
            directions},
   journal={Ark. Mat.},
    volume={15},
      date={1977},
    number={2},
     pages={229\ndash 240},
      issn={0004-2080},
    review={MR0487260 (58 \#6911)},
}

%

\end{biblist}

\end{bibsection}

\end{document}